\definecolor{mydarkblue}{rgb}{0,0.08,0.85}
\newtheorem{theorem}{Theorem}[section]
\newtheorem{proposition}[theorem]{Proposition}
\newtheorem{definition}[theorem]{Definition}
\newtheorem{lemma}[theorem]{Lemma}
\newtheorem{corollary}[theorem]{Corollary}
\newtheorem{remark}[theorem]{Remark}
\newcommand{\zeros}{0}
\newcommand{\reals}{{\mathbb R}}
\newcommand{\naturals}{{\mathbb N}}
\DeclareMathOperator*{\argmin}{arg\,min}
\newcommand{\retr}{\mathrm{R}}
\newcommand{\grad}{\nabla}
\newcommand{\hess}{\nabla^2}
\newcommand{\tangent}{\mathrm{T}}
\newcommand{\normal}{\mathrm{N}}
\newcommand{\ptransport}[2]{\Gamma_{#1}^{#2}}
\newcommand{\Exp}{\mathrm{Exp}}
\newcommand{\Log}{\mathrm{Log}}
\DeclareMathOperator{\dist}{dist}
\DeclareMathOperator{\rank}{rank}
\DeclareMathOperator{\vecspan}{span}
\DeclareMathOperator{\proj}{proj}
\DeclarePairedDelimiterX{\inner}[2]{\langle}{\rangle}{#1, #2}
\newcommand{\D}{\mathrm{D}}
\newcommand{\deriv}{\mathrm{d}}
\newcommand{\smooth}[1]{{\mathrm{C}^{#1}}}
\newcommand{\sequence}[1]{\{#1\}}
\newcommand{\TODOF}[1]{\@bsphack\@esphack}
\newcommand{\inj}{\mathrm{inj}}
\newcommand\restr[2]{{%
    \left.\kern-\nulldelimiterspace %
      #1 %
      \vphantom{\big|} %
    \right|_{#2} %
  }}
\newcommand{\aref}[1]{\hyperref[#1]{A\ref{#1}}}
\newtheorem{assumption}{A\ignorespaces} %
\newcommand{\retrdistboundconst}{c_{\mathrm{r}}}
\newcommand{\linesearchsigma}{\sigma}
\newcommand{\liplikegd}{L}
\newcommand{\cubicpenalty}{\varsigma}
\newcommand{\arcmodel}{m}
\newcommand{\arctheta}{\kappa}
\newcommand{\arcrho}{\varrho}
\newcommand{\arcstepdistconst}{c_1}
\newcommand{\arcquadconvconst}{c_q}
\newcommand{\absmineig}{\Lambda}
\newcommand{\linearmap}{H}
\newcommand{\ratiosteps}{r}
\newcommand{\qgradconst}{L_q}
\newcommand{\rtrmodel}{m}
\newcommand{\rtrrho}{\rho}
\newcommand{\cauchystep}{t^\mathrm{c}}
\newcommand{\rtrstepconstant}{c_s}
\newcommand{\rtrsufficientdecrease}{c_p}
\newcommand{\hesslipconstant}{L_H}
\newcommand{\lyapsufficientdecreaseconst}{\sigma}
\newcommand{\sufficientdecreaseconst}{\omega}
\newcommand{\hessapproxconst}{\beta_H}
\newcommand{\fliplikeconst}{L_H'}
\newcommand{\manifold}{\mathcal{M}}
\newcommand{\nanifold}{\mathcal{N}}
\newcommand{\chillmanifold}{\mathcal{Z}}
\newcommand{\mfc}{f}
\newcommand{\sfc}{g}
\newcommand{\hatmfc}{\hat \mfc}
\newcommand{\taylor}{\tau}
\newcommand{\optimalset}{\mathcal{S}}
\newcommand{\minsubset}{\mathfrak{X}}
\newcommand{\optpoint}{\bar x}
\newcommand{\mfcopt}{f_{\optimalset}}
\newcommand{\deterministicalgorithm}{F}
\newcommand{\rankop}{d}
\newcommand{\ball}{\mathrm{B}}
\newcommand{\leven}{Levenberg--Marquardt}
\newcommand{\causchwarz}{Cauchy--Schwarz}
\newcommand{\loja}{\L ojasiewicz}
\newcommand{\polyakloja}{Polyak--\loja}
\newcommand{\pl}{\ensuremath{\text{P\L}}}
\newcommand{\morsebott}{Morse--Bott}
\newcommand{\mb}{\ensuremath{\text{MB}}}
\newcommand{\kurdykaloja}{Kurdyka--\loja}
\newcommand{\kl}{\ensuremath{\text{K\L}}}
\newcommand{\eb}{\ensuremath{\text{EB}}}
\newcommand{\qg}{\ensuremath{\text{QG}}}
\newcommand{\plconstant}{\mu}
\newcommand{\boundpathlength}{\gamma}
\newcommand{\boundvanishingsteps}{\eta}
\newcommand{\lammax}{\lambda_{\max}}
\newcommand{\lamflat}{\mu^\flat}
\newcommand{\lamsharp}{\lambda^\sharp}
\newcommand{\plexp}{\theta}
\def\@seccntformat#1{\protect\makebox[0pt][r]{\csname the#1\endcsname\hspace{12pt}}}\makeatother
\title{
  Fast convergence to non-isolated minima:\\four equivalent conditions for
  $\smooth{2}$ functions
}
\author{
  Quentin Rebjock and Nicolas Boumal\thanks{Correspondence: quentin.rebjock@epfl.ch. Ecole Polytechnique F\'ed\'erale de Lausanne (EPFL), Insitute of Mathematics. This work was supported by the Swiss State Secretariat for Education, Research and Innovation (SERI) under contract number MB22.00027.}
}
\date{\today}
\begin{document}

\maketitle

\begin{abstract}
  Optimization algorithms can see their local convergence rates deteriorate when
  the Hessian at the optimum is singular.
  These singularities are inescapable when the optima are non-isolated.
  Yet, under the right circumstances, several algorithms preserve their
  favorable rates even when optima form a continuum (e.g., due to
  over-parameterization).
  This has been explained under various structural assumptions, including the
  Polyak--{\L}ojasiewicz condition, Quadratic Growth and the Error Bound.
  We show that, for cost functions which are twice continuously differentiable
  ($\mathrm{C}^2$), those three (local) properties are equivalent.
  Moreover, we show they are equivalent to the Morse--Bott property, that is,
  local minima form differentiable submanifolds, and the Hessian of the cost
  function is positive definite along its normal directions.
  We leverage this insight to improve local convergence guarantees for
  safe-guarded Newton-type methods under any (hence all) of the above
  assumptions.
  First, for adaptive cubic regularization, we secure quadratic convergence even
  with inexact subproblem solvers.
  Second, for trust-region methods, we argue capture can fail with an exact
  subproblem solver, then proceed to show linear convergence with an inexact one
  (Cauchy steps).
\end{abstract}

\section{Introduction}\label{sec:intro}

We consider local convergence of algorithms for unconstrained optimization
problems of the form
\begin{align*}
  \min_{x \in \manifold} \mfc(x),
\end{align*}
where $\manifold$ is a Riemannian manifold\footnote{The contributions are
  relevant for $\manifold = \mathbb{R}^n$ too. We treat the more general manifold case as it
  involves only mild overhead in notation, summarized in
  Table~\ref{table:euclidean-case}.} and $\mfc \colon \manifold \to \reals$ is
at least $\smooth{1}$ (continuously differentiable).

When $\mfc$ is $\smooth{2}$ (twice continuously differentiable), the most
classical local convergence results ensure favorable
rates for standard algorithms \emph{provided} they converge to a non-singular local minimum
$\optpoint$, that is, one such that the Hessian $\hess\mfc(\optpoint)$ is
positive definite.
And indeed, those rates can degrade if the Hessian is merely positive
semidefinite.
For example, with $f(x) = x^4$, gradient descent (with an appropriate step-size)
converges only sublinearly to the minimum, and Newton's method converges only
linearly.

This is problematic if the minimizers of $\mfc$ are not isolated, because in
that case the Hessian cannot be positive definite there.
This situation arises commonly in applications for structural reasons such as
over-parameterization, redundant parameterizations and symmetry---see
Section~\ref{subsec:applications}.

Notwithstanding, algorithms often exhibit good local behavior near non-isolated
minimizers.
As early as the 1960s, this has prompted investigations into properties that
such cost functions may satisfy and which lead to fast local rates despite
singular Hessians.
We study four such properties.

In all that follows, we are concerned with the behavior of algorithms in the
vicinity of its local minima.
Since we do not assume that they are isolated, rather than selecting one local
minimum $\optpoint$, we select all local minima of the same value.
Formally, given a local minimum $\optpoint$, let
\begin{align}
    \optimalset & = \{x \in \manifold : x \text{ is a local minimum of $\mfc$ and } \mfc(x) = \mfcopt\}
    \label{eq:def-s}
\end{align}
denote the set of all local minima with a given value $\mfcopt =
\mfc(\optpoint)$.

For $\mfc$ of class $\smooth{2}$, it is particularly favorable if $\optimalset$
is a differentiable submanifold of $\manifold$ around $\optpoint$.
In that case the set $\optimalset$ has a \emph{tangent space}
$\tangent_{\optpoint}\optimalset$ at $\optpoint$.
It is easy to see that each vector $v \in \tangent_{\optpoint}\optimalset$ must
be in the kernel of the Hessian $\hess\mfc$ at $\optpoint$ because the gradient
$\grad\mfc$ is constant (zero) on $\optimalset$.
Thus, $\tangent_{\optpoint}\optimalset \subseteq \ker\hess\mfc(\optpoint)$.
Since $\optpoint$ is a local minimum, we also know that $\hess\mfc(\optpoint)$
is positive semidefinite.
Then, in the spirit of asking the Hessian to be ``as positive definite as possible'', the best we can hope for is that the kernel of $\hess\mfc(\optpoint)$ is
exactly $\tangent_{\optpoint}\optimalset$, in which case the restriction of
$\hess\mfc(\optpoint)$ to the \emph{normal space} $\normal_{\optpoint}\optimalset$,
that is, the orthogonal complement of $\tangent_{\optpoint}\optimalset$ in
$\tangent_{\optpoint}\manifold$, is positive definite.

We call this the \emph{\morsebott{}} property (\mb{}), and we write
$\plconstant$-\mb{} to indicate that the positive eigenvalues are at least
$\plconstant > 0$.
The definition requires $\mfc$ to be twice differentiable.

\begin{definition}\label{def:mb}
  Let $\optpoint$ be a local minimum of $\mfc$ with associated set
  $\optimalset$~\eqref{eq:def-s}.
  We say $\mfc$ satisfies the \emph{\morsebott{}} property at $\optpoint$ if
  \begin{align}
    \optimalset \textrm{ is a $\smooth{1}$ submanifold around } \optpoint && \textrm{ and } &&
    \ker\hess \mfc(\optpoint) = \tangent_{\optpoint}\optimalset.
    \tag{\mb{}}
    \label{eq:morse-bott}
  \end{align}
  If also $\inner{v}{\hess \mfc(\optpoint)[v]} \geq \plconstant\|v\|^2$ for some
  $\plconstant > 0$ and all $v \in \normal_{\optpoint}\optimalset$ then we say $\mfc$
  satisfies \emph{$\plconstant$-\mb{}} at $\optpoint$.
\end{definition}

At first, a reasonable objection to the above is that one may not want to assume
that $\optimalset$ is a submanifold.
Perhaps for that reason, it is far more common to encounter other assumptions in
the optimization literature.
We focus on three: \emph{\polyakloja{}} (\pl{}), \emph{error bound} (\eb{}) and
\emph{quadratic growth} (\qg{}).
The first goes back to the 1960s~\citep{polyak1963gradient}.
The latter two go back at least to the
1990s~\citep{luo1993error,bonnans1995second}.

Below, the first two definitions (as stated) require $\mfc$ to be differentiable.
The distance to a set is defined as usual: $\dist(x, \optimalset) =
\inf_{y\in\optimalset} \dist(x, y)$ where $\dist(x, y)$ is the Riemannian
distance on $\manifold$.

\begin{definition}\label{def:pl-eb-qg}
  Let $\optpoint$ be a local minimum of $\mfc$ with associated set
  $\optimalset$~\eqref{eq:def-s}.
  We say $\mfc$ satisfies
  \begin{itemize}
  \item the \emph{\polyakloja{}} condition with constant $\plconstant > 0$
    ($\plconstant$-\pl{}) around $\optpoint$ if
    \begin{align}
      \mfc(x) - \mfcopt \leq \frac{1}{2\plconstant}\|\grad \mfc(x)\|^2;
      \tag{\pl{}}
      \label{eq:local-pl}
    \end{align}
  \item the \emph{error bound} with constant $\plconstant > 0$
    ($\plconstant$-\eb{}) around $\optpoint$ if
    \begin{align}
      \plconstant \dist(x, \optimalset) \leq \|\grad \mfc(x)\|;
      \tag{$\eb$}
      \label{eq:error-bound}
    \end{align}
  \item \emph{quadratic growth} with constant $\plconstant > 0$
    ($\plconstant$-\qg{}) around $\optpoint$ if
    \begin{align*}
      \mfc(x) - \mfcopt \geq \frac{\plconstant}{2}\dist(x, \optimalset)^2;
      \tag{QG}
      \label{eq:quadratic-growth}
    \end{align*}
  \end{itemize}
  all understood to hold for all $x$ in some neighborhood of $\optpoint$.
\end{definition}

Note that all the definitions are \emph{local} around a \emph{point}
$\optpoint$.
Two observations are immediate: \emph{(i)} \qg{} implies that $\optpoint$ is a
strict minimum relatively to $\optimalset$~\eqref{eq:def-s}, meaning that
$\mfc(x) > \mfcopt$ for all $x \notin \optimalset$ close enough to $\optpoint$,
and \emph{(ii)} both \eb{} and \pl{} imply that critical points and
$\optimalset$ coincide around $\optpoint$.
Thus, both \eb{} and \pl{} rule out existence of saddle points near $\optpoint$.
By extension, we say that $\mfc$ satisfies any of these four properties around a
\emph{set} of local minima if it holds around each point of that set.

\subsection{Contributions}

A number of relationships between \pl{}, \eb{} and \qg{} are well known already
for $\mfc$ of class $\smooth{1}$: see Table~\ref{table:results-summary} and
Section~\ref{sec:relatedwork}.
Our first main contribution in this paper is to show that:
\begin{quote}
    \centering
    \emph{If $\mfc$ is of class $\smooth{2}$, then \pl{}, \eb{}, \qg{} and \mb{}
      are essentially equivalent.}
\end{quote}
Here, ``essentially'' means that the constant $\plconstant$ may degrade (arbitrarily
little) and the neighborhoods where properties hold may shrink.
Notably, we show that if $f$ is $\smooth{p}$ with $p \geq 2$, then  \pl{}, \eb{}
and \qg{} all imply that the set of local minima $\optimalset$
is locally smooth (at least $\smooth{p - 1}$).
We also give counter-examples when $\mfc$ is only $\smooth{1}$.
Explicitly, in Section~\ref{sec:equiv-properties} we summarize known results for
$\mfc$ of class $\smooth{1}$ and we contribute the following:
\defcitealias{otto2000generalization}{Otto \& Villani}
\defcitealias{ioffe2000metric}{Ioffe}
\defcitealias{corvellec2008nonlinear}{Corvellec \& Motreanu}
\begin{table}
  \centering
  \begin{tabular}{|c|c|c|c|c|c|}
    \hline
                              & Statement                            & $\mfc$ is $\smooth{p}$  & Constants                                & Global? & Comments\\ \hline
    \pl{} $\Rightarrow$ \qg{} & Prop.~\ref{prop:pl-implies-qg}       & $p \geq 1$              & $\plconstant' = \plconstant$             & Yes     & \citetalias{otto2000generalization}, \citetalias{ioffe2000metric} [2000]\\ \hline
    \mb{} $\Rightarrow$ \qg{} & Prop.~\ref{prop:mb-implies-qg}       & $p \geq 2$              & $\plconstant' < \plconstant$             & n/a     &  Taylor expansion\\ \hline
    \eb{} $\Rightarrow$ \pl{} & Prop.~\ref{prop:eb-implies-pl}       & $p \geq 2$              & $\plconstant' < \plconstant$             & No      & \\
                              & Rmrk~\ref{remark:eb-implies-pl-c1}   & $p \geq 1$              & $\plconstant' = \frac{\plconstant^2}{L}$ & Yes     & History in Section~\ref{par:relationships-properties}\\ \hline
    \qg{} $\Rightarrow$ \eb{} & Prop.~\ref{prop:grad-dist-bounds}    & $p \geq 2$              & $\plconstant' < \plconstant$             & No      & Fails for $p = 1$, Rmrk~\ref{remark:qg-implies-eb-c1}\\ \hline
    \pl{} $\Rightarrow$ \eb{} & Rmrk~\ref{remark:pl-implies-eb-c1}   & $p \geq 1$              & $\plconstant'$ = $\plconstant$           & Yes     & History in Section~\ref{par:relationships-properties}\\ \hline
    \pl{} $\Rightarrow$ \mb{} & Cor.~\ref{cor:pl-implies-mb}         & $p \geq 2$              & $\plconstant' = \plconstant$             & n/a     &  Fails for $p = 1$, Rmrk~\ref{remark:structure-only-c1}\\ \hline
    \eb{} $\Rightarrow$ \qg{} &                                      & $p \geq 1$              & $\plconstant' = \plconstant$             & Yes     & \citetalias{corvellec2008nonlinear} [2008]\\ \hline
  \end{tabular}
  \caption{Summary of implications.
    For a row $\mathrm{A} \Rightarrow \mathrm{B}$, we mean that $\mathrm{A}$
    with constant $\plconstant$ implies $\mathrm{B}$ (in a possibly different
    neighborhood) with constant $\plconstant'$.
    The condition $\plconstant' < \plconstant$ means $\plconstant'$ can be taken
    arbitrarily close to $\plconstant$ by shrinking the neighborhood.
    For \eb{} $\Rightarrow$ \pl{} with $p = 1$, we let $L$ denote the Lipschitz
    constant of $\grad \mfc$.
    If $\mathrm{B}$ holds globally when $\mathrm{A}$ holds globally, we write ``Yes'' in the column ``Global?''
    Implications we found in the literature are marked with a reference.
}\label{table:results-summary}
\end{table}
\begin{itemize}
\item Theorem~\ref{th:pl-implies-submanifold} shows \pl{} around $\optpoint$ implies
  $\optimalset$ is a $\smooth{p - 1}$ submanifold around $\optpoint$ if $f$ is
  $\smooth{p}$ with $p \geq 2$.
  Remark~\ref{remark:structure-only-c1} provides counter-examples if $\mfc$ is
  only $\smooth{1}$.
  If $\mfc$ is analytic, so is $\optimalset$, as already shown
  by~\cite{feehan2020morse}.
\item Lemma~\ref{lemma:grad-image-hess} is instrumental to prove
  Theorem~\ref{th:pl-implies-submanifold} and to analyze algorithms.
  It states that under \pl{} the gradient locally aligns with the dominant
  eigenvectors of the Hessian.
\item Corollary~\ref{cor:pl-implies-mb} deduces that $\plconstant$-\pl{} implies
  $\plconstant$-\mb{} if $\mfc$ is $\smooth{2}$.
\item Proposition~\ref{prop:grad-dist-bounds} shows $\plconstant$-\qg{} implies
  $\plconstant'$-\eb{} with $\plconstant'<\plconstant$ arbitrarily close if
  $\mfc$ is $\smooth{2}$.
  Remark~\ref{remark:qg-implies-eb-c1} provides a counter-example if $\mfc$ is
  only $\smooth{1}$.
\item Proposition~\ref{prop:eb-implies-pl} shows $\plconstant$-\eb{} implies
  $\plconstant'$-\pl{} with $\plconstant'<\plconstant$ arbitrarily close if
  $\mfc$ is $\smooth{2}$.
  If $\mfc$ is only $\smooth{1}$ with $L$-Lipschitz continuous $\grad \mfc$,
  \citet{karimi2016linear} showed the same with $\plconstant' =
  \plconstant^2/L$.
\end{itemize}

In Section~\ref{sec:super-linear-conv},
we study the classical globalized versions of Newton's method.
We strengthen their local convergence guarantees
when minimizers are not isolated but the $\smooth{2}$ cost function
satisfies
any (hence all) of
the above.
The key observation that enables those improvements is
the fact that we may use all four conditions in the analysis
without loss of generality.
Specifically:
\begin{itemize}
\item Cubic regularization enjoys superlinear convergence under \pl{}, as shown
  by \citet{nesterov2006cubic}.
  \citet{yue2019quadratic} further showed quadratic convergence under \eb{}.
  Both references assume an exact subproblem solver.
  Leveraging our results above,
  we show that quadratic convergence still holds with inexact
  subproblem solvers (Theorem~\ref{th:arc-main-theorem}).
\item For the trust-region method with exact subproblem solver, we were
  surprised to find that even basic capture-type convergence properties can fail
  in the presence of non-isolated local minima
  (Section~\ref{par:shortcomings-exact-solver}).
  Notwithstanding, common implementations of the trust-region method use a
  truncated conjugate gradient (tCG) subproblem solver, and those do,
  empirically, exhibit superlinear convergence under the favorable conditions
  discussed above.
  We discuss this further in Remark~\ref{remark:more-than-cauchy-point}, and we
  show a partial result in Theorem~\ref{th:main-rtr}, namely, that using the
  Cauchy step (i.e., the first iterate of tCG) yields linear convergence.
\end{itemize}

As is classical, to prove the latter results, we rely on capture theorems and
Lyapunov stability.
Those hold under assumptions of vanishing step-sizes and bounded path length.
In Section~\ref{sec:reminders}, we state those building blocks succinctly,
adapted to accommodate non-isolated local minima.

\subsection{Non-isolated minima in applications}\label{subsec:applications}

We now illustrate how optimization problems with continuous sets of minima occur
in applications.
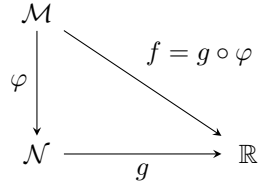
\begin{figure}[!h]
  \centering
  \begin{tikzpicture}
    \matrix (m) [matrix of math nodes,row sep=4em,column sep=6em,minimum width=2em]
    {
      \manifold & \phantom{a}\\
      \nanifold & \reals \\};
    \path[-stealth]
    (m-1-1) edge node [left] {$\varphi$} (m-2-1)
    (m-2-1) edge node [below] {$\sfc$} (m-2-2)
    (m-1-1) edge node [shift={(0.75,0.4)}] {$\mfc = \sfc \circ \varphi$} (m-2-2)
    ;
  \end{tikzpicture}
  \caption{Optimization through the map $\varphi$.}
  \label{fig:lift-diagram}
\end{figure}
In all three scenarios below, we can cast the cost function $\mfc \colon
\manifold \to \reals$ as a composition of some other function $\sfc \colon
\nanifold \to \reals$ through a map $\varphi \colon \manifold \to
\nanifold$, where $\nanifold$ is a smooth manifold (see
Figure~\ref{fig:lift-diagram} and \citep{levin2024lifts}).
If $\sfc$ and $\varphi$ are twice differentiable, then the \morsebott{}
property~\eqref{eq:morse-bott} for $\mfc = \sfc \circ \varphi$ can come about as follows.
Consider a local minimum $\bar{y}$ for $\sfc$.
The set $\minsubset = \varphi^{-1}(\bar{y})$ consists of local minima for $\mfc$.
Pick a point $\optpoint \in \minsubset$.
Assume $x \mapsto \rank\D\varphi(x)$ is constant in a neighborhood of
$\optpoint$.
Then, the set $\minsubset$ is an embedded submanifold of $\manifold$ around
$\optpoint$ with tangent space $\ker\D\varphi(\optpoint)$.
Moreover, the Hessians of $\mfc$ and $\sfc$ at $\optpoint$ are related by
\begin{align}\label{eq:hess-upstairs}
  \hess \mfc(\optpoint) = \D \varphi(\optpoint)^* \circ \hess \sfc(\varphi(\optpoint)) \circ \D \varphi(\optpoint).
\end{align}
Therefore, if $\hess \sfc(\varphi(\optpoint))$ is positive definite, then $\ker
\hess \mfc(\optpoint) = \tangent_{\optpoint}\minsubset$ and $\hess
\mfc(\optpoint)$ is positive definite along the orthogonal complement.
In other words: $\mfc$ satisfies the \morsebott{} property~\eqref{eq:morse-bott}
at $\optpoint$.
We present below a few concrete examples of optimization problems where this can
happen.

\paragraph{Over-parameterization and nonlinear regression.}

Consider minimizing $\mfc(x) = \frac{1}{2} \|F(x) - b\|^2$ with $F \colon
\reals^m \to \reals^n$ a $\smooth{2}$ function.
We cast this as above with $g(y) = \frac{1}{2} \|y - b\|^2$ and $\varphi = F$.
Suppose $\minsubset = \varphi^{-1}(b) = \{x : F(x) = b\}$ is non-empty
(interpolation regime), which is typical in deep learning.
This is the set of global minimizers of $f$.
If $\rank \D F(x)$ is equal to a constant $r$ in a neighborhood of $\minsubset$,
then $\minsubset$ is a smooth submanifold of $\reals^m$ of dimension $m - r$.
If additionally the problem is over-parameterized, that is, $m > n \geq r$, then
$\minsubset$ has positive dimension (see Figure~\ref{fig:plots} for an
illustration).
The discussion above immediately implies that $\mfc$ satisfies \mb{} on
$\minsubset$.
See also~\citet[\S4.2]{nesterov2006cubic} who argue that \pl{} holds in this
setting.
\begin{figure}
  \centering
  \includegraphics[clip, trim=3.5cm 4.55cm 3.5cm 5.4cm, width=1.00\textwidth]{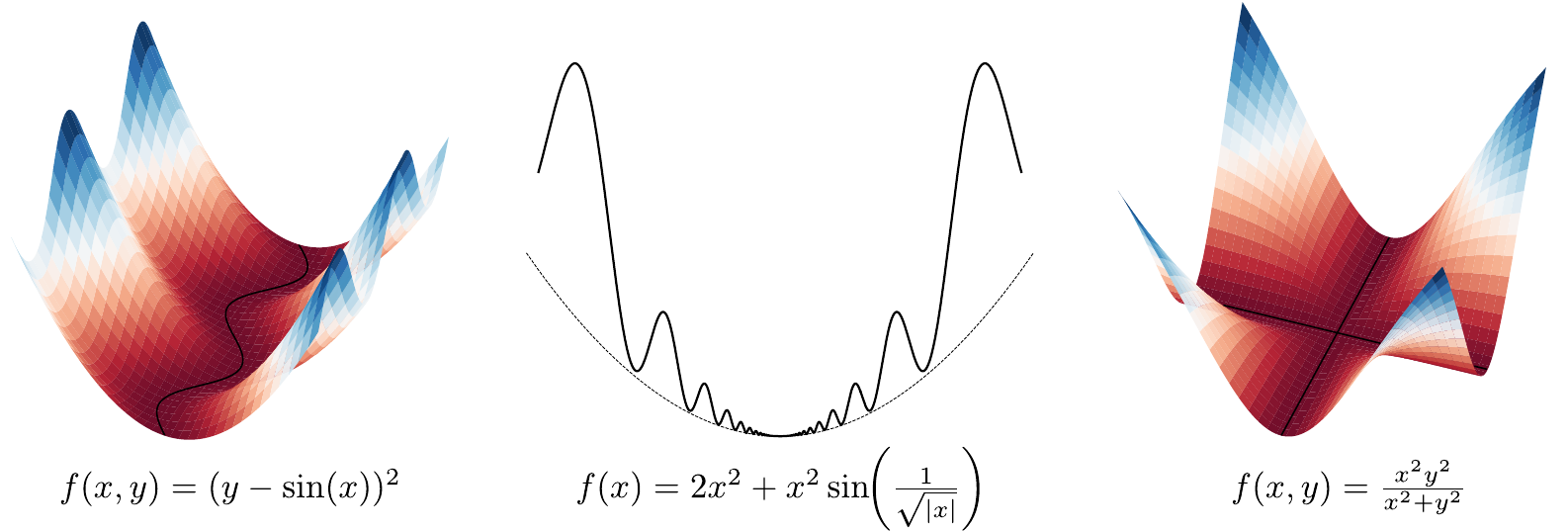}
  \caption{\emph{(Left)} Submanifold of minima where \mb{} holds.
    \emph{(Middle)} A $\smooth{1}$ function that satisfies \qg{} but not \pl{}
    nor \eb{}.
    \emph{(Right)} A $\smooth{1}$ function that satisfies \pl{}, yet whose
    set of minima is a cross.}\label{fig:plots}
\end{figure}

\paragraph{Redundant parameterizations and submersions.}\label{par:submersions}

Say we want to minimize $\sfc \colon \nanifold \to \reals$ constrained to
$\mathcal{C} \subseteq \nanifold$.
If $\mathcal{C}$ is complicated, and if we have access to a
parameterization $\varphi$ for that set (so that $\varphi(\manifold) =
\mathcal{C}$), it may be advantageous to minimize $\mfc = \sfc \circ
\varphi$ instead.
If the parameterization is redundant, this can cause $\mfc$ to have non-isolated
minima, even if the minima of $\sfc$ are isolated.

As an example, consider minimizing $\sfc \colon \reals^{m \times n} \to \reals$
over the bounded-rank matrices $\mathcal{C} = \{Y \in \reals^{m \times
  n} : \rank Y \leq r\}$.
A popular approach consists in lifting the search space to $\manifold =
\reals^{m \times r} \times \reals^{n \times r}$ and minimizing $\mfc = \sfc
\circ \varphi$, where $\varphi \colon \manifold \to \nanifold$ is defined as
$\varphi(L, R) = LR^\top$.
The parameterization is redundant because $\varphi(LJ^{-1}, RJ^{\top}) = \varphi(L,
R)$ for all invertible $J$.
In particular, given a local minimum $Y \in \mathcal{C}$ of $\sfc$, the
fiber $\varphi^{-1}(Y)$ is unbounded, which hinders convergence analyses
(see~\citep{levin2021finding}).
However, if $Y$ is of \emph{maximal rank} $r$ then $\D\varphi$ has constant rank
in a neighborhood of $\varphi^{-1}(Y)$.
From the discussion above, it follows that $\mfc$ satisfies \mb{} on
$\varphi^{-1}(Y)$ if the (Riemannian) Hessian of $g$ (restricted to the manifold
of rank-$r$ matrices) is positive definite.

Similarly, \cite{burer2003nonlinear,burer2005local} introduced a popular approach
to minimize a function $\sfc$ over the set of positive semidefinite matrices of
bounded rank through the map $\varphi \colon Y \mapsto
YY^\top$.
The resulting function $\mfc = \sfc \circ \varphi$ can have non-isolated minima.
However, the same arguments as above ensure that \mb{} holds at minimizers of
maximal rank when $\sfc$ is strongly convex (this setting is for
example considered in~\citep{zhang2022improved}).
This further extends to tensors~\citep{li2022local}.

\paragraph{Symmetries and quotients.}

Some optimization problems have intrinsic symmetries.
For example, in estimation problems, if the measurements are invariant under
particular transformations of the signal, then the signal can only be retrieved
up to those transformations.
The likelihood function then has symmetries, and possibly a continuous set of
optima as a result.
Sometimes, factoring these symmetries out (that is, passing to the quotient)
yields a quotient manifold, and we can investigate optimization on that
manifold~\citep{absil2009optimization}.
In the notation of our general framework above, $\varphi$ is then the quotient
map.
In particular, $\varphi$ is a submersion, so that if $\bar{y} \in \nanifold$ is
a non-singular minimum of $\sfc$ then $\mfc$ satisfies \mb{} on
$\varphi^{-1}(\bar{y})$ (which is a submanifold of dimension $\dim \manifold -
\dim \nanifold$).
See also~\citep[\S9.9]{boumal2020introduction} for the case where $\nanifold$ is
a \emph{Riemannian} quotient of $\manifold$.

\subsection{Related work}
\label{sec:relatedwork}

\paragraph{Historical note.}
Discussions about convergence to singular minima appear in the literature at
least as early as~\cite[\S6.1]{polyak1987introduction}.
\cite{luo1993error} introduced the \eb{} condition explicitly to study gradient
methods around singular minima.
The QG property is arguably as old as optimization, though the earliest work we
could locate is by~\cite{bonnans1995second}.
They employed \qg{} to understand complicated landscapes with non-isolated
minima.
\cite{lojasiewicz1963propriete,lojasiewicz1982trajectoires} introduced his
inequalities and used them subsequently to analyze gradient flow trajectories.
Specifically, he proved that for analytic
functions the trajectories either converge to a point or diverge.
Concurrently,~\cite{polyak1963gradient} introduced what became known as the
\polyakloja{} (\pl{}) variant (also ``gradient dominance'') to study both
gradient flows and discrete gradient methods.
Later,~\cite{kurdyka1998gradients} developed generalizations now known as
\kurdykaloja{} (\kl{}) inequalities.
They are satisfied by most functions encountered in practice, as discussed
in~\citep[\S4]{attouch2010proximal}.
In contrast, the \morsebott{} property has received little attention in the
optimization literature.
Early work by~\citet{shapiro1988perturbation} analyzes perturbations of
optimization problems assuming a property similar to \mb{}.
There is also a mention of gradient flow under MB
in~\citep[Prop.~12.3]{helmke1996optimization}.

\paragraph{Relationships between properties.}\label{par:relationships-properties}

Several articles have explored the interplay between \mb{}, \pl{}, \eb{} and
\qg{} in the last decades.
The implication \pl{} $\Rightarrow$ \qg{} has a rich history.
It can be obtained as a corollary from~\citep[Basic lemma]{ioffe2000metric}
(based on Ekeland's variational principle; see
also~\citep[Lem.~2.5]{drusvyatskiy2015curves}).
It also follows from \loja{}-type arguments that consist in
bounding the length of gradient flow
trajectories~\cite[Prop.~1]{otto2000generalization}.
Likewise, \citet{bolte2010characterizations} study growth under \kl{}
inequalities, with \pl{} $\Rightarrow$ \qg{} as a special case.
For lower-semicontinuous functions,~\citet[Thm.~4.2]{corvellec2008nonlinear}
show that $\plconstant$-\eb{} $\Rightarrow$ $\plconstant$-\qg{}.
They also find that $\plconstant$-\qg{} $\Rightarrow$
$\frac{\plconstant}{2}$-\eb{} for convex functions~\cite[Prop.~5.3]{corvellec2008nonlinear}.
In a somewhat different setting, \citet[Cor.~3.2]{drusvyatskiy2013second} prove
that $\plconstant$-\eb{} $\Rightarrow$ $\plconstant'$-\qg{} with arbitrary
$\plconstant' < \plconstant$.
With extra assumptions, they also show $\plconstant$-\qg{} $\Rightarrow$
$\plconstant'$-\eb{} but without control of $\plconstant'$.
Later,~\cite{bolte2017fromerror} proved an equivalence between \kl{}
inequalities and function growth for convex and potentially non-smooth
functions.
Their results seem to generalize to semi-convex functions.
See also~\citep{zhang2017therestricted} and~\citep{drusvyatskiy2018error} for
equivalences between \eb{} and \qg{}.
\citet{karimi2016linear} established implications between several properties
encountered in the optimization literature.
In particular, they also show that \pl{} and \eb{} are equivalent, and that they
both imply \qg{}.
\citet{liao2024error} extended this to a non-smooth and weakly convex setting.
\cite{li2018calculus} showed that \eb{} implies \pl{} for non-smooth functions
under a level set separation assumption, though with no control on the \pl{}
constant.
Implications between \pl{} and \eb{} are also reported in~\cite[Thm.~3.7,
Prop.~3.8]{drusvyatskiy2021nonsmooth} for non-smooth functions under
broad conditions.
In the context of functional analysis,~\cite{feehan2020morse} proved that \mb{}
and \pl{} are equivalent for \emph{analytic} functions defined on Banach spaces.
The work of~\cite[Ex.~2.9]{wojtowytsch2023stochastic} also mentions that
\mb{} implies \pl{} for $\smooth{2}$ functions.
A more general implication is given by \citet[Prop.~1]{arbel2022non} for
parameterized optimization problems.
Previously,~\cite{bonnans1995second} had exhibited sufficient conditions
(similar to \mb{}) for \qg{} to hold.
As a side note,~\cite{marteau2024second} proved that \mb{} is a sufficient
condition to ensure that a non-negative function is globally decomposable as a sum of
squares of smooth functions.

\paragraph{Convergence guarantees.}

The error bound approach of~\cite{luo1993error} has proven to be fruitful as
multiple analyses based on this condition followed.
Notably,~\cite{tseng2000error} proved local superlinear convergence rates for
some Newton-type methods applied to systems of nonlinear equations.
They relied specifically on \eb{} and did not assume isolated minima.
Later,~\cite{yamashita2001rate} employed \eb{} to establish capture theorems and
quadratic convergence rates for the \leven{} method.
\cite{fan2005quadratic},~\cite{behling2019local} and~\cite{boos2024levenberg}
generalized their results (in particular to solutions with a non-zero residual).
More recently,~\cite{bellavia2015strong} found that two adaptive regularized
methods converge quadratically for nonlinear least-squares problems (assuming
that \eb{} holds).

An early work of~\cite{anitescu2000degenerate} combines the \qg{} property with
other conditions to ensure isolated minima in constrained optimization, then
deducing convergence results.
Later, \qg{} has found applications mainly in the context of convex
optimization: see~\citep{liu2015asynchronous} for coordinate
descent,~\citep{necoara2019linear} for various gradient methods,
and~\citep{drusvyatskiy2018error} for the proximal gradient method.
It is also worth mentioning that the definition of \qg{} does not require
differentiability of the function.
For this reason, \qg{} is valuable to study algorithms in
non-smooth optimization too~\citep{davis2024local,lewis2024identifiability}.

The literature about convergence results based on \loja{} inequalities is vast,
and we touch here on some particularly relevant references.
\cite{absil2005convergence} discretized the arguments from
\citep{lojasiewicz1982trajectoires} and obtained capture results for a broad
class of optimization algorithms.
\cite{lageman2007convergence,lageman2007pointwise} provided generalizations to
broader classes of functions.
Later, such arguments have been used in many contexts to prove algorithmic
convergence guarantees, among
which~\citep{attouch2010proximal,bolte2014proximal} are particularly influential
works.
Moreover,~\cite{attouch2013convergence} proposed a general abstract framework
based on \kl{} to derive capture results and convergence rates,
and~\cite{frankel2015splitting} extended their statements.
See also~\citep{necoara2020general} for a framework that encompasses
higher-order methods.
\cite{li2018calculus} studied the preservation of \loja{}
inequalities under function transformations (such as sums and compositions).
See also~\citep{bassily2018exponential,terjek2022framework} for the preservation
of \pl{} through function compositions.
Interestingly, the \pl{} condition is known to be a necessary condition for
gradient descent to converge
linearly~\cite[Thm.~5]{abbaszadehpeivasti2023conditions}.
Recently,~\cite{yue2023lower} proved that acceleration is impossible to minimize
globally \pl{} functions: gradient descent is optimal absent further structure.
Assuming \pl{},~\cite{stonyakin2023stopping} formulated stopping criteria for
gradient methods when the gradient is corrupted with noise.
Using \kl{} inequalities,~\cite{noll2013convergence} and~\cite{khanh2022inexact}
analyzed the convergence of line-search gradient descent and trust-region
methods.
\loja{} inequalities have also proved relevant for the study of
second-order algorithms and superlinear convergence rates.
A prominent example is the regularized Newton algorithm that converges
superlinearly when \pl{} holds, as shown in~\citep{nesterov2006cubic}.
More recently,~\cite{zhou2018convergence,yue2019quadratic} provided finer
analyses of this algorithm, respectively assuming \loja{} inequalities and EB.
\citet{qian2022superlinear} extended the abstract framework
of~\cite{attouch2013convergence} to establish superlinear convergence rates.

Stochastic algorithms have also been extensively studied through \loja{}
inequalities, and we briefly mention a few references here.
\cite{dereich2023central,dereich2021convergence} analyzed stochastic gradient
descent (SGD) in the presence of non-isolated minima, using \loja{} inequalities
among other things.
Local analyses of SGD using \pl{} inequalities are given by~\cite{li2022what}
and~\cite{wojtowytsch2023stochastic}.
\citet{ko2023local} studied the local stability and convergence of
SGD in the presence of a compact set of minima with a condition that is weaker
than \pl{}.
As for second-order algorithms,~\cite{masiha2022stochastic} proved that a
stochastic version of regularized Newton has fast convergence under \pl{}.

\loja{} inequalities are also particularly suited to analyze the convergence of
flows.
Notably,~\cite{lojasiewicz1982trajectoires} bounded the path length of gradient
flow trajectories and~\cite{polyak1963gradient} derived linear convergence
of flows assuming \pl{}.
Related results for flows but under \mb{} are claimed
in~\citep[Prop.~12.3]{helmke1996optimization}.
More recently,~\cite{apidopoulos2021convergence} considered the Heavy-Ball
differential equation and deduced convergence guarantees from \pl{}.
\cite{wojtowytsch2024stochastic} studied a continuous model for SGD and the
impact of the noise on the trajectory.

Finally, we found only few convergence results based on \mb{} in the optimization
literature.
\cite{fehrman2020convergence} derive capture theorems and asymptotic
sublinear convergence rates for gradient descent assuming that \mb{} holds on
the set of minima.
They also provide probabilistic bounds for stochastic variants.
\cite{usevich2020approximate} consider optimization problems over unitary
matrices.
They propose sufficient conditions for \mb{} to hold at a local optimum and then
exploit the induced \pl{} condition to obtain convergence rates.
In order to solve systems of nonlinear equations,~\cite{zeng2023newton}
proposes a Newton-type method that is robust to non-isolated solutions.
It enjoys local quadratic convergence assuming a \mb{}-type property.
The algorithm requires the knowledge of the dimension of the set of solutions.

\paragraph{Applications.}
Non-isolated minima arise in all sorts of optimization problems.
It is common for non-convex inverse problems to have continuous symmetries,
hence non-isolated minima (see~\citep{zhang2020symmetry}).
In the context of deep learning,~\cite{cooper2021global} proved that the set of
global minima of a sufficiently over-parameterized neural network is a smooth
manifold.
In the last decade, there has been a renewed interest in \loja{} inequalities
because they are compatible with these complicated non-convex landscapes.
In particular, a whole line of research exploits them to understand deep
learning problems specifically.
As an example,~\cite{oymak2019overparameterized} employed \pl{} to analyze the
path taken by (stochastic) gradient descent in the vicinity of minimizers.
Several other works suggested that non-convex machine learning loss landscapes
can be understood in over-parameterized regimes through the lens of \loja{}
inequalities~\citep{bassily2018exponential,belkin2021fit,liu2022loss,terjek2022framework}.
Specifically, they argue that \pl{} holds on a significant part of the search
space and analyze (stochastic) gradient methods.
\cite{chatterjee2022convergence} also establishes local convergence
results for a large class of neural networks with \pl{} inequalities.

\subsection{Notation and geometric preliminaries}

\begin{table}[]
  \centering
  \begin{tabular}{|c|c|c|c|c|c|c|}
    \hline
     & $\Exp_x(s)$ & $\Log_x(y)$ & $\ptransport{x}{y}$, $\ptransport{v}{}$ & $\dist(x, y)$ & $\inj(x)$\\\hline
    $\manifold = \reals^n$ & $x + s$ & $y - x$ & identity & $\|x - y\|$ & $+\infty$\\\hline
  \end{tabular}
  \caption{Simplifications in the case where $\manifold$ is a Euclidean
    space.
    Here, $\ptransport{x}{y}$ denotes parallel transport along the minimizing geodesic
    between $x$ and $y$ (assuming the points are close enough).}\label{table:euclidean-case}
\end{table}
This section anchors notation and some basic geometric facts.
In the important case where $\manifold = \reals^n$, several objects reduce as
summarized in Table~\ref{table:euclidean-case}.

We let $\inner{\cdot}{\cdot}$ denote the inner product on
$\tangent_x\manifold$---it may depend on $x \in \manifold$, but the base point
is always clear from context.
The associated norm is $\|v\| = \sqrt{\inner{v}{v}}$.
The map $\dist \colon \manifold \times \manifold \to \reals_+$ is the Riemannian
distance on $\manifold$.
We let $\ball(x, \delta)$ denote the open ball of radius $\delta$ around $x \in
\manifold$.
The tangent bundle is $\tangent\manifold = \{ (x, v) : x \in \manifold \textrm{
  and } v \in \tangent_x\manifold \}$.

Moving away from $x \in \manifold$ along the geodesic with (sufficiently small)
initial velocity $v \in \tangent_x\manifold$ for unit time produces the point
$\Exp_x(v) \in \manifold$ (Riemannian exponential).
The injectivity radius at $x$ is $\inj(x) > 0$.
It is defined such that, given $y \in \ball(x, \inj(x))$, there exists a unique
smallest vector $v \in \tangent_x\manifold$ for which $\Exp_x(v) = y$.
We denote this $v$ by $\Log_x(y)$ (Riemannian logarithm).
Additionally, given $x \in \manifold$ and $y \in \ball(x, \inj(x))$, we let
$\ptransport{x}{y} \colon \tangent_x\manifold \to \tangent_y\manifold$ denote
parallel transport along the unique minimizing geodesic between $x$ and $y$.
If $v = \Log_x(y)$, we also let $\ptransport{v}{} = \ptransport{x}{y}$.

Let $\minsubset$ be a subset of $\manifold$.
We need the notions of tangent and normal cones to $\minsubset$, defined below.
\begin{definition}\label{def:tangent-normal-cones}
  The \emph{tangent cone} to a set $\minsubset$ at $x \in \minsubset$ is the closed set
  \begin{align*}
    \tangent_x\minsubset = \left\{\lim_{k \to +\infty}\frac{1}{t_k}\Log_x(x_k) \,\Big|\, x_k \in \minsubset, t_k > 0 \text{ for all $k$}, x_k \to x, t_k \to 0 \right\}.
  \end{align*}
  We also let $\normal_x\minsubset = \big\{w \in \tangent_x\manifold :
  \inner{w}{v} \leq 0 \text{ for all $v \in \tangent_x\minsubset$}\big\}$ denote
  the normal cone to $\minsubset$ at $x$.
\end{definition}
When $\minsubset$ is a submanifold of $\manifold$ around $x$, the cones
$\tangent_x\minsubset$ and $\normal_x\minsubset$ reduce to the tangent and
normal spaces of $\minsubset$ at $x$.
(By ``submanifold'', we always mean \emph{embedded} submanifold.)

Given $x \in \manifold$, we let $\dist(x, \minsubset) = \inf_{y \in \minsubset}
\dist(x, y)$ denote the distance of $x$ to $\minsubset$.
We further let $\proj_\minsubset(x)$ denote the set of minima of the
optimization problem $\min_{y \in \minsubset} \,\dist(x, y)$.
If this set is non-empty (which is the case in particular if $\minsubset$ is
closed), then we have:
\begin{align}
    \forall \optpoint \in \minsubset, y \in \proj_\minsubset(x), &&
    \dist(y, \optpoint) \leq 2 \dist(x, \optpoint).
    \label{eq:proj-dist-bound}
\end{align}
Indeed, the triangle inequality yields $\dist(y, \optpoint) \leq \dist(x, y) +
\dist(x, \optpoint)$, and $\dist(x, y) = \dist(x, \minsubset) \leq \dist(x,
\optpoint)$.
Moreover, if $y \in \proj_\minsubset(x)$ with $\dist(x, y) < \inj(y)$ then
$\Log_y(x) \in \normal_y\minsubset$.

The set of local minima $\optimalset$ defined in~\eqref{eq:def-s} may not
be closed: consider for example the function $\mfc(x) =
\mathrm{sgn}(x)\exp(-\frac{1}{x^2})(1 + \sin(\frac{1}{x^2}))$ with $\mfcopt =
0$.
It follows that the projection onto $\optimalset$ may be empty.
Notwithstanding, the following holds:
\begin{lemma}\label{lemma:proj-non-empty}
  Around each $\optpoint \in \optimalset$
  there exists a neighborhood in which $\proj_\optimalset$ is
  non-empty.
\end{lemma}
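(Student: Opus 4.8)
The plan is to produce, for each $\optpoint \in \optimalset$, a small closed ball around $\optpoint$ on which $\optimalset$ (hence its closure) is "trapped" in a way that makes the distance-minimization problem attain its infimum. The obstruction highlighted just before the lemma is that $\optimalset$ need not be closed, so a direct compactness argument fails; the fix is to replace $\optimalset$ by a suitable closed subset that agrees with it near $\optpoint$.

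First I would recall that $\optpoint$ is a local minimum of $\mfc$ with value $\mfcopt$, so there is $\delta > 0$ with $\overline{\ball(\optpoint, 2\delta)}$ compact (manifolds are locally compact) and $\mfc(x) \geq \mfcopt$ for all $x \in \overline{\ball(\optpoint, 2\delta)}$. Consider the set $C = \{ x \in \overline{\ball(\optpoint, \delta)} : \mfc(x) = \mfcopt \}$. Since $\mfc$ is continuous and $\overline{\ball(\optpoint, \delta)}$ is closed, $C$ is closed, hence compact; and $C$ is nonempty since $\optpoint \in C$. The key point is that every point of $C$ lying in the open ball $\ball(\optpoint, \delta)$ is in fact a \emph{local} minimum of $\mfc$ (it attains the value $\mfcopt$, which is a lower bound on a whole neighborhood), hence belongs to $\optimalset$; conversely any point of $\optimalset \cap \ball(\optpoint, \delta)$ lies in $C$. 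So $C \cap \ball(\optpoint, \delta) = \optimalset \cap \ball(\optpoint, \delta)$.

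Next I would take the neighborhood $U = \ball(\optpoint, \delta/4)$ and show $\proj_\optimalset$ is nonempty on $U$. Fix $x \in U$. Then $\dist(x, \optimalset) \leq \dist(x, \optpoint) < \delta/4$, so the infimum defining $\dist(x, \optimalset)$ is unchanged if we restrict to $y \in \optimalset$ with $\dist(x, y) < \delta/4$; all such $y$ satisfy $\dist(y, \optpoint) < \delta/2 < \delta$, so they lie in $\optimalset \cap \ball(\optpoint, \delta) = C \cap \ball(\optpoint,\delta) \subseteq C$. Hence $\dist(x, \optimalset) = \dist(x, C)$. Since $C$ is compact and $y \mapsto \dist(x, y)$ is continuous, this infimum is attained by some $y^\star \in C$, and $\dist(x, y^\star) = \dist(x, C) < \delta/4$ forces $\dist(y^\star, \optpoint) < \delta/2 < \delta$, so $y^\star \in C \cap \ball(\optpoint, \delta) = \optimalset \cap \ball(\optpoint,\delta) \subseteq \optimalset$. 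Thus $y^\star \in \proj_\optimalset(x)$, proving $\proj_\optimalset$ is nonempty throughout $U$.

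The only step requiring care — and the main thing to get right rather than an actual obstacle — is the bookkeeping of radii: one must ensure that the minimizing $y$ produced in $C$ genuinely lands in the open ball where $C$ and $\optimalset$ coincide, so that it is a bona fide element of $\optimalset$ and not merely of its closure. Choosing $U$ with radius a small fraction of $\delta$ (here $\delta/4$) and using the triangle inequality as above handles this cleanly. Local compactness of $\manifold$ is the only structural input beyond continuity of $\mfc$.
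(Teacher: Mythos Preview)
Your proof is correct and follows essentially the same approach as the paper: identify $\optimalset$ locally with the closed level set $\mfc^{-1}(\mfcopt)$ intersected with a closed ball, project onto that compact set, and use radius bookkeeping (the paper also uses radii $\delta$ and $\delta/4$) to verify the minimizer actually lies in $\optimalset$. One minor remark: your claim ``Hence $\dist(x,\optimalset)=\dist(x,C)$'' only explicitly justifies $\dist(x,\optimalset)\geq\dist(x,C)$, but since you later show the minimizer $y^\star\in C$ belongs to $\optimalset$, this inequality is all you need (and in fact $C\subseteq\optimalset$ holds outright, as every point of $C$ lies in the open ball $\ball(\optpoint,2\delta)$ where $\mfc\geq\mfcopt$).
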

\begin{proof}
  Let $\mathcal{U}$ be an open neighborhood of $\optpoint$ such that $\mfc(x)
  \geq \mfcopt$ for all $x \in \mathcal{U}$.
  Let $\mathcal{V}_1, \mathcal{V}_2 \subset \mathcal{U}$ be two closed balls
  around $\optpoint$ of radii $\delta > 0$ and $\frac{1}{4}\delta$ respectively.
  Then $\optimalset \cap \mathcal{V}_1 = \mfc^{-1}(\mfcopt) \cap \mathcal{V}_1$,
  showing that $\optimalset \cap \mathcal{V}_1$ is closed and the projection
  onto this set is non-empty.
  Let $x \in \mathcal{V}_2$ and $y \in \proj_{\optimalset \cap
    \mathcal{V}_1}(x)$.
  Then $\dist(x, y) \leq \frac{1}{4}\delta$.  %
  Moreover, for all $y' \in \optimalset \setminus \mathcal{V}_1$ we have
  $\dist(x, y') \geq \frac{3}{4}\delta$.
  It follows that $\proj_{\optimalset}(x) = \proj_{\optimalset \cap
    \mathcal{V}_1}(x)$, and this is non-empty.
\end{proof}

From these considerations we deduce that the projection onto $\optimalset$ is
always locally well behaved.

\begin{lemma}\label{lemma:log-well-defined}
  Let $\optpoint \in \optimalset$.
  There exists a neighborhood $\mathcal{U}$ of $\optpoint$ such that for all $x
  \in \mathcal{U}$ the set $\proj_{\optimalset}(x)$ is non-empty, and for all $y
  \in \proj_{\optimalset}(x)$ we have $\dist(x, y) < \inj(y)$.
  In particular, $v = \Log_y(x)$ is well defined and $v \in
  \normal_y\optimalset$.
\end{lemma}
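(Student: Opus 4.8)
The plan is to combine Lemma~\ref{lemma:proj-non-empty} (non-emptiness of the projection near $\optpoint$) with the bound~\eqref{eq:proj-dist-bound} and the lower semicontinuity of the injectivity radius. First I would fix, by Lemma~\ref{lemma:proj-non-empty}, an open neighborhood $\mathcal{W}$ of $\optpoint$ on which $\proj_\optimalset$ is non-empty. The key quantitative input is that $\inj$ is positive and lower semicontinuous on $\manifold$ (a standard Riemannian fact), so there exist $r_0 > 0$ and a possibly smaller neighborhood $\mathcal{W}' \subseteq \mathcal{W}$ of $\optpoint$ such that $\inj(z) \geq r_0$ for all $z$ in a fixed ball $\ball(\optpoint, \rho)$ with $\mathcal{W}' \subseteq \ball(\optpoint, \rho)$; in particular $\inj(\optpoint) \geq r_0$ as well.

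Next I would choose $\mathcal{U} = \ball(\optpoint, \delta)$ with $\delta$ small enough that $\mathcal{U} \subseteq \mathcal{W}'$ and $3\delta < r_0$. For $x \in \mathcal{U}$ and any $y \in \proj_\optimalset(x)$, the bound~\eqref{eq:proj-dist-bound} applied with $\minsubset = \optimalset$ gives $\dist(y, \optpoint) \leq 2\dist(x, \optpoint) < 2\delta < \rho$, so $y \in \ball(\optpoint, \rho)$ and hence $\inj(y) \geq r_0$. Then $\dist(x, y) = \dist(x, \optimalset) \leq \dist(x, \optpoint) < \delta < r_0 \leq \inj(y)$, which establishes $\dist(x, y) < \inj(y)$. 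Consequently $v = \Log_y(x)$ is well defined, and since $y \in \proj_\optimalset(x)$ with $\dist(x, y) < \inj(y)$, the remark immediately preceding the lemma (the statement that $y \in \proj_\minsubset(x)$ with $\dist(x,y) < \inj(y)$ implies $\Log_y(x) \in \normal_x\minsubset$) gives $v \in \normal_y\optimalset$, completing the proof.

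I expect the only delicate point to be the use of lower semicontinuity of the injectivity radius to get a uniform positive lower bound $r_0$ on $\inj$ over a neighborhood of $\optpoint$; everything else is bookkeeping with the triangle inequality and~\eqref{eq:proj-dist-bound}. In the Euclidean case ($\manifold = \reals^n$) this step is vacuous since $\inj \equiv +\infty$, so the reader can see the lemma is essentially trivial there and the Riemannian overhead is exactly this uniform-injectivity-radius argument.
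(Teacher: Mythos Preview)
Your proposal is correct and follows essentially the same route as the paper's proof: both combine Lemma~\ref{lemma:proj-non-empty}, a uniform positive lower bound on $\inj$ near $\optpoint$ (the paper obtains it via continuity of $\inj$ on compact balls by defining $h(\delta)=\inf_{z\in\bar\ball(\optpoint,2\delta)}\inj(z)$ and picking $\delta\leq h(\delta)$; lower semicontinuity, as you invoke, also suffices), and the bound~\eqref{eq:proj-dist-bound} to localize $y$. One small bookkeeping fix: you should also impose $2\delta<\rho$, so that $\dist(y,\optpoint)<2\delta$ actually places $y$ in $\ball(\optpoint,\rho)$ where $\inj(y)\geq r_0$ is available; your stated constraints $\mathcal{U}\subseteq\mathcal{W}'$ and $3\delta<r_0$ do not by themselves guarantee this.
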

\begin{proof}
  Let $\mathcal{U}$ be a neighborhood of $\optpoint$ where $\proj_{\optimalset}$
  is non-empty (given by Lemma~\ref{lemma:proj-non-empty}).
  Given $\bar \delta < \inj(\optpoint)$, the ball $\bar \ball(\optpoint,
  \delta)$ is compact for all $\delta < \bar \delta$.
  Define $h(\delta) = \inf_{x \in \bar \ball(\optpoint, 2\delta)} \inj(x)$ on
  the interval $\interval{0}{\frac{\bar \delta}{2}}$.
  The function $h$ is continuous with $h(0) = \inj(\optpoint) > 0$ so we can
  pick $\delta > 0$ such that $\delta \leq h(\delta)$.
  Let $x \in \ball(\optpoint, \delta) \cap \mathcal{U}$ and $y \in
  \proj_\optimalset(x)$.
  By definition of the projection we have $\dist(x, y) \leq \dist(x, \optpoint)
  < \delta$.
  Moreover, inequality~\eqref{eq:proj-dist-bound} yields $\dist(y, \optpoint)
  \leq 2 \dist(x, \optpoint) \leq 2\delta$ so $h(\delta) \leq \inj(y)$.
  It follows that $\dist(x, y) < \delta \leq h(\delta) \leq \inj(y)$ and $v =
  \Log_{y}(x)$ is well defined.
  The fact that $v$ is in the normal cone follows from optimality conditions of
  projections.
\end{proof}

Given a self-adjoint linear map $H$, we let $\lambda_i(H)$ denote the $i$th
largest eigenvalue of $H$, and $\lambda_{\min}(H)$ and $\lambda_{\max}(H)$
denote the minimum and maximum eigenvalues respectively.

\section{Four equivalent properties}\label{sec:equiv-properties}

In this section, we establish that \mb{}, \pl{}, \eb{} and \qg{} (see
Definitions~\ref{def:mb} and~\ref{def:pl-eb-qg}) are equivalent
around a local minimum $\optpoint$ when $\mfc$ is $\smooth{2}$.
Specifically, we show the implication graph in
Figure~\ref{fig:implication-graph}.
\begin{figure}
  \centering
  \begin{tikzcd}
    \mb{} &&& \pl{} \\
    \\
    \\
    \qg{} &&& \eb{}
    \arrow["\text{Prop.~\ref{prop:mb-implies-qg}}"', from=1-1, to=4-1]
    \arrow["\text{Prop.~\ref{prop:grad-dist-bounds}}"', from=4-1, to=4-4]
    \arrow["\text{Prop.~\ref{prop:eb-implies-pl}}"', from=4-4, to=1-4]
    \arrow["\text{Prop.~\ref{prop:pl-implies-qg}}", from=1-4, to=4-1]
    \arrow["\text{Cor.~\ref{cor:pl-implies-mb}}"', from=1-4, to=1-1]
  \end{tikzcd}
  \caption{Implication graph when $\mfc$ is $\smooth{2}$.
    The main missing pieces were \pl{} $\Rightarrow$ \mb{} and \qg{}
    $\Rightarrow$ \eb{}, both secured with the right constants and under the
    right regularity assumptions.}\label{fig:implication-graph}
\end{figure}
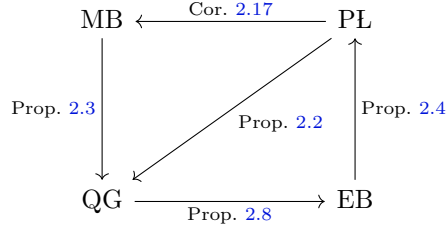

It is well known that \pl{} implies \qg{} around
minima: see references in Section~\ref{sec:relatedwork}.
Perhaps the most popular argument relies on the bounded length of gradient flow
trajectories under the more general \loja{}
inequality~\citep{lojasiewicz1963propriete,lojasiewicz1982trajectoires,otto2000generalization,absil2005convergence,bolte2010characterizations}.
\begin{definition}\label{def:loja}
  Let $\optpoint$ be a local minimum of $\mfc$ with associated set
  $\optimalset$~\eqref{eq:def-s}.
  We say $\mfc$ satisfies the \loja{} inequality with constants $\plexp \in
  \interval[open right]{0}{1}$ and $\plconstant > 0$ around $\optpoint$ if
  \begin{align}\label{eq:local-loja}\tag{\L{}}
    \lvert \mfc(x) - \mfcopt \rvert^{2\plexp} \leq \frac{1}{2\plconstant}\|\grad \mfc(x)\|^2
  \end{align}
  for all $x$ in some neighborhood of $\optpoint$.
\end{definition}
Notice that if $\mfc$ is \loja{} with exponent $\plexp$ then it is \loja{} with
exponent $\theta'$ for all $\plexp \leq \theta' < 1$ (though possibly in a
different neighborhood).
The case $\plexp = \frac{1}{2}$ is exactly the~\eqref{eq:local-pl} condition.

\begin{proposition}[$\text{\pl{}} \Rightarrow \text{\qg{}}$]\label{prop:pl-implies-qg}
  Suppose that $\mfc$ satisfies~\eqref{eq:local-loja} around $\optpoint \in
  \optimalset$.
  Then $\mfc$ satisfies
  \begin{align*}
    \mfc(x) - \mfcopt \geq \big((1 - \plexp)\sqrt{2\plconstant}\big)^{\frac{1}{1 - \plexp}}\dist(x, \optimalset)^{\frac{1}{1 - \plexp}}
  \end{align*}
  for all $x$ sufficiently close to $\optpoint$.
  In particular, if $\plexp = \frac{1}{2}$, this shows $\plconstant$-\eqref{eq:local-pl} $\Rightarrow$ $\plconstant$-\eqref{eq:quadratic-growth}.
\end{proposition}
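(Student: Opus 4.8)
The plan is to run the classical \loja{} ``bounded trajectory length'' argument. First I would fix an open neighborhood $\mathcal{U}$ of $\optpoint$ on which $\mfc \geq \mfcopt$ (such $\mathcal{U}$ exists because $\optpoint$ is a local minimum) and which is also contained in the neighborhood where~\eqref{eq:local-loja} holds. On $\mathcal{U}$, any point $x$ with $\mfc(x) = \mfcopt$ is automatically a local minimum with value $\mfcopt$, hence $x \in \optimalset$ and both sides of the claimed inequality vanish; so it suffices to treat $x \in \mathcal{U}$ with $\mfc(x) > \mfcopt$. On that set I introduce the auxiliary function $g(x) = (\mfc(x) - \mfcopt)^{1 - \plexp}$, which is $\smooth{1}$ there with $\grad g(x) = (1 - \plexp)(\mfc(x) - \mfcopt)^{-\plexp}\grad \mfc(x)$. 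Rewriting~\eqref{eq:local-loja} as $\|\grad \mfc(x)\| \geq \sqrt{2\plconstant}\,(\mfc(x) - \mfcopt)^{\plexp}$ and substituting gives the uniform lower bound $\|\grad g(x)\| \geq (1 - \plexp)\sqrt{2\plconstant}$ throughout $\mathcal{U} \cap \{\mfc > \mfcopt\}$.

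Next I consider the gradient-flow curve $\dot c(t) = -\grad \mfc(c(t))$ with $c(0) = x$. Since $\grad g$ is a strictly positive scalar multiple of $\grad \mfc$, along the flow one has $-\frac{\deriv}{\deriv t} g(c(t)) = \inner{\grad g(c(t))}{\grad \mfc(c(t))} = \|\grad g(c(t))\|\,\|\grad \mfc(c(t))\| = \|\grad g(c(t))\|\,\|\dot c(t)\| \geq (1 - \plexp)\sqrt{2\plconstant}\,\|\dot c(t)\|$. Integrating from $0$ to any time $T$ and using $g \geq 0$ yields that the length of $c|_{[0,T]}$ is at most $g(x)/\big((1-\plexp)\sqrt{2\plconstant}\big) = (\mfc(x) - \mfcopt)^{1-\plexp}/\big((1-\plexp)\sqrt{2\plconstant}\big)$. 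Choosing the starting neighborhood of $\optpoint$ small enough that this bound stays much smaller than the radius available inside $\mathcal{U}$, the trajectory is trapped in $\mathcal{U}$, so the differential inequality is valid for all $T \geq 0$; having finite length, $c(t)$ converges to a limit $c_\infty \in \overline{\mathcal{U}}$. Since $\mfc$ is nonincreasing along the flow and $c_\infty$ is a critical point, the \loja{} inequality forces $\grad \mfc(c_\infty) = 0$ and $\mfc(c_\infty) = \mfcopt$; as $c_\infty$ lies in $\mathcal{U}$, it is a local minimum, hence $c_\infty \in \optimalset$. Therefore $\dist(x, \optimalset) \leq \dist(x, c_\infty) \leq \mathrm{length}(c) \leq (\mfc(x) - \mfcopt)^{1-\plexp}/\big((1-\plexp)\sqrt{2\plconstant}\big)$, and raising both sides to the power $1/(1-\plexp)$ and rearranging gives exactly the stated bound. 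Specializing $\plexp = \tfrac12$ gives $\mfc(x) - \mfcopt \geq \tfrac{\plconstant}{2}\dist(x, \optimalset)^2$, i.e.\ $\plconstant$-\eqref{eq:quadratic-growth}.

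I expect the two ``soft'' steps to be the real work: making the trapping argument precise (so that the curve never leaves the \loja{} neighborhood and the differential inequality holds on all of $[0,\infty)$), and justifying existence and convergence of the gradient-flow curve. The latter is clean when $\grad \mfc$ is locally Lipschitz (e.g.\ $\mfc$ of class $\smooth{2}$); in the bare $\smooth{1}$ setting one either works with an absolutely continuous solution curve obtained via Peano existence, or bypasses the flow entirely and derives the inequality from Ekeland's variational principle, as in the references cited in Section~\ref{sec:relatedwork}.
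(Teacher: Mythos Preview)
Your proposal is correct and follows essentially the same classical \loja{} trajectory-length argument as the paper's proof in Appendix~\ref{sec:loja-proofs}: bound the arc length of the negative gradient flow by $(\mfc(x)-\mfcopt)^{1-\plexp}/((1-\plexp)\sqrt{2\plconstant})$, use a trapping argument to keep the trajectory inside the \loja{} neighborhood, conclude convergence to a point of $\optimalset$, and compare the resulting distance bound. The paper organizes these steps into separate statements (Lemma~\ref{lemma:bound-gd-flow-path-length}, Proposition~\ref{prop:lyapunov-gd-flow}, Corollary~\ref{cor:gf-converges}) rather than introducing your auxiliary $g$, but the computations and the care needed for trapping and existence are the same.
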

We include a classical proof in Appendix~\ref{sec:loja-proofs} for completeness,
with care regarding neighborhoods.

\subsection{Two straightforward implications}

In this section we show that $\mb{} \Rightarrow \qg{}$ and $\eb \Rightarrow
\pl{}$.
These implications are known and direct.
We give succinct proofs for completeness.
The first one follows immediately from a Taylor expansion.

\begin{proposition}[$\text{\mb{}} \Rightarrow \text{\qg{}}$]\label{prop:mb-implies-qg}
  Suppose that $\mfc$ is $\smooth{2}$ and
  satisfies~$\plconstant$-\eqref{eq:morse-bott} at $\optpoint \in \optimalset$.
  Then $\mfc$ satisfies $\plconstant'$-\eqref{eq:quadratic-growth} around
  $\optpoint$ for all $\plconstant' < \plconstant$.
\end{proposition}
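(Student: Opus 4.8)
The plan is to work in a neighborhood of $\optpoint$ where $\optimalset$ is a $\smooth{1}$ submanifold, project an arbitrary nearby point $x$ onto $\optimalset$ to get a footpoint $y$, and Taylor-expand $\mfc$ along the geodesic from $y$ to $x$. By Lemma~\ref{lemma:log-well-defined}, for $x$ close enough to $\optpoint$ the projection $\proj_\optimalset(x)$ is non-empty, and for $y \in \proj_\optimalset(x)$ the vector $v = \Log_y(x)$ is well defined, lies in $\normal_y\optimalset$, and satisfies $\|v\| = \dist(x,\optimalset) \leq \dist(x,\optpoint)$. Since $\optimalset$ consists of local minima all of value $\mfcopt$, we have $\mfc(y) = \mfcopt$ and $\grad\mfc(y) = 0$. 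A second-order Taylor expansion of $t \mapsto \mfc(\Exp_y(tv))$ at $t = 0$ then gives
\begin{align*}
  \mfc(x) - \mfcopt = \frac{1}{2}\inner{v}{\hess\mfc(y)[v]} + o(\|v\|^2),
\end{align*}
where the remainder is uniform for $y$ in a compact neighborhood of $\optpoint$ because $\mfc$ is $\smooth{2}$ (continuity of the Hessian).

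Next I would transfer the curvature lower bound from $\optpoint$ to the nearby footpoint $y$. We are given $\inner{w}{\hess\mfc(\optpoint)[w]} \geq \plconstant\|w\|^2$ for all $w \in \normal_\optpoint\optimalset$, and $\ker\hess\mfc(\optpoint) = \tangent_\optpoint\optimalset$. Since $\optimalset$ is a $\smooth{1}$ submanifold, its tangent (and hence normal) spaces vary continuously with the base point; combined with continuity of $\hess\mfc$, this means that for any $\plconstant' < \plconstant$ there is a neighborhood of $\optpoint$ such that, for every footpoint $y$ in it, $\inner{w}{\hess\mfc(y)[w]} \geq \plconstant'\|w\|^2$ for all $w \in \normal_y\optimalset$. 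Applying this with $w = v \in \normal_y\optimalset$ yields $\inner{v}{\hess\mfc(y)[v]} \geq \plconstant'\|v\|^2$. Shrinking the neighborhood once more to absorb the $o(\|v\|^2)$ term into the gap between $\plconstant'$ and, say, $\tfrac{\plconstant + \plconstant'}{2}$, we obtain $\mfc(x) - \mfcopt \geq \frac{\plconstant'}{2}\|v\|^2 = \frac{\plconstant'}{2}\dist(x,\optimalset)^2$, which is $\plconstant'$-\eqref{eq:quadratic-growth}. Since $\plconstant' < \plconstant$ was arbitrary, the claim follows.

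The main obstacle is the uniformity argument in the second paragraph: one must be careful that the error term in the Taylor expansion and the perturbation of the normal space are \emph{both} controlled uniformly over all admissible footpoints $y$, not just at $\optpoint$. This is where the $\smooth{1}$-submanifold hypothesis in \eqref{eq:morse-bott} is essential — it guarantees that $y \mapsto \normal_y\optimalset$ is continuous, so that the eigenvalue bound on the restricted Hessian degrades continuously away from $\optpoint$; without it one could not conclude a positive lower bound at the footpoints. A minor technical point is that, a priori, a point $x$ might have several projections $y$; the estimates above hold for every such $y$ simultaneously since the constants are uniform, so no selection is needed. Everything else — the Taylor expansion, the bound $\|v\| \le \dist(x,\optpoint)$, and the final absorption of lower-order terms — is routine.
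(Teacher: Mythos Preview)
Your proposal is correct and follows essentially the same route as the paper: project $x$ to a footpoint $y \in \proj_\optimalset(x)$ via Lemma~\ref{lemma:log-well-defined}, Taylor-expand $\mfc$ at $y$ along the normal vector $v = \Log_y(x)$, and use continuity to transfer the Hessian lower bound from $\optpoint$ to $y$. The paper phrases the continuity step slightly differently---it tracks the $d$th eigenvalue $\lambda_d(\hess\mfc(y))$ directly (where $d = \codim\optimalset$) rather than arguing via continuity of $y \mapsto \normal_y\optimalset$---but this is the same argument in different clothing, and your extra care about uniformity of the Taylor remainder over footpoints is a point the paper leaves implicit.
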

\begin{proof}
  Let $\mathcal{U}$ be a neighborhood of
  $\optpoint$ as in Lemma~\ref{lemma:log-well-defined}.
  Let $\rankop$ be the codimension of $\optimalset$ (around $\optpoint$).
  Given $\plconstant' < \plconstant$,
  pick $\varepsilon \in \interval[open]{0}{\plconstant - \plconstant'}$ and shrink
  $\mathcal{U}$ so that for all $x \in \mathcal{U}$ and $y \in
  \proj_\optimalset(x)$ we have $\lambda_{\rankop}(\hess \mfc(y)) \geq
  \plconstant' + \varepsilon$.
  Given $x \in \mathcal{U}$ and $y \in \proj_\optimalset(x)$, a Taylor expansion
  around $y$ gives
  \begin{align*}
    \mfc(x) - \mfcopt = \frac{1}{2}\inner{v}{\hess \mfc(y)[v]} + o(\|v\|^2) \geq \frac{\plconstant' + \varepsilon}{2} \dist(x, \optimalset)^2 + o(\dist(x, \optimalset)^2),
  \end{align*}
  where $v = \Log_y(x)$ is normal to $\optimalset$.
  We get the inequality $\plconstant'$-\eqref{eq:quadratic-growth} for all $x$ sufficiently close to $\optpoint$.
\end{proof}

\begin{proposition}[$\text{\eb{}} \Rightarrow \text{\pl{}}$]\label{prop:eb-implies-pl}
  Suppose that $\mfc$ is $\smooth{2}$ and satisfies
  $\plconstant$-\eqref{eq:error-bound} around $\optpoint \in \optimalset$.
  Then $\mfc$ satisfies $\plconstant'$-\eqref{eq:local-pl} around $\optpoint$
  for all $\plconstant' < \plconstant$.
\end{proposition}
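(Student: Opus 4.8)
The plan is to integrate the error bound along a minimizing geodesic from a nearby point $x$ to its projection $y$ onto $\optimalset$, using the $\smooth{2}$ regularity to control how the gradient norm changes along that segment. First I would invoke Lemma~\ref{lemma:log-well-defined} to get a neighborhood $\mathcal{U}$ of $\optpoint$ where $\proj_\optimalset$ is non-empty and, for $x \in \mathcal{U}$ and $y \in \proj_\optimalset(x)$, the logarithm $v = \Log_y(x)$ is well defined with $v \in \normal_y\optimalset$. Write $c(t) = \Exp_y(tv)$ for $t \in [0,1]$, so $c(0) = y \in \optimalset$, $c(1) = x$, and $\dist(c(t), \optimalset) \le \|tv\| = t\,\dist(x,\optimalset)$. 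The function $t \mapsto \mfc(c(t))$ has derivative $\inner{\grad\mfc(c(t))}{c'(t)}$, and since $\mfc(c(0)) = \mfcopt$,
\begin{align*}
  \mfc(x) - \mfcopt = \int_0^1 \inner{\grad\mfc(c(t))}{c'(t)}\,\deriv t.
\end{align*}

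The key estimate is a lower bound on the integrand. Because $\mfc$ is $\smooth{2}$ and $\grad\mfc$ vanishes on $\optimalset$, a Taylor expansion of $\grad\mfc$ around $y$ along the geodesic gives $\grad\mfc(c(t)) = t\,\hess\mfc(y)[v] + o(t\|v\|)$ (after parallel transport back to $\tangent_y\manifold$), while $c'(t)$ is the parallel transport of $v$ along $c$, of constant norm $\|v\|$. Hence $\inner{\grad\mfc(c(t))}{c'(t)} = t\inner{v}{\hess\mfc(y)[v]} + o(t\|v\|^2)$; integrating yields $\mfc(x)-\mfcopt = \tfrac12\inner{v}{\hess\mfc(y)[v]} + o(\|v\|^2)$. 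On the other hand, the same expansion gives $\|\grad\mfc(x)\|^2 = \inner{v}{\hess\mfc(y)^2[v]} + o(\|v\|^2)$. So the $\smooth{2}$ hypothesis reduces everything to the spectral behavior of $\hess\mfc(y)$ restricted to the subspace $\normal_y\optimalset$ in which $v$ lives. The error bound $\plconstant\dist(x,\optimalset) \le \|\grad\mfc(x)\|$ combined with $\dist(x,\optimalset) = \|v\| + o(\|v\|)$ (projection optimality, plus $v\in\normal_y\optimalset$ and $\mfc$ being $\smooth2$) forces $\inner{v}{\hess\mfc(y)^2[v]} \ge \plconstant^2\|v\|^2 + o(\|v\|^2)$ for all relevant $v$; letting $v$ range, this says the smallest positive eigenvalue of $\hess\mfc(y)$ (on $\normal_y\optimalset$) is at least $\plconstant - o(1)$, i.e. $\ge \plconstant' + \varepsilon$ on a small enough neighborhood, for any fixed $\plconstant' < \plconstant$ and suitable $\varepsilon > 0$. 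Then $\inner{v}{\hess\mfc(y)^2[v]} \le (\lambda_{\max})\inner{v}{\hess\mfc(y)[v]}$ and $\inner{v}{\hess\mfc(y)[v]} \ge (\plconstant'+\varepsilon)\|v\|^2$, which when plugged into the two displayed asymptotics gives
\begin{align*}
  \frac{\|\grad\mfc(x)\|^2}{2(\mfc(x)-\mfcopt)} = \frac{\inner{v}{\hess\mfc(y)^2[v]} + o(\|v\|^2)}{\inner{v}{\hess\mfc(y)[v]} + o(\|v\|^2)} \ge \plconstant' + o(1) \ge \plconstant'
\end{align*}
for $x$ close enough to $\optpoint$, which is exactly $\plconstant'$-\eqref{eq:local-pl}.

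The main obstacle is handling the fact that $y$ varies with $x$: the Taylor remainders and the spectral bounds must be uniform over all projection points $y$ in a neighborhood of $\optpoint$, not just at $\optpoint$ itself. This is where I would lean on continuity of $y \mapsto \hess\mfc(y)$ together with the bound $\dist(y,\optpoint) \le 2\dist(x,\optpoint)$ from~\eqref{eq:proj-dist-bound}, so that shrinking the neighborhood of $\optpoint$ controls both $\|v\|$ and the deviation of $\hess\mfc(y)$ from $\hess\mfc(\optpoint)$ simultaneously; compactness of a small closed ball (as in Lemma~\ref{lemma:log-well-defined}) makes the $o(\cdot)$ terms uniform. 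A secondary subtlety is that \eb{} and \pl{} only control critical points near $\optpoint$ (as noted after Definition~\ref{def:pl-eb-qg}), so there is no issue of the denominator $\mfc(x)-\mfcopt$ vanishing off $\optimalset$ — \eb{} already rules that out.
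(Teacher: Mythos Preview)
Your setup—projecting $x$ to $y \in \proj_\optimalset(x)$, writing $v = \Log_y(x)$, and Taylor-expanding both $\mfc(x)-\mfcopt$ and $\grad\mfc(x)$ around $y$—is exactly what the paper does, and your remarks about uniformity in $y$ via~\eqref{eq:proj-dist-bound} are on point. The divergence, and the gap, is in how you finish.

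The spectral step does not go through. From \eb{} and the gradient expansion you correctly get $\|\hess\mfc(y)[v]\|^2 = \inner{v}{\hess\mfc(y)^2[v]} \ge (\plconstant - o(1))^2\|v\|^2$ for the particular $v = \Log_y(x)$. But this does \emph{not} imply $\inner{v}{\hess\mfc(y)[v]} \ge (\plconstant'+\varepsilon)\|v\|^2$: a lower bound on $\|Hv\|/\|v\|$ is strictly weaker than the same lower bound on the Rayleigh quotient $\inner{v}{Hv}/\|v\|^2$ (take $H = \diag(4,0)$ and $v = (1,1)$: the former is $2\sqrt 2$, the latter only $2$). The missing ingredient is that $v$ lies in an $H$-invariant subspace on which $H \succeq \plconstant' I$, which is essentially \mb{}; the paper establishes that only \emph{after} \eb{} $\Rightarrow$ \pl{}, so invoking it here is circular. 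The ``letting $v$ range'' maneuver does not help: even if $\|Hv\| \ge \plconstant\|v\|$ held for every $v$ in a subspace $N$, this bounds $P_N H^2 P_N$ from below, not $P_N H P_N$, and these differ when $N$ is not $H$-invariant. Separately, your inequality $\inner{v}{H^2 v} \le \lambda_{\max}\inner{v}{Hv}$ bounds the ratio $\inner{v}{H^2 v}/\inner{v}{Hv}$ from \emph{above}, which is the wrong direction for the displayed conclusion.

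The paper's finish is a one-liner that replaces all of this. After the two Taylor expansions, apply \causchwarz{}: $\mfc(x)-\mfcopt \le \tfrac12\|v\|\,\|\hess\mfc(y)[v]\| + o(\|v\|^2)$. The gradient expansion gives $\|\hess\mfc(y)[v]\| \le \|\grad\mfc(x)\| + o(\|v\|)$, and \eb{} gives $\|v\| = \dist(x,\optimalset) \le \tfrac{1}{\plconstant}\|\grad\mfc(x)\|$. Combining yields $\mfc(x)-\mfcopt \le \tfrac{1}{2\plconstant}\|\grad\mfc(x)\|^2 + o(\|\grad\mfc(x)\|^2)$, which is $\plconstant'$-\pl{} for any $\plconstant' < \plconstant$ once the neighborhood is shrunk. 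No eigenvalue information is needed.
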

\begin{proof}
  Let $\mathcal{U}$ be the intersection of two neighborhoods of $\optpoint$:
  one where $\plconstant$-\eqref{eq:error-bound} holds,
  and the other provided by Lemma~\ref{lemma:log-well-defined}.
  Given $x \in \mathcal{U}$ and $y \in \proj_{\optimalset}(x)$, a Taylor
  expansion around $y$ yields
  \begin{align*}
    \mfc(x) - \mfcopt = \frac{1}{2}\inner{v}{\hess \mfc(y)[v]} + o(\|v\|^2) && \text{and} && \grad \mfc(x) = \ptransport{v}{}\hess \mfc(y)[v] + o(\|v\|),
  \end{align*}
  where $v = \Log_y(x)$.
  Using the \causchwarz{} inequality and the triangle inequality, it follows
  that
  \begin{align*}
    \mfc(x) - \mfcopt \leq \frac{1}{2}\|v\|\|\hess \mfc(y)[v]\| + o(\|v\|^2) \leq \frac{1}{2}\|v\|\|\grad \mfc(x)\| + o(\|v\|^2).
  \end{align*}
  Finally, \eb{} gives that $\|v\| \leq \frac{1}{\plconstant} \|\grad \mfc(x)\|$
  so $\mfc(x) - \mfcopt \leq \frac{1}{2\plconstant}\|\grad \mfc(x)\|^2 +
  o(\|\grad \mfc(x)\|^2)$.
  We get the inequality $\plconstant'$-\eqref{eq:local-pl} for all $x$ sufficiently close to $\optpoint$.
\end{proof}

\begin{remark}\label{remark:eb-implies-pl-c1}
  Suppose that $\mfc$ is only $\smooth{1}$ and $\grad \mfc$ is locally
  $L$-Lipschitz continuous around $\optpoint$.
  If $\plconstant$-\eb{} holds around $\optpoint$ then $\mfc$ satisfies \pl{}
  with constant $\frac{\plconstant^2}{L}$ around
  $\optpoint$~\citep[Thm.~2]{karimi2016linear}.
  (This still holds locally on manifolds with the same proof.)
  The constant worsens but that is inevitable:
  see the example in Remark~\ref{remark:structure-only-c1}.
\end{remark}

\subsection{Quadratic growth implies error bound}\label{subsec:qg-implies-eb}

In this section, we show that \qg{} implies \eb{} for $\smooth{2}$ functions.
Other works proving this implication either assume that $\mfc$ is convex
(see~\cite[Prop.~5.3]{corvellec2008nonlinear},~\citep{karimi2016linear}, and~\cite[Cor.~3.6]{drusvyatskiy2018error})
or do not provide control on the constants
(see~\cite[Cor.~3.2]{drusvyatskiy2013second}).
For this, we first characterize a distance growth rate when we move from
$\optimalset$ in a normal direction (see
Definition~\ref{def:tangent-normal-cones}).
Recall that for now $\optimalset$ is not necessarily smooth, and therefore
$\normal_{\optpoint} \optimalset$ is a priori only a cone.
\begin{lemma}\label{lemma:dist-equiv}
  Let $\optpoint \in \optimalset$ and $v \in \normal_{\optpoint}\optimalset$
  unitary.
  Then $\dist(\Exp_{\optpoint}(tv), \optimalset) = t + o(t)$ as $t \to 0$, $t
  \geq 0$.
\end{lemma}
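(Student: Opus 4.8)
The plan is to establish the two bounds $\dist(\Exp_{\optpoint}(tv), \optimalset) \le t$ and $\dist(\Exp_{\optpoint}(tv), \optimalset) \ge t - o(t)$ separately. The upper bound is immediate: since $\optpoint \in \optimalset$ and $\|v\| = 1$, for $0 \le t < \inj(\optpoint)$ the minimizing geodesic from $\optpoint$ to $\Exp_{\optpoint}(tv)$ has length exactly $t$, so $\dist(\Exp_{\optpoint}(tv), \optimalset) \le \dist(\Exp_{\optpoint}(tv), \optpoint) = t$.

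For the lower bound, write $\gamma(t) = \Exp_{\optpoint}(tv)$ and argue by contradiction. Suppose $\liminf_{t \to 0^+} \dist(\gamma(t), \optimalset)/t = c < 1$ and fix $\varepsilon \in (0,1)$ with $c < 1 - \varepsilon$. Choose $t_k \to 0^+$ with $\dist(\gamma(t_k), \optimalset) \le (1 - \varepsilon) t_k$. By Lemma~\ref{lemma:log-well-defined}, for $k$ large there exists $y_k \in \proj_{\optimalset}(\gamma(t_k))$, so that $\dist(\gamma(t_k), y_k) \le (1 - \varepsilon) t_k$; by the triangle inequality $\dist(\optpoint, y_k) \le \dist(\optpoint, \gamma(t_k)) + \dist(\gamma(t_k), y_k) \le (2 - \varepsilon) t_k \to 0$, hence $y_k \to \optpoint$. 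Moreover $y_k \ne \optpoint$ (otherwise $\dist(\gamma(t_k), \optimalset) = t_k$), and $b_k := \dist(\optpoint, y_k) \ge \dist(\optpoint, \gamma(t_k)) - \dist(\gamma(t_k), y_k) \ge \varepsilon t_k$, so $\varepsilon t_k \le b_k \le 2 t_k$.

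Now I extract the limiting direction from $\optpoint$ toward $y_k$ and use that $v$ is normal. For $k$ large, $\Log_{\optpoint}(y_k)$ is well defined with norm $b_k$; set $u_k = \Log_{\optpoint}(y_k)/b_k$, a unit vector in $\tangent_{\optpoint}\manifold$. Passing to a subsequence, $u_k \to u$ with $\|u\| = 1$ and $b_k / t_k \to \beta \in [\varepsilon, 2]$. Applying Definition~\ref{def:tangent-normal-cones} with the points $y_k \in \optimalset$ and scalars $b_k \to 0$ (recall also $y_k \to \optpoint$) shows $u \in \tangent_{\optpoint}\optimalset$, hence $\inner{v}{u} \le 0$ because $v \in \normal_{\optpoint}\optimalset$. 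I then invoke the standard Riemannian estimate that, in a small enough geodesic ball around $\optpoint$,
\begin{align*}
  \dist\big(\Exp_{\optpoint}(\xi), \Exp_{\optpoint}(\eta)\big)^2 = \|\xi - \eta\|^2 \big(1 + O(\|\xi\|^2 + \|\eta\|^2)\big)
\end{align*}
(an equality with no error term when $\manifold = \reals^n$; in general it follows from the expansion $g_{ij}(x) = \delta_{ij} + O(\|x\|^2)$ of the metric in normal coordinates centered at $\optpoint$). Applying this with $\xi = t_k v$, $\eta = b_k u_k$, and noting that $\inner{v}{u_k} \to \inner{v}{u} \le 0$ together with $\varepsilon t_k \le b_k \le 2 t_k$ force $\|t_k v - b_k u_k\| \ge \sqrt{1 - \varepsilon}\, t_k$ for $k$ large, I obtain
\begin{align*}
  \frac{\dist(\gamma(t_k), y_k)^2}{t_k^2} = \left(1 + \frac{b_k^2}{t_k^2} - 2 \frac{b_k}{t_k}\inner{v}{u_k}\right)\big(1 + o(1)\big),
\end{align*}
whose right-hand side tends to $1 + \beta^2 - 2\beta\inner{v}{u} \ge 1$ as $k \to \infty$. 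This contradicts $\dist(\gamma(t_k), y_k)/t_k \le 1 - \varepsilon$. Hence $\liminf_{t \to 0^+} \dist(\gamma(t), \optimalset)/t \ge 1$, which together with the upper bound gives $\dist(\gamma(t), \optimalset) = t + o(t)$.

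The one non-elementary ingredient is the displayed comparison between Riemannian distance and flat distance in normal coordinates, with relative error controlled at second order; the rest is triangle inequalities, a compactness argument to pass to a convergent subsequence, and the definition of the tangent cone. Conceptually, the statement amounts to the angle inequality $\inner{v}{u} \le 0$ enforced by $v \in \normal_{\optpoint}\optimalset$: a unit normal displacement recedes from $\optimalset$ at unit rate to first order, whereas a tangent displacement would only give $o(t)$.
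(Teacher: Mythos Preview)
Your proof is correct and follows essentially the same approach as the paper: project $\Exp_{\optpoint}(tv)$ onto $\optimalset$, observe that the direction from $\optpoint$ to the projected point lies (asymptotically) in the tangent cone so that $\inner{v}{u}\le 0$, and conclude via a first-order comparison between Riemannian and Euclidean distance in $\tangent_{\optpoint}\manifold$. The paper argues directly using accumulation points of $w(t)/\|w(t)\|$ and a parallel-transport law-of-cosines estimate, whereas you argue by contradiction along a subsequence and invoke the normal-coordinates expansion of the metric; these are stylistic rather than substantive differences.
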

\begin{proof}
  Let $\mathcal{U}$ be a neighborhood of $\optpoint$ as in
  Lemma~\ref{lemma:log-well-defined}.
  Shrink $\mathcal{U}$ so that for all $x \in \mathcal{U}$ and $y \in
  \proj_{\optimalset}(x)$ we have $\dist(y, \optpoint) < \inj(\optpoint)$.
  Given a small parameter $t > 0$, define $x(t) = \Exp_{\optpoint}(tv)$ and let
  $y(t) \in \proj_{\optimalset}(x(t))$.
  From~\eqref{eq:proj-dist-bound} we have $\dist(y(t), \optpoint) \leq 2t$, and
  it follows that $y(t) \to \optpoint$ as $t \to 0$.
  Define $u(t) = \Log_{y(t)}(x(t))$ and $w(t) = \Log_{\optpoint}(y(t))$.
  Then
  \begin{align*}
    \dist(x(t), \optimalset)^2 = \|u(t)\|^2 = \|tv - w(t) + \ptransport{y(t)}{\optpoint}u(t) - tv + w(t)\|^2 = \|tv - w(t)\|^2 + o(t^2)
  \end{align*}
  as $t \to 0$ because $\|\ptransport{y(t)}{\optpoint}u(t) - tv + w(t)\| = o(t)$
  as $t \to 0$~\citep[Eq.~(6)]{sun2019escaping}. %
  In particular, for all $t$ sufficiently small we have
  \begin{align*}
    \dist(x(t), \optimalset)^2 = t^2 - 2t\inner{w(t)}{v} + \|w(t)\|^2 + o(t^2) \geq t^2 - 2t\inner{w(t)}{v} + o(t^2).
  \end{align*}
  Let $I \subseteq \reals_{> 0}$ be the times when $y(t) \neq \optpoint$.
  If $\inf I > 0$ then the final claim holds because $y(t) = \optpoint$ for
  small enough $t$.
  Suppose now that $\inf I = 0$.
  Define $r(t) = \frac{w(t)}{\|w(t)\|}$ on $I$, and let $\mathcal{A}$ be the set
  of accumulation points of $r(t)$ as $t \to 0$, $t \in I$.
  Then $\mathcal{A}$ is included in the unit sphere and $\mathcal{A} \subseteq
  \tangent_{\optpoint}\optimalset$ by definition.
  Given $a \in \mathcal{A}$ and $t \in I$, we can use $\inner{a}{v} \leq 0$ to find that
  \begin{align*}
    \dist(x(t), \optimalset)^2 \geq t^2 - 2 t \|w(t)\| \inner{r(t) - a + a}{v} + o(t^2) \geq t^2 - 4 t^2 \|r(t) - a\| + o(t^2).
  \end{align*}
  It follows that $\dist(x(t), \optimalset)^2 \geq t^2 - 4 t^2
  \dist(r(t), \mathcal{A}) + o(t^2) = t^2 + o(t^2)$ because $\dist(r(t),
  \mathcal{A}) \to 0$ as $t \to 0$.
\end{proof}

Using this rate, we now find that \qg{} implies the following bounds for $\hess
\mfc$ in the normal cones of $\optimalset$.
Note that the following proposition does not yet show \qg{} $\Rightarrow$ \mb{},
because to establish \mb{} we still need to argue that $\optimalset$ is a
smooth set.

\begin{proposition}\label{prop:c2-and-qg-hessian}
  Suppose $\mfc$ is $\smooth{2}$ and satisfies~\eqref{eq:quadratic-growth} with
  constant $\plconstant$ around $\optpoint \in \optimalset$.
  Then for all $y \in \optimalset$ sufficiently close to $\optpoint$ and all $v
  \in \normal_{y}\optimalset$ we have
  \begin{align*}
    \inner{v}{\hess \mfc(y)[v]} \geq \plconstant\|v\|^2 && \text{and} && \|\hess \mfc(y)[v]\| \geq \plconstant\|v\|.
  \end{align*}
\end{proposition}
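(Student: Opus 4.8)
The strategy is to deduce the two lower bounds on $\hess\mfc(y)$ pointwise, for each fixed $y\in\optimalset$ near $\optpoint$ and each fixed $v\in\normal_y\optimalset$, by probing $\mfc$ along the geodesic $t\mapsto\Exp_y(tv)$ and invoking the \qg{} inequality together with the distance estimate of Lemma~\ref{lemma:dist-equiv}. First I would fix $y\in\optimalset$ close enough to $\optpoint$ that \qg{} with constant $\plconstant$ holds at $y$ (shrink a neighborhood of $\optpoint$ so this is true uniformly), and fix a unit vector $v\in\normal_y\optimalset$; by homogeneity it suffices to prove both inequalities for unit $v$. Set $x(t)=\Exp_y(tv)$ for small $t\geq 0$. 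A second-order Taylor expansion of $\mfc$ along this geodesic gives
\[
  \mfc(x(t)) - \mfcopt = \tfrac{1}{2}t^2\inner{v}{\hess\mfc(y)[v]} + o(t^2),
\]
using $\grad\mfc(y)=0$ (as $y\in\optimalset$). On the other hand, \qg{} gives $\mfc(x(t))-\mfcopt \geq \tfrac{\plconstant}{2}\dist(x(t),\optimalset)^2$, and Lemma~\ref{lemma:dist-equiv} gives $\dist(x(t),\optimalset)^2 = t^2 + o(t^2)$ since $v$ is a unit normal vector. Combining the three relations yields $\tfrac{1}{2}t^2\inner{v}{\hess\mfc(y)[v]} + o(t^2) \geq \tfrac{\plconstant}{2}t^2 + o(t^2)$; dividing by $t^2$ and letting $t\to 0$ produces $\inner{v}{\hess\mfc(y)[v]} \geq \plconstant$, which is the first claimed bound for unit $v$, hence $\inner{v}{\hess\mfc(y)[v]}\geq\plconstant\|v\|^2$ in general.

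For the second bound, the same Taylor expansion along $x(t)=\Exp_y(tv)$ gives $\grad\mfc(x(t)) = t\,\ptransport{y}{x(t)}\hess\mfc(y)[v] + o(t)$, so by the \causchwarz{} inequality $\mfc(x(t))-\mfcopt \leq \tfrac{1}{2}\,t\|v\|\cdot\|\hess\mfc(y)[v]\| + o(t^2)$; actually it is cleaner to bound $\mfc(x(t))-\mfcopt=\tfrac12 t^2\inner{v}{\hess\mfc(y)[v]}+o(t^2) \le \tfrac12 t^2\|v\|\,\|\hess\mfc(y)[v]\|+o(t^2)$ directly from the first Taylor expansion. Comparing with the \qg{} lower bound $\tfrac{\plconstant}{2}t^2+o(t^2)$ and dividing by $t^2$, letting $t\to0$, gives $\|\hess\mfc(y)[v]\|\geq\plconstant$ for unit $v$, hence $\|\hess\mfc(y)[v]\|\geq\plconstant\|v\|$ in general. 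One subtlety to address is uniformity: the $o(t^2)$ terms depend on $y$ and $v$, but since we argue for each fixed $y,v$ separately and only take $t\to 0$, no uniformity across $y$ or $v$ is needed for the conclusion — we only need \qg{} and Lemma~\ref{lemma:dist-equiv} to apply at the fixed $y$, which holds once $y$ lies in a fixed neighborhood of $\optpoint$.

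The main obstacle, and the reason this proposition is separated from the full \qg{}$\Rightarrow$\mb{} implication, is precisely that it says nothing about $\optimalset$ being a manifold: the normal cone $\normal_y\optimalset$ here is the \emph{cone} from Definition~\ref{def:tangent-normal-cones}, not a linear normal space, and it might a priori be large (even all of $\tangent_y\manifold$ if $y$ is isolated in $\optimalset$, in which case the claim is just positive definiteness of $\hess\mfc(y)$). The real work — showing $\optimalset$ is a $\smooth{1}$ submanifold so that $\ker\hess\mfc(\optpoint)=\tangent_{\optpoint}\optimalset$ — is deferred; here we only need that for each unit $v$ in the normal cone, the geodesic $\Exp_y(tv)$ recedes from $\optimalset$ at unit rate, which is exactly Lemma~\ref{lemma:dist-equiv}. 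So the genuinely delicate ingredient is already packaged in that lemma, and what remains for this proposition is the short Taylor-expansion comparison sketched above.
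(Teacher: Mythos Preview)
Your proposal is correct and follows essentially the same approach as the paper: Taylor expand $\mfc$ along $t\mapsto\Exp_y(tv)$, combine with \qg{} and Lemma~\ref{lemma:dist-equiv} to get the quadratic-form lower bound, then deduce the second inequality by \causchwarz{}. The paper's proof is slightly terser (it does not normalize $v$ and handles both inequalities in one displayed line), but the argument is the same.
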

\begin{proof}
  Let $\mathcal{U}$ be an open neighborhood of $\optpoint$ where
  $\plconstant$-\qg{} holds.
  Let $y \in \optimalset \cap \mathcal{U}$ and $v \in \normal_{y}\optimalset$.
  For all small enough $t > 0$, we obtain from a Taylor expansion that
  \begin{align*}
    \mfc(\Exp_y(tv)) - \mfcopt = \frac{t^2}{2}\inner{v}{\hess \mfc(y)[v]} + o(t^2) \geq \frac{\plconstant}{2} \dist(\Exp_y(tv), \optimalset)^2 = \frac{\plconstant}{2}t^2\|v\|^2 + o(t^2),
  \end{align*}
  where the inequality comes from \qg{} and the following equality comes from
  Lemma~\ref{lemma:dist-equiv}.
  Take $t \to 0$ to get the first inequality.
  The other inequality follows by \causchwarz{}.
\end{proof}

We deduce from this that, under \qg{}, the gradient norm is locally bounded from below and
from above by the distance to $\optimalset$, up to some constant factors.
This notably secures \eb{}.
\begin{proposition}[$\text{\qg{}} \Rightarrow \text{\eb{}}$]\label{prop:grad-dist-bounds}
  Suppose $\mfc$ is $\smooth{2}$ and satisfies
  $\plconstant$-\eqref{eq:quadratic-growth} around $\optpoint \in \optimalset$.
  For all $\lamflat < \plconstant$ and $\lamsharp > \lammax(\hess
  \mfc(\optpoint))$ there exists a neighborhood $\mathcal{U}$ of $\optpoint$
  such that for all $x \in \mathcal{U}$ we have
  \begin{align*}
    \lamflat \dist(x, \optimalset) \leq \|\grad \mfc(x)\| \leq \lamsharp \dist(x, \optimalset).
  \end{align*}
\end{proposition}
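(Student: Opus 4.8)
The plan is to derive both inequalities from the second-order information established in Proposition~\ref{prop:c2-and-qg-hessian}, combined with a Taylor expansion of $\grad\mfc$ around the projection point. The key local object is, for $x$ near $\optpoint$, a projection $y \in \proj_\optimalset(x)$ and the vector $v = \Log_y(x)$, which by Lemma~\ref{lemma:log-well-defined} is well defined, lies in $\normal_y\optimalset$, and satisfies $\|v\| = \dist(x,\optimalset)$. Since $\mfc$ is $\smooth{2}$ and $\grad\mfc(y) = 0$ (as $y \in \optimalset$), a first-order Taylor expansion of the gradient along the geodesic from $y$ to $x$ gives
\begin{align*}
  \grad\mfc(x) = \ptransport{v}{}\hess\mfc(y)[v] + o(\|v\|)
\end{align*}
as $\|v\| \to 0$. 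Because parallel transport is an isometry, $\|\grad\mfc(x)\| = \|\hess\mfc(y)[v]\| + o(\|v\|)$.

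For the \emph{lower bound}, I would invoke the second conclusion of Proposition~\ref{prop:c2-and-qg-hessian}: for $y$ close enough to $\optpoint$ and $v \in \normal_y\optimalset$ we have $\|\hess\mfc(y)[v]\| \geq \plconstant\|v\|$. Hence $\|\grad\mfc(x)\| \geq \plconstant\|v\| + o(\|v\|) = \plconstant\dist(x,\optimalset) + o(\dist(x,\optimalset))$. Given $\lamflat < \plconstant$, shrinking the neighborhood absorbs the $o(\cdot)$ term and yields $\lamflat\dist(x,\optimalset) \leq \|\grad\mfc(x)\|$. For the \emph{upper bound}, I would instead bound $\|\hess\mfc(y)[v]\| \leq \|\hess\mfc(y)\|\,\|v\|$, where $\|\hess\mfc(y)\|$ is the operator norm (the largest eigenvalue in absolute value). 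By continuity of $x \mapsto \hess\mfc(x)$ and of eigenvalues, $\|\hess\mfc(y)\| \to \|\hess\mfc(\optpoint)\| = \lammax(\hess\mfc(\optpoint))$ (the Hessian at $\optpoint$ is positive semidefinite, so its operator norm is $\lammax$) as $y \to \optpoint$; since $y \in \proj_\optimalset(x)$ forces $y \to \optpoint$ as $x \to \optpoint$ via~\eqref{eq:proj-dist-bound}, for any $\lamsharp > \lammax(\hess\mfc(\optpoint))$ we can shrink $\mathcal{U}$ so that $\|\hess\mfc(y)\| \leq \lamsharp$ uniformly. Then $\|\grad\mfc(x)\| \leq \lamsharp\|v\| + o(\|v\|)$, and again shrinking the neighborhood (now splitting the gap $\lamsharp - \lammax$) absorbs the remainder to give $\|\grad\mfc(x)\| \leq \lamsharp\dist(x,\optimalset)$.

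The main subtlety is making the $o(\cdot)$ terms uniform over the choice of $y \in \proj_\optimalset(x)$ as $x$ ranges over a neighborhood, rather than along a single fixed sequence; this is handled exactly as in the proofs of Propositions~\ref{prop:mb-implies-qg} and~\ref{prop:eb-implies-pl}, by using the compactness/continuity bound~\eqref{eq:proj-dist-bound} that forces $\dist(x,\optimalset) \to 0$ and $y \to \optpoint$ together, together with the fact that $\hess\mfc$ is continuous (hence the Taylor remainder is controlled uniformly on a compact neighborhood). I expect this bookkeeping, and nothing deeper, to be the only real work; the inequalities themselves fall out of Proposition~\ref{prop:c2-and-qg-hessian} and a one-line gradient expansion.
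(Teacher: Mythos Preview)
Your proposal is correct and follows essentially the same approach as the paper: project to $y\in\proj_\optimalset(x)$, Taylor-expand $\grad\mfc$ around $y$ along $v=\Log_y(x)$, invoke Proposition~\ref{prop:c2-and-qg-hessian} for the lower bound and continuity of $\hess\mfc$ for the upper bound, then absorb the $o(\|v\|)$ remainder by shrinking the neighborhood. The paper makes the remainder explicit as an integral $r(x)=\int_0^1(\ptransport{\tau v}{-1}\hess\mfc(\gamma(\tau))\ptransport{\tau v}{}-\hess\mfc(y))[v]\,\deriv\tau$ to justify uniformity, which is exactly the bookkeeping you anticipated.
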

\begin{proof}
  Let $\mathcal{U}$ be a neighborhood of $\optpoint$ as in
  Lemma~\ref{lemma:log-well-defined}.
  Shrink $\mathcal{U}$ so that for all $x \in \mathcal{U}$ and $y \in
  \proj_\optimalset(x)$ the inequalities of
  Proposition~\ref{prop:c2-and-qg-hessian} hold.
  Now let $x \in \mathcal{U}$ and $y \in \proj_\optimalset(x)$.
  Define $v = \Log_y(x)$ and $\gamma(t) = \Exp_y(tv)$, so that $y = \gamma(0)$ and $x = \gamma(1)$.
  Then a Taylor expansion of $\grad\mfc$ around $y$ yields
  \begin{align*}
    \ptransport{v}{-1} \grad \mfc(x) = \hess \mfc(y)[v] + r(x) && \text{where} && r(x) = \int_0^1 \Big( \ptransport{\tau v}{-1} \circ \hess \mfc(\gamma(\tau)) \circ \ptransport{\tau v}{} - \hess \mfc(y) \Big)[v] \deriv \tau.
  \end{align*}
  The Hessian is continuous so $r(x) = o(\|v\|)$ as $x \to \optpoint$.
  Moreover, Proposition~\ref{prop:c2-and-qg-hessian} provides that $\|\hess
  \mfc(y)[v]\| \geq \plconstant \|v\|$ so using the triangle inequality and the
  reverse triangle inequality we get
  \begin{align*}
    \|v\|\big(\plconstant - o(1)\big) \leq \|\grad \mfc(x)\| \leq \|v\|\big(\lambda_{\max}(\hess \mfc(y)) + o(1)\big)
  \end{align*}
  as $x \to \optpoint$.
  We get the result if we choose $x$ close enough to $\optpoint$.
\end{proof}

The first inequality in the previous proposition is exactly \eb{}.
In Proposition~\ref{prop:eb-implies-pl} we showed that \eb{} implies \pl{} when $\mfc$ is $\smooth{2}$.
Thus, it also holds that \qg{} implies \pl{}.

\begin{remark}\label{remark:qg-implies-eb-c1}
  When $\mfc$ is only $\smooth{1}$ it is not true that \qg{} implies \eb{}
  or \pl{}.
  To see this, consider the function $\mfc(x) = 2x^2 + x^2\sin(1/\sqrt{|x|})$
  (see Figure~\ref{fig:plots}).
  It is $\smooth{1}$ and satisfies \qg{} around the minimum
  $\optpoint = 0$ because $\mfc(x) \geq x^2$.
  However, there are other local minima arbitrarily close to $\bar x$.
  Those are critical points with function value strictly larger than $\mfc(\optpoint)$, which disqualifies \eb{} and \pl{} around $\optpoint$.
\end{remark}

\begin{remark}\label{remark:pl-implies-eb-c1}
  Combining Propositions~\ref{prop:pl-implies-qg} and~\ref{prop:grad-dist-bounds} ($\plconstant$-\pl{} $\Rightarrow$ $\plconstant$-\qg{} and $\plconstant$-\qg{} $\Rightarrow$ $\plconstant'$-\eb{}), one finds that $\plconstant$-\pl{} implies $\plconstant'$-\eb{} for $\smooth{2}$ functions.
  In fact, \citet{karimi2016linear} show that $\plconstant$-\pl{} implies $\plconstant$-\eb{} for $\smooth{1}$ functions, globally so if \pl{} holds globally.
  Indeed, for $x$ sufficiently close to $\optpoint$, we have
  \begin{align*}
    \frac{\plconstant}{2}\dist(x, \optimalset)^2 \leq \mfc(x) - \mfcopt \leq \frac{1}{2\plconstant}\|\grad \mfc(x)\|^2,
  \end{align*}
  where the second inequality comes from \pl{}, and the first inequality is
  \qg{} (implied by \pl{}).
\end{remark}

Combining all previous implications, we obtain that \mb{} implies \pl{}.
This is also straightforward from Taylor expansion arguments.
\begin{corollary}[$\text{\mb{}} \Rightarrow \text{\pl{}}$]\label{cor:mb-implies-pl}
  Suppose that $\mfc$ is $\smooth{2}$ and
  satisfies $\plconstant$-\eqref{eq:morse-bott} at $\optpoint \in \optimalset$.
  Let $0 < \plconstant' < \plconstant$.
  Then $\mfc$ satisfies $\plconstant'$-\eqref{eq:local-pl} around $\optpoint$.
\end{corollary}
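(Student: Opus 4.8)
The plan is to derive this corollary purely by composition of three implications already established in this section, namely $\mb{} \Rightarrow \qg{}$ (Proposition~\ref{prop:mb-implies-qg}), $\qg{} \Rightarrow \eb{}$ (Proposition~\ref{prop:grad-dist-bounds}), and $\eb{} \Rightarrow \pl{}$ (Proposition~\ref{prop:eb-implies-pl}); the only thing requiring a little care is to track the arbitrarily small loss in the constant incurred at each of these steps.

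Concretely, given $0 < \plconstant' < \plconstant$, I would first fix intermediate constants $\plconstant' < \plconstant_3 < \plconstant_2 < \plconstant_1 < \plconstant$. Then I would apply the three propositions in turn. Proposition~\ref{prop:mb-implies-qg}, invoked with target constant $\plconstant_1 < \plconstant$, gives that $\mfc$ satisfies $\plconstant_1$-\qg{} on some neighborhood of $\optpoint$. Proposition~\ref{prop:grad-dist-bounds}, applied to this $\plconstant_1$-\qg{} with $\lamflat = \plconstant_2 < \plconstant_1$ (and any valid choice of $\lamsharp$), produces on a possibly smaller neighborhood the lower bound $\plconstant_2 \dist(x, \optimalset) \leq \|\grad \mfc(x)\|$, which is exactly $\plconstant_2$-\eb{}. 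Proposition~\ref{prop:eb-implies-pl}, applied to this $\plconstant_2$-\eb{} with target $\plconstant_3 < \plconstant_2$, then yields $\plconstant_3$-\pl{} on a neighborhood of $\optpoint$; intersecting the three neighborhoods obtained along the way gives the final one.

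Finally, I would upgrade $\plconstant_3$-\pl{} to $\plconstant'$-\pl{}: since $\plconstant' < \plconstant_3$ we have $\frac{1}{2\plconstant_3} \leq \frac{1}{2\plconstant'}$, so $\mfc(x) - \mfcopt \leq \frac{1}{2\plconstant_3}\|\grad \mfc(x)\|^2 \leq \frac{1}{2\plconstant'}\|\grad \mfc(x)\|^2$ on the same neighborhood. I do not anticipate a genuine obstacle: the subtle point is merely that none of the three propositions preserves its constant exactly --- each only delivers a constant strictly below (though arbitrarily close to) its hypothesis constant --- so one must run the argument through the strict chain $\plconstant' < \plconstant_3 < \plconstant_2 < \plconstant_1 < \plconstant$ rather than with $\plconstant$ throughout, and remember to intersect the neighborhoods. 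This corollary is precisely the edge that closes the cycle in Figure~\ref{fig:implication-graph}, establishing the equivalence of \mb{}, \pl{}, \eb{} and \qg{} for $\smooth{2}$ cost functions.
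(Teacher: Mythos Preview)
Your proposal is correct and matches the paper's approach exactly: the paper derives this corollary simply by ``combining all previous implications,'' namely Proposition~\ref{prop:mb-implies-qg} ($\mb{} \Rightarrow \qg{}$), Proposition~\ref{prop:grad-dist-bounds} ($\qg{} \Rightarrow \eb{}$), and Proposition~\ref{prop:eb-implies-pl} ($\eb{} \Rightarrow \pl{}$), with the same care about the constant degrading arbitrarily little at each step. One small inaccuracy in your closing remark: this corollary is not itself an edge in Figure~\ref{fig:implication-graph} (the cycle there is already closed via $\pl{} \Rightarrow \mb{}$ and the other arrows), but this does not affect the proof.
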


As a consequence, if $\mfc$ is $\plconstant$-\mb{} on any subset of $\optimalset$ then for all
$\plconstant' < \plconstant$ there exists a neighborhood of that subset where
$\mfc$ is $\plconstant'$-\pl{}.
When $\mfc$ is $\smooth{3}$ the size of the neighborhood where \pl{} holds can
be controlled (to some extent) with the third derivative.
A version of this corollary appears in~\cite[Ex.~2.9]{wojtowytsch2023stochastic}
with a different trade-off between control of the neighborhood and the constant
$\plconstant'$.
\citet[Thm.~6]{feehan2020morse} shows a similar result in Banach spaces assuming
that the function is $\smooth{3}$.

\subsection{\pl{} implies a smooth set of minima and \mb{}}

The \mb{} property is explicitly strong because it presupposes a smooth set of
minima, and it clearly implies \pl{}, \eb{}, and \qg{}.
It raises a natural question: do the latter also enforce some structure
on the set of minima?
In this section we show that the answer is yes for $\smooth{2}$ functions: if
\pl{} (or \eb{} or \qg{}) holds around $\optpoint \in \optimalset$ then
$\optimalset$ must be a submanifold around $\optpoint$.

To get a sense of why $\optimalset$ cannot have singularities, suppose that $\manifold = \reals^2$ and that
around $\optpoint$ the set of minima is the union of two orthogonal
lines (a cross) that intersect at $\optpoint$.
Then it must be that $\hess \mfc(\optpoint) = \zeros$ because the gradient is
zero along both lines.
However, if we assume \pl{} then the spectrum of $\hess \mfc(x)$ must contain
at least one positive eigenvalue bigger than $\plconstant$ for all points $x \in
\optimalset \setminus \{\optpoint\}$  close to $\optpoint$, owing to the \qg{}
property.
We obtain a contradiction because the eigenvalues of $\hess \mfc$ are
continuous.

To generalize this intuition, we first show that \pl{} induces
a lower-bound on the positive eigenvalues of $\hess \mfc$.

\begin{proposition}\label{prop:eigenvalue-bigger-pl}
  Suppose $\mfc$ is $\smooth{2}$ and $\plconstant$-\eqref{eq:local-pl} around
  $\optpoint \in \optimalset$.
  If $\lambda$ is a \emph{non-zero} eigenvalue of $\hess \mfc(\optpoint)$ then
  $\lambda \geq \plconstant$.
\end{proposition}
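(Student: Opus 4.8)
The plan is to probe the Hessian along a single eigendirection and pit the quadratic behavior of $\mfc$ against the \pl{} inequality. Let $\lambda$ be a non-zero eigenvalue of $\hess \mfc(\optpoint)$ with associated unit eigenvector $v \in \tangent_{\optpoint}\manifold$. Since $\optpoint$ is a local minimum, $\hess \mfc(\optpoint) \succeq 0$, hence $\lambda > 0$; the goal is to upgrade this to $\lambda \geq \plconstant$.

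First I would introduce the geodesic $x(t) = \Exp_{\optpoint}(tv)$ for small $t > 0$. Since $x(t) \to \optpoint$ as $t \to 0$, eventually $x(t)$ lies in the neighborhood where $\plconstant$-\pl{} holds. A second-order Taylor expansion of $\mfc$ at $\optpoint$, using $\grad \mfc(\optpoint) = 0$ and $\mfc(\optpoint) = \mfcopt$, gives $\mfc(x(t)) - \mfcopt = \tfrac{\lambda}{2} t^2 + o(t^2)$. A first-order Taylor expansion of $\grad \mfc$ along the same geodesic, together with the facts that parallel transport is an isometry and $\|v\| = 1$, gives $\|\grad \mfc(x(t))\| = \lambda t + o(t)$, hence $\|\grad \mfc(x(t))\|^2 = \lambda^2 t^2 + o(t^2)$.

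Then I would substitute both expansions into the \pl{} inequality $\mfc(x(t)) - \mfcopt \leq \frac{1}{2\plconstant}\|\grad \mfc(x(t))\|^2$, obtaining $\tfrac{\lambda}{2} t^2 + o(t^2) \leq \tfrac{\lambda^2}{2\plconstant} t^2 + o(t^2)$. Dividing by $t^2/2$ and letting $t \to 0^+$ yields $\lambda \leq \lambda^2/\plconstant$; since $\lambda > 0$, this rearranges to $\plconstant \leq \lambda$, as claimed.

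There is essentially no hard step here. The only points requiring care are that $\optpoint$ is a local minimum (so that $\grad \mfc(\optpoint) = 0$ and the eigenvalue, being nonnegative, is in fact positive) and that the remainders in the two Taylor expansions are genuinely $o(t^2)$ and $o(t)$ respectively, which is exactly what $\smooth{2}$ regularity buys; the Riemannian bookkeeping (the exponential map, parallel transport being an isometry) is standard and contributes only to the lower-order terms.
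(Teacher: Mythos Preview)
Your proposal is correct and matches the paper's own proof essentially line for line: both probe along a unit eigenvector via $\Exp_{\optpoint}(tv)$, Taylor-expand $\mfc$ and $\grad\mfc$ to get $\tfrac{\lambda}{2}t^2 + o(t^2)$ and $\lambda^2 t^2 + o(t^2)$ respectively, plug into the \pl{} inequality, and let $t \to 0$. Your added remark that $\lambda > 0$ because $\optpoint$ is a local minimum (so $\hess\mfc(\optpoint)\succeq 0$) makes explicit a point the paper leaves implicit.
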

\begin{proof}
  Let $\lambda > 0$ be an eigenvalue of $\hess \mfc(\optpoint)$ with associated
  unit eigenvector $v$.
  Then
  \begin{align*}
    \mfc(\Exp_{\optpoint}(tv)) - \mfcopt = \frac{\lambda}{2}t^2 + o(t^2) && \text{and} && \grad \mfc(\Exp_{\optpoint}(tv)) = \lambda t \ptransport{tv}{} v + o(t).
  \end{align*}
  The \pl{} condition implies $\frac{\lambda}{2}t^2 + o(t^2) \leq
  \frac{1}{2\plconstant}{\lambda^2 t^2 + o(t^2)}$, which gives the result as $t
  \to 0$.
\end{proof}
The latter argument is inconclusive when $\lambda = 0$.
Still, we do get control of the Hessian's rank.
\begin{corollary} \label{cor:hessconstantrank}
    Suppose $\mfc$ is $\smooth{2}$ and $\plconstant$-\eqref{eq:local-pl} around
    $\optpoint \in \optimalset$.
    Then $\rank(\hess\mfc(x)) = \rank(\hess\mfc(\optpoint))$ for all $x \in
    \optimalset$ close enough to $\optpoint$.
\end{corollary}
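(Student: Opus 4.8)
The plan is to combine Proposition~\ref{prop:eigenvalue-bigger-pl} with a continuity argument for the eigenvalues of the Hessian along $\optimalset$. Fix $d = \rank(\hess\mfc(\optpoint))$, so $\lambda_d(\hess\mfc(\optpoint)) > 0$ and $\lambda_{d+1}(\hess\mfc(\optpoint)) = 0$. By Proposition~\ref{prop:eigenvalue-bigger-pl}, $\lambda_d(\hess\mfc(\optpoint)) \geq \plconstant$. Since $\mfc$ is $\smooth{2}$, the map $x \mapsto \hess\mfc(x)$ is continuous, and eigenvalues depend continuously on the operator, so there is a neighborhood $\mathcal{U}$ of $\optpoint$ on which $\lambda_d(\hess\mfc(x)) > \plconstant/2$ and $\lambda_{d+1}(\hess\mfc(x)) < \plconstant/2$ for all $x \in \mathcal{U}$.

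Now restrict attention to $x \in \optimalset \cap \mathcal{U}$. Since \pl{} holds around $\optpoint$, by shrinking $\mathcal{U}$ we may assume \pl{} (with constant $\plconstant$) holds around every such $x$ as well, simply because the hypothesis ``$\plconstant$-\pl{} around $\optpoint$'' means the inequality holds on a whole neighborhood of $\optpoint$, which therefore contains a neighborhood of each nearby point of $\optimalset$. Moreover every such $x$ is itself a local minimum of $\mfc$ with $\mfc(x) = \mfcopt$, hence $x \in \optimalset$ in the sense of~\eqref{eq:def-s}, so Proposition~\ref{prop:eigenvalue-bigger-pl} applies verbatim at $x$: every non-zero eigenvalue of $\hess\mfc(x)$ is at least $\plconstant$. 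Combined with $\lambda_{d+1}(\hess\mfc(x)) < \plconstant/2 < \plconstant$, this forces $\lambda_{d+1}(\hess\mfc(x)) = 0$, and combined with $\lambda_d(\hess\mfc(x)) > \plconstant/2 > 0$ it gives $\lambda_d(\hess\mfc(x)) \neq 0$. Hence exactly $d$ eigenvalues of $\hess\mfc(x)$ are non-zero, i.e. $\rank(\hess\mfc(x)) = d = \rank(\hess\mfc(\optpoint))$.

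The only mild subtlety — and the step I would be most careful about — is the bookkeeping in the previous paragraph: ensuring that the neighborhood on which \pl{} is available is the same one on which the eigenvalue gap holds, and checking that nearby points of $\optimalset$ genuinely inherit \pl{} "around" them with the \emph{same} constant $\plconstant$ (this is where one uses that \pl{} as defined is an open condition and that all points in question have the common value $\mfcopt$). Everything else is just continuity of eigenvalues plus a direct invocation of Proposition~\ref{prop:eigenvalue-bigger-pl}.
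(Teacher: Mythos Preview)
Your proposal is correct and follows essentially the same approach as the paper: continuity of the eigenvalues of $\hess\mfc$ gives $\lambda_{d+1}(\hess\mfc(x)) < \plconstant$ and $\lambda_d(\hess\mfc(x)) > 0$ near $\optpoint$, and then Proposition~\ref{prop:eigenvalue-bigger-pl} (applied at nearby $x \in \optimalset$, which inherit \pl{} because the \pl{} neighborhood is open) forces $\lambda_{d+1}(\hess\mfc(x)) = 0$. The paper phrases the lower bound on the rank via lower semi-continuity rather than the explicit inequality $\lambda_d > \plconstant/2$, and is terser about the neighborhood bookkeeping you flag, but the substance is identical.
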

\begin{proof}
    Since $\mfc$ is $\smooth{2}$ the eigenvalues of $\hess \mfc$ are continuous and
    the map $x \mapsto \rank(\hess \mfc(x))$ is lower semi-continuous, that is, if $x \in
    \manifold$ is close enough to $\optpoint$ then $\rank \hess \mfc(x) \geq
    \rankop$, where $\rankop = \rank \hess \mfc(\optpoint)$.
    Furthermore, if $y \in \optimalset$ is sufficiently close to $\optpoint$ then
    $\lambda_{\rankop + 1}(\hess \mfc(y)) < \plconstant$ by continuity of
    eigenvalues, and Proposition~\ref{prop:eigenvalue-bigger-pl} then implies
    $\lambda_{\rankop + 1}(\hess \mfc(y)) = 0$.
\end{proof}
This allows us to show that $\grad \mfc$ aligns locally in a special way
with the eigenspaces of $\hess \mfc$.
This alignment will be particularly valuable to analyze second-order algorithms
in Section~\ref{sec:super-linear-conv}.
\begin{lemma}\label{lemma:grad-image-hess}
  Suppose $\mfc$ is $\smooth{2}$ and satisfies~\eqref{eq:local-pl} around
  $\optpoint \in \optimalset$.
  Let $\rankop = \rank(\hess \mfc(\optpoint))$.
  Then the orthogonal projector $P(x)$ onto the top $\rankop$ eigenspace of
  $\hess \mfc(x)$ is well defined when $x$ is sufficiently close to $\optpoint$,
  and (with $I$ denoting identity)
  \begin{align*}
    \|(I - P(x)) \grad \mfc(x)\| = o(\dist(x, \optimalset)) =  o(\|\grad \mfc(x)\|)
  \end{align*}
  as $x \to \optpoint$.
  Additionally, if $\hess \mfc$ is locally Lipschitz continuous around
  $\optpoint$ then $\|(I - P(x)) \grad \mfc(x)\| = O(\dist(x, \optimalset)^2)
  = O(\|\grad \mfc(x)\|^2)$ as $x \to \optpoint$.
\end{lemma}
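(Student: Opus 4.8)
The plan is to work near $\optpoint$, using that $\plconstant$-\pl{} holds, that \eb{} and \qg{} hold (by Prop.~\ref{prop:grad-dist-bounds} and Prop.~\ref{prop:eb-implies-pl}, via the cycle), and above all that Cor.~\ref{cor:hessconstantrank} pins the rank of $\hess\mfc$ at $\optimalset$-points equal to $\rankop = \rank\hess\mfc(\optpoint)$. First I would establish that $P(x)$ is well defined: by continuity of eigenvalues and Prop.~\ref{prop:eigenvalue-bigger-pl}, for $x$ close to $\optpoint$ there is a spectral gap, namely $\lambda_{\rankop}(\hess\mfc(x)) \geq \plconstant - o(1) > \lambda_{\rankop+1}(\hess\mfc(x))$ whenever $x$ is near $\optpoint$ (the $(\rankop+1)$th eigenvalue is $0$ at $\optpoint$, hence $o(1)$ nearby by continuity). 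So $P(x)$, the spectral projector onto the top-$\rankop$ eigenspace, is well defined and continuous near $\optpoint$, with $P(\optpoint)$ the projector onto $\ima\hess\mfc(\optpoint)$.

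The core estimate uses a Taylor expansion of $\grad\mfc$ around a projection $y \in \proj_\optimalset(x)$. Writing $v = \Log_y(x) \in \normal_y\optimalset$ and transporting back to $\tangent_y\manifold$, we have $\ptransport{v}{-1}\grad\mfc(x) = \hess\mfc(y)[v] + o(\|v\|)$, so $\grad\mfc(x) = \ptransport{v}{}\hess\mfc(y)[v] + o(\|v\|)$ with $\|v\| = \dist(x,\optimalset)$. Now the key point: $\hess\mfc(y)[v]$ lies in $\ima\hess\mfc(y)$, which is exactly the top-$\rankop$ eigenspace of $\hess\mfc(y)$ \emph{because} $\hess\mfc(y)$ has rank $\rankop$ and its nonzero eigenvalues are all $\geq \plconstant$ (Cor.~\ref{cor:hessconstantrank} plus Prop.~\ref{prop:eigenvalue-bigger-pl} applied at $y$, which is also a local minimum of the same value). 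Hence $P(y)\hess\mfc(y)[v] = \hess\mfc(y)[v]$, i.e. $(I - P(y))\hess\mfc(y)[v] = 0$. Then I would write
\begin{align*}
  (I - P(x))\grad\mfc(x) &= (I - P(x))\ptransport{v}{}\hess\mfc(y)[v] + o(\|v\|) \\
  &= (I - P(x))\ptransport{v}{}(I - P(y))\hess\mfc(y)[v] + \big[(I-P(x))\ptransport{v}{}P(y) - \text{(correction)}\big]\hess\mfc(y)[v] + o(\|v\|),
\end{align*}
and the cleanest way is: $(I-P(x))\ptransport{v}{}\hess\mfc(y)[v] = (I-P(x))\ptransport{v}{}P(y)\hess\mfc(y)[v]$, and $(I - P(x))\ptransport{v}{}P(y)$ is a product that would vanish if $P(x)\ptransport{v}{} = \ptransport{v}{}P(y)$; by continuity of $P$ and of parallel transport, $P(x)\ptransport{v}{} = \ptransport{v}{}P(y) + o(1)$ as $x\to\optpoint$ (both sides tend to $P(\optpoint)$, resp.\ its transport, which agree in the limit). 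Since $\|\hess\mfc(y)[v]\| = O(\|v\|)$, this yields $\|(I-P(x))\grad\mfc(x)\| = o(1)\cdot O(\|v\|) + o(\|v\|) = o(\dist(x,\optimalset))$. Finally $o(\dist(x,\optimalset)) = o(\|\grad\mfc(x)\|)$ because \eb{} gives $\dist(x,\optimalset) \leq \plconstant^{-1}\|\grad\mfc(x)\| + o(\dots)$, or more directly Prop.~\ref{prop:grad-dist-bounds} gives $\|\grad\mfc(x)\|$ comparable to $\dist(x,\optimalset)$.

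For the Lipschitz refinement, when $\hess\mfc$ is locally Lipschitz the Taylor remainder in $\grad\mfc(x) = \ptransport{v}{}\hess\mfc(y)[v] + r(x)$ improves to $\|r(x)\| = O(\|v\|^2)$, and the spectral projector $P$ inherits Lipschitz continuity from $\hess\mfc$ (standard perturbation theory, using the uniform spectral gap), so $P(x)\ptransport{v}{} - \ptransport{v}{}P(y) = O(\|v\|)$; multiplying by $\|\hess\mfc(y)[v]\| = O(\|v\|)$ again gives $O(\|v\|^2)$, and combining with the $O(\|v\|^2)$ remainder yields $\|(I-P(x))\grad\mfc(x)\| = O(\dist(x,\optimalset)^2) = O(\|\grad\mfc(x)\|^2)$, the last equality again by comparability of $\dist$ and $\|\grad\mfc\|$. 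The main obstacle I anticipate is making the ``$P(x)\ptransport{v}{} = \ptransport{v}{}P(y) + o(1)$'' step airtight: one must check that $x\mapsto P(x)$ (defined via a contour integral of the resolvent of $\hess\mfc(x)$) is genuinely continuous — this needs the uniform spectral gap near $\optpoint$, which is exactly what Prop.~\ref{prop:eigenvalue-bigger-pl} and continuity of eigenvalues deliver — and that the comparison with parallel transport is legitimate despite $P(x)$ and $P(y)$ acting on different tangent spaces, handled by transporting everything to a single space and invoking joint continuity of $(x,y)\mapsto$ transport.
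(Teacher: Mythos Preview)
Your proposal is correct and follows essentially the same route as the paper: Taylor-expand $\grad\mfc$ around $y\in\proj_\optimalset(x)$, use Cor.~\ref{cor:hessconstantrank} to get $\hess\mfc(y)=P(y)\hess\mfc(y)$, then argue $(I-P(x))\ptransport{v}{}P(y)\to 0$ by continuity of $P$ and transport, and finally invoke Prop.~\ref{prop:grad-dist-bounds} for the $\dist\leftrightarrow\|\grad\mfc\|$ comparability; the Lipschitz refinement is handled identically via Lipschitz continuity of $P$ and the $O(\|v\|^2)$ Taylor remainder. The paper's version of the Lipschitz step is slightly slicker---it writes $(I-P(x))\ptransport{v}{}P(y)=(I-P(x))\ptransport{v}{}(P(y)-\ptransport{v}{-1}P(x))$ so that only the Lipschitz bound on $P$ is needed, without separately controlling the transport---but your formulation reaches the same conclusion.
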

\begin{proof}
  Given a point $x \in \manifold$, let $P(x) \colon \tangent_x\manifold \to
  \tangent_x\manifold$ denote the orthogonal projector onto the top $\rankop$
  eigenspace of $\hess \mfc(x)$.
  This is well defined provided $\lambda_{\rankop}(\hess\mfc(x)) > \lambda_{\rankop+1}(\hess\mfc(x))$.
  Let $\mathcal{U}$ be a neighborhood of $\optpoint$ as in
  Lemma~\ref{lemma:log-well-defined}.
  By continuity of the eigenvalues of $\hess \mfc$, we can shrink $\mathcal{U}$
  so that for all $x \in \mathcal{U}$ and $y \in \proj_\optimalset(x)$ the
  projectors $P(x)$ and $P(y)$ are well defined.
  Given $x \in \mathcal{U}$, we let $y \in \proj_\optimalset(x)$ and $v =
  \Log_{y}(x)$.
  Now define $\gamma(t) = \Exp_y(tv)$.
  A Taylor expansion of the gradient around $y$ gives that
  \begin{align*}
    \grad \mfc(x) = \ptransport{v}{}\big(\hess \mfc(y)[v] + r(x)\big) && \text{where} && r(x) = \int_0^1 \big( \ptransport{\tau v}{-1} \circ \hess \mfc(\gamma(\tau)) \circ \ptransport{\tau v}{} - \hess \mfc(y) \big)[v] \deriv \tau.
  \end{align*}
  By Corollary~\ref{cor:hessconstantrank}, the rank of $\hess \mfc$ is locally
  constant on $\optimalset$ (equal to $\rankop$) so $\hess\mfc(y) = P(y) \hess
  \mfc(y)$ whenever $x$ is sufficiently close to $\optpoint$ (using the bound
  $\dist(y, \optpoint) \leq 2 \dist(x, \optpoint)$
  from~\eqref{eq:proj-dist-bound}).
  It follows that
  \begin{align*}
    (I - P(x)) \grad \mfc(x) &= (I - P(x)) \ptransport{v}{} \big(P(y) \hess \mfc(y)[v] + r(x)\big).
  \end{align*}
  Notice that $\ptransport{v}{} \to I$ as $x \to \optpoint$ and $P(y) \to
  P(\optpoint)$ as $x \to \optpoint$ so $(I - P(x))\ptransport{v}{}P(y) \to
  \zeros$ as $x \to \optpoint$.
  It follows that $(I - P(x)) \grad \mfc(x) = o(\|v\|)$ as $x \to \optpoint$.
  The claim follows by noting that $\|v\| = \dist(x, \optimalset)$ and the fact
  that $\dist(x, \optimalset)$ is commensurate $\|\grad \mfc(x)\|$ (as shown in
  Proposition~\ref{prop:grad-dist-bounds}).

  Suppose now that $\hess \mfc$ is locally Lipschitz continuous around
  $\optpoint$.
  Then $P$ is also locally Lipschitz continuous around
  $\optpoint$~\citep[Thm.~1]{vannieuwenhoven2024condition} and $(I - P(x))
  \ptransport{v}{} P(y) = (I - P(x)) \ptransport{v}{} (P(y) - \ptransport{v}{-1}
  P(x)) = O(\|v\|)$.
  Moreover, we have $r(x) = O(\|v\|^2)$ so it follows that $(I - P(x)) \grad
  \mfc(x) = O(\|v\|^2)$ as $x \to \optpoint$.
  We conclude again with Proposition~\ref{prop:grad-dist-bounds}.
\end{proof}

The lemma below exhibits a submanifold $\chillmanifold$ that contains
$\optpoint$.
It need \emph{not} coincide with the set of minima $\optimalset$.
However, they \emph{do} coincide if we assume that \pl{} holds.
This is the main argument to show that $\optimalset$ is locally a submanifold
whenever $\mfc$ is \pl{} and sufficiently regular.

\begin{lemma}\label{lemma:chill-manifold}
  Suppose $\mfc$ is $\smooth{p}$ with $p \geq 2$.
  Let $\optpoint \in \optimalset$ and define $\mathcal{U} = \{x \in \manifold :
  \dist(x, \optpoint) < \inj(\optpoint)\}$.
  Let $P(\optpoint)$ denote the orthogonal projector onto the image of $\hess
  \mfc(\optpoint)$.
  Then the set
  \begin{align*}
    \chillmanifold = \{x \in \mathcal{U} : P(\optpoint) \ptransport{x}{\optpoint} \grad \mfc(x) = \zeros\}
  \end{align*}
  is a $\smooth{p - 1}$ embedded submanifold locally around $\optpoint$.
  If $\mfc$ is analytic then $\chillmanifold$ is also analytic.
\end{lemma}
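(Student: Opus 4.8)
The plan is to exhibit $\chillmanifold$ as the zero level set of a submersion. Set $\rankop = \rank \hess\mfc(\optpoint)$; by self-adjointness, $E \coloneqq \ima \hess\mfc(\optpoint) = \ker\hess\mfc(\optpoint)^\perp$ is a fixed $\rankop$-dimensional subspace of $\tangent_{\optpoint}\manifold$, and $P(\optpoint)$ is the orthogonal projector onto it. Define
\[
  \Phi \colon \mathcal{U} \to E, \qquad \Phi(x) = P(\optpoint)\+\ptransport{x}{\optpoint}\grad\mfc(x),
\]
so that $\chillmanifold = \Phi^{-1}(\zeros)$ by definition. Since $\grad\mfc$ is $\smooth{p-1}$, since $P(\optpoint)$ is a fixed linear map, and since $x \mapsto \ptransport{x}{\optpoint}$ is smooth on $\mathcal{U}$ (after possibly shrinking $\mathcal{U}$; it is the identity in the Euclidean case, and in general depends smoothly---analytically if the metric is---on the endpoint because $\dist(x,\optpoint) < \inj(\optpoint)$), the map $\Phi$ is $\smooth{p-1}$, and analytic when $\mfc$ is.

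The crux is the differential of $\Phi$ at $\optpoint$. Fix $u \in \tangent_{\optpoint}\manifold$ and a curve $c$ with $c(0)=\optpoint$, $c'(0)=u$. Because $\grad\mfc(\optpoint) = \zeros$ and $\ptransport{\optpoint}{\optpoint} = I$, differentiating $t \mapsto \ptransport{c(t)}{\optpoint}\grad\mfc(c(t))$ at $t=0$ annihilates the term in which the transport factor is differentiated (that term carries a factor $\grad\mfc(\optpoint) = \zeros$), and the product rule for the covariant derivative leaves
\[
  \D\Phi(\optpoint)[u] = P(\optpoint)\,\frac{\D}{\deriv t}\Big|_{t=0}\grad\mfc(c(t)) = P(\optpoint)\hess\mfc(\optpoint)[u] = \hess\mfc(\optpoint)[u],
\]
the last equality holding because $\hess\mfc(\optpoint)[u] \in E$. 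Thus $\D\Phi(\optpoint)$, viewed as a linear map $\tangent_{\optpoint}\manifold \to E$, is exactly $\hess\mfc(\optpoint)$ corestricted to its image, which is surjective of rank $\rankop$.

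Surjectivity of the differential is an open condition, so after shrinking $\mathcal{U}$ we may assume $\D\Phi(x)$ is surjective onto $E$ for all $x \in \mathcal{U}$, i.e.\ $\Phi$ is a submersion. The submersion theorem---in the $\smooth{p-1}$ category, and its analytic counterpart when $\mfc$ is analytic---then yields that $\chillmanifold = \Phi^{-1}(\zeros)$ is an embedded $\smooth{p-1}$ (resp.\ analytic) submanifold of $\mathcal{U}$ of codimension $\rankop$ near $\optpoint$; it contains $\optpoint$ since $\grad\mfc(\optpoint) = \zeros$. I do not anticipate a real obstacle: the only step demanding care is the differential computation at $\optpoint$---verifying that the parallel-transport factor contributes nothing (which is precisely where $\grad\mfc(\optpoint) = \zeros$ enters) and that covariant differentiation of $\grad\mfc$ returns $\hess\mfc(\optpoint)$---after which the conclusion is a routine invocation of the (analytic) submersion theorem.
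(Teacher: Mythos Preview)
Your proof is correct and essentially identical to the paper's: both exhibit $\chillmanifold$ as the zero set of the map $x \mapsto P(\optpoint)\ptransport{x}{\optpoint}\grad\mfc(x)$ and verify its differential at $\optpoint$ is $\hess\mfc(\optpoint)$ (surjective onto $E$). The only cosmetic difference is that the paper fixes an orthonormal eigenbasis $u_1,\dots,u_\rankop$ of $E$ and writes the defining map into $\reals^\rankop$ via $h_i(x)=\inner{u_i}{\ptransport{x}{\optpoint}\grad\mfc(x)}$, whereas you map directly into the subspace $E$; your more explicit justification of why the parallel-transport factor drops out (via $\grad\mfc(\optpoint)=\zeros$) is a welcome clarification the paper leaves implicit.
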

\begin{proof}
  We build a local defining function for $\chillmanifold$.
  Let $\rankop$ be the rank of $\hess \mfc(\optpoint)$ and $u_1, \dots,
  u_\rankop$ be a set of orthonormal eigenvectors of $\hess \mfc(\optpoint)$
  with associated eigenvalues $\lambda_1, \dots, \lambda_\rankop > 0$.
  We define $h \colon \mathcal{U} \to \reals^\rankop$ as $h_i(x) =
  \inner{u_i}{\ptransport{x}{\optpoint} \grad \mfc(x)}$.
  Clearly, $h(x) = \zeros$ if and only if $x \in \chillmanifold$.
  The function $h$ is $\smooth{p - 1}$ if $\mfc$ is $\smooth{p}$, and analytic
  if $\mfc$ is analytic.
  For all $\dot x \in \tangent_{\optpoint}\manifold$ we have
  \begin{align*}
    \D h_i(\optpoint)[\dot x] = \inner{u_i}{\hess \mfc(\optpoint)[\dot x]} = \inner{\hess \mfc(\optpoint)[u_i]}{\dot x} = \lambda_i\inner{u_i}{\dot x}.
  \end{align*}
  It follows that $\D h(\optpoint)$ has full rank.
  Thus, $\chillmanifold$ is a
  submanifold around $\optpoint$ with the stated regularity.
\end{proof}

A result similar to Lemma~\ref{lemma:chill-manifold} is presented
in~\cite[Lem.~1]{chill2006lojasiewicz} for Banach spaces.
We are now ready for one of our main theorems, regarding the regularity of the set of local minimizers $\optimalset$~\eqref{eq:def-s}.
\begin{theorem}\label{th:pl-implies-submanifold}
  Suppose $\mfc$ is $\smooth{p}$ with $p \geq 2$ and
  satisfies~\eqref{eq:local-pl} around $\optpoint \in \optimalset$.
  Then $\optimalset$ is a $\smooth{p - 1}$ submanifold of $\manifold$ locally
  around $\optpoint$.
  If $\mfc$ is analytic then $\optimalset$ is also analytic.
\end{theorem}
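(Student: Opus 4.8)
The plan is to show that $\optimalset$ coincides, on a neighborhood of $\optpoint$, with the submanifold $\chillmanifold$ produced by Lemma~\ref{lemma:chill-manifold}; the asserted regularity (including analyticity) is then inherited directly from that lemma. One inclusion is immediate: if $x \in \optimalset$ is close to $\optpoint$, then $x$ is a local minimum of $\mfc$, so $\grad\mfc(x) = \zeros$, hence $P(\optpoint)\ptransport{x}{\optpoint}\grad\mfc(x) = \zeros$ and $x \in \chillmanifold$. The substance of the proof is the reverse inclusion: every $x \in \chillmanifold$ sufficiently close to $\optpoint$ lies in $\optimalset$.

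For the reverse inclusion I would fix such an $x$ and introduce the ``transported'' operator $\tilde P(x) = \ptransport{\optpoint}{x}\circ P(\optpoint)\circ\ptransport{x}{\optpoint}$, which is an orthogonal projector on $\tangent_x\manifold$ because parallel transport is an isometry. Membership of $x$ in $\chillmanifold$ says precisely that $\tilde P(x)\grad\mfc(x) = \ptransport{\optpoint}{x}\big(P(\optpoint)\ptransport{x}{\optpoint}\grad\mfc(x)\big) = \zeros$, so $\grad\mfc(x) = (I - \tilde P(x))\grad\mfc(x)$. Writing $I - \tilde P(x) = (I - P(x)) + (P(x) - \tilde P(x))$ with $P(x)$ the projector onto the top $\rankop$ eigenspace of $\hess\mfc(x)$ from Lemma~\ref{lemma:grad-image-hess}, the triangle inequality gives $\|\grad\mfc(x)\| \le \|(I - P(x))\grad\mfc(x)\| + \|P(x) - \tilde P(x)\|\,\|\grad\mfc(x)\|$. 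Now Lemma~\ref{lemma:grad-image-hess} bounds the first term by $o(\dist(x,\optimalset))$; Proposition~\ref{prop:grad-dist-bounds} bounds $\|\grad\mfc(x)\|$ by $\lamsharp\dist(x,\optimalset)$; and $\|P(x) - \tilde P(x)\| \to 0$ as $x \to \optpoint$, since $P$ is continuous near $\optpoint$ (the Hessian eigenvalue gap at $\optpoint$ already invoked in Lemma~\ref{lemma:grad-image-hess}) and $\tilde P$ is continuous with $\ptransport{x}{\optpoint} \to I$, the two agreeing at $\optpoint$. Hence $\|\grad\mfc(x)\| = o(\dist(x,\optimalset))$, which, against the error bound $\lamflat\dist(x,\optimalset) \le \|\grad\mfc(x)\|$ of Proposition~\ref{prop:grad-dist-bounds} (with $\lamflat > 0$), forces $\dist(x,\optimalset) = \zeros$ and therefore $\grad\mfc(x) = \zeros$ once $x$ is close enough to $\optpoint$. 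Since \pl{} makes the critical points of $\mfc$ coincide with $\optimalset$ near $\optpoint$, we get $x \in \optimalset$, as wanted. (Alternatively, one may observe that $\optimalset$ is closed near $\optpoint$, as in the proof of Lemma~\ref{lemma:proj-non-empty}, and conclude directly from $\dist(x,\optimalset)=\zeros$.)

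Combining the two inclusions, $\optimalset$ and $\chillmanifold$ agree on a neighborhood of $\optpoint$, so by Lemma~\ref{lemma:chill-manifold} the set $\optimalset$ is a $\smooth{p-1}$ embedded submanifold around $\optpoint$, and analytic when $\mfc$ is analytic. The step I expect to need the most care is the reverse inclusion: the guiding heuristic is that Lemma~\ref{lemma:grad-image-hess} forces the gradient to lie almost entirely in the top eigenspace of the Hessian, while the defining equation of $\chillmanifold$ annihilates exactly that component, so the gradient must vanish — but turning this into a clean contradiction requires the commensurability of $\|\grad\mfc\|$ with $\dist(\cdot,\optimalset)$ (that is, \pl{} $\Rightarrow$ \qg{} $\Rightarrow$ \eb{}) together with continuity of the spectral projectors, and one must shrink the neighborhood so that Lemma~\ref{lemma:chill-manifold}, Lemma~\ref{lemma:grad-image-hess}, and Proposition~\ref{prop:grad-dist-bounds} all apply at once.
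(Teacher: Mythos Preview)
Your proposal is correct and follows essentially the same route as the paper: both arguments identify $\optimalset$ locally with the set $\chillmanifold$ of Lemma~\ref{lemma:chill-manifold}, and both derive the reverse inclusion by combining Lemma~\ref{lemma:grad-image-hess} with the continuity of the spectral projector (your $\tilde P(x)$ is exactly the paper's $\ptransport{\optpoint}{x} P(\optpoint) \ptransport{x}{\optpoint}$). The only cosmetic difference is that you route the final step through $\dist(x,\optimalset)$ and invoke the error bound from Proposition~\ref{prop:grad-dist-bounds} to force $\dist(x,\optimalset)=0$, whereas the paper stays with $\|\grad\mfc(x)\|$ throughout and concludes $\|\grad\mfc(x)\| \leq o(\|\grad\mfc(x)\|)$ directly---but since Lemma~\ref{lemma:grad-image-hess} already equates $o(\dist(x,\optimalset))$ with $o(\|\grad\mfc(x)\|)$ via that same proposition, the two are equivalent and your detour is harmless.
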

\begin{proof}
  We let $\rankop$ denote the rank of $\hess \mfc(\optpoint)$.
  By Corollary~\ref{cor:hessconstantrank}, $\rank \hess \mfc(y) = \rankop$ for
  all $y \in \optimalset$ sufficiently close to $\optpoint$.
  We let $\mathcal{U} \subseteq \ball(\optpoint, \inj(\optpoint))$ be a
  neighborhood of $\optpoint$ such that for all $x \in \mathcal{U}$ the
  orthogonal projector $P(x) \colon \tangent_x\manifold \to
  \tangent_x\manifold$ onto the top $\rankop$ eigenspace of $\hess \mfc(x)$ is
  well defined (it exists because eigenvalues of $\hess \mfc$ are continuous).
  Lemma~\ref{lemma:chill-manifold} ensures that
  \begin{align*}
    \chillmanifold = \{x \in \mathcal{U} : P(\optpoint) \ptransport{x}{\optpoint} \grad \mfc(x) = \zeros\}
  \end{align*}
  is a submanifold around $\optpoint$.
  Clearly, $\optimalset \cap \mathcal{U}
  \subseteq \chillmanifold$ holds.
  We now show the other inclusion to obtain that $\optimalset$ and
  $\chillmanifold$ coincide around $\optpoint$.
  From Lemma~\ref{lemma:grad-image-hess} we have $\|\grad \mfc(x)\| \leq \|P(x)
  \grad \mfc(x)\| + o(\|\grad \mfc(x)\|)$ as $x \to \optpoint$.
  Moreover, the triangle inequality gives $\|P(x) \grad \mfc(x)\| \leq \|(P(x) -
  \ptransport{\optpoint}{x} P(\optpoint) \ptransport{x}{\optpoint}) \grad
  \mfc(x)\| + \|P(\optpoint) \ptransport{x}{\optpoint} \grad \mfc(x)\|$.
  By continuity of $P$ we have $P(x) - \ptransport{\optpoint}{x} P(\optpoint)
  \ptransport{x}{\optpoint} = o(1)$ as $x \to \optpoint$ so it follows that
  \begin{align*}
    \|\grad \mfc(x)\| \leq \|P(\optpoint) \ptransport{x}{\optpoint} \grad \mfc(x)\| + o(\|\grad \mfc(x)\|)
  \end{align*}
  as $x \to \optpoint$.
  We conclude that $P(\optpoint) \ptransport{x}{\optpoint} \grad \mfc(x) =
  \zeros$ implies $\grad \mfc(x) = \zeros$ for all $x$ sufficiently close to $\optpoint$.
  This confirms that $\chillmanifold \subseteq \optimalset \cap \mathcal{U}$ around $\optpoint$
  because all critical points near $\optpoint$ are in $\optimalset$ by \pl{}.
\end{proof}

The codimension of $\optimalset$ is equal to the rank of $\hess \mfc$ on
$\optimalset$, as expected.
A similar result holds for Banach spaces when the function is assumed
analytic~\cite[Thm.~1]{feehan2020morse}.
Around $\optpoint$, the set of all minima of $\mfc$ and $\optimalset$ coincide
when \pl{} holds.
Hence, Theorem~\ref{th:pl-implies-submanifold} implies that the set of minima of
$\mfc$ is a submanifold around $\optpoint$.
Using the \qg{} property we now deduce that \pl{} implies \mb{}.

\begin{corollary}\label{cor:pl-implies-mb}
  If $\mfc$ is $\smooth{2}$ and $\plconstant$-\eqref{eq:local-pl} around
  $\optpoint \in \optimalset$ then it
  satisfies~$\plconstant$-\eqref{eq:morse-bott} at $\optpoint$.
  The same holds if $\mfc$ is $\plconstant$-\eqref{eq:error-bound} or
  $\plconstant$-\eqref{eq:quadratic-growth} rather than
  $\plconstant$-\eqref{eq:local-pl}.
\end{corollary}
\begin{proof}
  Apply Theorem~\ref{th:pl-implies-submanifold} to get that $\optimalset$ is
  locally a $\smooth{1}$ submanifold around $\optpoint$.
  Proposition~\ref{prop:pl-implies-qg} gives~\eqref{eq:quadratic-growth} around
  $\optpoint$.
  Finally apply Proposition~\ref{prop:c2-and-qg-hessian} to normal eigenvectors
  of $\hess \mfc(\optpoint)$.
  This yields that the normal eigenvalues are at least $\plconstant$.
  We obtain the same result if we suppose $\plconstant$-\eb{} or
  $\plconstant$-\qg{} instead of $\plconstant$-\pl{}.
  This is because they both imply $\plconstant'$-\pl{} for $\plconstant' <
  \plconstant$ arbitrarily close to $\plconstant$.
  Taking the limit $\plconstant' \to \plconstant$ gives $\plconstant$-\mb{}.
\end{proof}

\begin{remark}[Connections to the distance function]\label{remark:connection-distance}
  Given a closed set $\minsubset \subseteq \manifold$, the function $\mfc(x) =
  \frac{1}{2}\dist_{\minsubset}(x)^2$ clearly satisfies \qg{}.
  If $\mfc$ is $\smooth{p}$ with $p \geq 2$ in a neighborhood of
  $\minsubset$ then Theorem~\ref{th:pl-implies-submanifold} applies, revealing that
  $\minsubset$ is a $\smooth{p-1}$ submanifold.
  This question is of independent interest, see for
  example~\citep{bellettini2007conjecture} for a proof when assuming $p \geq 3$.
\end{remark}

\begin{remark}[Structure when $\mfc$ is only $\smooth{1}$]\label{remark:structure-only-c1}
  Theorem~\ref{th:pl-implies-submanifold} requires $\mfc$ to be $\smooth{2}$.
  And indeed, if $\mfc$ is only $\smooth{1}$ the set of minima may not be a
  submanifold.
  We provide two examples.

  The function $\mfc(x, y) = \frac{x^2y^2}{x^2 + y^2}$ is $\smooth{1}$ and \pl{} around the origin,
  yet its minimizers form a cross (see
  Figure~\ref{fig:plots}).\footnote{We thank Christopher Criscitiello
    who found this function.}
  Incidentally, $f$ is $\frac{1}{\sqrt{2}}$-\eb{} but only $\frac{1}{2}$-\pl{}
  around the origin, confirming that the constant worsens for the implication
  $\eb{} \Rightarrow \pl{}$ when $\mfc$ is only $\smooth{1}$.

  Additionally, let $\minsubset \subseteq \manifold$ be a closed set and
  suppose that the distance function $\dist_{\minsubset}$ is $\smooth{1}$ around
  $\minsubset$ (such a set is called proximally
  smooth~\citep{clarke1995proximal}).
  For $\mfc(x) = \frac{1}{2}\dist_\minsubset(x)^2$, we find that $\grad
  \mfc(x) = x - \proj_\minsubset(x)$, meaning that $\mfc$ is \pl{} around
  $\minsubset$ with constant $\plconstant = 1$.
  This holds in particular for all closed convex sets, yet many such sets fail
  to be $\smooth{0}$ submanifolds (e.g., consider a closed square in $\manifold
  = \reals^2$).
  This provides further examples of $\smooth{1}$ functions satisfying \pl{} yet
  for which the set of minima is not $\smooth{0}$.
\end{remark}

\begin{remark}[Restricted secant inequality] \label{rem:RSI}
  It is possible to show equivalences with even more properties.
  For example, as in~\citep{zhang2013gradient}, we say $\mfc$ satisfies the
  \emph{restricted secant inequality} (RSI) with constant $\plconstant$ around
  $\optpoint \in \optimalset$ if $\inner{\grad \mfc(x)}{v} \geq \plconstant
  \dist(x, \optimalset)^2$ for all $x$ in a neighborhood of $\optpoint$, where
  $v = -\Log_x(y)$ and $y \in \proj_\optimalset(x)$.
  From simple Taylor expansion arguments, we find that $\plconstant$-\mb{}
  implies $\plconstant'$-RSI for all $\plconstant' < \plconstant$.
  By \causchwarz{}, we also find that $\plconstant$-RSI implies
  $\plconstant$-\eb{} for $\smooth{1}$ functions (see~\citep{karimi2016linear}).
  It follows that for $\smooth{2}$ functions RSI is also equivalent to the four
  properties that we consider.
\end{remark}

\begin{remark}[Other \loja{} exponents]\label{remark:other-loja-exponents}
  The \pl{} condition is exactly~\eqref{eq:local-loja} with exponent $\plexp =
  \frac{1}{2}$.
  We comment here about other values of $\plexp$.
  First, suppose $\mfc$ is $\smooth{1}$ and $\grad \mfc$ locally Lipschitz
  continuous.
  If $\mfc$ is non-constant around $\optpoint \in \optimalset$ then it cannot
  satisfy~\eqref{eq:local-loja} with an exponent $\plexp < \frac{1}{2}$ around
  $\optpoint$.
  This is because these assumptions are incompatible with the growth property
  from Proposition~\ref{prop:pl-implies-qg}.
  See also~\cite[Thm.~4]{abbaszadehpeivasti2023conditions} for an algorithmic
  perspective on this.
  Now suppose $\mfc$ is $\smooth{2}$ and satisfies~\eqref{eq:local-loja} with
  exponent $\plexp$ around $\optpoint \in \optimalset$.
  If $\plexp \in \interval[open right]{\frac{1}{2}}{\frac{2}{3}}$ and $\hess
  \mfc$ is Lipschitz continuous around $\optpoint$ then \pl{} holds around
  $\optpoint$.
  Furthermore, if $\plexp = \frac{2}{3}$ and $\mfc$ is additionally $\smooth{3}$ then \pl{}
  also holds around $\optpoint$.
  We include a proof of these observations in
  Appendix~\ref{sec:other-loja-exponents}.
\end{remark}

\section{Stability of minima and linear rates}\label{sec:reminders}

In this section, we consider two types of algorithmic questions:
the stability of minima and standard local convergence rates.
We review some necessary classical arguments, taking this opportunity to generalize
some of them to accommodate non-isolated minima.
This will serve us well to analyze algorithms in
Section~\ref{sec:super-linear-conv}.

\subsection{Capture for sets of non-isolated minima}\label{subsec:capture}

Typically, global convergence analyses of optimization algorithms merely
guarantee that iterates accumulate only at critical points.
The set of accumulation points may be empty (when the iterates diverge).
Worse, it may even be infinite when minima are not isolated.
See~\cite[\S3.2.1]{absil2005convergence} and \cite[\S5.3]{bolte2021curiosities}
for examples of pathological functions for which reasonable algorithms (such
as gradient descent) produce iterates with continuous sets of accumulation
points.
The latter issue cannot occur when minima are isolated.

What kind of stability results still hold when minima are not isolated?
Consider an algorithm generating iterates as $x_{k + 1} =
\deterministicalgorithm_k(x_k, \dots, x_0)$, where $\deterministicalgorithm_k$
is a \emph{descent mapping}: it satisfies $\mfc(\deterministicalgorithm_k(x_k, \dots,
x_0)) \leq \mfc(x_k)$ for all $x_k, \dots, x_0 \in \manifold$.
Many deterministic algorithms fall in this category, including gradient descent
and trust-region methods under suitable hypotheses.
The standard capture theorem asserts that if the iterates generated by such
descent mappings get sufficiently close to an \emph{isolated} local minimum then
the sequence eventually converges to it (under a few weak
assumptions)~\citep[Prop.~1.2.5]{bertsekas1995nonlinear}, \citep[Thm.~4.4.2]{absil2009optimization}.
The result can be easily extended to a compact set of non-isolated local minima
that satisfies several properties that we define now.

\begin{definition}\label{def:isolated-from-critical-points}
  We say $\minsubset \subseteq \manifold$ is \emph{isolated from critical
    points} if there exists a neighborhood $\mathcal{U}$ of $\minsubset$ such
  that $x \in \mathcal{U}$ and $\grad \mfc(x) = \zeros$ imply $x \in
  \minsubset$.
\end{definition}

Note that the points in $\minsubset$ do \emph{not} need to be isolated: the set
$\minsubset$ may be a continuum of critical points.
It is clear that if $\minsubset \subseteq \optimalset$~\eqref{eq:def-s} is
isolated from critical points then there exists a neighborhood $\mathcal{U}$ of
$\minsubset$ such that $\mfc(x) > \mfcopt$ for all $x \in \mathcal{U} \setminus
\minsubset$, where $\mfcopt$ is the value of $\mfc$ on $\minsubset$.
The capture result below, based on~\citep[Thm.~4.4.2]{absil2009optimization}, states that if
the set of minima $\minsubset \subseteq \optimalset$ is both compact and
isolated from critical points then it traps the iterates generated by all
reasonable descent algorithms.
A key hypothesis is that the steps have to be small around local minima: that is
typically the case.

\begin{definition}
  An algorithm which generates sequences on $\manifold$ has the \emph{vanishing
    steps} property on a set $\minsubset \subseteq \manifold$ if there exists a neighborhood
  $\mathcal{U}$ of $\minsubset$ and a continuous function $\boundvanishingsteps
  \colon \manifold \to \reals_+$ with $\boundvanishingsteps(\minsubset) = 0$
  such that, if $x_k$ is an iterate in $\mathcal{U}$, then the next iterate
  $x_{k + 1}$ satisfies
  \begin{align}\label{eq:vanishing-steps}\tag{VS}
    \dist(x_k, x_{k + 1}) \leq \boundvanishingsteps(x_k).
  \end{align}
  We say that the algorithm has the~\eqref{eq:vanishing-steps} property at a
  point $\optpoint \in \optimalset$ if it holds on the set $\{\optpoint\}$.
\end{definition}

\begin{proposition}[Capture of iterates]\label{prop:capture-theorem}
  Let $\minsubset$ be a compact subset of $\optimalset$ isolated from critical
  points.
  Consider an algorithm that produces iterates as $x_{k + 1} =
  \deterministicalgorithm_k(x_k, \dots, x_0)$, where $\deterministicalgorithm_k$
  is a descent mapping.
  Assume that it satisfies the~\eqref{eq:vanishing-steps} property on
  $\minsubset$.
  Also suppose that the sequences generated by this algorithm accumulate only at
  critical points of $\mfc$.
  Then there exists a neighborhood $\mathcal{U}$ of $\minsubset$ such that if a
  sequence enters $\mathcal{U}$ then all subsequent iterates are in
  $\mathcal{U}$ and $\dist(x_k, \minsubset) \to 0$.
\end{proposition}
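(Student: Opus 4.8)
The plan is to adapt the classical capture argument (e.g.\ \citep[Thm.~4.4.2]{absil2009optimization}) to a compact set of minima rather than a single isolated one. First I would fix a compact neighborhood $\mathcal{K}$ of $\minsubset$ on which $\mfc(x) > \mfcopt$ for all $x \in \mathcal{K} \setminus \minsubset$ (possible since $\minsubset$ is isolated from critical points and every critical point in a small enough neighborhood that is also in $\optimalset$ has value $\mfcopt$; shrink so that additionally $\mathcal{K}$ contains no critical points outside $\minsubset$). Choose $r > 0$ small enough that the closed ``tube'' $\mathcal{T}_r = \{x : \dist(x, \minsubset) \leq r\}$ is contained in the interior of $\mathcal{K}$ and that the vanishing-steps neighborhood $\mathcal{U}_{\mathrm{VS}}$ (with its continuous modulus $\boundvanishingsteps$, $\boundvanishingsteps(\minsubset) = 0$) contains $\mathcal{T}_r$. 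By compactness of the ``shell'' $\mathcal{S} = \{x : r/2 \leq \dist(x,\minsubset) \leq r\}$ and continuity of $\mfc$, the quantity $\alpha = \min_{x \in \mathcal{S}} \mfc(x)$ satisfies $\alpha > \mfcopt$. Also, by continuity of $\boundvanishingsteps$ and compactness, there is $\delta_0 \in (0, r/2]$ with $\boundvanishingsteps(x) \leq r/2$ for every $x$ with $\dist(x,\minsubset) \leq \delta_0$; shrinking further, pick $\delta \in (0,\delta_0]$ such that $\mfc(x) < \alpha$ whenever $\dist(x,\minsubset) \leq \delta$ (possible since $\mfc \to \mfcopt < \alpha$ as $\dist(\cdot,\minsubset)\to 0$, using compactness of $\minsubset$). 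Set $\mathcal{U} = \{x : \dist(x,\minsubset) < \delta\}$.

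Next I would show $\mathcal{U}$ is forward-invariant. Suppose $x_k \in \mathcal{U}$. If every subsequent iterate stayed in $\mathcal{T}_{r/2}$ we are done for invariance; otherwise let $k' > k$ be the first index with $\dist(x_{k'}, \minsubset) > r/2$. For all indices $j$ with $k \leq j < k'$ we have $\dist(x_j, \minsubset) \leq r/2 \leq \delta_0$, hence $\boundvanishingsteps(x_j) \leq r/2$, so by~\eqref{eq:vanishing-steps} the step from $x_{k'-1}$ to $x_{k'}$ has length at most $r/2$; combined with $\dist(x_{k'-1},\minsubset) \leq r/2$ this gives $\dist(x_{k'},\minsubset) \leq r$, i.e.\ $x_{k'} \in \mathcal{S}$, so $\mfc(x_{k'}) \geq \alpha$. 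But the descent property gives $\mfc(x_{k'}) \leq \mfc(x_k) < \alpha$ since $x_k \in \mathcal{U}$ --- a contradiction. Therefore $\dist(x_j, \minsubset) \leq r/2 < r$ for all $j \geq k$, and in particular all iterates remain in $\mathcal{T}_r \subset \mathcal{K}$.

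Finally I would establish $\dist(x_k, \minsubset) \to 0$. Once a sequence enters $\mathcal{U}$, all later iterates lie in the compact set $\mathcal{K}$, so the sequence has accumulation points, and by hypothesis every accumulation point is a critical point of $\mfc$; since $\mathcal{K}$ contains no critical points outside $\minsubset$, every accumulation point lies in $\minsubset$. If $\dist(x_k, \minsubset)$ did not tend to $0$, there would be $\varepsilon > 0$ and a subsequence with $\dist(x_{k_i}, \minsubset) \geq \varepsilon$; by compactness that subsequence has a further accumulation point $x_\star$, which would satisfy both $x_\star \in \minsubset$ and $\dist(x_\star, \minsubset) \geq \varepsilon > 0$ --- impossible. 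Hence $\dist(x_k, \minsubset) \to 0$, completing the proof.

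The main obstacle is purely bookkeeping: with non-isolated minima one cannot argue via a single ``valley'' around a point, so the level-set argument must be run against the \emph{shell} $\mathcal{S}$ surrounding the whole set $\minsubset$, and one must be careful that the vanishing-steps bound is uniformly small on the inner tube so that a single step cannot jump across the shell. The compactness of $\minsubset$ is used precisely to make $\alpha > \mfcopt$ and to get a uniform $\delta$; without it the shell minimum could equal $\mfcopt$ and the trap would fail. No delicate estimates are needed beyond these uniformity arguments.
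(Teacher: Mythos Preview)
Your overall strategy matches the paper's, and your final paragraph establishing $\dist(x_k,\minsubset)\to 0$ is correct. But the trapping step has two related glitches. First, in the invariance argument you write ``$\dist(x_j,\minsubset)\leq r/2\leq\delta_0$'', yet you defined $\delta_0\in(0,r/2]$, so the inequality is reversed. Consequently you cannot deduce $\boundvanishingsteps(x_{k'-1})\leq r/2$ from $\dist(x_{k'-1},\minsubset)\leq r/2$: an intermediate iterate may lie in the band $(\delta_0,r/2]$ where $\boundvanishingsteps$ is uncontrolled, and one step could then overshoot the shell $\mathcal S$ entirely, defeating the barrier argument. The fix is to shrink $r$ (rather than introduce a separate $\delta_0$) so that $\boundvanishingsteps\leq r/2$ holds throughout $\mathcal T_{r/2}$.

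Second, even after that repair, you only show that entering your $\mathcal U$ traps the sequence in the \emph{larger} set $\mathcal T_{r/2}$; you never show $\mathcal U$ itself is forward-invariant, which is what the proposition asserts. Neither set works as the proposition's neighborhood: for your $\mathcal U$ you have not proved invariance, and for $\mathcal T_{r/2}$ mere entry does not guarantee $\mfc<\alpha$, so the barrier argument cannot start. The remedy, which is exactly the paper's construction, is to take the sublevel set $\mathcal U'=\{x\in\mathcal T_r:\mfc(x)<\alpha\}$. Since $\mfc\geq\alpha$ on $\mathcal S$ one has $\mathcal U'\subseteq\mathcal T_{r/2}$; then for $x_k\in\mathcal U'$ the vanishing-steps bound gives $x_{k+1}\in\mathcal T_r$ and descent gives $\mfc(x_{k+1})\leq\mfc(x_k)<\alpha$, hence $x_{k+1}\in\mathcal U'$.
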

\begin{proof}
  There exists a compact neighborhood $\mathcal{V}$ of $\minsubset$ such that
  $\mathcal{V} \setminus \minsubset$ does not contain any critical point and
  $\mfc(x) > \mfcopt$ for all $x \in \mathcal{V} \setminus \minsubset$.
  The~\eqref{eq:vanishing-steps} property implies that there exists an open
  neighborhood $\mathcal{W}$ of $\minsubset$ included in $\mathcal{V}$ such
  that for all $k \in \naturals$, $x_k \in \mathcal{W}$ and $x_{k - 1}, \dots,
  x_0 \in \manifold$ we have $\deterministicalgorithm_k(x_k, \dots, x_0) \in
  \mathcal{V}$.
  The set $\mathcal{V} \setminus \mathcal{W}$ is compact, and we let $\alpha >
  \mfcopt$ denote the minimum of $\mfc$ on this set.
  We define $\mathcal{U} = \{x \in \mathcal{V} : \mfc(x) < \alpha\}$, which is
  included in $\mathcal{W}$ by minimality of $\alpha$.
  Now let $K \in \naturals$, $x_K \in \mathcal{U}$ and $x_{K - 1}, \dots, x_0
  \in \manifold$.
  Then we have $\deterministicalgorithm_K(x_K, \dots, x_0) \in \mathcal{V}$ by
  definition of $\mathcal{W}$.
  Moreover, $\deterministicalgorithm_K$ is a descent mapping so
  $\mfc(\deterministicalgorithm_K(x_K, \dots, x_0)) < \alpha$, and it implies
  that $\deterministicalgorithm_K(x_K, \dots, x_0) \in \mathcal{U}$.
  It follows that $x_{K + 1}$ is in $\mathcal{U}$ and all subsequent iterates
  are also in $\mathcal{U}$.
  Now we show that $\sequence{x_k}$ converges to $\minsubset$.
  The sequence $\sequence{x_k}$ eventually stays in a compact set (because $x_k \in \mathcal{U} \subseteq \mathcal{V}$ for
  all $k \geq K$) so it has a non-empty and compact set of accumulation points
  that we denote by $\mathcal{A}$.
  Then $\mathcal{A} \subseteq \minsubset$ because $\mathcal{A} \subseteq
  \mathcal{V}$ and the only critical points in $\mathcal{V}$ are in
  $\minsubset$.
  The set of accumulation points of $\sequence{\dist(x_k, \mathcal{A})}$ is
  $\{0\}$ (because $\sequence{x_k}$ is bounded).
  So we deduce that $\lim_{k \to +\infty} \dist(x_k, \minsubset) \leq \lim_{k
    \to +\infty} \dist(x_k, \mathcal{A}) = 0$.
\end{proof}

This statement does \emph{not} guarantee that the iterates converge to a specific
point, but merely that $\dist(x_k, \minsubset) \to 0$.
Notice that we do not require any particular structure for $\minsubset$ nor any
form of function growth.
The assumptions on the mappings $F_k$ are also mild.

However, we need $\minsubset$ to be compact and this cannot be relaxed.
Indeed, consider the function $\mfc \colon \reals^2 \to \reals$ defined as
$\mfc(x, y) = \exp(x)y^2$.
The set of global minima is $\minsubset = \{(x, y) \in \reals^2 : y = 0\}$, and
it contains all the critical points of $\mfc$.
Consider the update rule $(x_{k + 1}, y_{k + 1}) = (x_k - y_k^2, y_k)$.
It satisfies the descent condition because $\mfc(x_{k + 1}, y_{k + 1}) =
\exp(-y_k^2)\mfc(x_k, y_k)$ for all $k$.
The distance assumption~\eqref{eq:vanishing-steps} also holds because
$\dist((x_{k + 1}, y_{k + 1}), (x_k, y_k)) = \dist((x_k - y_k^2, y_k), (x_k,
y_k)) = y_k^2$.
However, the sequence $\sequence{\dist((x_k, y_k), \minsubset)}$ is constant
(and not converging to zero) even if we initialize the algorithm arbitrarily
close to $\minsubset$ (but not exactly on $\minsubset$).

The vanishing steps property is a reasonable assumption.
When $\manifold = \reals^n$, many optimization algorithms have an
update rule of the form $x_{k + 1} = x_k + s_k$ and the vector $s_k$ is small
if $x_k$ is close to a local minimum (e.g., $s_k = -\alpha_k \grad
\mfc(x_k)$ with bounded $\alpha_k$).
For a general manifold $\manifold$, algorithms produce iterates as
$x_{k + 1} = \retr_{x_k}(s_k)$, where $\retr \colon \tangent\manifold \to
\manifold \colon (x, s) \mapsto \retr_x(s)$ is a
retraction~\citep[Def.~3.47]{boumal2020introduction}.
Specifically, for all $(x, s) \in \tangent\manifold$ the curve $c(t) =
\retr_x(ts)$ satisfies $c(0) = x$ and $c'(0) = s$.

To ensure vanishing steps, we must control the distance traveled by retractions.
We let $\retrdistboundconst \geq 1$ be such that
\begin{align}\label{eq:retr-distance-condition}
  \dist(\retr_x(s), x) \leq \retrdistboundconst \|s\|
  \tag{RD}
\end{align}
for all $(x, s) \in \tangent\manifold$ where $\|s\|$ is smaller than some fixed positive radius.
For $\retr = \Exp$ (including the Euclidean case) the choice $\retrdistboundconst = 1$ is valid for all $s$.
More generally, $\retrdistboundconst$ can be set arbitrarily close to 1 for all retractions as long as the radius is sufficiently small~\citep[Lem.~6]{ring2012optimization}.

\subsection{Lyapunov stability and convergence to a single
  point}\label{subsec:lyap}

Pathological behavior such as a continuous set of accumulation points can be
ruled out assuming \loja{} inequalities.
In this case, local minima are stable for a variety of
algorithms, even without compactness hypothesis (in contrast to
Proposition~\ref{prop:capture-theorem}).
This in turn ensures that the iterates converge to a single point.
In this section, we review arguments
from~\cite[Thm.~4]{polyak1963gradient},~\citep{absil2005convergence},~\cite[Lem.~2.6]{attouch2013convergence},
and~\cite[Thm.~14]{bolte2017fromerror}.

A central property to obtain convergence to a single point is a bound on the
path length of the iterates.
We make this precise in the following definition.

\begin{definition}
  An algorithm which generates sequences on $\manifold$ has the \emph{bounded
    path length} property on a set $\minsubset \subseteq \manifold$ if the following
  is true.
  There exist a neighborhood $\mathcal{U}$ of $\minsubset$ and a continuous
  function $\boundpathlength \colon \manifold \to \reals_+$ with
  $\boundpathlength(\minsubset) = 0$ such that, if $x_L, \dots, x_K \in
  \manifold$ are consecutive points generated by the algorithm and which are all
  in $\mathcal{U}$, then
  \begin{align}\label{eq:bounded-path-length-prop}\tag{BPL}
    \sum_{k = L}^{K - 1} \dist(x_k, x_{k + 1}) \leq \boundpathlength(x_L).
  \end{align}
  We say that the algorithm has the~\eqref{eq:bounded-path-length-prop} property
  at a point $\optpoint \in \manifold$ if it does so on the set $\{\optpoint\}$.
\end{definition}

Here we think of the algorithm as an optimization method with some fixed
hyper-parameters.
The definition is given for a generic set $\minsubset$ but the bounded path
length property is usually only satisfied around local minima.
Combined with the vanishing steps property, bounded path length ensures
stability of local minima.
For comparison,~\cite{absil2005convergence}
and~\cite{attouch2013convergence} deduce~\eqref{eq:bounded-path-length-prop} from a function decrease condition.
Here, we factor out~\eqref{eq:bounded-path-length-prop} to enable analysis of
algorithms that do not satisfy that decrease condition.

\begin{proposition}[Lyapunov stability]\label{prop:lyapunov-stability}
  Suppose that an algorithm satisfies the~\eqref{eq:vanishing-steps}
  and~\eqref{eq:bounded-path-length-prop} properties at $\optpoint \in
  \manifold$.
  Given a neighborhood $\mathcal{U}$ of $\optpoint$, there exists a neighborhood
  $\mathcal{V}$ of $\optpoint$ such that if a sequence generated by this
  algorithm enters $\mathcal{V}$ then all subsequent iterates stay in
  $\mathcal{U}$.
\end{proposition}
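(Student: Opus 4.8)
The plan is to adapt the classical Lyapunov trapping argument, with the two hypotheses playing complementary roles: \eqref{eq:bounded-path-length-prop} keeps the total drift of an orbit small, and \eqref{eq:vanishing-steps} prevents a single step from jumping out of the region where \eqref{eq:bounded-path-length-prop} is in force. Concretely, let $\mathcal{U}_1$ and $\boundvanishingsteps$ be the neighborhood of $\optpoint$ and the control function supplied by~\eqref{eq:vanishing-steps}, and $\mathcal{U}_2$ and $\boundpathlength$ those supplied by~\eqref{eq:bounded-path-length-prop}; recall $\boundvanishingsteps,\boundpathlength \colon \manifold \to \reals_+$ are continuous and vanish at $\optpoint$. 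Replacing $\mathcal{U}$ by $\mathcal{U} \cap \mathcal{U}_1 \cap \mathcal{U}_2$ (which only strengthens the conclusion), I may assume $\mathcal{U} \subseteq \mathcal{U}_1 \cap \mathcal{U}_2$, and I fix $\delta > 0$ with $\ball(\optpoint, \delta) \subseteq \mathcal{U}$. It then suffices to exhibit $\varepsilon > 0$ such that every sequence that enters $\mathcal{V} := \ball(\optpoint, \varepsilon)$ stays in $\ball(\optpoint, \delta)$ forever after.

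For the choice of constants I would use continuity twice, at two nested scales. First, using $\boundvanishingsteps(\optpoint) = 0$, pick $\delta' \in (0, \delta/2)$ with $\boundvanishingsteps(x) < \delta/2$ whenever $\dist(x, \optpoint) < \delta'$; then, using $\boundpathlength(\optpoint) = 0$, pick $\varepsilon \in (0, \delta'/2)$ with $\boundpathlength(x) < \delta'/2$ whenever $\dist(x, \optpoint) < \varepsilon$, and set $\mathcal{V} = \ball(\optpoint, \varepsilon)$. The idea is that once the run enters at some $x_L \in \mathcal{V}$, the portion that stays inside $\mathcal{U}_2$ has total length at most $\boundpathlength(x_L) < \delta'/2$ by~\eqref{eq:bounded-path-length-prop}, so every such iterate remains within $\dist(x_L, \optpoint) + \boundpathlength(x_L) < \varepsilon + \delta'/2 < \delta'$ of $\optpoint$ — i.e.\ well inside $\ball(\optpoint, \delta)$, with room to spare. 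The lone step not directly covered is the one that would leave that window, but it emanates from a point already within $\delta'$ of $\optpoint$, so~\eqref{eq:vanishing-steps} caps its length by $\delta/2$ and the triangle inequality keeps the new iterate in $\ball(\optpoint, \delta)$.

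To make this rigorous I would run a strong induction on $K \ge L$ with hypothesis ``$x_L, \dots, x_K \in \ball(\optpoint, \delta)$'', the base case being immediate since $x_L \in \mathcal{V} \subseteq \ball(\optpoint, \delta)$. In the inductive step, $x_L, \dots, x_K$ are consecutive iterates all lying in $\ball(\optpoint, \delta) \subseteq \mathcal{U}_2$, so~\eqref{eq:bounded-path-length-prop} gives $\sum_{k=L}^{K-1} \dist(x_k, x_{k+1}) \le \boundpathlength(x_L) < \delta'/2$, hence $\dist(x_j, \optpoint) \le \dist(x_L, \optpoint) + \boundpathlength(x_L) < \varepsilon + \delta'/2 < \delta'$ for every $j \in \{L, \dots, K\}$; in particular $x_K \in \ball(\optpoint, \delta') \subseteq \mathcal{U}_1$, so~\eqref{eq:vanishing-steps} yields $\dist(x_K, x_{K+1}) \le \boundvanishingsteps(x_K) < \delta/2$, and therefore $\dist(x_{K+1}, \optpoint) \le \dist(x_K, \optpoint) + \dist(x_K, x_{K+1}) < \delta' + \delta/2 < \delta$, closing the induction; since $\ball(\optpoint, \delta) \subseteq \mathcal{U}$ this is exactly the claim. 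The main obstacle to anticipate is precisely this ``one extra step'': \eqref{eq:bounded-path-length-prop} only controls path length over windows \emph{known in advance} to lie in $\mathcal{U}_2$, so the step that first tries to leave the window must be bounded by other means, and the argument above shows the two hypotheses dovetail exactly for this — \eqref{eq:bounded-path-length-prop} forces that step to start deep inside the small ball, where \eqref{eq:vanishing-steps} makes it short. No compactness of $\optimalset$ and no structural assumption on it is needed, in contrast with Proposition~\ref{prop:capture-theorem}.
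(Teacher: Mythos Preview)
Your proof is correct and follows essentially the same route as the paper: choose three nested balls by continuity of $\boundpathlength$ and $\boundvanishingsteps$, use~\eqref{eq:bounded-path-length-prop} to keep the window of known-good iterates well inside the middle ball, and use~\eqref{eq:vanishing-steps} to control the one ``extra'' step. The paper phrases the last part as a contradiction (take the first $K$ with $x_{K+1}$ outside and derive $x_{K+1}$ inside) rather than a strong induction, but the two arguments are interchangeable.
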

\begin{proof}
  The set $\mathcal{U}$ contains an open ball of radius $\delta_u > 0$ around
  $\optpoint$ in which the~\eqref{eq:bounded-path-length-prop}
  and~\eqref{eq:vanishing-steps} properties are satisfied with some functions
  $\boundpathlength$ and $\boundvanishingsteps$.
  By continuity of $\boundvanishingsteps$ there exists an open ball
  $\mathcal{W}$ centered on $\optpoint$ of radius $\delta_w$ that satisfies
  $\delta_w + \boundvanishingsteps(x) < \delta_u$ for all $x \in
  \mathcal{W}$.
  Likewise, by continuity of $\boundpathlength$, there exists an open ball
  $\mathcal{V} \subset \mathcal{W}$ of radius $\delta_v > 0$ around $\optpoint$
  such that for all $x \in \mathcal{V}$ we have $\delta_v + \boundpathlength(x)
  < \delta_w$.
  Suppose that an iterate $x_L$ is in $\mathcal{V}$.
  For contradiction, let $K \geq L$ be the first index such
  that $x_{K + 1} \notin \ball(\optpoint, \delta_u)$.
  We deduce from the triangle inequality and
  the~\eqref{eq:bounded-path-length-prop} property that
  \begin{align*}
    \dist(x_K, \optpoint) \leq \dist(x_L, \optpoint) + \sum_{k = L}^{K - 1} \dist(x_k, x_{k + 1}) \leq \delta_v + \boundpathlength(x_L) < \delta_w.
  \end{align*}
  It follows that $x_K \in \mathcal{W}$.
  Using again the triangle inequality we find $\dist(x_{K + 1}, \optpoint) \leq
  \dist(x_K, \optpoint) + \dist(x_K, x_{K + 1}) \leq \delta_w +
  \boundvanishingsteps(x_K) < \delta_u$.
  This implies that $x_{K + 1}$ is in $\ball(\optpoint, \delta_u)$: a
  contradiction.
\end{proof}

In particular, this excludes that the iterates diverge.
We can also guarantee that accumulation points are actually limit points.

\begin{corollary}\label{cor:pl-accumulation-unique-limit}
  Suppose that an algorithm satisfies the~\eqref{eq:vanishing-steps}
  and~\eqref{eq:bounded-path-length-prop} properties at $\optpoint \in
  \manifold$.
  If it generates a sequence that accumulates at $\optpoint$ then the sequence
  converges to $\optpoint$.
\end{corollary}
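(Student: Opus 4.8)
The plan is to obtain the statement as an immediate consequence of the Lyapunov stability result, Proposition~\ref{prop:lyapunov-stability}. Let $\sequence{x_k}$ be a sequence generated by the algorithm that accumulates at $\optpoint$. I would show $x_k \to \optpoint$ by verifying the definition directly: for every neighborhood $\mathcal{U}$ of $\optpoint$, all iterates eventually lie in $\mathcal{U}$.

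So I would fix an arbitrary neighborhood $\mathcal{U}$ of $\optpoint$. Since the algorithm satisfies~\eqref{eq:vanishing-steps} and~\eqref{eq:bounded-path-length-prop} at $\optpoint$, Proposition~\ref{prop:lyapunov-stability} furnishes a neighborhood $\mathcal{V}$ of $\optpoint$ such that any sequence generated by the algorithm which enters $\mathcal{V}$ has all its subsequent iterates in $\mathcal{U}$. Because $\optpoint$ is an accumulation point of $\sequence{x_k}$, there exists an index $L$ with $x_L \in \mathcal{V}$. Applying the conclusion of Proposition~\ref{prop:lyapunov-stability} to the tail of the sequence starting at $x_L$ gives $x_k \in \mathcal{U}$ for all $k \geq L$. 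Since $\mathcal{U}$ was arbitrary, this yields $x_k \to \optpoint$.

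I do not expect a genuine obstacle: the substantive work is already contained in Proposition~\ref{prop:lyapunov-stability}, and the only point to note is that ``$\sequence{x_k}$ accumulates at $\optpoint$'' is precisely what guarantees the sequence enters the neighborhood $\mathcal{V}$ at some finite index. As an alternative route that bypasses Lyapunov stability, one could instead trap the sequence in a neighborhood where~\eqref{eq:bounded-path-length-prop} holds (using accumulation at $\optpoint$ together with~\eqref{eq:vanishing-steps} to keep it there), deduce $\sum_{k \geq L}\dist(x_k, x_{k+1}) < \infty$ so that $\sequence{x_k}$ is Cauchy and hence convergent, and finally identify the limit with $\optpoint$ since the latter is an accumulation point; this variant, however, requires a little care about completeness, whereas the stability-based argument avoids it entirely.
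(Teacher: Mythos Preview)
Your proposal is correct and follows essentially the same approach as the paper's proof: apply Proposition~\ref{prop:lyapunov-stability} to an arbitrary neighborhood $\mathcal{U}$ of $\optpoint$, use the accumulation hypothesis to enter the resulting $\mathcal{V}$, and conclude since $\mathcal{U}$ was arbitrary. The paper phrases the last step as ``repeat with smaller and smaller neighborhoods,'' but this is the same argument.
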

\begin{proof}
  Let $\mathcal{U}$ be a neighborhood of $\optpoint$.
  From Proposition~\ref{prop:lyapunov-stability} there is a neighborhood
  $\mathcal{V}$ such that if an iterate is in $\mathcal{V}$ then all
  subsequent iterates are in $\mathcal{U}$.
  Since $\optpoint$ is an accumulation point we know that such an iterate
  exists.
  Repeat with a sequence of smaller and smaller neighborhoods of $\optpoint$.
\end{proof}

Many optimization algorithms generate sequences that accumulate only at critical
points.
In that scenario, we can deduce that the sequence converges to a point, provided
that it gets close enough to a set where~\eqref{eq:vanishing-steps}
and~\eqref{eq:bounded-path-length-prop} hold.

\begin{corollary}\label{cor:capture-to-single-point}
  Consider an algorithm that satisfies the~\eqref{eq:vanishing-steps}
  and~\eqref{eq:bounded-path-length-prop} properties on a set $\minsubset
  \subseteq \manifold$ and let $\optpoint \in \minsubset$.
  Let $\mathcal{U}$ be a neighborhood of $\optpoint$ such that if a sequence
  generated by this algorithm accumulates at a point $x_\infty \in \mathcal{U}$
  then $x_\infty$ is in $\minsubset$.
  There exists a neighborhood $\mathcal{V}$ of $\optpoint$ such that if a
  sequence enters $\mathcal{V}$ then all subsequent iterates stay in
  $\mathcal{U}$ and converge to some $x_\infty \in \mathcal{U} \cap \minsubset$.
\end{corollary}
\begin{proof}
  The set $\mathcal{U}$ contains a compact neighborhood $\mathcal{B}$ of
  $\optpoint$ such that the~\eqref{eq:vanishing-steps}
  and~\eqref{eq:bounded-path-length-prop} properties hold on $\mathcal{B} \cap
  \minsubset$.
  From Proposition~\ref{prop:lyapunov-stability} there exists a neighborhood
  $\mathcal{V}$ of $\optpoint$ such that if $\sequence{x_k}$ enters
  $\mathcal{V}$ then it stays in $\mathcal{B}$.
  The set $\mathcal{B}$ is compact so $\sequence{x_k}$ has at least one
  accumulation point $x_\infty \in \mathcal{B}$.
  Our hypotheses ensure that $x_\infty$ must be in $\minsubset$ since
  $\mathcal{B} \subseteq \mathcal{U}$.
  We conclude with Corollary~\ref{cor:pl-accumulation-unique-limit}.
\end{proof}

The conclusions of Corollary~\ref{cor:capture-to-single-point} are similar to
the ones in~\cite[Prop.~3.3]{absil2005convergence}
and~\cite[Thm.~2.10]{attouch2013convergence}.\footnote{Notice
  that~\cite{absil2005convergence} do \emph{not} assume vanishing steps, but in
  that case the property can fail when $\optpoint$ is not a \emph{global}
  minimum. The statement of Corollary~\ref{cor:capture-to-single-point} accounts
  for this technical point.
}

We describe below the argument that~\cite{absil2005convergence} used to show
that many gradient descent algorithms
satisfy~\eqref{eq:bounded-path-length-prop} around points where $\mfc$ is
\loja{}.
We say that the sequence $\sequence{x_k}$ satisfies the \emph{strong decrease
property} around $\optpoint \in \manifold$ if there exists
$\lyapsufficientdecreaseconst > 0$ such that
\begin{align}\label{eq:sufficient-decrease-lyapunov}
  \mfc(x_k) - \mfc(x_{k + 1}) \geq \lyapsufficientdecreaseconst \|\grad \mfc(x_k)\|\dist(x_k, x_{k + 1}) && \text{and} && x_k \in \optimalset \;\;\Rightarrow\;\; x_{k + 1} = x_k
\end{align}
whenever $x_k$ is sufficiently close to $\optpoint$, as introduced
by~\cite{absil2005convergence}.

\begin{lemma}\label{lemma:bound-discrete-gd-path-length}
  Suppose that $\mfc$ satisfies~\eqref{eq:local-loja} around $\optpoint \in
  \optimalset$ with constants $\plexp$ and $\plconstant$.
  If an algorithm generates sequences $\sequence{x_k}$ that
  satisfy~\eqref{eq:sufficient-decrease-lyapunov} around $\optpoint$ then it satisfies
  the~\eqref{eq:bounded-path-length-prop} property at $\optpoint$ with
  \begin{align*}
    \boundpathlength(x) = \frac{1}{\lyapsufficientdecreaseconst (1 - \plexp) \sqrt{2\plconstant}} |\mfc(x) - \mfcopt|^{1 - \plexp}.
  \end{align*}
\end{lemma}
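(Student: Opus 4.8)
The goal is to bound the cumulative step length $\sum_{k=L}^{K-1}\dist(x_k,x_{k+1})$ by a multiple of $|\mfc(x_L)-\mfcopt|^{1-\plexp}$. The standard device is to combine the strong decrease condition~\eqref{eq:sufficient-decrease-lyapunov} with the \loja{} inequality~\eqref{eq:local-loja} via a concave-function (telescoping) argument. First I would introduce the auxiliary function $\psi(s) = \frac{1}{(1-\plexp)\sqrt{2\plconstant}} s^{1-\plexp}$ for $s \geq 0$, which is concave and increasing, with derivative $\psi'(s) = \frac{1}{\sqrt{2\plconstant}} s^{-\plexp}$. The key inequality is that, for $s \geq t \geq 0$, concavity gives $\psi(s) - \psi(t) \geq \psi'(s)(s - t)$.

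The main argument proceeds as follows. Fix consecutive iterates $x_L,\dots,x_K$ all lying in a small enough neighborhood of $\optpoint$ where both~\eqref{eq:local-loja} and~\eqref{eq:sufficient-decrease-lyapunov} hold. Write $\delta_k = \mfc(x_k) - \mfcopt \geq 0$. If $\delta_k = 0$ for some $k$, then $x_k \in \optimalset$ (since $\optpoint$ is a local minimum and the neighborhood can be shrunk so $\mfc \geq \mfcopt$ there), hence by the second half of~\eqref{eq:sufficient-decrease-lyapunov} we get $x_{k+1} = x_k$ and all subsequent steps vanish, so we may assume $\delta_k > 0$ for $k < K$. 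For each such $k$, the \loja{} inequality reads $\delta_k^{2\plexp} \leq \frac{1}{2\plconstant}\|\grad\mfc(x_k)\|^2$, i.e.\ $\|\grad\mfc(x_k)\| \geq \sqrt{2\plconstant}\,\delta_k^{\plexp}$, so $\psi'(\delta_k) = \frac{1}{\sqrt{2\plconstant}}\delta_k^{-\plexp} \geq \frac{1}{\|\grad\mfc(x_k)\|}$. Then, using concavity of $\psi$ and $\delta_k \geq \delta_{k+1}$ (descent),
\begin{align*}
  \psi(\delta_k) - \psi(\delta_{k+1}) \geq \psi'(\delta_k)(\delta_k - \delta_{k+1}) \geq \frac{\mfc(x_k) - \mfc(x_{k+1})}{\|\grad\mfc(x_k)\|} \geq \lyapsufficientdecreaseconst\, \dist(x_k, x_{k+1}),
\end{align*}
where the last step is the first half of~\eqref{eq:sufficient-decrease-lyapunov}. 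Summing this telescoping bound over $k = L, \dots, K-1$ yields $\sum_{k=L}^{K-1}\dist(x_k,x_{k+1}) \leq \frac{1}{\lyapsufficientdecreaseconst}\big(\psi(\delta_L) - \psi(\delta_K)\big) \leq \frac{1}{\lyapsufficientdecreaseconst}\psi(\delta_L) = \frac{1}{\lyapsufficientdecreaseconst(1-\plexp)\sqrt{2\plconstant}}|\mfc(x_L) - \mfcopt|^{1-\plexp}$, which is exactly the claimed bound with $\boundpathlength$ as stated. Finally I would note $\boundpathlength$ is continuous on the neighborhood and $\boundpathlength(\optpoint) = 0$ (indeed $\boundpathlength$ vanishes on all of $\optimalset$ near $\optpoint$), so the~\eqref{eq:bounded-path-length-prop} property holds at $\optpoint$.

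**Main obstacle.** The only genuinely delicate points are bookkeeping ones rather than deep: (i) making sure the neighborhood is chosen so that $\mfc \geq \mfcopt$ holds there, so that $\delta_k \geq 0$ is legitimate and the $\delta_k = 0$ case can be dispatched via the second clause of~\eqref{eq:sufficient-decrease-lyapunov}; and (ii) handling the degenerate sub-case where $\grad\mfc(x_k) = 0$ but $\delta_k > 0$ — here the \loja{} inequality forces $\delta_k^{2\plexp} \leq 0$, hence $\delta_k = 0$, a contradiction, so this case cannot arise and no division by zero occurs. Beyond these, the argument is the classical \loja{} path-length estimate of~\cite{absil2005convergence}, transcribed with the general exponent $\plexp$ and with $\dist$ in place of the Euclidean step norm, which changes nothing since~\eqref{eq:sufficient-decrease-lyapunov} is already phrased in terms of $\dist$.
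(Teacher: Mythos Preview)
Your proposal is correct and follows essentially the same approach as the paper's proof: both combine the \loja{} bound $\|\grad\mfc(x_k)\| \geq \sqrt{2\plconstant}\,\delta_k^{\plexp}$ with the strong decrease condition and a telescoping estimate on $\delta_k^{1-\plexp}$. The paper writes the key step as an integral bound $\int_{\delta_{k+1}}^{\delta_k} \delta_k^{-\plexp}\,dt \leq \int_{\delta_{k+1}}^{\delta_k} t^{-\plexp}\,dt$, which is exactly your concavity inequality $\psi(\delta_k)-\psi(\delta_{k+1}) \geq \psi'(\delta_k)(\delta_k-\delta_{k+1})$ in integral form; your handling of the degenerate cases $\delta_k = 0$ and $\grad\mfc(x_k)=0$ is slightly more explicit than the paper's but equivalent.
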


We include a proof of this statement in Appendix~\ref{sec:loja-discrete-proofs}
for completeness.
In fact, the algorithm would still satisfy
the~\eqref{eq:bounded-path-length-prop} property under the more general
\kurdykaloja{} assumption (see~\cite[\S3.2.3]{absil2005convergence}).
In practice, many first-order algorithms (including gradient descent with
constant step-sizes or with line-search) generate sequences with the strong
decrease condition~\eqref{eq:sufficient-decrease-lyapunov}, as shown
in~\cite[\S4]{absil2005convergence}.

\subsection{Asymptotic convergence rate}
\label{subsec:asymptoticrate}

To conclude this section, we briefly review classical linear convergence results
for gradient methods under the \pl{} assumption, as needed for
Section~\ref{sec:super-linear-conv}.
Proofs are in Appendix~\ref{sec:loja-conv-rate} for completeness.
It is well known that gradient descent with appropriate step-sizes converges
linearly to a minimum when $\mfc$ satisfies \pl{} globally and has a
Lipschitz continuous gradient~\citep{polyak1963gradient}.
The same arguments lead to an asymptotic linear convergence rate when \pl{}
holds only locally.
We say that the sequence $\sequence{x_k}$ satisfies the \emph{sufficient
  decrease property} with constant $\sufficientdecreaseconst > 0$ if
\begin{align}\label{eq:sufficient-decrease}
  \mfc(x_k) - \mfc(x_{k + 1}) \geq \sufficientdecreaseconst \|\grad \mfc(x_k)\|^2.
\end{align}
whenever $x_k$ is sufficiently close to a point $\optpoint$.
The classical result below follows from that
inequality~\citep{polyak1963gradient}.

\begin{proposition}\label{prop:linear-convergence-sufficient-decrease}
  Let $\sequence{x_k}$ be a sequence of iterates converging to some $\optpoint
  \in \optimalset$ and satisfying~\eqref{eq:sufficient-decrease}.
  Suppose $\mfc$ satisfies~\eqref{eq:local-pl} around $\optpoint$ with constant
  $\plconstant > 0$.
  Then the sequence $\sequence{\mfc(x_k)}$ converges linearly to $\mfcopt$ with
  rate $1 - 2 \sufficientdecreaseconst \plconstant$.
  Moreover, $\sequence{\|\grad \mfc(x_k)\|}$ and $\sequence{\dist(x_k,
    \optimalset)}$ converge linearly to zero with rate $\sqrt{1 - 2
    \sufficientdecreaseconst \plconstant} \leq 1 - \sufficientdecreaseconst
  \plconstant$.
\end{proposition}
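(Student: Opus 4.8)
The plan is to run Polyak's classical chaining argument \citep{polyak1963gradient} on the function-value gap $\delta_k = \mfc(x_k) - \mfcopt \geq 0$. Since $x_k \to \optpoint$ and both~\eqref{eq:sufficient-decrease} and~\eqref{eq:local-pl} are eventually active, I would first fix an index $k_0$ such that, for all $k \geq k_0$, the iterate $x_k$ lies in the neighborhood of $\optpoint$ where \pl{} holds and the sufficient decrease inequality~\eqref{eq:sufficient-decrease} holds. I may also assume $\delta_k > 0$ for every $k$: if $\delta_k = 0$ for some $k$ then $x_k$ is a local minimizer with value $\mfcopt$ (since $\mfc \geq \mfcopt$ near $\optpoint$), hence $\grad\mfc(x_k) = \zeros$, and~\eqref{eq:sufficient-decrease} forces $\delta_{k'} = 0$ for all $k' \geq k$, which makes all three claims trivial.

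Next, for $k \geq k_0$ I would combine the two inequalities. Writing \pl{} as $\|\grad\mfc(x_k)\|^2 \geq 2\plconstant\,\delta_k$ and plugging it into $\delta_k - \delta_{k+1} = \mfc(x_k) - \mfc(x_{k+1}) \geq \sufficientdecreaseconst\|\grad\mfc(x_k)\|^2$ gives $\delta_k - \delta_{k+1} \geq 2\sufficientdecreaseconst\plconstant\,\delta_k$, that is, $\delta_{k+1} \leq (1 - 2\sufficientdecreaseconst\plconstant)\,\delta_k$. Iterating yields $\delta_k \leq (1 - 2\sufficientdecreaseconst\plconstant)^{k - k_0}\,\delta_{k_0}$ for all $k \geq k_0$, which is the claimed linear convergence of $\sequence{\mfc(x_k)}$ to $\mfcopt$ with rate $1 - 2\sufficientdecreaseconst\plconstant$; note that $1 - 2\sufficientdecreaseconst\plconstant \geq 0$ automatically, since $\delta_{k+1} \geq 0$ while $\delta_k > 0$.

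For the gradient I would reuse the sufficient decrease inequality alone: $\|\grad\mfc(x_k)\|^2 \leq \sufficientdecreaseconst^{-1}(\delta_k - \delta_{k+1}) \leq \sufficientdecreaseconst^{-1}\delta_k$, so $\|\grad\mfc(x_k)\| \leq \sufficientdecreaseconst^{-1/2}\sqrt{\delta_k}$, and the geometric decay of $\delta_k$ makes $\sequence{\|\grad\mfc(x_k)\|}$ decay linearly with rate $\sqrt{1 - 2\sufficientdecreaseconst\plconstant}$. For the distance I would invoke Proposition~\ref{prop:pl-implies-qg}: \pl{} with constant $\plconstant$ implies $\plconstant$-\qg{} in a possibly smaller neighborhood, so after enlarging $k_0$ if needed we have $\dist(x_k,\optimalset)^2 \leq \frac{2}{\plconstant}\delta_k$, whence $\dist(x_k,\optimalset) \leq \sqrt{2/\plconstant}\,\sqrt{\delta_k}$, again linear with rate $\sqrt{1 - 2\sufficientdecreaseconst\plconstant}$. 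The bound $\sqrt{1 - 2\sufficientdecreaseconst\plconstant} \leq 1 - \sufficientdecreaseconst\plconstant$ is just $(1 - \sufficientdecreaseconst\plconstant)^2 = 1 - 2\sufficientdecreaseconst\plconstant + \sufficientdecreaseconst^2\plconstant^2 \geq 1 - 2\sufficientdecreaseconst\plconstant$.

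There is essentially no mathematical obstacle here; the argument is the textbook one. The only points requiring care are the bookkeeping of neighborhoods---making sure \pl{}, \qg{}, and~\eqref{eq:sufficient-decrease} are simultaneously in force from some common index onward, which is immediate from $x_k \to \optpoint$---and the degenerate case $\delta_k = 0$ dealt with above. If one insisted on rates in the $\limsup$-of-ratios sense for the gradient and distance sequences rather than in the geometric-majorant sense, a little extra work would be needed, but the stated conclusion only asks for the majorant form.
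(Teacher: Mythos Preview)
Your proposal is correct and follows essentially the same route as the paper's proof: combine \eqref{eq:sufficient-decrease} with \eqref{eq:local-pl} to get the contraction $\delta_{k+1}\le(1-2\sufficientdecreaseconst\plconstant)\delta_k$ on the function gap, then bound $\|\grad\mfc(x_k)\|$ via \eqref{eq:sufficient-decrease} alone and $\dist(x_k,\optimalset)$ via Proposition~\ref{prop:pl-implies-qg}. Your treatment is in fact slightly more careful, as you handle the degenerate case $\delta_k=0$ and spell out the elementary bound $\sqrt{1-2\sufficientdecreaseconst\plconstant}\le 1-\sufficientdecreaseconst\plconstant$.
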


In the case where $\manifold$ is a Euclidean space and $\retr_x(s) = x + s$, it
is well known that the sufficient decrease
condition~\eqref{eq:sufficient-decrease} holds for many first-order algorithms
when $\grad \mfc$ is Lipschitz continuous.
This is also true for a general manifold $\manifold$ and retraction $\retr$ as
we briefly describe now.
We say that $\mfc$ and the retraction $\retr$ locally satisfy a
\emph{Lipschitz-type} property around $\optpoint \in \optimalset$ if there
exists $\liplikegd > 0$ such that
\begin{align}\label{eq:lipschitz-like}
  \mfc(\retr_x(s)) \leq \mfc(x) + \inner{\grad \mfc(x)}{s} + \frac{\liplikegd}{2}\|s\|^2
\end{align}
for all $x$ close enough to $\optpoint$ and $s$ small enough.
Note that if $\mfc \circ \retr$ is $\smooth{2}$ then the
inequality~\eqref{eq:lipschitz-like} is always (locally) satisfied.
It is a classical result that~\eqref{eq:lipschitz-like} implies sufficient
decrease for gradient descent with constant step-sizes.
This yields the following statement.

\begin{proposition}\label{prop:sufficient-decrease-gd}
  Suppose $\mfc$ satisfies $\plconstant$-\eqref{eq:local-pl} around $\optpoint
  \in \optimalset$.
  Also assume that~\eqref{eq:lipschitz-like} holds around $\optpoint$.
  Let $\sequence{x_k}$ be a sequence of iterates generated by gradient descent
  with constant step-size $\gamma \in \interval[open]{0}{\frac{2}{\liplikegd}}$,
  that is, $x_{k + 1} = \retr_{x_k}(-\gamma \grad \mfc(x_k))$.
  Given a neighborhood $\mathcal{U}$ of $\optpoint$, there exists a neighborhood
  $\mathcal{V}$ of $\optpoint$ such that if an iterate enters $\mathcal{V}$ then
  the sequence converges linearly to some $x_\infty \in \mathcal{U} \cap
  \optimalset$ with rate $\sqrt{1 - 2\plconstant(\gamma -
    \frac{\liplikegd}{2}\gamma^2)}$.
\end{proposition}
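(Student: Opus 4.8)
The plan is to feed gradient descent into the machinery of Section~\ref{sec:reminders}. First I would extract a sufficient-decrease estimate from~\eqref{eq:lipschitz-like}; this will simultaneously give the~\eqref{eq:vanishing-steps} property, the strong decrease property~\eqref{eq:sufficient-decrease-lyapunov}, and — via Lemma~\ref{lemma:bound-discrete-gd-path-length} together with \pl{} — the~\eqref{eq:bounded-path-length-prop} property near $\optpoint$. Then I would invoke Corollary~\ref{cor:capture-to-single-point} to get convergence to a single point $x_\infty \in \optimalset$, and finally read off the rate from Proposition~\ref{prop:linear-convergence-sufficient-decrease} applied at $x_\infty$. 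Throughout I would shrink the given neighborhood $\mathcal{U}$ so that \pl{} and~\eqref{eq:lipschitz-like} both hold on it.

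\textbf{Step 1: sufficient decrease.} Substituting $s = -\gamma\grad\mfc(x)$ into~\eqref{eq:lipschitz-like} yields, for $x$ near $\optpoint$,
\[
  \mfc\big(\retr_x(-\gamma\grad\mfc(x))\big) \leq \mfc(x) - \Big(\gamma - \tfrac{\liplikegd}{2}\gamma^2\Big)\|\grad\mfc(x)\|^2 = \mfc(x) - \sufficientdecreaseconst\|\grad\mfc(x)\|^2,
\]
with $\sufficientdecreaseconst = \gamma - \frac{\liplikegd}{2}\gamma^2 > 0$ since $0 < \gamma < 2/\liplikegd$. Hence the gradient-descent iterates satisfy~\eqref{eq:sufficient-decrease} with constant $\sufficientdecreaseconst$ while they remain near $\optpoint$; in particular the update is a descent mapping, $\mfc$ is bounded below on a small compact neighborhood, and telescoping forces $\|\grad\mfc(x_k)\| \to 0$, so any sequence that stays near $\optpoint$ accumulates only at critical points.

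\textbf{Step 2: VS, strong decrease, and BPL.} From~\eqref{eq:retr-distance-condition} one gets $\dist(x_k, x_{k+1}) \leq \retrdistboundconst\gamma\|\grad\mfc(x_k)\|$, so~\eqref{eq:vanishing-steps} holds with the continuous function $x \mapsto \retrdistboundconst\gamma\|\grad\mfc(x)\|$, which vanishes on $\optimalset$. Combined with Step~1, $\mfc(x_k) - \mfc(x_{k+1}) \geq \sufficientdecreaseconst\|\grad\mfc(x_k)\|^2 \geq \frac{\sufficientdecreaseconst}{\retrdistboundconst\gamma}\|\grad\mfc(x_k)\|\dist(x_k,x_{k+1})$, and if $x_k \in \optimalset$ then $\grad\mfc(x_k) = \zeros$, so $x_{k+1} = \retr_{x_k}(\zeros) = x_k$; thus~\eqref{eq:sufficient-decrease-lyapunov} holds with $\lyapsufficientdecreaseconst = \sufficientdecreaseconst/(\retrdistboundconst\gamma)$. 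Because \pl{} holds on a whole neighborhood $\mathcal{U}_0$ of $\optpoint$ with associated set $\optimalset$, the \emph{same} inequality (same constant $\plconstant$) holds on a neighborhood of every $y \in \optimalset \cap \mathcal{U}_0$, i.e.\ $\mfc$ is $\plconstant$-\eqref{eq:local-pl} — equivalently $\plconstant$-\eqref{eq:local-loja} with $\plexp = \tfrac12$ — around each such $y$; likewise~\eqref{eq:lipschitz-like}, and hence~\eqref{eq:sufficient-decrease-lyapunov}, holds around each such $y$. Lemma~\ref{lemma:bound-discrete-gd-path-length} then gives~\eqref{eq:bounded-path-length-prop} at each such $y$, with the continuous, $\optimalset$-vanishing path-length bound it provides. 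Fixing a closed ball $\mathcal{B}$ around $\optpoint$ inside $\mathcal{U}_0 \cap \mathcal{U}$ and inside the domain of~\eqref{eq:lipschitz-like}, the set $\minsubset := \optimalset \cap \mathcal{B}$ is compact, and a finite subcover of the local neighborhoods above yields a single neighborhood of $\minsubset$ on which~\eqref{eq:vanishing-steps} and~\eqref{eq:bounded-path-length-prop} both hold; so gradient descent has these properties on $\minsubset$.

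\textbf{Step 3: capture and rate, and the main difficulty.} The set $\minsubset$ consists of critical points, contains $\optpoint$, and every critical point in the interior of $\mathcal{B}$ lies in $\optimalset$ (as \pl{} rules out other critical points near $\optpoint$), hence in $\minsubset$. Gradient descent is a descent mapping satisfying~\eqref{eq:vanishing-steps} and~\eqref{eq:bounded-path-length-prop} on $\minsubset$, whose sequences accumulate only at critical points while they remain near $\minsubset$; so Corollary~\ref{cor:capture-to-single-point} supplies a neighborhood $\mathcal{V}$ of $\optpoint$ such that any sequence entering $\mathcal{V}$ converges to some $x_\infty \in \mathcal{U} \cap \minsubset \subseteq \mathcal{U} \cap \optimalset$. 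Since $x_\infty \in \optimalset \cap \mathcal{U}_0$, $\mfc$ is $\plconstant$-\eqref{eq:local-pl} around $x_\infty$ and the sequence obeys~\eqref{eq:sufficient-decrease} with constant $\sufficientdecreaseconst$, so Proposition~\ref{prop:linear-convergence-sufficient-decrease} (taking its base point to be $x_\infty$) gives that $\mfc(x_k)-\mfcopt$, $\|\grad\mfc(x_k)\|$ and $\dist(x_k,\optimalset)$ decay linearly with ratios $1 - 2\sufficientdecreaseconst\plconstant$ and $\sqrt{1-2\sufficientdecreaseconst\plconstant}$; bounding $\dist(x_k,x_\infty) \leq \sum_{j\geq k}\dist(x_j,x_{j+1}) \leq \retrdistboundconst\gamma\sum_{j\geq k}\|\grad\mfc(x_j)\|$ then shows the sequence itself converges to $x_\infty$ at rate $\sqrt{1-2\plconstant(\gamma-\frac{\liplikegd}{2}\gamma^2)}$, as claimed. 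I expect the only real subtlety to be in Step~2: upgrading the pointwise hypotheses to the \emph{uniform}~\eqref{eq:bounded-path-length-prop} property on the compact set $\minsubset$ that Corollary~\ref{cor:capture-to-single-point} needs, and observing that \pl{} transfers to nearby points of $\optimalset$ with the \emph{same} constant $\plconstant$, so the advertised rate does not degrade.
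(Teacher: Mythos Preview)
Your proposal is correct and follows essentially the same route as the paper: extract the sufficient decrease $\sufficientdecreaseconst = \gamma - \tfrac{\liplikegd}{2}\gamma^2$ from~\eqref{eq:lipschitz-like}, deduce the strong decrease property~\eqref{eq:sufficient-decrease-lyapunov} (your constant $\lyapsufficientdecreaseconst = \sufficientdecreaseconst/(\retrdistboundconst\gamma) = (1-\tfrac{\liplikegd}{2}\gamma)/\retrdistboundconst$ matches the paper's), obtain~\eqref{eq:bounded-path-length-prop} via Lemma~\ref{lemma:bound-discrete-gd-path-length}, apply Corollary~\ref{cor:capture-to-single-point} for capture to some $x_\infty \in \mathcal{U}\cap\optimalset$, and conclude with Proposition~\ref{prop:linear-convergence-sufficient-decrease} at $x_\infty$. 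You are in fact more careful than the paper on two points: you spell out the compactness/uniformity argument needed so that~\eqref{eq:vanishing-steps} and~\eqref{eq:bounded-path-length-prop} hold on a whole set $\minsubset$ (as Corollary~\ref{cor:capture-to-single-point} formally requires), and you add the geometric-series bound showing $\dist(x_k,x_\infty)$ itself decays at the advertised rate, whereas the paper's Proposition~\ref{prop:linear-convergence-sufficient-decrease} only directly controls $\dist(x_k,\optimalset)$.
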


\section{Aiming for superlinear convergence}\label{sec:super-linear-conv}

Under fairly general assumptions, the \pl{} condition (which is compatible with
non-isolated minima) ensures stability of minima and linear convergence for
first-order methods, as recalled in the previous section.
We now assume $\mfc$ is $\smooth{2}$ and investigate superlinear convergence to
non-isolated minima.

A natural starting point is Newton's method which,
in spite of terrible global behavior~\citep{jarre2016simple},
enjoys quadratic convergence to a
\emph{non-singular} minimum, provided
the method is initialized sufficiently close.
Unfortunately, this does not extend to non-isolated minima.

We exhibit here an example showing that the \mb{} property is in general not
sufficient to ensure such a strong convergence behavior.
The update rule is $x_{k + 1} = x_k -\hess \mfc(x_k)^{-1}[\grad \mfc(x_k)]$ (we
may use the pseudo-inverse instead).
Consider the cost function $\mfc(x, y) = \frac{1}{2}(x^2 + 1)y^2$, whose set of
minima is the line $\optimalset = \{(x, y) \in \reals^2 : y = 0\}$.
The gradient and Hessian of $\mfc$ are
\begin{align*}
  \grad \mfc(x, y) =
  \begin{bmatrix}
    xy^2\\ (x^2 + 1)y
  \end{bmatrix}
  \qquad\text{and}\qquad
  \hess \mfc(x, y) =
  \begin{bmatrix}
    y^2 & 2xy\\ 2xy & x^2 + 1
  \end{bmatrix}.
\end{align*}
One can check that $\mfc$ satisfies~\eqref{eq:morse-bott}.
To see how Newton's method behaves on $\mfc$, notice that
\begin{align*}
  \hess \mfc(x, y)^{-1} \grad \mfc(x, y) =
  \frac{1}{3x^2 - 1}
  \begin{bmatrix}
    x^3 + x\\ (x^2 - 1)y
  \end{bmatrix}
\end{align*}
whenever $3x^2 \neq 1$.
Let $x(t) = \sqrt{\frac{1 - t}{3}}$ and $y(t) = \sqrt{t}$.
We can choose $t \in \interval[open]{0}{1}$ as small as desired to make the
point $(x(t), y(t))$ arbitrarily close to $\optimalset$.
Yet computing the Newton step at $(x(t), y(t))$ results in a new point at a
distance $\frac{2}{3}\frac{1 - t}{\sqrt{t}}$ from the optimal set $\optimalset$:
that is arbitrarily far away.
The failure of Newton's method stems from a misalignment between the gradient
and some eigenspaces of the Hessian.

The usual fix for Newton's method is to regularize it.
This yields two classes of algorithms in particular: regularized Newton with cubics and
trust-region methods.
We will show that cubic regularization enjoys satisfying local convergence properties, even in
the presence of non-isolated minima.
In contrast, the picture is less clear for trust-region methods.

Throughout, we make several local assumptions around a point $\optpoint \in
\optimalset$ as stated below.
The first two are Lipschitz-type properties.
\begin{assumption}\label{assu:hess-lip}
  The Hessian $\hess \mfc$ is locally $\hesslipconstant$-Lipschitz continuous
  around $\optpoint$ for some $\hesslipconstant \geq 0$.
\end{assumption}
\begin{assumption}\label{assu:hess-lipschitz-like}
  There exists a constant $\fliplikeconst \geq 0$ such that the Lipschitz-type
  inequality
  \begin{align*}
    \mfc(\retr_{x}(s)) - \mfc(x) - \inner{s}{\grad \mfc(x)} - \frac{1}{2}\inner{s}{\hess \mfc(x)[s]} \leq \frac{\fliplikeconst}{6}\|s\|^3
  \end{align*}
  holds for all $x$ close enough to $\optpoint$ and all $s$ small enough.
\end{assumption}
These assumptions typically hold (locally).
For the retraction $\retr = \Exp$, the first implies the other (with the same constant).
This is true in particular when $\manifold$ is a Euclidean space and $\retr_x(s)
= x + s$.
The third assumption concerns the two classes of algorithms we consider.
At every iterate $x_k$, they build a local model of the cost function around
$x_k$ based on a linear map $\linearmap_k$.
We require it to be close to the Hessian $\hess \mfc(x_k)$.
That holds in particular if $\linearmap_k = \hess \mfc(x_k)$.
\begin{assumption}\label{assu:hess-approx}
  For all $k$ the map $\linearmap_k$ is linear, symmetric, and there is a
  constant $\hessapproxconst \geq 0$ such that
  \begin{align*}
    \|\linearmap_k - \hess \mfc(x_k)\| \leq \hessapproxconst \|\grad \mfc(x_k)\|
  \end{align*}
  whenever the iterate $x_k$ is close enough to $\optpoint$.
\end{assumption}
Finally, we let $\retrdistboundconst \geq 1$ be such that the retraction
satisfies~\eqref{eq:retr-distance-condition} for sufficiently small tangent vectors (which is enough for the local analyses below).

\subsection{Adaptive regularized Newton}\label{subsec:arc}

The regularized Newton method using cubics was introduced
by~\cite{griewank1981modification} and later revisited
by~\cite{nesterov2006cubic}.
An adaptive version of this algorithm was proposed
by~\cite{cartis2011adaptive,cartis2011adaptive2}, with extensions to manifolds
by~\cite{qi2011numerical,zhang2018cubic}
and~\cite{agarwal2021adaptive}.
The adaptive variants update the penalty weight automatically: they are called
ARC.
We consider those variants, and more specifically an algorithm that generates
sequences $\sequence{(x_k, \cubicpenalty_k)}$, where $x_k$ is the
current iterate and $\cubicpenalty_k$ is
the cubic penalty weight.
The update rule is $x_{k + 1} = \retr_{x_k}(s_k)$ for some step $s_k$.
At each iteration $k$, we define a linear operator $\linearmap_k \colon
\tangent_{x_k}\manifold \to \tangent_{x_k}\manifold$ and the step $s_k$ is
chosen to approximately minimize the regularized second-order model
\begin{align}
  \arcmodel_k(s) = \mfc(x_k) + \inner{s}{\grad \mfc(x_k)} + \frac{1}{2}\inner{s}{\linearmap_k[s]} + \frac{\cubicpenalty_k}{3} \|s\|^3
  \label{eq:modelmk}
\end{align}
in a way that we make precise below.
We require $\linearmap_k$ to be close to $\hess \mfc(x_k)$ as prescribed
in~\aref{assu:hess-approx}.

In the literature, there are a number of superlinear (but non-quadratic)
convergence results for such algorithms.
Assuming the \pl{} condition,~\cite{nesterov2006cubic} showed that regularized
Newton generates sequences that converge superlinearly, with exponent $4/3$.
Later, assuming a \loja{} inequality with exponent $\plexp \in \interval[open
right]{0}{1}$,~\cite{zhou2018convergence} characterized the convergence speed of
regularized Newton depending on $\plexp$.
In particular, they also show that the \pl{} condition implies superlinear
convergence.\footnote{
  In fact,~\cite{zhou2018convergence} show a superlinear convergence rate only
  assuming that $\plexp \in \interval[open]{0}{\frac{2}{3}}$ and that $\hess
  \mfc$ is Lipschitz continuous.
  However, these assumptions imply \pl{} as noted in
  Remark~\ref{remark:other-loja-exponents}.
}
More recently,~\cite{qian2022superlinear} developed an abstract framework that
encompasses these superlinear convergence results,
and~\citet[\S5.3]{cartis2022evaluation} reviewed superlinear convergence rates
of ARC under \loja{} inequalities.

There is also a quadratic convergence result:~\cite{yue2019quadratic} employed a
local error bound assumption to show quadratic convergence for the regularized
Newton method.
As discussed in Section~\ref{sec:equiv-properties}, this assumption is
equivalent to local \pl{}, making their result an improvement over the superlinear
rates from the aforementioned references.
This underlines one of the benefits of recognizing the equivalence of the four
conditions \mb{}, \pl{}, \eb{} and \qg{}, as some may more readily lead to a
sharp analysis than others.

Note that the results in~\citep{yue2019quadratic} assume that the subproblem is
solved exactly, meaning that $s_k \in \argmin_s \arcmodel_k(s)$.
Several authors proposed weaker conditions on $s_k$ (only requiring an
approximate solution to the subproblem) to ensure convergence guarantees: this
is important because we cannot find an exact solution in practice.
\cite{agarwal2021adaptive} for example,
following \cite{birgin2017worst},
establish global convergence
guarantees assuming only that $\arcmodel_k(s_k) \leq \arcmodel_k(\zeros)$ and
$\|\grad \arcmodel_k(s_k)\| \leq \arctheta \|s_k\|^2$ for some $\arctheta \geq
0$.
For their results to hold, they require $\linearmap_k = \hess
\hatmfc_k(\zeros)$, where $\hatmfc_k = \mfc \circ \retr_x$ is the pullback of
$\mfc$ at $x$.
This choice of $\linearmap_k$ is compatible with~\aref{assu:hess-approx} for
retractions with bounded initial acceleration (which is typical).

We revisit the results of~\cite{yue2019quadratic} to obtain
an asymptotic quadratic convergence rate for ARC under the \pl{} assumption,
even with approximate solutions to the subproblem.
Specifically, throughout this section we suppose that the steps $s_k$ satisfy
\begin{align}\label{eq:arc-dynamics}
  \arcmodel_k(s_k) \leq \arcmodel_k(\zeros) && \text{and} && \|\grad \arcmodel_k(s_k)\| \leq \arctheta \|s_k\| \|\grad \mfc(x_k)\|
\end{align}
for some $\arctheta \geq 0$.
At each iteration $k$, we define the ratio
\begin{align}
  \arcrho_k = \frac{\mfc(x_k) - \mfc(\retr_{x_k}(s_k))}{\arcmodel_k(\zeros) - \arcmodel_k(s_k) + \frac{\cubicpenalty_k}{3}\|s_k\|^3}
  \label{eq:arcrhok}
\end{align}
(as do~\cite{birgin2017worst}), which measures the adequacy of the local model.
Iteration $k$ is said to be \emph{successful} when $\arcrho_k$ is larger than some fixed
parameter $\arcrho_c \in \interval[open]{0}{1}$.
In this case, we set $x_{k + 1} = \retr_{x_k}(s_k)$ and decrease the penalty
weight so that $\cubicpenalty_{k + 1} \leq \cubicpenalty_k$.
The update mechanism ensures that $\cubicpenalty_k \geq \cubicpenalty_{\min}$
for all $k$, where $\cubicpenalty_{\min} > 0$ is a fixed parameter.
Conversely, the step is \emph{unsuccessful} when $\arcrho_k < \arcrho_c$: we set $x_{k
  + 1} = x_k$ and increase the penalty so that $\cubicpenalty_{k + 1} >
\cubicpenalty_k$.
The explicit updates for $\cubicpenalty_{k + 1}$ are stated
in~\citep{agarwal2021adaptive}.
We prove the following result for this algorithm.

\begin{theorem}\label{th:arc-main-theorem}
  Suppose~\aref{assu:hess-lip}, \aref{assu:hess-lipschitz-like},
  \aref{assu:hess-approx} and~\eqref{eq:local-pl} hold around $\optpoint \in
  \optimalset$.
  We run ARC with inexact subproblem solver satisfying~\eqref{eq:arc-dynamics}.
  Given any neighborhood $\mathcal{U}$ of $\optpoint$, there exists a
  neighborhood $\mathcal{V}$ of $\optpoint$ such that if an iterate enters
  $\mathcal{V}$ then the sequence converges quadratically to some $x_\infty \in
  \mathcal{U} \cap \optimalset$.
\end{theorem}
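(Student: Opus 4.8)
The plan is to follow the classical capture-plus-quadratic-rate template for regularized Newton, using the equivalences of Section~\ref{sec:equiv-properties} in place of the usual non-degeneracy hypothesis. Work throughout in a ball $\mathcal{B}$ around $\optpoint$ small enough that \aref{assu:hess-lip}, \aref{assu:hess-lipschitz-like} and \eqref{eq:local-pl} hold; that $\mfc\geq\mfcopt$ on $\mathcal{B}$ with every critical point of $\mfc$ in $\mathcal{B}$ lying in $\optimalset$ (this is \pl{}/\eb{}); that $\|\grad\mfc(x)\|$ is commensurate with $\dist(x,\optimalset)$ on $\mathcal{B}$ (Proposition~\ref{prop:grad-dist-bounds}); and that, writing $\rankop=\rank\hess\mfc(\optpoint)$, the orthogonal projector $P(x)$ onto the top-$\rankop$ eigenspace of $\hess\mfc(x)$ is well defined on $\mathcal{B}$ with $\lambda_{\rankop}(\hess\mfc(x))\geq\plconstant-\varepsilon$, $|\lambda_{\rankop+1}(\hess\mfc(x))|\leq\varepsilon$ for a small fixed $\varepsilon>0$ (Proposition~\ref{prop:eigenvalue-bigger-pl}, Corollary~\ref{cor:hessconstantrank}, continuity of eigenvalues), and $\|(I-P(x))\grad\mfc(x)\|=O(\dist(x,\optimalset)^2)$ on $\mathcal{B}$ (the Lipschitz-Hessian case of Lemma~\ref{lemma:grad-image-hess}; this is where \aref{assu:hess-lip} enters, and it is ultimately responsible for the \emph{quadratic} rather than merely super-linear rate). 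The conclusion will come from Corollary~\ref{cor:capture-to-single-point} applied with $\minsubset=\optimalset\cap\mathcal{B}$: ARC is a descent mapping whose iterates accumulate only at critical points (standard global theory for the inexactness \eqref{eq:arc-dynamics}; see~\citep{birgin2017worst,agarwal2021adaptive}), so it suffices to verify that ARC has the \eqref{eq:vanishing-steps} and \eqref{eq:bounded-path-length-prop} properties on $\minsubset$ and contracts $\dist(\cdot,\optimalset)$ quadratically near $\optpoint$.

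First comes routine ARC bookkeeping: using \aref{assu:hess-lipschitz-like} and \eqref{eq:hess-pullback-bound} to bound the ratio $\arcrho_k$ in \eqref{eq:arcrhok}, there is a threshold $\bar\cubicpenalty$ such that any iteration with $x_k\in\mathcal{B}$ and $\cubicpenalty_k\geq\bar\cubicpenalty$ is successful; hence along any run of consecutive iterates in $\mathcal{B}$ the penalty stays in $[\cubicpenalty_{\min},\bar\cubicpenalty]$, every step is accepted, and $\dist(x_k,x_{k+1})\leq\retrdistboundconst\|s_k\|$ by \eqref{eq:retr-distance-condition}. The crux of the whole argument, and the step I expect to be hardest, is the sharp step-size bound
\[
  \|s_k\| = O\!\big(\dist(x_k,\optimalset)\big)
\]
with constant uniform over $\mathcal{B}$. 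To get it, write $g=\grad\mfc(x_k)$ and $P=P(x_k)$, use the approximate stationarity $\grad\arcmodel_k(s_k)=g+\linearmap_k s_k+\cubicpenalty_k\|s_k\|s_k$ with $\|\grad\arcmodel_k(s_k)\|\leq\arctheta\|s_k\|\|g\|$ from \eqref{eq:arc-dynamics}, and split it into its $P$-component and its $(I-P)$-component. On $\range P$ the operator $\hess\mfc(x_k)$ (hence $\linearmap_k$, up to the $\pullbackhessconst\|g\|$ error of \eqref{eq:hess-pullback-bound}) is $\succeq\plconstant-\varepsilon$, which together with the smallness of the cubic coefficient $\cubicpenalty_k\|s_k\|$ near $\optpoint$ yields $\|Ps_k\|=O(\|g\|)$; on $(I-P)$-directions $\hess\mfc(x_k)$ is $O(\varepsilon)$, so the $(I-P)$-equation balances $\cubicpenalty_k\|s_k\|(I-P)s_k$ against $(I-P)g$ and lower-order terms, and using $\|(I-P)g\|=O(\dist^2)=O(\|g\|^2)$ one obtains $\|(I-P)s_k\|=O(\|g\|)$ as well — here one must carefully treat the regimes $\|s_k\|\approx\|Ps_k\|$ and $\|s_k\|\approx\|(I-P)s_k\|$ and absorb the $\varepsilon$-term into the cubic term after shrinking $\mathcal{B}$. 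Commensurability of $\|g\|$ and $\dist(x_k,\optimalset)$ then gives the displayed bound.

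With the step-size bound in hand, the quadratic contraction is comparatively direct. Expanding the pullback $\mfc\circ\retr_{x_k}$ to second order, controlled by \aref{assu:hess-lipschitz-like}, \aref{assu:hess-lip} and \eqref{eq:retr-distance-condition}, and substituting the stationarity relation for $s_k$, one gets
\[
  \|\grad\mfc(x_{k+1})\| = O\!\big(\|s_k\|^2 + \|\linearmap_k-\hess\mfc(x_k)\|\,\|s_k\| + \arctheta\|s_k\|\|g\|\big) = O\!\big(\|s_k\|^2 + \|g\|\|s_k\|\big) = O\!\big(\dist(x_k,\optimalset)^2\big),
\]
the last equality using the step-size bound and $\|g\|=O(\dist(x_k,\optimalset))$. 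The error bound \eqref{eq:error-bound} (available from Proposition~\ref{prop:grad-dist-bounds}) then gives $\dist(x_{k+1},\optimalset)\leq C\,\dist(x_k,\optimalset)^2$ with $C$ uniform over $\mathcal{B}$, whenever $x_k$ and $x_{k+1}$ both lie in $\mathcal{B}$.

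It remains to assemble. Since $\dist(x_k,x_{k+1})\leq\retrdistboundconst\|s_k\|=O(\dist(x_k,\optimalset))$ and $x\mapsto\dist(x,\optimalset)$ is continuous and vanishes on $\optimalset$, ARC has the \eqref{eq:vanishing-steps} property on $\minsubset$. For \eqref{eq:bounded-path-length-prop}, shrink $\mathcal{B}$ so that $d_L:=\dist(x_L,\optimalset)<\tfrac{1}{2C}$ there; then along any run of consecutive iterates in $\mathcal{B}$ the quadratic contraction makes $d_k$ decay super-geometrically, so $\sum_{k\geq L}d_k\leq 2d_L$ and $\sum_{k=L}^{K-1}\dist(x_k,x_{k+1})=O(d_L)=:\boundpathlength(x_L)$ with $\boundpathlength$ continuous and vanishing on $\optimalset$. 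Corollary~\ref{cor:capture-to-single-point} now furnishes a neighborhood $\mathcal{V}$ of $\optpoint$ such that any sequence entering $\mathcal{V}$ converges to some $x_\infty\in\optimalset\cap\mathcal{B}$; taking $\mathcal{B}$ inside the prescribed neighborhood $\mathcal{U}$ of the theorem gives $x_\infty\in\mathcal{U}\cap\optimalset$. Finally, for the rate: $\dist(x_k,x_\infty)\leq\sum_{j\geq k}\dist(x_j,x_{j+1})=O(d_k)$ by the super-geometric tail, while trivially $d_k\leq\dist(x_k,x_\infty)$, so the two are commensurate; hence $\dist(x_{k+1},x_\infty)=O(d_{k+1})=O(d_k^2)=O(\dist(x_k,x_\infty)^2)$, i.e.\ quadratic convergence to $x_\infty$.
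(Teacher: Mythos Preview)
Your overall architecture matches the paper's: establish \eqref{eq:vanishing-steps} and \eqref{eq:bounded-path-length-prop} near $\optpoint$, invoke Corollary~\ref{cor:capture-to-single-point} for capture, then prove a quadratic contraction of $\dist(\cdot,\optimalset)$. Your $P$/$Q$ splitting of $s_k$ is essentially Lemma~\ref{lemma:arc-s-dist-bound}, and your contraction argument parallels Proposition~\ref{prop:arc-converges-quadratically}. One genuine difference is your route to \eqref{eq:bounded-path-length-prop}: you obtain it directly from the quadratic contraction (a super-geometrically decaying sequence of distances is summable), whereas the paper first establishes the strong decrease condition~\eqref{eq:sufficient-decrease-lyapunov} and only then applies the \loja{}-type Lemma~\ref{lemma:bound-discrete-gd-path-length}. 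Your shortcut is legitimate for that purpose because unsuccessful iterations contribute zero path length.

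There is, however, a real gap in your ``routine bookkeeping''. From ``$\cubicpenalty_k\geq\bar\cubicpenalty$ implies success'' you conclude that the penalty stays in $[\cubicpenalty_{\min},\bar\cubicpenalty]$ and that every step is accepted. This does not follow: after a success the penalty decreases, possibly below $\bar\cubicpenalty$, and then the next step may be rejected; the threshold argument bounds $\cubicpenalty_k$ but does not preclude rejections recurring indefinitely. Yet you need rejections to eventually cease for your final rate: at an unsuccessful iteration $x_{k+1}=x_k$, so $d_{k+1}=d_k$ rather than $O(d_k^2)$, and your chain $\dist(x_{k+1},x_\infty)=O(d_{k+1})=O(d_k^2)=O(\dist(x_k,x_\infty)^2)$ breaks. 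The paper closes this with Lemma~\ref{lemma:arc-eventually-successful}: near $\optpoint$ one has $\arcrho_k\geq 1-\varepsilon$ for \emph{every} $\cubicpenalty_k\geq\cubicpenalty_{\min}$, so all iterations are eventually successful and $\cubicpenalty_k$ is eventually non-increasing. That proof is not routine; it hinges on a lower bound $\inner{s_k}{\hess\mfc(x_k)[s_k]}\geq c\|s_k\|^2$ uniform in $\cubicpenalty_k$ (Lemma~\ref{lemma:inner-product-lower-bound}), which in turn rests on the ratio bound $\|P(x_k)s_k\|\geq r\|s_k\|$ of Lemma~\ref{lemma:ratio-steps}. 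You have the raw $P$/$Q$ estimates, but this assembly is missing, and it is precisely the place where the \pl{} structure does work that the standard ARC success mechanism cannot.
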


We first adapt an argument from~\cite[Lem.~6]{agarwal2021adaptive} to show that
ARC satisfies the vanishing steps property~\eqref{eq:vanishing-steps} defined in
Section~\ref{subsec:capture}.

\begin{lemma}\label{lemma:arc-vanishing-condition}
  Suppose~\aref{assu:hess-approx} holds around a point $\optpoint$.
  There exists a neighborhood $\mathcal{U}$ of $\optpoint$ such that if an
  iterate $x_k$ is in $\mathcal{U}$ then the step-size has norm bounded as
  $\|s_k\| \leq \tilde \boundvanishingsteps(x_k, \cubicpenalty_k) \leq \tilde
  \boundvanishingsteps(x_k, \cubicpenalty_{\min})$, where
  \begin{align*}
    \tilde \boundvanishingsteps(x, \cubicpenalty) = \sqrt{\frac{3\|\grad \mfc(x)\|}{\cubicpenalty}} + \frac{3}{2\cubicpenalty}\absmineig(x) && \text{and} && \absmineig(x) = \max\!\Big(0, \hessapproxconst \|\grad \mfc(x)\| - \lambda_{\min}(\hess \mfc(x))\Big).
  \end{align*}
  In particular, ARC has
  the~\eqref{eq:vanishing-steps} property around second-order critical points with
  $\boundvanishingsteps(x) = \retrdistboundconst \tilde \boundvanishingsteps(x,
  \cubicpenalty_{\min})$, where $\retrdistboundconst$
  controls possible retraction distortion as
  in~\eqref{eq:retr-distance-condition}.
\end{lemma}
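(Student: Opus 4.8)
The plan is to exploit the first requirement in~\eqref{eq:arc-dynamics}, namely $\arcmodel_k(s_k) \le \arcmodel_k(\zeros) = \mfc(x_k)$. Expanding the model~\eqref{eq:modelmk}, this reads $\inner{s_k}{\grad \mfc(x_k)} + \frac{1}{2}\inner{s_k}{\linearmap_k[s_k]} + \frac{\cubicpenalty_k}{3}\|s_k\|^3 \le 0$. I would then bound the first term below by $-\|s_k\|\,\|\grad \mfc(x_k)\|$ (\causchwarz{}) and the second by $\tfrac{1}{2}\lambda_{\min}(\linearmap_k)\|s_k\|^2$. Assuming $s_k \neq \zeros$ (the claimed bound being trivial otherwise), dividing through by $\|s_k\|$ leaves the scalar quadratic inequality $\frac{\cubicpenalty_k}{3}\|s_k\|^2 + \frac{1}{2}\lambda_{\min}(\linearmap_k)\|s_k\| - \|\grad \mfc(x_k)\| \le 0$ in the unknown $\|s_k\| \ge 0$.

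Next I would bound $\|s_k\|$ by the larger (hence non-negative) root of the associated quadratic, and apply $\sqrt{a+b} \le \sqrt{a} + \sqrt{b}$ to the discriminant together with the identity $|\lambda| - \lambda = 2\max(0,-\lambda)$. After simplification this yields $\|s_k\| \le \sqrt{3\|\grad \mfc(x_k)\|/\cubicpenalty_k} + \frac{3}{2\cubicpenalty_k}\max\!\big(0,-\lambda_{\min}(\linearmap_k)\big)$. It then remains to replace $\linearmap_k$ by $\hess \mfc(x_k)$: Weyl's perturbation inequality combined with~\eqref{eq:hess-pullback-bound} gives $\lambda_{\min}(\linearmap_k) \ge \lambda_{\min}(\hess \mfc(x_k)) - \pullbackhessconst\|\grad \mfc(x_k)\|$, whence $\max(0,-\lambda_{\min}(\linearmap_k)) \le \absmineig(x_k)$; this delivers $\|s_k\| \le \tilde \boundvanishingsteps(x_k, \cubicpenalty_k)$. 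The inequality $\tilde \boundvanishingsteps(x_k,\cubicpenalty_k) \le \tilde \boundvanishingsteps(x_k,\cubicpenalty_{\min})$ is then immediate since both summands of $\tilde \boundvanishingsteps(x,\cubicpenalty)$ are non-increasing in $\cubicpenalty$ and $\cubicpenalty_k \ge \cubicpenalty_{\min}$ by the update rule.

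For the~\eqref{eq:vanishing-steps} conclusion: on an unsuccessful step $x_{k+1} = x_k$ so there is nothing to prove, while on a successful step $x_{k+1} = \retr_{x_k}(s_k)$, and~\eqref{eq:retr-distance-condition} gives $\dist(x_k, x_{k+1}) \le \retrdistboundconst\|s_k\| \le \retrdistboundconst\tilde \boundvanishingsteps(x_k, \cubicpenalty_{\min}) =: \boundvanishingsteps(x_k)$. The map $\boundvanishingsteps$ is continuous because the eigenvalues of $\hess \mfc$ are continuous and $\absmineig$ is a pointwise maximum of continuous functions; and at a second-order critical point $\optpoint$ one has $\grad \mfc(\optpoint) = \zeros$ and $\lambda_{\min}(\hess \mfc(\optpoint)) \ge 0$, so $\absmineig(\optpoint) = 0$ and thus $\boundvanishingsteps(\optpoint) = 0$, exactly as the property requires.

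I do not anticipate a genuine obstacle; this is essentially a careful exercise in bounding the root of a quadratic. The points that warrant care are the degenerate case $s_k = \zeros$, the sign bookkeeping around $\max(0, -\lambda_{\min}(\linearmap_k))$ (so as not to implicitly assume $\linearmap_k$ is indefinite), and checking that $\sqrt{a+b} \le \sqrt{a} + \sqrt{b}$ is applied only to manifestly non-negative quantities when splitting the discriminant.
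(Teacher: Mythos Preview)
Your proposal is correct and follows essentially the same route as the paper: both start from the model-decrease condition in~\eqref{eq:arc-dynamics}, apply \causchwarz{} and a minimum-eigenvalue bound to obtain a scalar quadratic inequality in $\|s_k\|$, and then solve that quadratic (using $\sqrt{a+b}\le\sqrt a+\sqrt b$ on the discriminant). The only cosmetic difference is ordering: the paper absorbs the model-accuracy bound~\eqref{eq:hess-pullback-bound} into the quadratic term up front (yielding $\absmineig(x_k)$ directly), whereas you first work with $\lambda_{\min}(\linearmap_k)$ and invoke Weyl's inequality at the end; the resulting bound is identical.
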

\begin{proof}
  The model decrease in~\eqref{eq:arc-dynamics} and the model
  accuracy~\aref{assu:hess-approx} ensure
  \begin{align*}
    \cubicpenalty_k\|s_k\|^3 &\leq -3 \Big\langle s_k, \grad \mfc(x_k) + \frac{1}{2}\hess \mfc(x_k)[s_k] + \frac{1}{2}(\linearmap_k - \hess \mfc(x_k))[s_k]\Big\rangle\\
                                  &\leq 3 \|s_k\|\Big(\|\grad \mfc(x_k)\| + \frac{1}{2}\absmineig(x_k)\|s_k\|\Big).
  \end{align*}
  Divide by $\|s_k\|$ and solve the quadratic inequality for $\|s_k\|$ to get the result,
  recalling $\cubicpenalty_{k} \geq \cubicpenalty_{\min}$. %
\end{proof}

The function $\boundvanishingsteps$ is indeed continuous with value zero on
$\optimalset$, as required in~\eqref{eq:vanishing-steps}.
To obtain the vanishing steps property we only relied on the decrease
requirement $\arcmodel_k(s_k) \leq \arcmodel_k(\zeros)$.
Assuming a \pl{} condition and a locally Lipschitz continuous Hessian, we now
derive sharper bounds for the steps.
We rely on the fact that the gradient of the model
$\arcmodel_k$~\eqref{eq:modelmk}
\begin{align}\label{eq:arc-model-gradient}
  \grad \arcmodel_k(s_k) = \grad \mfc(x_k) + \linearmap_k[s_k] + \cubicpenalty_k \|s_k\| s_k
\end{align}
is small.
We will exploit the particular alignment of $\grad \mfc$ given in
Lemma~\ref{lemma:grad-image-hess}.
In the following statements, given a point $\optpoint \in \optimalset$ and
$\rankop = \rank(\hess \mfc(\optpoint))$, we let $P(x) \colon
\tangent_x\manifold \to \tangent_x\manifold$ denote the orthogonal projector
onto the top $d$ eigenspace of $\hess \mfc(x)$.
This is always well defined in a neighborhood of $\optpoint$ by continuity of
eigenvalues.
Additionally, we let $Q(x) = I - P(x)$ be the projector onto the orthogonal
complement.

\begin{lemma}\label{lemma:arc-s-dist-bound}
  Suppose that~\aref{assu:hess-lip}, \aref{assu:hess-approx}
  and~$\plconstant$-\eqref{eq:local-pl} hold around $\optpoint \in \optimalset$.
  Given $\varepsilon > 0$, $\lamflat < \plconstant$ and $\lamsharp >
  \lambda_{\max}(\hess \mfc(\optpoint))$, there exists a neighborhood
  $\mathcal{U}$ of $\optpoint$ and a constant $\qgradconst \geq 0$ such that if
  $x_k$ is an iterate in $\mathcal{U}$ and $s_k$ is a step
  satisfying~\eqref{eq:arc-dynamics} then
  \begin{align}\label{eq:arc-s-bound-p}
    (1 - \varepsilon)\frac{\|\grad \mfc(x_k)\|}{\lamsharp + \cubicpenalty_k\|s_k\|} \leq \|P(x_k)s_k\| \leq (1 + \varepsilon)\frac{\|\grad \mfc(x_k)\|}{\lamflat + \cubicpenalty_k\|s_k\|} \qquad\text{and}
  \end{align}
  \begin{align}\label{eq:arc-s-bound-q}
    \|Q(x_k)s_k\| \leq \frac{1}{\cubicpenalty_k}\Big((\arctheta + \hessapproxconst)\|\grad \mfc(x_k)\| + (\hesslipconstant + \qgradconst\sqrt{\cubicpenalty_k})\dist(x_k, \optimalset)\Big).
  \end{align}
\end{lemma}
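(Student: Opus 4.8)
The plan is to exploit that $s_k$ is nearly stationary for the regularized model $\arcmodel_k$. Write $g = \grad\mfc(x_k)$, $H = \hess\mfc(x_k)$ (not to be confused with $\linearmap_k$), $s = s_k$, $\sigma = \cubicpenalty_k$, $P = P(x_k)$, $Q = Q(x_k)$, and $\rankop = \rank\hess\mfc(\optpoint)$. Combining the model-gradient identity~\eqref{eq:arc-model-gradient} with~\eqref{eq:hess-pullback-bound} and~\eqref{eq:arc-dynamics} yields
\begin{align*}
  \big\| g + H[s] + \sigma\|s\|\,s \big\| \;\le\; (\arctheta + \pullbackhessconst)\,\|g\|\,\|s\|,
\end{align*}
and both claims follow by projecting this relation onto the $H$-invariant subspaces $\ima P$ and $\ima Q$ (recall $HP = PH$, $HQ = QH$, $QHP = 0$) and treating each piece. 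Note first that $s = 0$ forces $g = 0$ by the displayed inequality, so outside the case $x_k \in \optimalset$ (handled at the end) we have $g \neq 0$ and $s \neq 0$. Beforehand I would shrink $\mathcal{U}$ finitely often so that, simultaneously: $\plconstant$-\eqref{eq:local-pl} and hence \qg{} hold, and Proposition~\ref{prop:grad-dist-bounds} gives $\lamflat\,\dist(x_k,\optimalset) \le \|g\| \le \lamsharp\,\dist(x_k,\optimalset)$; $\proj_\optimalset$ is well defined on $\mathcal{U}$ (Lemma~\ref{lemma:log-well-defined}) and $\rank\hess\mfc = \rankop$ on $\optimalset$ near $\optpoint$ (Corollary~\ref{cor:hessconstantrank}); the projectors $P,Q$ are well defined with $\lamflat < \lambda_\rankop(H)$ and $\lambda_1(H) < \lamsharp$ (possible since $\lambda_\rankop(\hess\mfc(\optpoint)) \ge \plconstant > \lamflat$ by Proposition~\ref{prop:eigenvalue-bigger-pl}, $\lambda_1(\hess\mfc(\optpoint)) = \lammax(\hess\mfc(\optpoint)) < \lamsharp$, and the eigenvalues of $\hess\mfc$ are continuous); $\|s_k\|$ is small enough (Lemma~\ref{lemma:arc-vanishing-condition}) that $(\arctheta+\pullbackhessconst)\|s_k\| \le \varepsilon/2$; and the estimates $\|Q(x)\grad\mfc(x)\| \le c_0\,\dist(x,\optimalset)^2$ (Lemma~\ref{lemma:grad-image-hess}, using~\aref{assu:hess-lip}) and $\|P(x)\grad\mfc(x)\| \ge (1-\varepsilon/2)\|\grad\mfc(x)\|$ hold on $\mathcal{U}$, for some constant $c_0 \ge 0$.

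For~\eqref{eq:arc-s-bound-p}, apply $P$ to the displayed inequality and use $HP = PH$ to get $(H+\sigma\|s\|I)(Ps) = -Pg + e$ with $\|e\| \le (\arctheta+\pullbackhessconst)\|g\|\|s\|$. On $\ima P$ the self-adjoint operator $H + \sigma\|s\|I$ has all eigenvalues in $(\lamflat + \sigma\|s\|,\ \lamsharp + \sigma\|s\|)$, so $(\lamflat+\sigma\|s\|)\|Ps\| \le \|(H+\sigma\|s\|I)Ps\| \le (\lamsharp+\sigma\|s\|)\|Ps\|$. Combining with $\|Pg\| \in \left[(1-\varepsilon/2)\|g\|,\ \|g\|\right]$ and the triangle / reverse-triangle inequalities, one gets $(\lamflat+\sigma\|s\|)\|Ps\| \le \|Pg\| + \|e\| \le (1+\varepsilon)\|g\|$ and $(\lamsharp+\sigma\|s\|)\|Ps\| \ge \|Pg\| - \|e\| \ge (1-\varepsilon)\|g\|$, which upon dividing is exactly~\eqref{eq:arc-s-bound-p}.

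For~\eqref{eq:arc-s-bound-q}, apply $Q$, use $QHs = QHQ\,s$ (as $QHP = 0$) and $\|Qs\| \le \|s\|$; the triangle inequality then gives $\sigma\|s\|\,\|Qs\| \le \|Qg\| + \|QHQ\|\,\|s\| + (\arctheta+\pullbackhessconst)\|g\|\|s\|$, i.e.
\begin{align*}
  \|Qs\| \;\le\; \frac{\|Qg\|}{\sigma\|s\|} \;+\; \frac{\|QHQ\|}{\sigma} \;+\; \frac{(\arctheta+\pullbackhessconst)\|g\|}{\sigma}.
\end{align*}
The last term already appears in~\eqref{eq:arc-s-bound-q}. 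For the middle term, $\|QHQ\| = \max_{i>\rankop}|\lambda_i(H)|$; picking $y \in \proj_\optimalset(x_k)$ we have $\lambda_i(\hess\mfc(y)) = 0$ for $i>\rankop$ by Corollary~\ref{cor:hessconstantrank}, so Weyl's perturbation inequality together with the $\hesslipconstant$-Lipschitz continuity of $\hess\mfc$ (\aref{assu:hess-lip}, comparing $\hess\mfc(x_k)$ and $\hess\mfc(y)$ along the minimizing geodesic via parallel transport, an isometry) gives $\|QHQ\| \le \hesslipconstant\,\dist(x_k,\optimalset)$. It remains to absorb the first term, $\|Qg\|/(\sigma\|s\|) \le c_0\,\dist(x_k,\optimalset)^2/(\sigma\|s\|)$, into a term of the form $\qgradconst\,\dist(x_k,\optimalset)/\sqrt{\sigma}$.

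This is the main obstacle, and it amounts to a uniform lower bound $\|s_k\| \ge c\,\dist(x_k,\optimalset)/\sqrt{\cubicpenalty_k}$ for some constant $c > 0$. I would derive it from the already-proven lower bound in~\eqref{eq:arc-s-bound-p} together with $\|Ps\| \le \|s\|$ and $\|g\| \ge \lamflat\,\dist(x_k,\optimalset)$, which give $\sigma\|s\|^2 + \lamsharp\|s\| \ge (1-\varepsilon)\lamflat\,\dist(x_k,\optimalset)$; solving this quadratic inequality for $\|s\|$ and distinguishing whether the cubic or the linear term dominates the left side (respectively $\|s\| \gtrsim \sqrt{\dist(x_k,\optimalset)/\sigma}$ and $\|s\| \gtrsim \dist(x_k,\optimalset) \gtrsim \dist(x_k,\optimalset)/\sqrt{\sigma}$, the latter using $\sigma \ge \cubicpenalty_{\min}$), and using that $\dist(\cdot,\optimalset)$ is bounded on $\mathcal{U}$ and $\cubicpenalty_k \ge \cubicpenalty_{\min} > 0$, produces such a $c$. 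Then $\|Qg\|/(\sigma\|s\|) \le (c_0/c)\,\dist(x_k,\optimalset)/\sqrt{\cubicpenalty_k}$, so $\qgradconst = c_0/c$ closes~\eqref{eq:arc-s-bound-q}. Finally, in the boundary case $x_k \in \optimalset$ one has $g = 0$ and $\linearmap_k = \hess\mfc(x_k) \succeq 0$ (by~\eqref{eq:hess-pullback-bound} and $x_k$ being a local minimum), so the model-decrease part of~\eqref{eq:arc-dynamics} forces $s_k = 0$ and both~\eqref{eq:arc-s-bound-p} and~\eqref{eq:arc-s-bound-q} hold trivially. The remaining work is bookkeeping: checking the finitely many shrinkings of $\mathcal{U}$ are compatible and that $c_0$, $c$ (hence $\qgradconst$) depend only on $\plconstant,\lamflat,\lamsharp,\arctheta,\pullbackhessconst,\hesslipconstant,\cubicpenalty_{\min}$ and on the chosen neighborhood.
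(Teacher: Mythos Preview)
Your argument is correct. The treatment of $\|P(x_k)s_k\|$ matches the paper's essentially verbatim. For $\|Q(x_k)s_k\|$, however, you take a different route. The paper pairs the projected identity with $Q(x_k)s_k$ to obtain the \emph{quadratic} inequality
\[
\cubicpenalty_k\|Q(x_k)s_k\|^2 \le \|Q(x_k)\grad\mfc(x_k)\| + \big(\arctheta\|\grad\mfc(x_k)\| + \absmineig(x_k)\big)\|Q(x_k)s_k\|,
\]
and solving it produces the term $\sqrt{\cubicpenalty_k\|Q(x_k)\grad\mfc(x_k)\|}/\cubicpenalty_k$ directly, after which $\sqrt{\|Q(x_k)\grad\mfc(x_k)\|}\le \qgradconst\,\dist(x_k,\optimalset)$ from Lemma~\ref{lemma:grad-image-hess} finishes the bound without ever touching $\|s_k\|$. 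By contrast, you apply a plain triangle inequality, divide by $\cubicpenalty_k\|s_k\|$, and are then forced to manufacture a lower bound $\|s_k\|\ge c\,\dist(x_k,\optimalset)/\sqrt{\cubicpenalty_k}$ from the already-proven~\eqref{eq:arc-s-bound-p}. This works (your case split on the quadratic $\cubicpenalty_k\|s_k\|^2+\lamsharp\|s_k\|\ge (1-\varepsilon)\lamflat\,\dist(x_k,\optimalset)$ is sound, and the resulting constant indeed depends on the diameter of $\mathcal{U}$ and on $\cubicpenalty_{\min}$), but it couples the $Q$-estimate to the $P$-estimate, whereas the paper's quadratic-inequality trick keeps the two parts independent and yields a cleaner constant. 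A minor side difference: you control the ``bottom'' Hessian contribution via $\|Q(x_k)\hess\mfc(x_k)Q(x_k)\|\le \hesslipconstant\,\dist(x_k,\optimalset)$ (Weyl), while the paper routes the same term through $\absmineig(x_k)$; both give the same $\hesslipconstant\,\dist(x_k,\optimalset)$ in the end.
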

\begin{proof}
  Let $\mathcal{U}$ be a neighborhood of $\optpoint$ where the orthogonal
  projector $P(x)$ is well defined for all $x \in \mathcal{U}$.
  Let $x_k \in \mathcal{U}$ and $s_k$ be a step that
  satisfies~\eqref{eq:arc-dynamics}.
  We first bound the term $P(x_k)s_k$.
  Multiply~\eqref{eq:arc-model-gradient} by $P(x_k)$ and use commutativity of $P(x_k)$ and $\hess \mfc(x_k)$ to get
  \begin{align*}
    \Big(\hess \mfc(x_k) + \cubicpenalty_k \|s_k\| I\Big)P(x_k) s_k = P(x_k) \Big(- \grad \mfc(x_k) + \grad \arcmodel_k(s_k) - \big(\linearmap_k - \hess \mfc(x_k)\big)[s_k]\Big).
  \end{align*}
  If we apply~\aref{assu:hess-approx} and~\eqref{eq:arc-dynamics}, we find that
  $\|\grad \arcmodel_k(s_k) - (\linearmap_k - \hess \mfc(x_k))[s_k]\| \leq
  (\arctheta + \hessapproxconst)\|\grad \mfc(x_k)\|\|s_k\|$ when $x_k$ is close
  enough to $\optpoint$ (shrink $\mathcal{U}$ as needed).
  Consequently, the previous equality yields
  \begin{align*}
    \frac{\|P(x_k)\grad \mfc(x_k)\| - (\arctheta + \hessapproxconst)\|\grad \mfc(x_k)\|\|s_k\|}{\lambda_1(\hess \mfc(x_k)) + \cubicpenalty_k\|s_k\|} \leq \|P(x_k)s_k\| \leq \frac{\|P(x_k)\grad \mfc(x_k)\| + (\arctheta + \hessapproxconst)\|\grad \mfc(x_k)\|\|s_k\|}{\lambda_d(\hess \mfc(x_k)) + \cubicpenalty_k\|s_k\|}.
  \end{align*}
  Lemma~\ref{lemma:grad-image-hess} gives that $\|P(x)\grad \mfc(x)\| = \|\grad
  \mfc(x)\| + o(\|\grad \mfc(x)\|)$ as $x \to \optpoint$.
  Moreover, the steps $s_k$ vanish (as shown in
  Lemma~\ref{lemma:arc-vanishing-condition}), so we obtain the
  bound~\eqref{eq:arc-s-bound-p} when $x_k$ is sufficiently close to
  $\optpoint$.
  We now let $Q(x) = I - P(x)$ and consider the term $Q(x_k)s_k$.
  Multiply~\eqref{eq:arc-model-gradient} by $Q(x_k)$ to obtain
  \begin{align*}
    Q(x_k) \grad \arcmodel_k(s_k) = Q(x_k) \grad \mfc(x_k) + \hess \mfc(x_k) Q(x_k) s_k + Q(x_k)(\linearmap_k - \hess \mfc(x_k))[s_k] + \cubicpenalty_k \|s_k\| Q(x_k) s_k.
  \end{align*}
  Taking the inner product of this expression with $Q(x_k)s_k$, applying the
  \causchwarz{} inequality, dividing by $\|s_k\|$, and
  using~\aref{assu:hess-approx} yields
  \begin{align*}
    \cubicpenalty_k \|Q(x_k)s_k\|^2 \leq \|Q(x_k)\grad \mfc(x_k)\| + \Big(\arctheta \|\grad \mfc(x_k)\| + \absmineig(x_k)\Big) \|Q(x_k)s_k\|,
  \end{align*}
  where $\absmineig$ is as in Lemma~\ref{lemma:arc-vanishing-condition}.
  Solving the quadratic inequality gives
  \begin{align*}
    \|Q(x_k)s_k\| \leq \frac{1}{\cubicpenalty_k}\Big(\sqrt{\cubicpenalty_k\|Q(x_k)\grad \mfc(x_k)\|} + \arctheta \|\grad \mfc(x_k)\| + \absmineig(x_k)\Big).
  \end{align*}
  Local Lipschitz continuity of $\hess \mfc$ provides
  $\absmineig(x_k) \leq \hessapproxconst \|\grad \mfc(x_k)\| +
  \hesslipconstant \dist(x_k, \optimalset)$ when $x_k$ is close to $\optpoint$.
  Via Lemma~\ref{lemma:grad-image-hess}, it also provides a constant
  $\qgradconst \geq 0$ such that $\sqrt{\|Q(x_k)\grad \mfc(x_k)\|} \leq
  \qgradconst \dist(x_k, \optimalset)$ when $x_k$ is sufficiently close to
  $\optpoint$. This is enough to secure~\eqref{eq:arc-s-bound-q}.
\end{proof}

Using these bounds, we now show that $\|P(x_k)s_k\|$ cannot be too small
compared to $\|s_k\|$ when $x_k$ is close to a minimum where \pl{} holds.

\begin{lemma}\label{lemma:ratio-steps}
  Suppose that~\aref{assu:hess-lip}, \aref{assu:hess-approx}
  and~$\plconstant$-\eqref{eq:local-pl} hold around $\optpoint \in \optimalset$.
  Given $\varepsilon > 0$ and $\lamsharp > \lambda_{\max}(\hess
  \mfc(\optpoint))$, there exists a neighborhood $\mathcal{U}$ of $\optpoint$
  and a constant $\qgradconst \geq 0$ such that if $x_k$ is an iterate in
  $\mathcal{U}$ and $s_k$ is a step satisfying~\eqref{eq:arc-dynamics} then
  $\|P(x_k)s_k\| \geq \ratiosteps \|s_k\|$ where $\ratiosteps > 0$ is the constant
  \begin{align*}
    \ratiosteps = \sqrt{1 - \frac{1}{1 + \tilde \ratiosteps^2}} && \text{with} && \tilde \ratiosteps = \frac{(1 - \varepsilon)\cubicpenalty_{\min}}{\big(\lamsharp + \varepsilon(1 + \sqrt{\cubicpenalty_{\min}})\big)\big(\arctheta + \hessapproxconst + \frac{\hesslipconstant + \qgradconst\sqrt{\cubicpenalty_{\min}}}{\plconstant}\big)}.
  \end{align*}
\end{lemma}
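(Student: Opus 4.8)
The plan is to split $s_k$ along the spectral decomposition of $\hess\mfc(x_k)$ and play the upper bound~\eqref{eq:arc-s-bound-q} on $Q(x_k)s_k$ against the lower bound~\eqref{eq:arc-s-bound-p} on $P(x_k)s_k$. Since $P(x_k)$ and $Q(x_k)$ are complementary orthogonal projectors, $\|s_k\|^2 = \|P(x_k)s_k\|^2 + \|Q(x_k)s_k\|^2$, so the claimed inequality $\|P(x_k)s_k\| \geq \ratiosteps\|s_k\|$ with $\ratiosteps = \sqrt{1 - 1/(1+\tilde\ratiosteps^2)}$ is equivalent to $\|Q(x_k)s_k\| \leq \tilde\ratiosteps^{-1}\|P(x_k)s_k\|$. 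It therefore suffices to establish the latter for $x_k$ in a sufficiently small neighborhood $\mathcal{U}$.

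First I would bound the numerator. Because $\plconstant$-\eqref{eq:local-pl} implies $\plconstant$-\eqref{eq:error-bound} (Remark~\ref{remark:pl-implies-eb-c1}, combining Propositions~\ref{prop:pl-implies-qg} and~\ref{prop:grad-dist-bounds}), we have $\dist(x_k,\optimalset) \leq \plconstant^{-1}\|\grad\mfc(x_k)\|$ near $\optpoint$. Substituting this into~\eqref{eq:arc-s-bound-q} gives $\|Q(x_k)s_k\| \leq \frac{1}{\cubicpenalty_k}\big(\arctheta + \pullbackhessconst + \frac{\hesslipconstant + \qgradconst\sqrt{\cubicpenalty_k}}{\plconstant}\big)\|\grad\mfc(x_k)\|$, and since $\cubicpenalty \mapsto \frac{1}{\cubicpenalty}\big(\arctheta + \pullbackhessconst + \frac{\hesslipconstant + \qgradconst\sqrt{\cubicpenalty}}{\plconstant}\big) = \frac{\arctheta+\pullbackhessconst+\hesslipconstant/\plconstant}{\cubicpenalty} + \frac{\qgradconst}{\plconstant\sqrt{\cubicpenalty}}$ is decreasing while $\cubicpenalty_k \geq \cubicpenalty_{\min}$, this yields $\|Q(x_k)s_k\| \leq \frac{1}{\cubicpenalty_{\min}}\big(\arctheta + \pullbackhessconst + \frac{\hesslipconstant + \qgradconst\sqrt{\cubicpenalty_{\min}}}{\plconstant}\big)\|\grad\mfc(x_k)\|$.

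Second, the denominator. The lower bound in~\eqref{eq:arc-s-bound-p} reads $\|P(x_k)s_k\| \geq (1-\varepsilon)\frac{\|\grad\mfc(x_k)\|}{\lamsharp + \cubicpenalty_k\|s_k\|}$, so it remains to control the ``effective damping'' $\cubicpenalty_k\|s_k\|$ near $\optpoint$. Here I would invoke Lemma~\ref{lemma:arc-vanishing-condition}: the steps vanish as $x_k \to \optpoint$, and combined with $\cubicpenalty_k \geq \cubicpenalty_{\min}$ this lets me shrink $\mathcal{U}$ so that $\cubicpenalty_k\|s_k\| \leq \varepsilon(1+\sqrt{\cubicpenalty_{\min}})$ for every iterate $x_k \in \mathcal{U}$ obeying~\eqref{eq:arc-dynamics}, whence $\|P(x_k)s_k\| \geq \frac{1-\varepsilon}{\lamsharp + \varepsilon(1+\sqrt{\cubicpenalty_{\min}})}\|\grad\mfc(x_k)\|$. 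Dividing the two displays, the factors $\|\grad\mfc(x_k)\|$ cancel and $\frac{\|Q(x_k)s_k\|}{\|P(x_k)s_k\|} \leq \frac{(\lamsharp + \varepsilon(1+\sqrt{\cubicpenalty_{\min}}))(\arctheta + \pullbackhessconst + \frac{\hesslipconstant + \qgradconst\sqrt{\cubicpenalty_{\min}}}{\plconstant})}{(1-\varepsilon)\cubicpenalty_{\min}} = \tilde\ratiosteps^{-1}$, exactly as required; the stated value of $\ratiosteps$ then drops out of $\|s_k\|^2 = \|P(x_k)s_k\|^2 + \|Q(x_k)s_k\|^2 \leq (1 + \tilde\ratiosteps^{-2})\|P(x_k)s_k\|^2$.

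The main obstacle is precisely the estimate $\cubicpenalty_k\|s_k\| \leq \varepsilon(1 + \sqrt{\cubicpenalty_{\min}})$: although $\cubicpenalty_k$ may itself grow, the step length $\|s_k\|$ decays fast enough near $\optimalset$ (by the vanishing-steps bound, which uses only the model-decrease half of~\eqref{eq:arc-dynamics} together with the Hessian accuracy~\eqref{eq:hess-pullback-bound} and Lipschitz continuity of $\hess\mfc$) that the product stays small; making this quantitative and \emph{uniform} over all iterates in a neighborhood, rather than merely pointwise, is the delicate part. Everything else is bookkeeping with the already-established bounds of Lemma~\ref{lemma:arc-s-dist-bound} and the equivalence \eqref{eq:local-pl}~$\Rightarrow$~\eqref{eq:error-bound}.
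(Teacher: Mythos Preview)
Your overall strategy matches the paper's: bound $\|Q(x_k)s_k\|/\|P(x_k)s_k\|$ using~\eqref{eq:arc-s-bound-p},~\eqref{eq:arc-s-bound-q}, the error bound $\dist(x_k,\optimalset)\leq \plconstant^{-1}\|\grad\mfc(x_k)\|$, and the step-size control from Lemma~\ref{lemma:arc-vanishing-condition}. The gap is in how you handle the penalty weight $\cubicpenalty_k$, and it occurs exactly where you flagged the ``main obstacle''.

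You substitute $\cubicpenalty_k\to\cubicpenalty_{\min}$ into the $Q$-bound first (valid, since that expression is decreasing in $\cubicpenalty_k$), and then for the $P$-bound you need $\cubicpenalty_k\|s_k\|\leq\varepsilon(1+\sqrt{\cubicpenalty_{\min}})$ uniformly over all $\cubicpenalty_k\geq\cubicpenalty_{\min}$. This fails. Lemma~\ref{lemma:arc-vanishing-condition} gives $\cubicpenalty_k\|s_k\|\leq\sqrt{3\cubicpenalty_k\|\grad\mfc(x_k)\|}+\tfrac{3}{2}\absmineig(x_k)$, so by shrinking $\mathcal{U}$ you can arrange $\cubicpenalty_k\|s_k\|\leq\varepsilon(1+\sqrt{\cubicpenalty_k})$, but not a bound independent of $\cubicpenalty_k$: the product grows like $\sqrt{\cubicpenalty_k}$, and at this point in the analysis $\cubicpenalty_k$ is not yet bounded above (eventual successfulness is proved only later, in Lemma~\ref{lemma:arc-eventually-successful}, which itself relies on the present lemma). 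Your claim that ``$\|s_k\|$ decays fast enough that the product stays small'' is therefore incorrect.

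The fix, and what the paper does, is to postpone the substitution: keep $\cubicpenalty_k$ in \emph{both} bounds, obtaining
\[
\frac{\|P(x_k)s_k\|}{\|Q(x_k)s_k\|}\;\geq\;\frac{(1-\varepsilon)\,\cubicpenalty_k}{\bigl(\lamsharp+\varepsilon(1+\sqrt{\cubicpenalty_k})\bigr)\bigl(\arctheta+\pullbackhessconst+\tfrac{\hesslipconstant+\qgradconst\sqrt{\cubicpenalty_k}}{\plconstant}\bigr)},
\]
and then observe that this lower bound is an \emph{increasing} function of $\cubicpenalty_k$ (write $u=\sqrt{\cubicpenalty_k}$; the right-hand side is $u^2$ divided by a product of two affine functions of $u$). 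Hence the worst case is $\cubicpenalty_k=\cubicpenalty_{\min}$, which yields exactly $\tilde\ratiosteps$. The point is that as $\cubicpenalty_k$ grows the $P$-lower bound does degrade, but the $Q$-upper bound improves faster; your premature substitution in the $Q$-bound throws this compensation away.
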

\begin{proof}
  Assume $x_k$ is sufficiently close to $\optpoint$ for the projectors $P(x_k)$
  and $Q(x_k)$ to be well defined.
  Define $\nu_p = \|P(x_k)s_k\|$, $\nu_q = \|Q(x_k)s_k\|$ and $\xi =
  \frac{\nu_p}{\nu_q}$ (consider $\nu_q \neq 0$ as otherwise the claim is clear).
  We compute that $\nu_p^2 = \big(1 - \frac{1}{1 + \xi^2}\big)\|s_k\|^2$ and
  find a lower-bound for $\xi$.
  Remark~\ref{remark:pl-implies-eb-c1} gives that $\dist(x_k, \optimalset) \leq
  \frac{1}{\plconstant}\|\grad \mfc(x_k)\|$ when $x_k$ is sufficiently close to
  $\optpoint$.
  Together with the bound on $\nu_q$ in Lemma~\ref{lemma:arc-s-dist-bound}, this
  gives
  \begin{align*}
    \nu_q \leq \frac{\|\grad \mfc(x_k)\|}{\cubicpenalty_k}\Big(\arctheta + \hessapproxconst + \frac{\hesslipconstant + \qgradconst\sqrt{\cubicpenalty_k}}{\plconstant}\Big)
  \end{align*}
  Combining this with the lower-bound on $\nu_p$ in
  Lemma~\ref{lemma:arc-s-dist-bound}, we find
  \begin{align*}
    \xi \geq \frac{(1 - \varepsilon)\cubicpenalty_k}{(\lamsharp + \cubicpenalty_k\|s_k\|)\big(\arctheta + \hessapproxconst + \frac{\hesslipconstant + \qgradconst\sqrt{\cubicpenalty_k}}{\plconstant}\big)}.
  \end{align*}
  With Lemma~\ref{lemma:arc-vanishing-condition} we can upper-bound
  $\cubicpenalty_k\|s_k\| \leq \sqrt{3\cubicpenalty_k\|\grad \mfc(x_k)\|} +
  \frac{3}{2}\absmineig(x_k)$, and consequently, $\cubicpenalty_k\|s_k\| \leq
  \varepsilon (1 + \sqrt{\cubicpenalty_k})$ whenever $x_k$ is sufficiently close
  to $\optpoint$.
  We finally notice that the resulting lower-bound on $\xi$ is an increasing
  function of $\cubicpenalty_k$.
  Therefore, we get the desired inequality by using $\cubicpenalty_k \geq
  \cubicpenalty_{\min}$.
\end{proof}

From this we deduce a lower-bound on the quadratic term of the model.

\begin{lemma}\label{lemma:inner-product-lower-bound}
  Suppose that~\aref{assu:hess-lip}, \aref{assu:hess-approx} and
  $\plconstant$-\eqref{eq:local-pl} hold around $\optpoint \in \optimalset$.
  Given $\lamflat < \plconstant$, there exists a constant $\ratiosteps > 0$
  (provided by Lemma~\ref{lemma:ratio-steps}) and a neighborhood $\mathcal{U}$
  of $\optpoint$ such that if $x_k \in \mathcal{U}$ then the step satisfies
  \begin{align*}
    \inner{s_k}{\hess \mfc(x_k)[s_k]} \geq \lamflat \ratiosteps^2 \|s_k\|^2.
  \end{align*}
\end{lemma}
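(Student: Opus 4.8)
The plan is to project the step $s_k$ onto the spectral subspaces of $\hess\mfc(x_k)$ and control the two pieces separately. Write $\rankop = \rank\hess\mfc(\optpoint)$ and, as elsewhere in this section, let $P(x_k)$ be the orthogonal projector onto the top $\rankop$ eigenspace of $\hess\mfc(x_k)$ and $Q(x_k) = I - P(x_k)$; both are well defined near $\optpoint$ because $\lambda_{\rankop}(\hess\mfc(x))$ stays bounded away from $\lambda_{\rankop+1}(\hess\mfc(x))$ there (the former converges to $\lambda_{\rankop}(\hess\mfc(\optpoint)) \geq \plconstant > 0$ by Proposition~\ref{prop:eigenvalue-bigger-pl}, the latter to $0$ since $\hess\mfc(\optpoint)\succeq 0$ has rank $\rankop$). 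Since $P(x_k)$ and $Q(x_k)$ commute with $\hess\mfc(x_k)$ and have orthogonal ranges, the cross terms vanish and
\[
  \inner{s_k}{\hess\mfc(x_k)[s_k]} = \inner{P(x_k)s_k}{\hess\mfc(x_k)[P(x_k)s_k]} + \inner{Q(x_k)s_k}{\hess\mfc(x_k)[Q(x_k)s_k]}.
\]

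Next I would bound the two summands by continuity of eigenvalues. Fix $\lamflat' \in \interval[open]{\lamflat}{\plconstant}$. On $\range P(x_k)$ the quadratic form of $\hess\mfc(x_k)$ is at least $\lambda_{\rankop}(\hess\mfc(x_k))\,\|\cdot\|^2$, and this eigenvalue exceeds $\lamflat'$ once $x_k$ is close enough to $\optpoint$ (it converges to $\lambda_{\rankop}(\hess\mfc(\optpoint))\geq\plconstant$); hence the first summand is at least $\lamflat'\|P(x_k)s_k\|^2$. On $\range Q(x_k)$ the quadratic form is at least $\lambda_{\min}(\hess\mfc(x_k))\,\|\cdot\|^2$, and $\lambda_{\min}(\hess\mfc(x_k)) \to \lambda_{\min}(\hess\mfc(\optpoint)) = 0$ because $\hess\mfc(\optpoint)$ is positive semidefinite with exactly $\rankop$ nonzero eigenvalues; so for any prescribed $\varepsilon' > 0$ the second summand is at least $-\varepsilon'\|Q(x_k)s_k\|^2 \geq -\varepsilon'\|s_k\|^2$ near $\optpoint$.

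Finally I would invoke Lemma~\ref{lemma:ratio-steps}, applied with any admissible parameters (say $\varepsilon = 1$ and $\lamsharp = \lammax(\hess\mfc(\optpoint)) + 1$), to obtain the constant $\ratiosteps > 0$ together with a neighborhood on which $\|P(x_k)s_k\| \geq \ratiosteps\|s_k\|$. Choosing $\varepsilon' = (\lamflat' - \lamflat)\ratiosteps^2 > 0$ at the outset and intersecting the finitely many neighborhoods above to form $\mathcal{U}$, we obtain for $x_k \in \mathcal{U}$
\[
  \inner{s_k}{\hess\mfc(x_k)[s_k]} \geq \lamflat'\|P(x_k)s_k\|^2 - \varepsilon'\|s_k\|^2 \geq \lamflat'\ratiosteps^2\|s_k\|^2 - (\lamflat'-\lamflat)\ratiosteps^2\|s_k\|^2 = \lamflat\ratiosteps^2\|s_k\|^2,
\]
which is the claim.

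The substantive content — that $s_k$ is predominantly aligned with the positive part of the spectrum of $\hess\mfc(x_k)$, quantified by $\ratiosteps$ — has already been extracted in Lemma~\ref{lemma:ratio-steps}, so the remaining work is essentially bookkeeping. The one point requiring care is the orthogonal‑complement term: one must check it perturbs the estimate by at most $o(\|s_k\|^2)$, which is exactly where $\hess\mfc(\optpoint)\succeq 0$ with rank $\rankop$ is used (forcing eigenvalues $\rankop+1,\dots,n$ to vanish at $\optpoint$). The degenerate case $\rankop = 0$ is harmless: then $\optimalset$ is locally open by Theorem~\ref{th:pl-implies-submanifold}, so $\grad\mfc(x_k) = 0$ and $s_k = 0$ near $\optpoint$ and the inequality is trivial (and in any case it is already subsumed by Lemma~\ref{lemma:ratio-steps}).
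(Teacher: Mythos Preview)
Your argument is correct and follows essentially the same route as the paper: decompose $\inner{s_k}{\hess\mfc(x_k)[s_k]}$ along the spectral projectors $P(x_k)$ and $Q(x_k)$, bound the $P$-part from below via $\lambda_{\rankop}(\hess\mfc(x_k))$ together with the ratio $\|P(x_k)s_k\|\geq \ratiosteps\|s_k\|$ from Lemma~\ref{lemma:ratio-steps}, and absorb the asymptotically vanishing $Q$-part into the slack between $\lamflat$ and $\plconstant$. One small slip: your illustrative choice $\varepsilon = 1$ in Lemma~\ref{lemma:ratio-steps} makes $\tilde\ratiosteps = 0$ (its numerator carries the factor $1-\varepsilon$) and hence $\ratiosteps = 0$, so pick any $\varepsilon\in(0,1)$ instead.
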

\begin{proof}
  Let $\ratiosteps > 0$ be a constant and $\mathcal{U}$ a neighborhood of
  $\optpoint$ as in Lemma~\ref{lemma:ratio-steps}.
  Shrink $\mathcal{U}$ for the projectors $P$ and $Q$ to be well defined in
  $\mathcal{U}$.
  If $x_k$ is in $\mathcal{U}$ we compute
  \begin{align*}
    \inner{s_k}{\hess \mfc(x_k)[s_k]} &= \inner{P(x_k)s_k}{\hess \mfc(x_k)[P(x_k)s_k]} + \inner{Q(x_k)s_k}{\hess \mfc(x_k)[Q(x_k)s_k]}\\
                                      &\geq \lambda_d(\hess \mfc(x_k))\ratiosteps^2\|s_k\|^2 + \lambda_{\min}(\hess \mfc(x_k))\|Q(x_k)s_k\|^2,
  \end{align*}
  where we used $\|P(x_k)s_k\| \geq r\|s_k\|$.
  We obtain the result by noticing that the second term is lower-bounded by
  $\max\!\big(0, -\lambda_{\min}(\hess \mfc(x_k))\big)\|s_k\|^2$.
\end{proof}

We now show that the ratio $\arcrho_k$ is large when $x_k$ is close to a local
minimum where \pl{} holds.
The upshot is that iterations near $\optimalset$ are successful.
\begin{lemma}\label{lemma:arc-eventually-successful}
  Suppose that~\aref{assu:hess-lip}, \aref{assu:hess-lipschitz-like},
  \aref{assu:hess-approx} and $\plconstant$-\eqref{eq:local-pl} hold around
  $\optpoint \in \optimalset$.
  For all $\varepsilon > 0$ there exists a neighborhood $\mathcal{U}$ of
  $\optpoint$ such that if $x_k \in \mathcal{U}$ then $\arcrho_k \geq 1 -
  \varepsilon$.
\end{lemma}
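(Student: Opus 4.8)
The plan is to show that, when $x_k$ is close to $\optpoint$, the numerator and the denominator of $\arcrho_k$ in~\eqref{eq:arcrhok} are both equal, up to an error that is $o(\|s_k\|^2)$, to the common quantity $-\inner{s_k}{\grad \mfc(x_k)} - \tfrac12\inner{s_k}{\hess \mfc(x_k)[s_k]}$, and that this quantity is bounded below by a \emph{positive constant} times $\|s_k\|^2$. Dividing then forces $1 - \arcrho_k \to 0$ as $x_k \to \optpoint$, which is exactly the claim.

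First I would expand both pieces. Since $\arcmodel_k(\zeros) = \mfc(x_k)$ by~\eqref{eq:modelmk}, the denominator equals $-\inner{s_k}{\grad \mfc(x_k)} - \tfrac12\inner{s_k}{\linearmap_k[s_k]}$, and~\eqref{eq:hess-pullback-bound} lets me replace $\linearmap_k$ by $\hess \mfc(x_k)$ at the cost of a term of size at most $\tfrac{\pullbackhessconst}{2}\|\grad \mfc(x_k)\|\|s_k\|^2$. For the numerator, \aref{assu:hess-lipschitz-like} gives $\mfc(x_k) - \mfc(\retr_{x_k}(s_k)) \geq -\inner{s_k}{\grad \mfc(x_k)} - \tfrac12\inner{s_k}{\hess \mfc(x_k)[s_k]} - \tfrac{\fliplikeconst}{6}\|s_k\|^3$. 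Subtracting, the difference ``denominator minus numerator'' is at most $\tfrac{\pullbackhessconst}{2}\|\grad \mfc(x_k)\|\|s_k\|^2 + \tfrac{\fliplikeconst}{6}\|s_k\|^3$, which is $o(\|s_k\|^2)$ because $\|\grad \mfc(x_k)\| \to 0$ as $x_k \to \optpoint$ and the steps vanish by Lemma~\ref{lemma:arc-vanishing-condition}.

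Next comes the lower bound on the denominator, which is the crucial point. Taking the inner product of the model-gradient identity~\eqref{eq:arc-model-gradient} with $s_k$ gives $\inner{s_k}{\grad \mfc(x_k)} = \inner{s_k}{\grad \arcmodel_k(s_k)} - \inner{s_k}{\linearmap_k[s_k]} - \cubicpenalty_k\|s_k\|^3$, so the denominator equals $-\inner{s_k}{\grad \arcmodel_k(s_k)} + \tfrac12\inner{s_k}{\linearmap_k[s_k]} + \cubicpenalty_k\|s_k\|^3$. Now I bound $|\inner{s_k}{\grad \arcmodel_k(s_k)}| \leq \arctheta\|s_k\|^2\|\grad \mfc(x_k)\|$ using~\eqref{eq:arc-dynamics}, use~\eqref{eq:hess-pullback-bound} once more to pass from $\linearmap_k$ to $\hess \mfc(x_k)$, and invoke Lemma~\ref{lemma:inner-product-lower-bound} (with some fixed $\lamflat < \plconstant$, say $\lamflat = \plconstant/2$, and the associated $\ratiosteps > 0$ from Lemma~\ref{lemma:ratio-steps}) to get $\inner{s_k}{\hess \mfc(x_k)[s_k]} \geq \lamflat \ratiosteps^2\|s_k\|^2$. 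Dropping the nonnegative cubic term, the denominator is at least $\big(\tfrac{\lamflat \ratiosteps^2}{2} - o(1)\big)\|s_k\|^2$ as $x_k \to \optpoint$, hence positive and bounded below by a fixed positive multiple of $\|s_k\|^2$ near $\optpoint$. Combining, $1 - \arcrho_k = \tfrac{\text{denom}-\text{num}}{\text{denom}} \leq \tfrac{o(\|s_k\|^2)}{(\lamflat \ratiosteps^2/2 - o(1))\|s_k\|^2} \to 0$, so shrinking $\mathcal{U}$ makes $\arcrho_k \geq 1 - \varepsilon$; the degenerate case $s_k = 0$ occurs only when $\grad \mfc(x_k) = 0$, i.e.\ $x_k \in \optimalset$ by \pl{}, and may be set aside.

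I expect the main obstacle to be precisely this denominator bound: using only the model-decrease half of~\eqref{eq:arc-dynamics} yields the standard estimate $\arcmodel_k(\zeros) - \arcmodel_k(s_k) + \tfrac{\cubicpenalty_k}{3}\|s_k\|^3 \geq \tfrac{\cubicpenalty_{\min}}{3}\|s_k\|^3$, which is too weak — it would leave $1 - \arcrho_k$ bounded by a fixed constant (of order $\fliplikeconst/\cubicpenalty_{\min}$) rather than by something arbitrarily small. The fix is to exploit that under \pl{} the step retains a non-negligible component $P(x_k)s_k$ in the positive-curvature eigenspace (Lemmas~\ref{lemma:ratio-steps} and~\ref{lemma:inner-product-lower-bound}), which upgrades the denominator lower bound from order $\|s_k\|^3$ to order $\|s_k\|^2$ — exactly the order needed to dominate the $o(\|s_k\|^2)$ model-mismatch error coming from \aref{assu:hess-lipschitz-like} and~\eqref{eq:hess-pullback-bound}.
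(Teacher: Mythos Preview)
Your proposal is correct and follows essentially the same route as the paper: you bound the numerator of $1-\arcrho_k$ via~\aref{assu:hess-lipschitz-like} and~\eqref{eq:hess-pullback-bound}, rewrite the denominator through the model-gradient identity~\eqref{eq:arc-model-gradient}, and then invoke Lemma~\ref{lemma:inner-product-lower-bound} (built on Lemma~\ref{lemma:ratio-steps}) to get a lower bound of order $\|s_k\|^2$ on the denominator, exactly as in the paper. Your closing remark about why the naive $\tfrac{\cubicpenalty_{\min}}{3}\|s_k\|^3$ bound is insufficient is also on point.
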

\begin{proof}
  Let $\taylor_k(s) = \mfc(x_k) + \inner{s}{\grad \mfc(x)} +
  \frac{1}{2}\inner{s}{\linearmap_k[s]}$ be the second-order component of the
  model $\arcmodel_k$.
  Then,
  \begin{align*}
    1 - \arcrho_k = \frac{\mfc(\retr_{x_k}(s_k)) - \taylor_k(s_k)}{\taylor_k(\zeros) - \taylor_k(s_k)} \leq \frac{1}{6} \frac{\fliplikeconst\|s_k\|^3 + 3\hessapproxconst\|s_k\|^2\|\grad \mfc(x_k)\|}{\taylor_k(\zeros) - \taylor_k(s_k)},
  \end{align*}
  where the bound on the numerator comes from \aref{assu:hess-lipschitz-like}
  and~\aref{assu:hess-approx}.
  The denominator is given by
  \begin{align*}
    \taylor_k(\zeros) - \taylor_k(s_k) &= -\inner{s_k}{\grad \mfc(x_k)} - \frac{1}{2}\inner{s_k}{\linearmap_k[s_k]}\\
                                       &= -\inner{s_k}{\grad \arcmodel_k(s_k)} + \frac{1}{2}\inner{s_k}{(\linearmap_k - \hess \mfc(x_k))[s_k]} + \frac{1}{2}\inner{s_k}{\hess \mfc(x_k)[s_k]} + \cubicpenalty_k\|s_k\|^3,
  \end{align*}
  where we used identity~\eqref{eq:arc-model-gradient} for the second equality.
  With the \causchwarz{} inequality, \aref{assu:hess-approx}
  and~\eqref{eq:arc-dynamics}, we bound the two first terms as
  \begin{align*}
    |\inner{s_k}{\grad \arcmodel_k(s_k)}| \leq \arctheta \|\grad \mfc(x_k)\| \|s_k\|^2 && \text{and} && |\inner{s_k}{(\linearmap_k - \hess \mfc(x_k))[s_k]}| \leq \hessapproxconst \|\grad \mfc(x_k)\| \|s_k\|^2.
  \end{align*}
  If we combine these bounds with Lemma~\ref{lemma:inner-product-lower-bound},
  we find that given $\lamflat < \plconstant$, there exists $\ratiosteps > 0$
  and a neighborhood $\mathcal{U}$ of the local minimum $\optpoint$ such that
  \begin{align*}
    x_k \in \mathcal{U} \;\;\Rightarrow\;\; \taylor_k(\zeros) - \taylor_k(s_k) \geq \lamflat \ratiosteps^2 \|s_k\|^2,
  \end{align*}
  and therefore $1 - \arcrho_k \leq \frac{\fliplikeconst \|s_k\| +
    3\hessapproxconst\|\grad \mfc(x_k)\|}{6 \lamflat \ratiosteps^2}$.
  Owing to Lemma~\ref{lemma:arc-vanishing-condition} the steps $s_k$ vanish
  around second-order critical points so we can guarantee $1 - \arcrho_k$
  becomes as small as desired.
\end{proof}

We can now show that the iterates produced by ARC satisfy the strong decrease
property~\eqref{eq:sufficient-decrease-lyapunov} around minima where \pl{}
holds.

\begin{proposition}
  Suppose that~\aref{assu:hess-lip}, \aref{assu:hess-lipschitz-like},
  \aref{assu:hess-approx} and~$\plconstant$-\eqref{eq:local-pl} hold around
  $\optpoint \in \optimalset$.
  Given $\lamflat < \plconstant$ and $\lamsharp > \lambda_{\max}(\hess
  \mfc(\optpoint))$, there exists a neighborhood of $\optpoint$ where ARC
  satisfies the strong decrease property~\eqref{eq:sufficient-decrease-lyapunov}
  with constant $\lyapsufficientdecreaseconst =
  \frac{\ratiosteps\lamflat}{2\retrdistboundconst\lamsharp}$, where
  $\retrdistboundconst$ is defined in~\eqref{eq:retr-distance-condition} and
  $\ratiosteps > 0$ is provided by Lemma~\ref{lemma:ratio-steps}.
\end{proposition}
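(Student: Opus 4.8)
The plan is to chain together the estimates already proved in this subsection. Fix $\lamflat < \plconstant$ and $\lamsharp > \lambda_{\max}(\hess\mfc(\optpoint))$, let $\ratiosteps > 0$ be the constant produced by Lemma~\ref{lemma:ratio-steps} for these values and a small $\varepsilon > 0$, and restrict attention to a neighborhood $\mathcal{U}$ of $\optpoint$ small enough for all cited lemmas to apply; I will shrink $\mathcal{U}$ a bounded number of times. First I would invoke Lemma~\ref{lemma:arc-eventually-successful}: taking $\varepsilon < 1 - \arcrho_c$ makes every iteration with $x_k \in \mathcal{U}$ successful, so $x_{k+1} = \retr_{x_k}(s_k)$. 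Since the denominator of $\arcrho_k$ in~\eqref{eq:arcrhok} equals $\taylor_k(\zeros) - \taylor_k(s_k)$, where $\taylor_k$ is the quadratic part of $\arcmodel_k$, this gives $\mfc(x_k) - \mfc(x_{k+1}) = \arcrho_k\bigl(\taylor_k(\zeros) - \taylor_k(s_k)\bigr) \geq (1-\varepsilon)\bigl(\taylor_k(\zeros) - \taylor_k(s_k)\bigr)$.

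Next I would lower-bound the model decrease. Expanding $\taylor_k(\zeros) - \taylor_k(s_k)$ via~\eqref{eq:arc-model-gradient} exactly as in the proof of Lemma~\ref{lemma:arc-eventually-successful}, and bounding the two cross terms with \causchwarz{}, \eqref{eq:hess-pullback-bound} and~\eqref{eq:arc-dynamics}, yields $\taylor_k(\zeros) - \taylor_k(s_k) \geq \tfrac12\inner{s_k}{\hess\mfc(x_k)[s_k]} + \cubicpenalty_k\|s_k\|^3 - (\arctheta + \tfrac12\pullbackhessconst)\|\grad\mfc(x_k)\|\,\|s_k\|^2$. I then split the Hessian term along $P(x_k)$ and $Q(x_k)$: the $P$-part is at least $\lambda_{\rankop}(\hess\mfc(x_k))\|P(x_k)s_k\|^2 \geq \lamflat\|P(x_k)s_k\|^2$ after a shrink (continuity of eigenvalues together with Proposition~\ref{prop:eigenvalue-bigger-pl}, which gives $\lambda_{\rankop}(\hess\mfc(\optpoint)) \geq \plconstant$), and the $Q$-part is at least $-|\lambda_{\min}(\hess\mfc(x_k))|\,\|Q(x_k)s_k\|^2$, which is negligible because $\lambda_{\min}(\hess\mfc(x_k)) \to 0$ while, by~\eqref{eq:arc-s-bound-q} and the error bound $\dist(x_k,\optimalset) \leq \plconstant^{-1}\|\grad\mfc(x_k)\|$ of Remark~\ref{remark:pl-implies-eb-c1}, $\|Q(x_k)s_k\| = O(\|\grad\mfc(x_k)\|)$.

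The decisive step is to turn $\tfrac12\lamflat\|P(x_k)s_k\|^2$ into a quantity proportional to $\|\grad\mfc(x_k)\|\,\|s_k\|$ with $\ratiosteps$ entering only to the first power: I would apply $\|P(x_k)s_k\| \geq \ratiosteps\|s_k\|$ (Lemma~\ref{lemma:ratio-steps}) to one factor and the lower bound in~\eqref{eq:arc-s-bound-p} to the other, using that $\cubicpenalty_k\|s_k\| \to 0$ (Lemma~\ref{lemma:arc-vanishing-condition}), which produces $\tfrac12\lamflat\|P(x_k)s_k\|^2 \geq \tfrac{\ratiosteps\lamflat}{2\lamsharp}\,\|\grad\mfc(x_k)\|\,\|s_k\|$ up to a factor tending to $1$ as $\mathcal{U}$ shrinks. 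The remaining error terms, $(\arctheta + \tfrac12\pullbackhessconst)\|\grad\mfc(x_k)\|\,\|s_k\|^2$ and the $Q$-contribution $O(|\lambda_{\min}(\hess\mfc(x_k))|\,\|\grad\mfc(x_k)\|^2)$, are both $o(\|\grad\mfc(x_k)\|\,\|s_k\|)$ as $x_k \to \optpoint$, since $\|s_k\| \to 0$ and $\|\grad\mfc(x_k)\| = O(\|s_k\|)$ by~\eqref{eq:arc-s-bound-p}; one final shrink absorbs them. Combining with the first step and with $\dist(x_k, x_{k+1}) = \dist(\retr_{x_k}(s_k), x_k) \leq \retrdistboundconst\|s_k\|$ from~\eqref{eq:retr-distance-condition} gives the first requirement of~\eqref{eq:sufficient-decrease-lyapunov} with constant $\lyapsufficientdecreaseconst = \tfrac{\ratiosteps\lamflat}{2\retrdistboundconst\lamsharp}$, the leftover factors (all $\to 1$) being absorbed by decreasing $\lamflat$ and increasing $\lamsharp$ by an arbitrarily small amount and updating $\ratiosteps$. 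For the second requirement, if $x_k \in \optimalset$ then $\grad\mfc(x_k) = \zeros$, so~\eqref{eq:hess-pullback-bound} forces $\linearmap_k = \hess\mfc(x_k) \succeq 0$, and $\arcmodel_k(s_k) \leq \arcmodel_k(\zeros)$ reads $\tfrac12\inner{s_k}{\hess\mfc(x_k)[s_k]} + \tfrac{\cubicpenalty_k}{3}\|s_k\|^3 \leq 0$ with both summands nonnegative and $\cubicpenalty_k \geq \cubicpenalty_{\min} > 0$, hence $s_k = \zeros$ and $x_{k+1} = x_k$.

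The main obstacle is the constant accounting in the decisive step. The cruder bound $\inner{s_k}{\hess\mfc(x_k)[s_k]} \geq \lamflat\ratiosteps^2\|s_k\|^2$ used in Lemma~\ref{lemma:inner-product-lower-bound} would leave $\ratiosteps$ squared, hence (as $\ratiosteps < 1$) a strictly weaker constant, so one must keep $\|P(x_k)s_k\|$ linear on both sides and feed the error-bound-type estimate~\eqref{eq:arc-s-bound-p} into exactly one of the two factors. A secondary subtlety is that $\hess\mfc(x_k)$ need not be positive semidefinite for $x_k$ off $\optimalset$ (the Newton example in this section exhibits this), which is why the $Q$-block of the Hessian must be controlled separately; there the error bound and the vanishing of the steps do the work.
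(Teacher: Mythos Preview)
Your overall route is close in spirit to the paper's but pivots on a different quantity: the paper upper-bounds $\inner{s_k}{\grad\mfc(x_k)}$ directly (via the $P/Q$ split of the gradient identity), whereas you lower-bound the Hessian form $\tfrac12\inner{s_k}{\hess\mfc(x_k)[s_k]}$. Both routes eventually call Lemma~\ref{lemma:ratio-steps} on one factor of $\|P(x_k)s_k\|$ and a gradient-type bound on the other, and both arrive at the same constant. The paper's key device is the monotonicity $\frac{\lambda_\rankop}{\lambda_1} \le \frac{\lambda_\rankop + \cubicpenalty_k\|s_k\|}{\lambda_1 + \cubicpenalty_k\|s_k\|} \le 1$, which makes the main estimate independent of $\cubicpenalty_k$ in one stroke.

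There is, however, a genuine gap at your ``decisive step'': you assert that $\cubicpenalty_k\|s_k\| \to 0$ by Lemma~\ref{lemma:arc-vanishing-condition}, and this is not justified. That lemma gives $\|s_k\| \le \sqrt{3\|\grad\mfc(x_k)\|/\cubicpenalty_k} + \tfrac{3}{2\cubicpenalty_k}\absmineig(x_k)$, hence $\cubicpenalty_k\|s_k\| \le \sqrt{3\cubicpenalty_k\|\grad\mfc(x_k)\|} + \tfrac32\absmineig(x_k)$, which need not be small when $\cubicpenalty_k$ is large. The strong decrease property must hold for \emph{any} iterate $x_k$ in the neighborhood, regardless of the penalty weight carried by the algorithm's state; no a priori bound on $\cubicpenalty_k$ is available at this point (boundedness is only obtained later, in Proposition~\ref{prop:arc-converges-quadratically}, \emph{after} convergence has been secured via the strong decrease property). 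Consequently your lower bound $\tfrac12\lamflat\|P(x_k)s_k\|^2 \ge \tfrac{\ratiosteps\lamflat(1-\varepsilon)}{2(\lamsharp + \cubicpenalty_k\|s_k\|)}\|\grad\mfc(x_k)\|\,\|s_k\|$ cannot be simplified to $\tfrac{\ratiosteps\lamflat}{2\lamsharp}\|\grad\mfc(x_k)\|\,\|s_k\|$ by shrinking the neighborhood alone.

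The fix along your route is to \emph{retain} the cubic term $\cubicpenalty_k\|s_k\|^3$ that you dropped. Lower-bounding $\|s_k\|^2 \ge \|P(x_k)s_k\|\,\|s_k\|$ and feeding~\eqref{eq:arc-s-bound-p} into one factor yields $\cubicpenalty_k\|s_k\|^3 \ge \tfrac{(1-\varepsilon)\,\cubicpenalty_k\|s_k\|}{\lamsharp + \cubicpenalty_k\|s_k\|}\|\grad\mfc(x_k)\|\,\|s_k\|$, so that the sum is at least $(1-\varepsilon)\,\tfrac{\ratiosteps\lamflat/2 + \cubicpenalty_k\|s_k\|}{\lamsharp + \cubicpenalty_k\|s_k\|}\,\|\grad\mfc(x_k)\|\,\|s_k\|$; since $\ratiosteps\lamflat/2 < \lamsharp$, this ratio is increasing in $\cubicpenalty_k\|s_k\|$ and is minimized at zero, giving the desired $\tfrac{\ratiosteps\lamflat}{2\lamsharp}$ uniformly in $\cubicpenalty_k$. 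This is the analogue, on your side, of the paper's ratio trick.
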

\begin{proof}
  From Lemma~\ref{lemma:arc-eventually-successful}, there exists a neighborhood
  $\mathcal{U}$ of $\optpoint$ in which all the steps are successful.
  Given an iterate $x_k$ in $\mathcal{U}$, success implies $x_{k+1} =
  \retr_{x_k}(s_k)$ and therefore, by definition of
  $\arcrho_k$~\eqref{eq:arcrhok},
  \begin{align*}
    \mfc(x_k) - \mfc(x_{k + 1}) = \arcrho_k\Big(\arcmodel_k(\zeros) - \arcmodel_k(s_k) + \frac{\cubicpenalty_k}{3}\|s_k\|^3\Big) = -\arcrho_k\Big(\inner{s_k}{\grad \mfc(x_k)} + \frac{1}{2}\big\langle s_k, \linearmap_k[s_k]\big\rangle\Big).
  \end{align*}
  Also, taking the inner product of~\eqref{eq:arc-model-gradient} with $s_k$ and
  using~\eqref{eq:arc-dynamics} yields
  \begin{align*}
    \inner{s_k}{\linearmap_k[s_k]} \leq -\inner{s_k}{\grad \mfc(x_k)} + \arctheta \|\grad \mfc(x_k)\|\|s_k\|^2.
  \end{align*}
  Multiply the latter by $-\frac{\arcrho_k}{2}$ and plug into the former to
  deduce
  \begin{align}
    \mfc(x_k) - \mfc(x_{k + 1}) \geq -\frac{\arcrho_k}{2}\Big(\inner{s_k}{\grad \mfc(x_k)} + \arctheta \|\grad \mfc(x_k)\|\|s_k\|^2\Big).
    \label{eq:fdecreasearc}
  \end{align}
  We now bound the inner product $\inner{s_k}{\grad \mfc(x_k)}$.
  Let $\rankop = \rank \hess \mfc(\optpoint)$ and restrict the neighborhood
  $\mathcal{U}$ if need be to ensure $\lambda_{\rankop}(\hess\mfc(x_k)) > 0$ and
  $\lambda_{\rankop}(\hess\mfc(x_k)) > \lambda_{\rankop+1}(\hess\mfc(x_k))$.
  In particular, the orthogonal projector $P(x_k)$ onto the top $\rankop$
  eigenspace of $\hess \mfc(x_k)$ is well defined.
  Let $Q(x_k) = I - P(x_k)$.
  Decompose $s_k = P(x_k)s_k + Q(x_k)s_k$ and apply the \causchwarz{} inequality
  to obtain
  \begin{align}
    \inner{s_k}{\grad \mfc(x_k)} \leq \inner{P(x_k)s_k}{\grad \mfc(x_k)} + \|Q(x_k)\grad \mfc(x_k)\|\|s_k\|.
    \label{eq:innerskgradfxkarc}
  \end{align}
  The second term is small owing to Lemma~\ref{lemma:grad-image-hess}.
  Let us focus on the first term.
  To this end, multiply~\eqref{eq:arc-model-gradient} by $P(x_k)$ to verify the
  following (recall that $\hess\mfc(x_k)$ and $P(x_k)$ commute):
  \begin{align*}
      P(x_k) \grad\mfc(x_k) & = -\hess\mfc(x_k)P(x_k)s_k + P(x_k)\grad\arcmodel_k(s_k) - P(x_k)\big(\linearmap_k - \hess\mfc(x_k)\big)s_k - \cubicpenalty_k \|s_k\| P(x_k) s_k.
  \end{align*}
  On the one hand, we can use it with~\aref{assu:hess-approx}
  and~\eqref{eq:arc-dynamics} to lower-bound the norm of $P(x_k)s_k$, through:
  \begin{align*}
      \|P(x_k) \grad\mfc(x_k)\| & \leq \left(\lambda_1(\hess\mfc(x_k)) + \cubicpenalty_k \|s_k\|\right) \|P(x_k)s_k\| + (\arctheta + \hessapproxconst) \|\grad\mfc(x_k)\|\|s_k\|.
  \end{align*}
  On the other hand, we can use it to upper-bound
  $\inner{s_k}{P(x_k)\grad\mfc(x_k)}$, also with~\aref{assu:hess-approx}
  and~\eqref{eq:arc-dynamics}, and using the fact that $P(x_k)s_k$ lives in the
  top-$\rankop$ eigenspace of $\hess\mfc(x_k)$, like so:
  \begin{align*}
      \inner{s_k}{P(x_k)\grad\mfc(x_k)} & \leq -\left(\lambda_{\rankop}(\hess\mfc(x_k)) + \cubicpenalty_k \|s_k\|\right) \|P(x_k)s_k\|^2
      + (\arctheta + \hessapproxconst) \|\grad\mfc(x_k)\| \|s_k\| \|P(x_k)s_k\|.
  \end{align*}
  Combine the two inequalities above as follows: use the former to upper-bound
  one of the first factors $\|P(x_k)s_k\|$ in the latter.
  Also using
  $\frac{\lambda_{\rankop}(\hess\mfc(x_k))}{\lambda_{1}(\hess\mfc(x_k))} \leq
  \frac{\lambda_{\rankop}(\hess\mfc(x_k)) + \cubicpenalty_k
    \|s_k\|}{\lambda_{1}(\hess\mfc(x_k)) + \cubicpenalty_k \|s_k\|} \leq 1$,
  this yields:
  \begin{multline}
      \inner{s_k}{P(x_k)\grad\mfc(x_k)} \leq - \frac{\lambda_{\rankop}(\hess\mfc(x_k))}{\lambda_{1}(\hess\mfc(x_k))} \|P(x_k) \grad\mfc(x_k)\| \|P(x_k)s_k\| \\
      + 2 (\arctheta + \hessapproxconst) \|\grad\mfc(x_k)\| \|s_k\| \|P(x_k)s_k\|.
  \end{multline}
  We now plug this back into~\eqref{eq:innerskgradfxkarc}.
  Using Lemma~\ref{lemma:grad-image-hess} for its second term and also
  Lemma~\ref{lemma:arc-vanishing-condition} which asserts $\|s_k\|$ is
  arbitrarily small for $x_k$ near $\optpoint$, we find that for all
  $\varepsilon > 0$ we can restrict the neighborhood $\mathcal{U}$ in order to
  secure
  \begin{align}
      \inner{s_k}{\grad \mfc(x_k)} \leq - \frac{\lambda_{\rankop}(\hess\mfc(x_k))}{\lambda_{1}(\hess\mfc(x_k))} \|P(x_k) \grad\mfc(x_k)\| \|P(x_k)s_k\|
        + \varepsilon \|\grad\mfc(x_k)\| \|s_k\|.
  \end{align}
  Lemma~\ref{lemma:ratio-steps} provides a (possibly smaller) neighborhood and a
  positive $r$ such that $\|P(x_k)s_k\| \geq r \|s_k\|$.
  Also, for any $\delta > 0$, Lemma~\ref{lemma:grad-image-hess} ensures
  $\|P(x_k) \grad\mfc(x_k)\| \geq (1-\delta) \|\grad\mfc(x_k)\|$ upon
  appropriate neighborhood restriction.
  Thus,
  \begin{align}
    \inner{s_k}{\grad \mfc(x_k)} \leq - \left( \frac{\lambda_{\rankop}(\hess\mfc(x_k))}{\lambda_{1}(\hess\mfc(x_k))} (1-\delta) r - \varepsilon \right) \|\grad\mfc(x_k)\| \|s_k\|.
  \end{align}
  Now plugging back into~\eqref{eq:fdecreasearc} and possibly restricting the
  neighborhood again,
  \begin{align}
      \mfc(x_k) - \mfc(x_{k + 1}) & \geq \frac{\arcrho_k}{2} \left( \frac{\lambda_{\rankop}(\hess\mfc(x_k))}{\lambda_{1}(\hess\mfc(x_k))} (1-\delta) r - \varepsilon - \arctheta \|s_k\| \right) \|\grad\mfc(x_k)\| \|s_k\|.
  \end{align}
  The result now follows since we can arrange to make $\varepsilon, \delta$ and
  $\|s_k\|$ arbitrarily small; to make $\lambda_{\rankop}(\hess\mfc(x_k))$ and
  $\lambda_{1}(\hess\mfc(x_k))$ arbitrarily close to $\plconstant$ and
  $\lambda_{\max}(\hess\mfc(\optpoint))$ (respectively); and to make $\arcrho_k$
  larger than a number arbitrarily close to 1
  (Lemma~\ref{lemma:arc-eventually-successful}).
  The final step is to account for potential distortion due to a
  retraction~\eqref{eq:retr-distance-condition}: this adds the factor
  $\retrdistboundconst$.
\end{proof}

If we combine this result with Lemma~\ref{lemma:bound-discrete-gd-path-length}
we obtain that ARC satisfies the bounded path length
property~\eqref{eq:bounded-path-length-prop}.
In Lemma~\ref{lemma:arc-vanishing-condition} we found that it also
satisfies the vanishing steps property~\eqref{eq:vanishing-steps}.
Moreover, if the iterates of ARC stay in a compact region then they accumulate
only at critical points~\cite[Cor.~2.6]{cartis2011adaptive}.
As a result, Corollary~\ref{cor:capture-to-single-point} applies to ARC: if an
iterate gets close enough to a point where \pl{} holds then the sequence has a
limit.
We conclude this section with the quadratic convergence rate of ARC.

\begin{proposition}\label{prop:arc-converges-quadratically}
  Suppose that $\sequence{x_k}$ converges to some $\optpoint \in \optimalset$
  and that $\mfc$ is~\eqref{eq:local-pl} around $\optpoint$.
  Also assume that~\aref{assu:hess-lip}, \aref{assu:hess-lipschitz-like}
  and~\aref{assu:hess-approx} hold around $\optpoint$.
  Then $\sequence{\dist(x_k, \optimalset)}$ converges quadratically to zero.
\end{proposition}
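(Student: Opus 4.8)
The plan is to show that a single ARC step contracts the distance to $\optimalset$ \emph{quadratically}, by chaining three ingredients: eventual success of the iterations together with a bounded penalty weight; a \emph{linear} bound $\|s_k\| = O(\dist(x_k,\optimalset))$ on the step length, which is exactly where~\eqref{eq:local-pl} does the work; and a Taylor expansion of $\grad\mfc$ that turns these bounds into a quadratic decrease of $\|\grad\mfc(x_{k+1})\|$, which the error bound then converts back into a bound on $\dist(x_{k+1},\optimalset)$.

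First I would fix the neighborhood. Since $x_k \to \optpoint$, the iterates eventually lie in the intersection of the neighborhoods supplied by Lemma~\ref{lemma:arc-eventually-successful}, Lemma~\ref{lemma:arc-s-dist-bound} and Proposition~\ref{prop:grad-dist-bounds}, and where $P(x)$, $Q(x)$ are well defined. For such $k$, Lemma~\ref{lemma:arc-eventually-successful} gives $\arcrho_k \geq \arcrho_c$, so the step is successful, $x_{k+1} = \retr_{x_k}(s_k)$, and $\cubicpenalty_{k+1} \leq \cubicpenalty_k$; hence $(\cubicpenalty_k)$ stays in $[\cubicpenalty_{\min},\bar\cubicpenalty]$ for some $\bar\cubicpenalty$. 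Fixing $\lamflat < \plconstant$ and $\lamsharp > \lambda_{\max}(\hess\mfc(\optpoint))$, Proposition~\ref{prop:grad-dist-bounds} (equivalently Remark~\ref{remark:pl-implies-eb-c1}) gives $\lamflat\, d_k \leq \|\grad\mfc(x_k)\| \leq \lamsharp\, d_k$ for $k$ large, where $d_k = \dist(x_k,\optimalset)$.

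Next I would bound the step. Applying Lemma~\ref{lemma:arc-s-dist-bound} with a fixed $\varepsilon$, together with $d_k \leq \frac{1}{\plconstant}\|\grad\mfc(x_k)\|$ and $\cubicpenalty_{\min} \leq \cubicpenalty_k \leq \bar\cubicpenalty$, yields $\|P(x_k)s_k\| \lesssim \|\grad\mfc(x_k)\|$ and $\|Q(x_k)s_k\| \lesssim \|\grad\mfc(x_k)\|$, hence a constant $c_s$ with $\|s_k\| \leq c_s\|\grad\mfc(x_k)\| \leq c_s\lamsharp\, d_k$. This is the conceptual crux: generically one only knows $\|s_k\| = O(\sqrt{d_k})$ from Lemma~\ref{lemma:arc-vanishing-condition}, but \pl{} forces $\grad\mfc$ to align with the top eigenspace of $\hess\mfc$ (Lemma~\ref{lemma:grad-image-hess}), so the cubic step behaves like a Newton step and $\|s_k\|$ is genuinely $O(d_k)$. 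Then I would Taylor-expand: along $t \mapsto \retr_{x_k}(ts_k)$, Assumption~\ref{assu:hess-lip} (and~\eqref{eq:retr-distance-condition} when $\retr \neq \Exp$) gives $\|\ptransport{s_k}{-1}\grad\mfc(x_{k+1}) - \grad\mfc(x_k) - \hess\mfc(x_k)[s_k]\| \lesssim \|s_k\|^2$, while the model-gradient identity~\eqref{eq:arc-model-gradient} together with~\eqref{eq:arc-dynamics} and~\eqref{eq:hess-pullback-bound} gives $\|\grad\mfc(x_k) + \hess\mfc(x_k)[s_k]\| \lesssim \|s_k\|\|\grad\mfc(x_k)\| + \cubicpenalty_k\|s_k\|^2$. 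Combining with the step bound yields $\|\grad\mfc(x_{k+1})\| \leq C d_k^2$ for a constant $C$, and the error bound~\eqref{eq:error-bound} at $x_{k+1}$ then gives $d_{k+1} \leq \frac{1}{\lamflat}\|\grad\mfc(x_{k+1})\| \leq \frac{C}{\lamflat} d_k^2$; since $d_k \to 0$, this is quadratic convergence.

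The main obstacle is the Taylor step: producing an $O(\|s_k\|^2)$ remainder for $\grad\mfc$ with only a Lipschitz Hessian (no third derivative available) and a general retraction takes some care, in the spirit of the integral remainders already used for Proposition~\ref{prop:grad-dist-bounds}. Conceptually, though, the delicate point is the linear step bound: absent the \pl{}-induced alignment the cubic regularization term would remain of order $d_k$ rather than $d_k^2$, and only linear convergence would survive — which is consistent with the failure of plain Newton exhibited at the start of Section~\ref{sec:super-linear-conv}.
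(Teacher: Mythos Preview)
Your proposal is correct and follows essentially the same route as the paper: eventual success gives a bounded penalty $\cubicpenalty_k \leq \cubicpenalty_{\max}$; Lemma~\ref{lemma:arc-s-dist-bound} combined with Proposition~\ref{prop:grad-dist-bounds} yields the linear step bound $\|s_k\| = O(d_k)$; the model-gradient identity~\eqref{eq:arc-model-gradient} plus a Taylor expansion of $\grad\mfc$ give $\|\grad\mfc(x_{k+1})\| = O(d_k^2)$; and~\eqref{eq:error-bound} closes the loop. The one place the paper differs from your sketch is precisely the obstacle you flag: to handle a general retraction, instead of expanding along $t \mapsto \retr_{x_k}(ts_k)$, the paper writes $v_k = s_k - \Log_{x_k}(x_{k+1})$, expands $\grad\mfc$ along the geodesic (where~\aref{assu:hess-lip} applies directly), and uses the inverse function theorem on $\retr_{x_k}$ at $0$ to get $\|v_k\| = O(\dist(x_k,x_{k+1})^2)$, so the extra $\hess\mfc(x_k)[v_k]$ term is harmless.
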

\begin{proof}
  From Lemma~\ref{lemma:arc-eventually-successful} all the steps are eventually
  successful.
  In particular, the penalty weights eventually stop increasing: there exists
  $\cubicpenalty_{\max} > 0$ such that $\cubicpenalty_k \leq \cubicpenalty_{\max}$
  for all $k$.
  Let $\lamflat < \plconstant$ (where $\plconstant$ is the \pl{} constant)
  and $\lamsharp > \lambda_{\max}(\hess
  \mfc(\optpoint))$.
  We first apply the Pythagorean theorem with the upper-bounds from
  Lemma~\ref{lemma:arc-s-dist-bound}.
  Together with the upper-bound in Proposition~\ref{prop:grad-dist-bounds}, it
  implies for all large enough $k$ that
  \begin{align}\label{eq:arc-s2-bound}
    \|s_k\|^2 \leq \arcstepdistconst^2 \dist(x_k, \optimalset)^2
    && \textrm{ where } &&
    \arcstepdistconst^2 = \left(\frac{\lamsharp}{\lamflat}\right)^2 + \frac{1}{\cubicpenalty_{\min}^2}\!\left((\arctheta + \hessapproxconst)\lamsharp + \hesslipconstant + \qgradconst \sqrt{\cubicpenalty_{\min}}\right)^2.
  \end{align}
  We now let $v_k = s_k - \Log_{x_k}(x_{k + 1})$, which is always well defined
  for large enough $k$.
  Using \eb{} (given by Remark~\ref{remark:pl-implies-eb-c1}), we obtain that
  for all large enough $k$ we have
  \begin{align*}
    \dist(x_{k + 1}, \optimalset) &\leq \frac{1}{\plconstant} \|\grad \mfc(x_{k + 1})\|\\
                                  &= \frac{1}{\plconstant} \big\|\ptransport{x_{k + 1}}{x_k}\grad \mfc(x_{k + 1}) - \grad \mfc(x_k) - \hess \mfc(x_k)[\Log_{x_k}(x_{k + 1})]\\
                                  &\qquad\qquad\qquad- \hess \mfc(x_k)[v_k] - \big(\linearmap_k - \hess \mfc(x_k)\big)[s_k] - \cubicpenalty_k\|s_k\|s_k + \grad \arcmodel_k(s_k)\big\|,
  \end{align*}
  where we used identity~\eqref{eq:arc-model-gradient} for $\grad \arcmodel_k$.
  Now the triangle inequality, \aref{assu:hess-approx}
  and~\eqref{eq:arc-dynamics} give
  \begin{align}\label{eq:arc-next-dist-bound}
    \dist(x_{k + 1}, \optimalset) \leq \frac{1}{\plconstant}\Big(\frac{\hesslipconstant}{2}\dist(x_k, x_{k + 1})^2 + \lamsharp\|v_k\| + \cubicpenalty_k\|s_k\|^2 + (\arctheta + \hessapproxconst)\|\grad \mfc(x_k)\| \|s_k\|\Big).
  \end{align}
  Notice that $\dist(x_k, x_{k + 1}) \leq \retrdistboundconst \|s_k\|$
  using~\eqref{eq:retr-distance-condition}.
  We now bound the quantity $\|v_k\|$.
  For all $x \in \manifold$, since $\D \retr_x(\zeros) = I$, the inverse
  function theorem implies that $\retr_x$ is locally invertible and $\D
  \retr_x^{-1}(x) = I$.
  It follows that there exists a neighborhood $\mathcal{U} \subseteq
  \ball(\optpoint, \inj(\optpoint))$ of $\optpoint$ such that for all $x, y \in
  \mathcal{U}$ the quantity $\retr_x^{-1}(y)$ is well defined and satisfies
  \begin{align*}
    \retr_x^{-1}(y) = \retr_x^{-1}(x) + \D\retr_x^{-1}(x)[\Log_x(y)] + O(\dist(x, y)^2) = \Log_x(y) + O(\dist(x, y)^2).
  \end{align*}
  In particular, using the identity $s_k = \retr_{x_k}^{-1}(x_{k + 1})$, we find
  that there exists a constant $c_2$ such that $\|v_k\| \leq c_2\dist(x_k, x_{k
    + 1})^2$ holds for large enough $k$.
  Combining this with~\eqref{eq:arc-next-dist-bound}, we obtain
  \begin{align*}
    \dist(x_{k + 1}, \optimalset) \leq \frac{1}{\plconstant}\bigg(\Big(\frac{\hesslipconstant}{2}\retrdistboundconst^2 + \lamsharp c_2 \retrdistboundconst^2 + \cubicpenalty_k\Big)\|s_k\|^2 + (\arctheta + \hessapproxconst) \|\grad \mfc(x_k)\| \|s_k\|\bigg).
  \end{align*}
  Finally, using~\eqref{eq:arc-s2-bound} and the upper-bound from
  Proposition~\ref{prop:grad-dist-bounds}, we conclude that
  \begin{align*}
    \dist(x_{k + 1}, \optimalset) \leq \arcquadconvconst \dist(x_k, \optimalset)^2
      && \textrm{ where } &&
    \arcquadconvconst = \frac{\arcstepdistconst^2}{\plconstant} \Big(\frac{\hesslipconstant}{2}\retrdistboundconst^2 + \lamsharp c_2 \retrdistboundconst^2 + \cubicpenalty_{\max}\Big) + \frac{\lamsharp}{\plconstant}(\arctheta + \hessapproxconst)\arcstepdistconst. & \qedhere{}
  \end{align*}
\end{proof}

The quadratic convergence rates of the sequences $\sequence{\mfc(x_k)}$ and
$\sequence{\|\grad \mfc(x_k)\|}$ follow immediately by \qg{} and \eb{}.
Theorem~\ref{th:arc-main-theorem} is a direct consequence of
Corollary~\ref{cor:capture-to-single-point} and
Proposition~\ref{prop:arc-converges-quadratically}.

\subsection{Trust-region algorithms}\label{subsec:rtr}

In this section we analyze Riemannian trust-region algorithms (TR), which embed
Newton iterations in safeguards to ensure global convergence
guarantees~\citep{absil2007trust}.
They produce sequences $\sequence{(x_k, \Delta_k)}$, where $x_k$ is the
current iterate and $\Delta_k$ is the trust-region radius.
At iteration $k$, we define the trust-region model as
\begin{align}\label{eq:rtr-model}
  \rtrmodel_k(s) = \mfc(x_k) + \inner{s}{\grad \mfc(x_k)} + \frac{1}{2}\inner{s}{\linearmap_k[s]},
\end{align}
where $\linearmap_k \colon \tangent_{x_k}\manifold \to \tangent_{x_k}\manifold$
is a linear map close to $\hess \mfc(x_k)$,
satisfying~\aref{assu:hess-approx}.
The step $s_k$ is chosen by (usually approximately) solving the trust-region
subproblem
\begin{align}\label{eq:rtr-subproblem}\tag{TRS}
  \min_{s_k \in \tangent_{x_k}\manifold} \rtrmodel_k(s_k) \quad\text{subject to}\quad \|s_k\| \leq \Delta_k.
\end{align}
The point $x_k$ and radius $\Delta_k$ are then updated depending on how good the
model is, as measured by the ratio
\begin{align*}
  \rtrrho_k = \frac{\mfc(x_k) - \mfc(\retr_{x_k}(s_k))}{\rtrmodel_k(\zeros) - \rtrmodel_k(s_k)}.
\end{align*}
(If the denominator is zero, we let $\rtrrho_k = 1$.)
Specifically, given parameters $\rtrrho' \in \interval[open]{0}{\frac{1}{4}}$ and $\bar\Delta > 0$,
the update rules for the state are
\begin{align*}
  x_{k + 1} = \begin{cases}
    \retr_{x_k}(s_k) &\text{if $\rtrrho_k > \rtrrho'$},\\
    x_k &\text{otherwise},
  \end{cases}
  \qquad\quad
  \Delta_{k + 1} = \begin{cases}
    \frac{1}{4}\Delta_k &\text{if $\rtrrho_k < \frac{1}{4}$},\\
    \min(2\Delta_k, \bar\Delta) &\text{if $\rtrrho_k > \frac{3}{4}$ and $\|s_k\| = \Delta_k$},\\
    \Delta_k &\text{otherwise}.
  \end{cases}
\end{align*}

\paragraph{Shortcomings of trust-region with exact subproblem solver.}\label{par:shortcomings-exact-solver}

There exist algorithms to efficiently solve the subproblem exactly (up to some
accuracy).
Can we provide strong guarantees in the presence of non-isolated minima using an
exact subproblem solver?
Assume for simplicity that $\linearmap_k = \hess \mfc(x_k)$.
We recall~\cite[Thm.~4.1]{nocedal2006numerical} that a vector $s \in
\tangent_{x_k}\manifold$ is a global solution of the
subproblem~\eqref{eq:rtr-subproblem} if and only if $\|s\| \leq \Delta_k$ and there
exists a scalar $\lambda \geq 0$ such that
\begin{align}\label{eq:tr-subproblem-optimality}
  \big(\hess \mfc(x_k) + \lambda I\big)s = -\grad \mfc(x_k), && \lambda(\Delta_k - \|s\|) = 0 && \text{and} && \hess \mfc(x_k) + \lambda I \succeq \zeros.
\end{align}
As mentioned
in~\citep{wojtowytsch2023stochastic,wojtowytsch2024stochastic,liu2022loss}, if
$\mfc$ is convex in a neighborhood of $\optpoint$ then $\optimalset$ is locally
convex, hence affine.
Assuming that $\optimalset$ is not flat around $\optpoint$, it follows that
$\hess \mfc$ must have a negative eigenvalue in any neighborhood of $\optpoint$.
Consider an iterate $x_k$ close to $\optpoint$, and for which $\hess \mfc(x_k)$
has a negative eigenvalue.
Conditions~\eqref{eq:tr-subproblem-optimality} imply $\lambda > 0$ and hence
$\|s\| = \Delta_k$, meaning that $s$ is at the border of the trust region.
Consequently, even if $x_0$ is arbitrarily close to $\optpoint$, we can arrange
for the next iterate to be far away (if the radius $\Delta_0$ is large).
This shows that capture results such as Corollary~\ref{cor:capture-to-single-point}
fail for this algorithm.

As an example, define $\mfc \colon \reals^2 \to \reals$ as $\mfc(x, y) = (x^2 +
y^2 - 1)^2$.
The set of minima is the unit circle, on which $\mfc$ satisfies \mb{}.
Given $t > 0$, define $x(t) = 0$ and $y(t) = 1 - t$.
Consider the subproblem~\eqref{eq:rtr-subproblem} at $( x(t), y(t) )^\top$ with
exact Hessian $\hess \mfc$ and with parameter $\Delta$.
For small enough $t$ there are two solutions:
\begin{align*}
  s = \begin{bmatrix}
    \pm\sqrt{\Delta^2 - \frac{t^2(t - 2)^2}{4(t - 1)^2}}\\
    \frac{t(t - 2)}{2(t - 1)}
  \end{bmatrix}.
\end{align*}
The criterion~\eqref{eq:tr-subproblem-optimality} with $\lambda = 4t(2 - t)$
confirms optimality.
We find that $s \to ( \pm\Delta, 0 )^\top$ as $t \to 0$, so the tentative step
is far even when $t$ is small.
We could arrange for that step to be accepted by adjusting the function value at
the tentative iterate.
That rules out even basic capture-type theorems.
This type of behavior does not happen when the Hessian is positive definite at
the minimum.

\paragraph{Trust-region with Cauchy steps.}\label{par:cauchy-steps}

As just discussed, TR with an exact subproblem solver can fail in the face of
non-isolated minima.
However, practical implementations of TR typically solve the subproblem only
approximately.
We set out to investigate the robustness of such mechanisms to non-isolated
minima.

Our investigation is prompted by the empirical observation that TR with a
popular approximate subproblem solver known as truncated conjugate gradient
(tCG, see~\citep{conn2000trust,absil2007trust}) seems to enjoy superlinear
convergence under \pl{}, even with non-isolated minima.
We confirmed this subsequently~\citep{rebjock2023tcg}
using significant additional machinery.

As a more direct illustration, we show the following theorem, regarding TR with a crude
subproblem solver that computes \emph{Cauchy steps} (see~\eqref{eq:cauchy-steps}
below).
It is relevant in particular because tCG generates a sequence of increasingly
good tentative steps, the first of which is the Cauchy step.

\begin{theorem}\label{th:main-rtr}
  Suppose~\aref{assu:hess-lipschitz-like}, \aref{assu:hess-approx} and
  $\plconstant$-\eqref{eq:local-pl} hold around $\optpoint \in \optimalset$.
  Let $\mathcal{U}$ be a neighborhood of $\optpoint$.
  There exists a neighborhood $\mathcal{V}$ of $\optpoint$ such that if a
  sequence of iterates generated by TR with Cauchy steps enters $\mathcal{V}$
  then the sequence converges linearly to some $x_\infty \in \mathcal{U} \cap
  \optimalset$ with rate $\sqrt{1 - \frac{\plconstant}{\lambda_{\max}}}$, where
  $\lambda_{\max} = \lambda_{\max}(\hess \mfc(x_\infty))$.
\end{theorem}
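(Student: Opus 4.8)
\emph{Proof sketch.}
The plan is to show that, in a small enough neighborhood of $\optpoint$, RTR with Cauchy steps behaves essentially like gradient descent with a fixed step-size: it has the vanishing steps property~\eqref{eq:vanishing-steps}, it produces steps with the strong decrease property~\eqref{eq:sufficient-decrease-lyapunov}, and hence the bounded path length property~\eqref{eq:bounded-path-length-prop}; Corollary~\ref{cor:capture-to-single-point} then yields convergence to a single $x_\infty \in \mathcal{U} \cap \optimalset$, and a refined decrease estimate together with Proposition~\ref{prop:linear-convergence-sufficient-decrease} yields the rate. We may assume $\hess \mfc(\optpoint) \neq \zeros$: otherwise Lemma~\ref{lemma:grad-image-hess} (with empty top eigenspace) and Proposition~\ref{prop:grad-dist-bounds} force $\grad \mfc \equiv \zeros$ near $\optpoint$, the iterates never move, and the statement is trivial. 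Throughout write $g_k = \grad \mfc(x_k)$, let $\rankop = \rank \hess \mfc(\optpoint)$, fix a small $\varepsilon > 0$ and a parameter $\bar\lambda > \lammax(\hess \mfc(\optpoint))$ to be sent to $\lammax(\hess \mfc(\optpoint))$ at the end, and let $P(x)$ denote the orthogonal projector onto the top-$\rankop$ eigenspace of $\hess \mfc(x)$.

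The core is an analysis of the Cauchy step $s_k$, the minimizer of the trust-region model~\eqref{eq:rtr-model} along $-g_k$ within the ball of radius $\Delta_k$. Using continuity of eigenvalues, Proposition~\ref{prop:eigenvalue-bigger-pl} (nonzero eigenvalues of $\hess\mfc(\optpoint)$ are $\geq \plconstant$), the fact that $\lammin(\hess\mfc(\optpoint)) = 0$, and Lemma~\ref{lemma:grad-image-hess} (the component $(I-P(x_k))g_k$ is $o(\|g_k\|)$), I first show there is a neighborhood $\mathcal{W}$ of $\optpoint$ on which $(\plconstant - \varepsilon)\|g_k\|^2 \leq \inner{g_k}{\hess\mfc(x_k)[g_k]} \leq \bar\lambda\|g_k\|^2$. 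Hence the line minimizer sits at $t_k^\star = \|g_k\|^2/\inner{g_k}{\hess\mfc(x_k)[g_k]} \in [\bar\lambda^{-1},(\plconstant-\varepsilon)^{-1}]$; whether $s_k$ is the interior point $-t_k^\star g_k$ or the boundary point, one gets $\|s_k\| \leq \|g_k\|/(\plconstant-\varepsilon)$ (and, in the boundary case, $\Delta_k \leq \|g_k\|/(\plconstant-\varepsilon)$), together with the decrease bound $\rtrmodel_k(\zeros) - \rtrmodel_k(s_k) \geq \tfrac12\min\big(t_k^\star\|g_k\|^2, \Delta_k\|g_k\|\big) \geq \tfrac12\min\big(\|g_k\|^2/\bar\lambda, \Delta_k\|g_k\|\big)$. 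Plugging these into the model-error bound $\mfc(\retr_{x_k}(s_k)) - \rtrmodel_k(s_k) \leq \tfrac{\fliplikeconst}{6}\|s_k\|^3$ from~\aref{assu:hess-lipschitz-like} gives $1 - \rtrrho_k \leq C\|g_k\|$ for a constant $C$ depending only on $\fliplikeconst,\plconstant,\varepsilon,\bar\lambda$, \emph{uniformly over all admissible $\Delta_k$}. Shrinking $\mathcal{W}$ so that $C\|g_k\| < \tfrac14$ there, every step taken from $\mathcal{W}$ is successful, with $\rtrrho_k > \tfrac34$, so the trust-region radius is not decreased. Combining $\rtrrho_k > \tfrac34$ with the decrease bound and with $\dist(x_k,x_{k+1}) \leq \retrdistboundconst\|s_k\|$, and noting $\|s_k\| \leq \|g_k\|/(\plconstant-\varepsilon)$ in the interior case and $\|s_k\| = \Delta_k \leq \|g_k\|/(\plconstant-\varepsilon)$ in the boundary case, shows that on $\mathcal{W}$ the iterates satisfy the strong decrease property~\eqref{eq:sufficient-decrease-lyapunov} for a fixed $\lyapsufficientdecreaseconst > 0$ (the implication $x_k \in \optimalset \Rightarrow x_{k+1} = x_k$ is clear since then $g_k = \zeros$ and $s_k = \zeros$); the vanishing steps property~\eqref{eq:vanishing-steps} holds on $\mathcal{W}$ with $\boundvanishingsteps(x) = \tfrac{\retrdistboundconst}{\plconstant-\varepsilon}\|\grad\mfc(x)\|$.

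With strong decrease in hand, Lemma~\ref{lemma:bound-discrete-gd-path-length} (applied with \pl{}, i.e.\ $\plexp = \tfrac12$) gives the bounded path length property~\eqref{eq:bounded-path-length-prop} on $\mathcal{W}$. Since RTR with Cauchy steps is a descent mapping whose sequences accumulate only at critical points~\citep{absil2007trust}, Corollary~\ref{cor:capture-to-single-point} now produces a neighborhood $\mathcal{V}$ of $\optpoint$ such that a sequence entering $\mathcal{V}$ converges to some $x_\infty \in \mathcal{U}\cap\optimalset$. For the rate, note that the tail of such a sequence lies in $\mathcal{W}$, so its radius eventually stops decreasing, say $\Delta_k \geq \Delta_{\min} > 0$; meanwhile $\|g_k\| \to 0$, so for large $k$ the boundary case (which would require $\Delta_k \leq \|g_k\|/(\plconstant-\varepsilon) < \Delta_{\min}$) is excluded, and $\mfc(x_k)-\mfc(x_{k+1}) = \rtrrho_k(\rtrmodel_k(\zeros)-\rtrmodel_k(s_k)) \geq \rtrrho_k\,\tfrac{\|g_k\|^2}{2\bar\lambda}$ with $\rtrrho_k \to 1$. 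Re-localizing all the estimates above around $x_\infty$ (so that $\bar\lambda$ may be taken arbitrarily close to $\lammax(\hess\mfc(x_\infty))$), this is the sufficient decrease property~\eqref{eq:sufficient-decrease} with constant arbitrarily close to $\tfrac{1}{2\lammax(\hess\mfc(x_\infty))}$; Proposition~\ref{prop:linear-convergence-sufficient-decrease} then gives linear convergence of $\dist(x_k,\optimalset)$ with rate arbitrarily close to $\sqrt{1 - \plconstant/\lammax(\hess\mfc(x_\infty))}$.

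The main obstacle is the Cauchy-step analysis in the non-isolated regime: lower-bounding the curvature $\inner{g_k}{\hess\mfc(x_k)[g_k]}$ by roughly $\plconstant\|g_k\|^2$ even though $\hess\mfc(x_k)$ is far from positive definite (it has a large near-kernel and may have small negative eigenvalues). This is exactly what the alignment estimate of Lemma~\ref{lemma:grad-image-hess} provides, and it is what distinguishes the Cauchy step from the exact subproblem solver of Section~\ref{par:shortcomings-exact-solver}, for which this control fails. A secondary nuisance is the bookkeeping needed to let the rate constant limit to the sharp value $\sqrt{1 - \plconstant/\lammax(\hess\mfc(x_\infty))}$, which requires re-localizing the estimates around $x_\infty$ after convergence has been secured, and tracking that the success ratios $\rtrrho_k$ tend to $1$.
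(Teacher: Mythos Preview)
Your proposal is correct and follows essentially the same approach as the paper: use Lemma~\ref{lemma:grad-image-hess} to obtain the curvature bounds $\lamflat\|g_k\|^2 \leq \inner{g_k}{\hess\mfc(x_k)[g_k]} \leq \lamsharp\|g_k\|^2$ (the paper's Proposition~\ref{prop:inner-product-bounds}), deduce $\|s_k\| \lesssim \|g_k\|$ and $\rtrrho_k \to 1$ (Propositions~\ref{prop:rtr-cs-vs} and~\ref{prop:ratios-converge}), establish strong decrease and hence bounded path length via Lemma~\ref{lemma:bound-discrete-gd-path-length}, invoke Corollary~\ref{cor:capture-to-single-point} for convergence to a single $x_\infty$, and finish with Proposition~\ref{prop:linear-convergence-sufficient-decrease}. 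The only organizational difference is that the paper first proves Propositions~\ref{prop:ratios-converge}--\ref{prop:rtr-linear-convergence} for any subproblem solver satisfying the abstract conditions~\eqref{eq:rtr-better-than-cauchy} and~\eqref{eq:strong-vs}, and then checks these for Cauchy steps, whereas you work directly with Cauchy steps throughout; the substance is the same.
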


A local convergence analysis of TR with Cauchy steps is given
in~\citep{muoi2017local} for \emph{non-singular} local minima.
Here, we prove that the favorable convergence properties also hold if we only
assume \pl{}.

To prove Theorem~\ref{th:main-rtr}, we first establish a number of intermediate
results only assuming the subproblem solver satisfies the
properties~\eqref{eq:rtr-better-than-cauchy} and~\eqref{eq:strong-vs} defined
below.
We then secure these properties for Cauchy steps.
First, given a local minimum $\optpoint \in \optimalset$, we assume that the
step $s_k$ satisfies the \emph{sufficient decrease} condition
\begin{align}\label{eq:rtr-better-than-cauchy}
  \rtrmodel_k(0) - \rtrmodel_k(s_k) \geq \rtrsufficientdecrease \|\grad \mfc(x_k)\| \min\!\bigg(\Delta_k, \frac{\|\grad \mfc(x_k)\|^3}{\big|\inner{\grad \mfc(x_k)}{\linearmap_k[\grad \mfc(x_k)]}\big|}\bigg)
\end{align}
whenever the iterate $x_k$ is sufficiently close to $\optpoint$.
(If the denominator is zero, consider the rightmost expression to be infinite.)
This condition holds for many practical subproblem solvers and ensures global
convergence guarantees in particular (see~\citep[\S7.4]{absil2009optimization}
and \cite[\S6.4]{boumal2020introduction}).
Second, given a local minimum $\optpoint \in \optimalset$, we assume that there
exists a constant $\rtrstepconstant \geq 0$ such that
\begin{align}\label{eq:strong-vs}
  \|s_k\| \leq \rtrstepconstant \|\grad \mfc(x_k)\|
\end{align}
when $x_k$ is sufficiently close to $\optpoint$.

We find that the ratios $\sequence{\rtrrho_k}$ are large around minima where
these two conditions hold.
This is because they imply that the trust-region model is an accurate
approximation of the local behavior of $\mfc$.
It follows that the steps $\sequence{s_k}$ decrease $\mfc$ nearly as much as
predicted by the model.

\begin{proposition}\label{prop:ratios-converge}
  Suppose that~\aref{assu:hess-lipschitz-like}
  and~\aref{assu:hess-approx} hold around $\optpoint \in \optimalset$.
  Also assume that the steps $s_k$ satisfy~\eqref{eq:rtr-better-than-cauchy}
  and~\eqref{eq:strong-vs} around $\optpoint$.
  For all $\varepsilon > 0$ there exists a neighborhood $\mathcal{U}$ of
  $\optpoint$ such that if an iterate $x_k$ is in $\mathcal{U}$ then $\rtrrho_k
  \geq 1 - \varepsilon$.
\end{proposition}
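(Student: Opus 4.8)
The plan is to estimate $1-\rtrrho_k$ directly. Since $\rtrmodel_k(0)=\mfc(x_k)$, one has
\[
  1-\rtrrho_k = \frac{\mfc(\retr_{x_k}(s_k))-\rtrmodel_k(s_k)}{\rtrmodel_k(0)-\rtrmodel_k(s_k)},
\]
with the convention $\rtrrho_k=1$ when the denominator vanishes; by~\eqref{eq:strong-vs} that degenerate case forces $\grad\mfc(x_k)=0$ and hence $s_k=0$, so we may assume $\grad\mfc(x_k)\neq 0$ from now on, which makes the denominator strictly positive. The numerator is exactly the left-hand side of~\eqref{eq:hess-lipschitz-like}; since $\|s_k\|\le\rtrstepconstant\|\grad\mfc(x_k)\|\to 0$ as $x_k\to\optpoint$ by~\eqref{eq:strong-vs}, the step is small enough for~\aref{assu:hess-lipschitz-like} to apply and give $\mfc(\retr_{x_k}(s_k))-\rtrmodel_k(s_k)\le\frac{\fliplikeconst}{6}\|s_k\|^3$.

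Next I would lower-bound the denominator via the sufficient-decrease hypothesis~\eqref{eq:rtr-better-than-cauchy}. By continuity of $\hess\mfc$, fix $M\ge 1$ with $\|\hess\mfc(x)\|\le M$ on a neighborhood of $\optpoint$; then $|\inner{\grad\mfc(x_k)}{\hess\mfc(x_k)[\grad\mfc(x_k)]}|\le M\|\grad\mfc(x_k)\|^2$, so~\eqref{eq:rtr-better-than-cauchy} yields
\[
  \rtrmodel_k(0)-\rtrmodel_k(s_k)\ \ge\ \rtrsufficientdecrease\,\|\grad\mfc(x_k)\|\,\min\!\Big(\Delta_k,\tfrac{\|\grad\mfc(x_k)\|}{M}\Big).
\]
The decisive step is to match the cubic numerator to this bound. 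Using \emph{both} $\|s_k\|\le\Delta_k$ (the step lies in the trust region) and~\eqref{eq:strong-vs}, I would write $\|s_k\|^3\le \rtrstepconstant^2\|\grad\mfc(x_k)\|^2\,\min(\Delta_k,\rtrstepconstant\|\grad\mfc(x_k)\|)$. Dividing, the two ``$\min$'' factors cancel up to the constant $\max(1,\rtrstepconstant M)$ (one checks $\min(\Delta,\rtrstepconstant t)\le\max(1,\rtrstepconstant M)\min(\Delta,t/M)$ for all $\Delta,t\ge 0$), leaving
\[
  1-\rtrrho_k\ \le\ \frac{\fliplikeconst\,\rtrstepconstant^2\,\max(1,\rtrstepconstant M)}{6\,\rtrsufficientdecrease}\,\|\grad\mfc(x_k)\|.
\]
Since $\grad\mfc$ is continuous and $\grad\mfc(\optpoint)=0$, shrinking $\mathcal{U}$ makes the right-hand side at most $\varepsilon$, which is the claim.

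I expect the main obstacle to be the regime where $\Delta_k$ is small, so that $s_k$ is capped by the radius rather than by the gradient: a crude bound $\|s_k\|^3\le\rtrstepconstant^3\|\grad\mfc(x_k)\|^3$ against a denominator of order $\Delta_k\|\grad\mfc(x_k)\|$ would blow up as $\Delta_k\to 0$. Keeping one factor $\|s_k\|\le\Delta_k$ inside the cube so the $\Delta_k$'s cancel is what makes the estimate uniform over the neighborhood. Note that the \pl{} inequality is not used here; it enters only in the later results that convert $\rtrrho_k\to 1$ into a convergence rate.
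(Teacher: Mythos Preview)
Your argument is correct and follows essentially the same route as the paper: bound the numerator by $\frac{\fliplikeconst}{6}\|s_k\|^3$ via~\aref{assu:hess-lipschitz-like}, lower-bound the denominator by~\eqref{eq:rtr-better-than-cauchy}, and exploit $\|s_k\|\le\Delta_k$ together with~\eqref{eq:strong-vs} so that the $\Delta_k$-dependence cancels. The paper handles the last step by an explicit two-case split on which term of the $\min$ is active, whereas you package the same dichotomy into the single inequality $\min(\Delta,\rtrstepconstant t)\le\max(1,\rtrstepconstant M)\min(\Delta,t/M)$; the resulting bounds are identical up to constants. One cosmetic slip: the implication ``denominator vanishes $\Rightarrow$ $\grad\mfc(x_k)=0$'' comes from~\eqref{eq:rtr-better-than-cauchy}, not~\eqref{eq:strong-vs} (the latter then gives $s_k=0$); since the convention $\rtrrho_k=1$ already disposes of that case, this does not affect the proof.
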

\begin{proof}
  We follow and adapt some arguments
  from~\cite[Thm.~7.4.11]{absil2009optimization}, which is stated there assuming
  $\hess \mfc(\optpoint) \succ \zeros$.
  We can dismiss the case where $\grad \mfc(x_k) = \zeros$ because it implies
  $\rtrrho_k = 1$.
  Using the definitions of $\rtrmodel_k$ and $\rtrrho_k$ we have
  \begin{align*}
    1 - \rtrrho_k &= \frac{\mfc(\retr_{x_k}(s_k)) - \rtrmodel_k(s_k)}{\rtrmodel_k(\zeros) - \rtrmodel_k(s_k)}.
  \end{align*}
  Assuming $x_k$ is sufficiently close to $\optpoint$, we bound the numerator as
  $\mfc(\retr_{x_k}(s_k)) - \rtrmodel_k(s_k) \leq
  \frac{\fliplikeconst}{6}\|s_k\|^3 \leq
  \frac{\fliplikeconst\rtrstepconstant^2}{6}\|s_k\|\|\grad \mfc(x_k)\|^2$
  using~\aref{assu:hess-lipschitz-like} and inequality~\eqref{eq:strong-vs}.
  Combining this with the sufficient decrease~\eqref{eq:rtr-better-than-cauchy}
  and~\aref{assu:hess-approx} gives
  \begin{align*}
    1 - \rtrrho_k \leq \frac{\fliplikeconst \rtrstepconstant^2 \|s_k\| \|\grad \mfc(x_k)\|}{6 \rtrsufficientdecrease \min\!\Big(\Delta_k, \frac{\|\grad \mfc(x_k)\|}{\|\hess \mfc(x_k)\| + \hessapproxconst \|\grad \mfc(x_k)\|}\Big)}.
  \end{align*}
  If $\Delta_k$ is active in the denominator then we obtain $1 - \rtrrho_k \leq
  \frac{\fliplikeconst \rtrstepconstant^2}{6\rtrsufficientdecrease} \|\grad
  \mfc(x_k)\|$ because $\|s_k\| \leq \Delta_k$.
  Otherwise, using~\eqref{eq:strong-vs} we obtain $1 - \rtrrho_k \leq
  \frac{\fliplikeconst \rtrstepconstant^3}{6\rtrsufficientdecrease} \|\grad
  \mfc(x_k)\|\big(\|\hess \mfc(x_k)\| + \hessapproxconst \|\grad
  \mfc(x_k)\|\big)$.
  In both cases this yields the result.
\end{proof}

This result notably implies that the trust-region radius does not decrease in
the vicinity of the minimum $\optpoint$.
It means that the trust region eventually becomes inactive when the iterates
converge to $\optpoint$.
We now employ the particular alignment between the gradient and the top
eigenspace of the Hessian induced by \pl{} (see
Lemma~\ref{lemma:grad-image-hess}) to derive bounds on the inner products
$\inner{\grad \mfc(x)}{\hess \mfc(x)[\grad \mfc(x)]}$.

\begin{proposition}\label{prop:inner-product-bounds}
  Suppose that $\mfc$ is~$\plconstant$-\eqref{eq:local-pl} around $\optpoint \in
  \optimalset$.
  Let $\lamflat < \plconstant$ and $\lamsharp > \lammax(\hess \mfc(\optpoint))$.
  Then there exists a neighborhood $\mathcal{U}$ of $\optpoint$ such that for all
  $x \in \mathcal{U}$ we have
  \begin{align*}
    \lamflat \|\grad \mfc(x)\|^2 \leq \inner{\grad \mfc(x)}{\hess \mfc(x)[\grad \mfc(x)]} \leq \lamsharp \|\grad \mfc(x)\|^2.
  \end{align*}
\end{proposition}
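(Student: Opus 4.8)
The plan is to reduce the quadratic form $\inner{\grad \mfc(x)}{\hess \mfc(x)[\grad \mfc(x)]}$ to the behavior of $\hess \mfc(x)$ on the top-$\rankop$ eigenspace, exploiting the alignment result of Lemma~\ref{lemma:grad-image-hess}. First I would set $\rankop = \rank(\hess \mfc(\optpoint))$ and, by continuity of eigenvalues and Corollary~\ref{cor:hessconstantrank}, shrink to a neighborhood $\mathcal{U}$ of $\optpoint$ on which the orthogonal projector $P(x)$ onto the top $\rankop$ eigenspace of $\hess \mfc(x)$ is well defined, with $\lambda_\rankop(\hess\mfc(x)) \geq \lamflat$ and $\lambda_1(\hess\mfc(x)) \leq \lamsharp$ (using that $\lambda_\rankop(\hess\mfc(\optpoint)) \geq \plconstant$ by Proposition~\ref{prop:eigenvalue-bigger-pl} and $\lambda_1(\hess\mfc(\optpoint)) \leq \lammax(\hess\mfc(\optpoint))$). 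Write $g = \grad \mfc(x)$, $Q(x) = I - P(x)$, and split $g = P(x)g + Q(x)g$. Since $\hess\mfc(x)$ commutes with $P(x)$ and $Q(x)$, the cross terms vanish and
\begin{align*}
  \inner{g}{\hess \mfc(x)[g]} = \inner{P(x)g}{\hess \mfc(x)[P(x)g]} + \inner{Q(x)g}{\hess \mfc(x)[Q(x)g]}.
\end{align*}

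Next I would control the second term: $Q(x)$ projects onto the sum of the eigenspaces for $\lambda_{\rankop+1}, \dots$, which are all $o(1)$ as $x \to \optpoint$ (they tend to eigenvalues $\leq 0$ of $\hess\mfc(\optpoint)$ that are in fact $0$ on $\optimalset$, and more generally are bounded in absolute value by a quantity tending to $0$), so $|\inner{Q(x)g}{\hess\mfc(x)[Q(x)g]}| \leq \max(|\lambda_{\min}(\hess\mfc(x))|, |\lambda_{\rankop+1}(\hess\mfc(x))|)\,\|Q(x)g\|^2 = o(1)\|g\|^2$. Actually the cleaner route uses Lemma~\ref{lemma:grad-image-hess} directly: it gives $\|Q(x)g\| = \|(I-P(x))g\| = o(\|g\|)$ as $x \to \optpoint$, so the second term is $o(\|g\|^2)$ regardless of the eigenvalue sizes. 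For the first term, $P(x)g$ lies in the top-$\rankop$ eigenspace, hence $\lambda_\rankop(\hess\mfc(x))\|P(x)g\|^2 \leq \inner{P(x)g}{\hess\mfc(x)[P(x)g]} \leq \lambda_1(\hess\mfc(x))\|P(x)g\|^2$, and $\|P(x)g\|^2 = \|g\|^2 - \|Q(x)g\|^2 = \|g\|^2(1 + o(1))$ again by Lemma~\ref{lemma:grad-image-hess}. Combining, $\inner{g}{\hess\mfc(x)[g]} = \lambda_\rankop(\hess\mfc(x))\|g\|^2(1+o(1)) + o(\|g\|^2)$ on the lower side and similarly with $\lambda_1$ on the upper side.

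Finally, since $\lambda_\rankop(\hess\mfc(x)) \to \lambda_\rankop(\hess\mfc(\optpoint)) \geq \plconstant > \lamflat$ and $\lambda_1(\hess\mfc(x)) \to \lambda_1(\hess\mfc(\optpoint)) \leq \lammax(\hess\mfc(\optpoint)) < \lamsharp$, absorbing the $o(\|g\|^2)$ terms by shrinking $\mathcal{U}$ once more yields $\lamflat\|g\|^2 \leq \inner{g}{\hess\mfc(x)[g]} \leq \lamsharp\|g\|^2$ for all $x \in \mathcal{U}$, which is the claim. I do not expect a serious obstacle here; the only delicate point is making sure the $o(\cdot)$ estimates from Lemma~\ref{lemma:grad-image-hess} are applied with the same $\rankop$ and the same projector family, and that the eigenvalue continuity is invoked on a neighborhood where the spectral gap separating the top $\rankop$ eigenvalues from the rest stays open (guaranteed by Proposition~\ref{prop:eigenvalue-bigger-pl} together with Corollary~\ref{cor:hessconstantrank}).
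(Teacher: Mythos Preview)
Your proposal is correct and follows essentially the same route as the paper: split $\grad\mfc(x)$ along $P(x)$ and $Q(x)=I-P(x)$, use Lemma~\ref{lemma:grad-image-hess} to make the $Q$-part $o(\|\grad\mfc(x)\|)$, and bound the $P$-part via $\lambda_\rankop(\hess\mfc(x))$. The only cosmetic difference is that the paper obtains the upper bound directly from $\lammax(\hess\mfc(x))\leq\lamsharp$ without the decomposition, whereas you reuse the $P/Q$ split for both sides; either way works.
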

\begin{proof}
  By continuity of eigenvalues, there exists a neighborhood $\mathcal{U}$ of
  $\optpoint$ such that for all $x \in \mathcal{U}$ we have $\lammax(\hess
  \mfc(x)) \leq \lamsharp$.
  The upper-bound $\inner{\grad \mfc(x)}{\hess \mfc(x)[\grad \mfc(x)]} \leq
  \lamsharp \|\grad \mfc(x)\|^2$ follows immediately.
  We now prove the lower-bound.
  Let $\rankop$ be the rank of $\hess \mfc(\optpoint)$.
  Given $x$ sufficiently close to $\optpoint$, we let $P(x) \colon
  \tangent_x\manifold \to \tangent_x\manifold$ denote the orthogonal projector
  onto the top $\rankop$ eigenspace of $\hess \mfc(x)$.
  From Lemma~\ref{lemma:grad-image-hess} we have $\|(I - P(x)) \grad \mfc(x)\|^2
  = o(\|\grad \mfc(x)\|^2)$ as $x \to \optpoint$.
  If we write $Q(x) = I - P(x)$, we obtain
  \begin{align*}
    \inner{\grad \mfc(x)}{\hess \mfc(x)[\grad \mfc(x)]} &= \inner{P(x) \grad \mfc(x)}{\hess \mfc(x) P(x) \grad \mfc(x)} + \inner{Q(x) \grad \mfc(x)}{\hess \mfc(x) Q(x) \grad \mfc(x)}\\
                                                        &\geq \lambda_\rankop(\hess \mfc(x)) \|\grad \mfc(x)\|^2 + o(\|\grad \mfc(x)\|^2)
  \end{align*}
  as $x \to \optpoint$.
\end{proof}

Combining these results guarantees a linear rate of convergence.

\begin{proposition}\label{prop:rtr-linear-convergence}
  Suppose that~\aref{assu:hess-lipschitz-like}, \aref{assu:hess-approx} and
  $\plconstant$-\eqref{eq:local-pl} hold around $\optpoint \in \optimalset$.
  Let $\sequence{x_k}$ be a sequence of iterates produced by TR converging to
  $\optpoint$.
  Assume that the steps $s_k$ satisfy~\eqref{eq:rtr-better-than-cauchy}
  and~\eqref{eq:strong-vs} around $\optpoint$.
  Then the iterates converge at least linearly with rate $\sqrt{1 -
    \frac{2\rtrsufficientdecrease\plconstant}{\lammax}}$, where $\lammax =
  \lambda_{\max}(\hess \mfc(\optpoint))$.
\end{proposition}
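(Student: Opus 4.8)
The plan is to establish the sufficient decrease condition~\eqref{eq:sufficient-decrease} with a constant $\sufficientdecreaseconst$ arbitrarily close to $\rtrsufficientdecrease/\lammax$, and then invoke Proposition~\ref{prop:linear-convergence-sufficient-decrease}. Since $\sequence{x_k}$ converges to $\optpoint$, for any finite collection of neighborhoods of $\optpoint$ we may assume all iterates (past some index) lie in their intersection; in particular we assume they lie in neighborhoods where~\eqref{eq:strong-vs} holds and where Propositions~\ref{prop:ratios-converge} and~\ref{prop:inner-product-bounds} apply. If $\grad\mfc(x_k) = \zeros$ for such a $k$, then~\eqref{eq:strong-vs} forces $s_k = \zeros$, hence $x_{k+1} = x_k$ and the sequence is eventually constant, so there is nothing to prove; thus we may assume $\grad\mfc(x_k) \neq \zeros$ for all $k$.

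First, by Proposition~\ref{prop:ratios-converge} we have $\rtrrho_k \to 1$. Hence for all large $k$: $\rtrrho_k > \rtrrho'$, so the step is accepted and $x_{k+1} = \retr_{x_k}(s_k)$; and $\rtrrho_k \geq \tfrac14$, so the trust-region radius never decreases past some index, giving $\Delta_k \geq \Delta_{\min}$ for some $\Delta_{\min} > 0$ and all large $k$. Next, fix $\lamflat \in \interval[open]{0}{\plconstant}$ and $\lamsharp > \lammax = \lambda_{\max}(\hess\mfc(\optpoint))$. Proposition~\ref{prop:inner-product-bounds} gives $\lamflat\|\grad\mfc(x_k)\|^2 \leq \inner{\grad\mfc(x_k)}{\hess\mfc(x_k)[\grad\mfc(x_k)]} \leq \lamsharp\|\grad\mfc(x_k)\|^2$ for large $k$ (in particular the inner product is positive, matching the absolute value in~\eqref{eq:rtr-better-than-cauchy}). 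Since $\grad\mfc(x_k)\to\zeros$, the ratio $\|\grad\mfc(x_k)\|^3 / \inner{\grad\mfc(x_k)}{\hess\mfc(x_k)[\grad\mfc(x_k)]} \leq \|\grad\mfc(x_k)\|/\lamflat$ tends to zero, hence is eventually smaller than $\Delta_{\min} \leq \Delta_k$. Therefore the minimum in~\eqref{eq:rtr-better-than-cauchy} is attained at the rightmost term, and using the upper bound on the inner product,
\begin{align*}
  \rtrmodel_k(\zeros) - \rtrmodel_k(s_k) \geq \rtrsufficientdecrease\frac{\|\grad\mfc(x_k)\|^4}{\inner{\grad\mfc(x_k)}{\hess\mfc(x_k)[\grad\mfc(x_k)]}} \geq \frac{\rtrsufficientdecrease}{\lamsharp}\|\grad\mfc(x_k)\|^2.
\end{align*}

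Since the step is accepted, $\mfc(x_k) - \mfc(x_{k+1}) = \rtrrho_k\big(\rtrmodel_k(\zeros) - \rtrmodel_k(s_k)\big) \geq \tfrac{\rtrrho_k\rtrsufficientdecrease}{\lamsharp}\|\grad\mfc(x_k)\|^2$. Given any $\sufficientdecreaseconst < \rtrsufficientdecrease/\lammax$, we may have chosen $\lamsharp$ close enough to $\lammax$, and then $k$ large enough that $\rtrrho_k$ is close enough to $1$, so that $\rtrrho_k\rtrsufficientdecrease/\lamsharp \geq \sufficientdecreaseconst$; thus~\eqref{eq:sufficient-decrease} holds with constant $\sufficientdecreaseconst$ for all large $k$. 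Proposition~\ref{prop:linear-convergence-sufficient-decrease} then yields that $\|\grad\mfc(x_k)\|$ and $\dist(x_k,\optimalset)$ converge to zero linearly with rate $\sqrt{1 - 2\sufficientdecreaseconst\plconstant}$. To conclude linear convergence of $\sequence{x_k}$ to $\optpoint$ itself, combine~\eqref{eq:strong-vs} and~\eqref{eq:retr-distance-condition} to get $\dist(x_k,x_{k+1}) \leq \retrdistboundconst\|s_k\| \leq \retrdistboundconst\rtrstepconstant\|\grad\mfc(x_k)\| = O\big((\sqrt{1-2\sufficientdecreaseconst\plconstant})^k\big)$, and sum the geometric tail: $\dist(x_k,\optpoint) \leq \sum_{j\geq k}\dist(x_j,x_{j+1}) = O\big((\sqrt{1-2\sufficientdecreaseconst\plconstant})^k\big)$. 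Letting $\sufficientdecreaseconst \uparrow \rtrsufficientdecrease/\lammax$ gives the stated rate $\sqrt{1 - 2\rtrsufficientdecrease\plconstant/\lammax}$.

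There is no single hard step here; the work is almost entirely in carefully chaining Propositions~\ref{prop:ratios-converge}, \ref{prop:inner-product-bounds} and~\ref{prop:linear-convergence-sufficient-decrease}. The one place that needs care is the bookkeeping of nested neighborhoods together with the \emph{simultaneous} limiting regimes ($\lamsharp\to\lammax$, $\rtrrho_k\to 1$, $\grad\mfc(x_k)\to\zeros$, and the $\min$ in~\eqref{eq:rtr-better-than-cauchy} switching to its second argument), all of which are mutually compatible precisely because $\sequence{x_k}$ is assumed to converge to $\optpoint$; and noting that the quantity under the square root is nonnegative (for the Cauchy-type constant $\rtrsufficientdecrease = \tfrac12$ this is just $\plconstant \leq \lammax$, which holds unless $\mfc$ is locally constant, a case in which the claim is trivial).
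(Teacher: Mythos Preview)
Your proof is correct and follows essentially the same route as the paper: invoke Proposition~\ref{prop:ratios-converge} to get $\rtrrho_k$ close to $1$ and the radii bounded away from zero, use Proposition~\ref{prop:inner-product-bounds} to control the inner product in~\eqref{eq:rtr-better-than-cauchy}, deduce the sufficient decrease~\eqref{eq:sufficient-decrease} with constant arbitrarily close to $\rtrsufficientdecrease/\lammax$, and conclude via Proposition~\ref{prop:linear-convergence-sufficient-decrease}. You are slightly more explicit than the paper in arguing which branch of the $\min$ in~\eqref{eq:rtr-better-than-cauchy} is active, and you add a short geometric-tail argument to upgrade linear convergence of $\dist(x_k,\optimalset)$ to linear convergence of $\dist(x_k,\optpoint)$, which the paper leaves implicit.
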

\begin{proof}
  We can assume that $\grad \mfc(x_k)$ is non-zero for all $k$ (otherwise the
  sequence converges in a finite number of steps).
  We show that the sequence satisfies the sufficient decrease
  property~\eqref{eq:sufficient-decrease}.
  Given $\lamflat < \plconstant$ and $\lamsharp > \lambda_{\max}(\hess
  \mfc(\optpoint))$, Proposition~\ref{prop:inner-product-bounds}
  and~\aref{assu:hess-approx} ensure that
  \begin{align*}
    \frac{1}{\lamsharp} \leq \frac{\|\grad \mfc(x_k)\|^2}{\inner{\grad \mfc(x_k)}{\linearmap_k[\grad \mfc(x_k]}} \leq \frac{1}{\lamflat}
  \end{align*}
  for all large enough $k$.
  We let $0 < \varepsilon < \frac{3}{4}$ and
  Proposition~\ref{prop:ratios-converge} implies that $\rtrrho_k \geq 1 -
  \varepsilon$ for all large enough $k$.
  In particular, the radii $\sequence{\Delta_k}$ are bounded away from zero
  (because the update mechanism does not decrease the radius when $\rtrrho_k
  \geq \frac{1}{4}$).
  Combining the definition of $\rtrrho_k$ and the sufficient
  decrease~\eqref{eq:rtr-better-than-cauchy} gives
  \begin{align*}
    \mfc(x_k) - \mfc(x_{k + 1}) = \rtrrho_k\big(\rtrmodel_k(\zeros) - \rtrmodel_k(s_k)\big) \geq \frac{(1 - \varepsilon)\rtrsufficientdecrease}{\lamsharp}\|\grad \mfc(x_k)\|^2
  \end{align*}
  for all large enough $k$.
  We can now conclude with
  Proposition~\ref{prop:linear-convergence-sufficient-decrease}.
\end{proof}

We are now in a position to prove Theorem~\ref{th:main-rtr}.
The Cauchy step at iteration $x_k$ is defined as the minimum
of~\eqref{eq:rtr-subproblem} with the additional constraint that $s_k \in
\vecspan(\grad \mfc(x_k))$.
We can find an explicit expression for it: when $\grad \mfc(x_k) \neq \zeros$,
the Cauchy step is $s_k^c = -\cauchystep_k \grad \mfc(x_k)$, where
\begin{align}\label{eq:cauchy-steps}
  \cauchystep_k =
  \begin{cases}
    \min\! \Big( \frac{\|\grad \mfc(x_k)\|^2}{\inner{\grad \mfc(x_k)}{\linearmap_k[\grad \mfc(x_k)]}}, \frac{\Delta_k}{\|\grad \mfc(x_k)\|} \Big) \quad&\text{if}\quad \inner{\grad \mfc(x_k)}{\linearmap_k[\grad \mfc(x_k)]} > 0,\\
    \frac{\Delta_k}{\|\grad \mfc(x_k)\|} &\text{otherwise}.
  \end{cases}
\end{align}
Cauchy steps notably satisfy the sufficient decrease
property~\eqref{eq:rtr-better-than-cauchy} globally with $\rtrsufficientdecrease
= \frac{1}{2}$ (see~\citep[Thm.~6.3.1]{conn2000trust}).
We now prove that they also satisfy~\eqref{eq:strong-vs} around minima where
\pl{} holds.

\begin{proposition}\label{prop:rtr-cs-vs}
  Suppose that~\aref{assu:hess-approx} and $\plconstant$-\eqref{eq:local-pl}
  hold around $\optpoint \in \optimalset$.
  Given $\lamflat < \plconstant$, there exists a neighborhood $\mathcal{U}$
  of $\optpoint$ such that if an iterate $x_k$ is in $\mathcal{U}$ then the
  Cauchy step satisfies $\|s_k^c\| \leq \frac{1}{\lamflat}\|\grad \mfc(x_k)\|$.
\end{proposition}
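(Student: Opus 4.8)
The plan is to reduce the statement entirely to Proposition~\ref{prop:inner-product-bounds}. Write the Cauchy step as $s_k^c = -\cauchystep_k \grad \mfc(x_k)$, so that $\|s_k^c\| = \cauchystep_k \|\grad \mfc(x_k)\|$ and it suffices to show that $\cauchystep_k \leq \frac{1}{\lamflat}$ for every iterate $x_k$ close enough to $\optpoint$. The case $\grad \mfc(x_k) = \zeros$ is trivial since then $s_k^c = \zeros$, so we may assume $\grad \mfc(x_k) \neq \zeros$.

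First I would fix any $\lamsharp > \lammax(\hess \mfc(\optpoint))$ and invoke Proposition~\ref{prop:inner-product-bounds} with constants $\lamflat < \plconstant$ and $\lamsharp$ to obtain a neighborhood $\mathcal{U}$ of $\optpoint$ on which $\lamflat \|\grad \mfc(x)\|^2 \leq \inner{\grad \mfc(x)}{\hess \mfc(x)[\grad \mfc(x)]}$. In particular, on $\mathcal{U}$, whenever $\grad \mfc(x_k) \neq \zeros$ the curvature $\inner{\grad \mfc(x_k)}{\hess \mfc(x_k)[\grad \mfc(x_k)]}$ is strictly positive, so only the first branch of the definition~\eqref{eq:cauchy-steps} of $\cauchystep_k$ is active. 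In that branch $\cauchystep_k$ is a minimum, hence bounded above by its first argument, giving
\begin{align*}
  \cauchystep_k \leq \frac{\|\grad \mfc(x_k)\|^2}{\inner{\grad \mfc(x_k)}{\hess \mfc(x_k)[\grad \mfc(x_k)]}} \leq \frac{1}{\lamflat}.
\end{align*}
Multiplying through by $\|\grad \mfc(x_k)\|$ yields $\|s_k^c\| \leq \frac{1}{\lamflat}\|\grad \mfc(x_k)\|$, as desired.

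I do not anticipate a genuine obstacle here; the only point worth flagging is that the ``otherwise'' branch of~\eqref{eq:cauchy-steps}, where $\cauchystep_k = \Delta_k/\|\grad \mfc(x_k)\|$ and which could a priori be large relative to $\|\grad \mfc(x_k)\|$, is exactly what \pl{} excludes locally: it forces positive curvature of $\mfc$ along the gradient direction in a neighborhood of $\optpoint$, so that the step-length is controlled by the curvature term rather than by the trust-region radius. This is the crux of why Cauchy steps inherit the \eqref{eq:strong-vs} property under \pl{}.
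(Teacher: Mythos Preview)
Your proof is correct and follows exactly the same approach as the paper: invoke Proposition~\ref{prop:inner-product-bounds} to obtain $\inner{\grad \mfc(x_k)}{\hess \mfc(x_k)[\grad \mfc(x_k)]} \geq \lamflat \|\grad \mfc(x_k)\|^2$ near $\optpoint$, then deduce $\cauchystep_k \leq \frac{1}{\lamflat}$. Your version is more explicit about the trivial case and about which branch of~\eqref{eq:cauchy-steps} is active, but the argument is the same.
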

\begin{proof}
  Given $\lamflat < \plconstant$, Proposition~\ref{prop:inner-product-bounds}
  and~\aref{assu:hess-approx} yield that $\inner{\grad
    \mfc(x_k)}{\linearmap_k[\grad \mfc(x_k)]} \geq \lamflat \|\grad
  \mfc(x_k)\|^2$ if $x_k$ is sufficiently close to $\optpoint$.
  It implies that the step-sizes defined in~\eqref{eq:cauchy-steps} are bounded
  as $\cauchystep_k \leq \frac{1}{\lamflat}$, which gives the result.
\end{proof}

In particular, this proposition shows that TR with Cauchy steps
satisfies the~\eqref{eq:vanishing-steps} property at $\optpoint$ with
$\boundvanishingsteps(x) = \frac{\retrdistboundconst}{\lamflat}\|\grad
\mfc(x)\|$, where $\retrdistboundconst$ is as
in~\eqref{eq:retr-distance-condition}.
Furthermore, Cauchy steps satisfy the model decrease
\begin{align*}
  \rtrmodel_k(\zeros) - \rtrmodel_k(s_k^c) \geq \frac{1}{2}\|\grad \mfc(x_k)\|\|s_k^c\|,
\end{align*}
as shown in~\cite[Lem.~4.3]{absil2005convergence}.
It implies that TR with Cauchy steps generates sequences that satisfy the
strong decrease property~\eqref{eq:sufficient-decrease-lyapunov} with
$\lyapsufficientdecreaseconst = \frac{\rtrrho'}{2\retrdistboundconst}$, where
$\retrdistboundconst$ is as in~\eqref{eq:retr-distance-condition}, and
$\rtrrho'$ is defined in the algorithm description in Section~\ref{subsec:rtr}.
See~\cite[Thm.~4.4]{absil2005convergence} for details on this.
As a consequence, TR with Cauchy steps satisfies the bounded path length
property~\eqref{eq:bounded-path-length-prop} at points where a \loja{}
inequality holds (Lemma~\ref{lemma:bound-discrete-gd-path-length}).
Moreover, if the iterates of this algorithm stay in a compact region then they
accumulate only at critical points~\cite[Thm.~7.4.4]{absil2009optimization}.
We can finally combine the statements from
Corollary~\ref{cor:capture-to-single-point} and
Proposition~\ref{prop:rtr-linear-convergence} to obtain
Theorem~\ref{th:main-rtr}.

\begin{remark}\label{remark:more-than-cauchy-point}
  The model decrease~\eqref{eq:rtr-better-than-cauchy} is not a
  sufficient condition for the strong decrease
  property~\eqref{eq:sufficient-decrease-lyapunov} to hold.
  As a result, it is not straightforward to determine whether the bounded path
  length property~\eqref{eq:bounded-path-length-prop} holds for a given
  subproblem solver: see~\cite[\S4.2]{absil2005convergence} for a
  discussion of this.
\end{remark}

\section{Conclusions and perspectives}\label{sec:conclusion}

We showed the (local) equivalence (up to arbitrarily small losses in
constants) of \mb{}, \pl{}, \eb{} and \qg{}
(Section~\ref{sec:equiv-properties}).
We then revisited classical capture results compatible with non-isolated minima
to factor out the roles of vanishing step-sizes and bounded path lengths
(Section~\ref{sec:reminders}).
The \mb{} property and the alignment of the gradient with respect to the Hessian
eigenspaces (Lemma~\ref{lemma:grad-image-hess}) are particularly adapted to
analyze second-order algorithms.
Accordingly, assuming the above conditions we establish quadratic convergence
for ARC with inexact subproblem solvers and linear convergence for TR with Cauchy steps
(Section~\ref{sec:super-linear-conv}).

We conclude with a few research directions:
\begin{itemize}
\item In Section~\ref{par:cauchy-steps} we analyze a simple subproblem solver
  (Cauchy steps) for TR.
  It is natural to explore more advanced subproblem solvers.
  For example in~\citep{rebjock2023tcg} we show superlinear convergence for a
  truncated CG method assuming \mb{}.
\item In Section~\ref{par:shortcomings-exact-solver} we argue that TR with an
  \emph{exact} subproblem solver cannot satisfy a standard capture property
  in the presence of non-isolated minima.
  However, establishing capture is only a means to an end.
  It may still be possible to obtain other satisfactory guarantees.
\item More generally, using the tools from Sections~\ref{sec:equiv-properties}
  and~\ref{sec:reminders}, there is an opportunity to revisit analyses of other
  algorithms that currently require strong local convexity.
  For example, \cite{goyens2024riemannian} control the global complexity of
  hybrid TR algorithms for strict saddle functions, and currently require
  non-singular minima.
\item Likewise, Remark~\ref{rem:RSI} illustrates equivalence of \mb{} with the restricted secant inequality (RSI). Showing \mb{} implies RSI is direct. The converse is facilitated via equivalence of \mb{} with \eb{}. There may be other local properties in the literature that turn out to be equivalent to these.
\end{itemize}

\section*{Acknowledgments}

We thank Dmitriy Drusvyatskiy, J\'er\^ome Bolte and Christopher Criscitiello for
insightful discussions and pointers to literature,
and Brighton Ancelin for helpful feedback on an earlier version.

\bibliographystyle{plainnat}
\bibliography{references}

\clearpage
\appendix
\section{\loja{} and function growth}\label{sec:loja-proofs}

We review here the classical arguments at the basis of
Proposition~\ref{prop:pl-implies-qg}.
Given an initial point $x_0 \in \manifold$, we let $x \colon I \to \manifold$
denote a solution of the negative gradient flow
\begin{align}\label{eq:gradient-flow}\tag{GF}
    x'(t) = -\grad \mfc(x(t)) \qquad\text{with}\qquad x(0) = x_0
\end{align}
on the maximum interval $I$.
The following is classical~\citep{lojasiewicz1982trajectoires}, restated
succinctly to highlight neighborhood assumptions.

\begin{lemma}\label{lemma:bound-gd-flow-path-length}
  Suppose $\mfc$ satisfies~\eqref{eq:local-loja} with constants $\plexp \in
  \interval[open right]{0}{1}$ and $\plconstant > 0$ in a neighborhood
  $\mathcal{U}$ of $\optpoint \in \optimalset$.
  Also suppose that $\mfc(x) > \mfcopt$ for all $x \in \mathcal{U} \setminus
  \optimalset$.
  Let $x$ be a solution
  to~\eqref{eq:gradient-flow} for some $x_0 \in \mathcal{U} \setminus
  \optimalset$.
  Suppose that for all $t \in \interval[open]{0}{T}$ we have $x(t) \in
  \mathcal{U}$ and $\grad \mfc(x(t)) \neq \zeros$.
  Then the path length is bounded as
  \begin{align*}
    \int_0^T \|x'(t)\|\deriv t &\leq \frac{1}{(1 - \plexp)\sqrt{2\plconstant}}\vert \mfc(x_0) - \mfcopt \vert^{1 - \plexp}.
  \end{align*}
\end{lemma}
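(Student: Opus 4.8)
The plan is to bound the path length by integrating the decay of $\mfc$ along the flow, using the \loja{} inequality to convert the gradient-norm term into a derivative of a power of $\mfc(x(t)) - \mfcopt$. Concretely, define $\varphi(t) = \mfc(x(t)) - \mfcopt$ for $t \in \interval[open]{0}{T}$; this is strictly positive throughout by the hypothesis that $\mfc > \mfcopt$ off $\optimalset$ together with $\grad\mfc(x(t)) \neq \zeros$ (so $x(t) \notin \optimalset$). Along the gradient flow, $\varphi'(t) = \inner{\grad\mfc(x(t))}{x'(t)} = -\|\grad\mfc(x(t))\|^2$, and $\|x'(t)\| = \|\grad\mfc(x(t))\|$. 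The \loja{} inequality~\eqref{eq:local-loja} gives $\|\grad\mfc(x(t))\| \geq \sqrt{2\plconstant}\,\varphi(t)^{\plexp}$ (using $\varphi(t) > 0$), so that
\begin{align*}
  -\frac{\deriv}{\deriv t}\varphi(t)^{1 - \plexp} = (1 - \plexp)\varphi(t)^{-\plexp}\|\grad\mfc(x(t))\|^2 = (1 - \plexp)\varphi(t)^{-\plexp}\|\grad\mfc(x(t))\|\,\|x'(t)\| \geq (1 - \plexp)\sqrt{2\plconstant}\,\|x'(t)\|.
\end{align*}

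Next I would integrate this differential inequality from $0$ to $T$. The left side telescopes to $\varphi(0)^{1-\plexp} - \varphi(T)^{1-\plexp} \leq \varphi(0)^{1-\plexp} = |\mfc(x_0) - \mfcopt|^{1-\plexp}$, while the right side is $(1-\plexp)\sqrt{2\plconstant}\int_0^T \|x'(t)\|\deriv t$. Rearranging gives exactly the claimed bound
\begin{align*}
  \int_0^T \|x'(t)\|\deriv t \leq \frac{1}{(1 - \plexp)\sqrt{2\plconstant}}|\mfc(x_0) - \mfcopt|^{1-\plexp}.
\end{align*}
One should take a little care that $\varphi$ is $\smooth{1}$ on $\interval[open]{0}{T}$ (which it is, since $\mfc$ is $\smooth{1}$ and $x(\cdot)$ is $\smooth{1}$ as a gradient-flow solution) so that the chain rule applies and the fundamental theorem of calculus is valid; the endpoint $t = 0$ is handled by taking a limit / working on $\interval{\varepsilon}{T}$ and letting $\varepsilon \to 0$, using continuity of $\varphi$ at $0$.

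The main subtlety — rather than an obstacle — is the bookkeeping around the hypotheses: the statement is carefully phrased so that $x(t)$ stays in the neighborhood $\mathcal{U}$ where~\eqref{eq:local-loja} holds and $\grad\mfc(x(t)) \neq \zeros$ on the whole open interval, which is precisely what licenses dividing by $\varphi(t)^{\plexp}$ and guarantees $\varphi$ does not vanish before time $T$. No monotonicity or convergence claim about the flow itself is needed here; the lemma is purely a one-line Gr\"onwall-type estimate once the substitution $u = \varphi^{1-\plexp}$ is in place. I do not anticipate a genuinely hard step — the classical \loja{} trick of differentiating a fractional power of the function gap does all the work, and the explicit constant falls out of the exponent $1 - \plexp$ produced by that differentiation.
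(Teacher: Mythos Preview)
Your proposal is correct and follows essentially the same approach as the paper: the paper defines $h(t) = \frac{1}{(1-\plexp)\sqrt{2\plconstant}}|\mfc(x(t)) - \mfcopt|^{1-\plexp}$ (your $\varphi^{1-\plexp}$ up to a constant), shows $\|x'(t)\| \leq -h'(t)$ via the same chain of equalities and the \loja{} inequality, and integrates. The only cosmetic difference is that the paper packages the constant into $h$ from the start, whereas you carry it through the inequality; the argument and the constant obtained are identical.
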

\begin{proof}
  Define $h(t) = \frac{1}{(1 - \plexp)\sqrt{2\plconstant}}\vert \mfc(x(t)) -
  \mfcopt \vert^{1 - \plexp}$.
  For all $t \in \interval[open right]{0}{T}$, the \loja{} inequality provides:
  \begin{align*}
    \|x'(t)\| = \|\grad \mfc(x(t))\| = \frac{\|\grad \mfc(x(t))\|^2}{\|\grad \mfc(x(t))\|} \leq \frac{\|\grad \mfc(x(t))\|^2}{\sqrt{2\plconstant}(\mfc(x(t)) - \mfcopt)^\plexp} = -h'(t).
  \end{align*}
  It follows that $\int_0^T \|x'(t)\| \deriv t \leq \int_0^T -h'(t) \deriv t =
  h(0) - h(T) \leq h(0)$.
\end{proof}

As shown below, this bound implies that the trajectories are trapped and have a
limit point if $x_0$ is close enough to $\optimalset$.

\begin{proposition}[Lyapunov stability]\label{prop:lyapunov-gd-flow}
  Suppose that $\mfc$ satisfies~\eqref{eq:local-loja} around $\optpoint \in
  \optimalset$ and let $\mathcal{U}$ be a neighborhood of $\optpoint$.
  There exists a neighborhood $\mathcal{V}$ of $\optpoint$ such that if $x_0 \in
  \mathcal{V}$ then the solution $x$ to~\eqref{eq:gradient-flow} is defined on
  $\interval[open right]{0}{+\infty}$, and $x(t) \in \mathcal{U}$ for all $t \geq
  0$.
\end{proposition}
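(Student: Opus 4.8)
The plan is to combine the path-length bound of Lemma~\ref{lemma:bound-gd-flow-path-length} with a standard trapping argument and the escape lemma for ordinary differential equations. Since $\optpoint$ is a local minimum and $\mfc$ is \loja{} near $\optpoint$, I would first fix $\delta > 0$ with $\ball(\optpoint, \delta) \subseteq \mathcal{U}$ such that~\eqref{eq:local-loja} holds on $\ball(\optpoint, \delta)$, $\mfc \geq \mfcopt$ there, and $\overline{\ball(\optpoint, \delta)}$ is compact (manifolds are locally compact); it suffices to prove the statement with $\mathcal{U}$ replaced by this smaller ball, since a $\mathcal{V}$ that traps trajectories in $\ball(\optpoint,\delta)$ also traps them in $\mathcal{U}$. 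On $\ball(\optpoint,\delta)$ the hypotheses of Lemma~\ref{lemma:bound-gd-flow-path-length} are met. Using continuity of $\mfc$ at $\optpoint$ (recall $\mfc(\optpoint) = \mfcopt$), I would then pick $\mathcal{V} = \ball(\optpoint, \delta')$ with $\delta' < \delta/2$ small enough that $\frac{1}{(1 - \plexp)\sqrt{2\plconstant}}|\mfc(x_0) - \mfcopt|^{1 - \plexp} < \delta/2$ for all $x_0 \in \mathcal{V}$.

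Next, I would fix $x_0 \in \mathcal{V}$, let $x \colon \interval[open right]{0}{T_{\max}} \to \manifold$ be the maximal solution of~\eqref{eq:gradient-flow}, and let $\Theta$ be the supremum of those $t$ for which $x([0,t]) \subseteq \ball(\optpoint, \delta)$; this $\Theta$ is positive because $x_0 \in \ball(\optpoint, \delta')$ and $x$ is continuous, and for $t < \Theta$ the trajectory lies in the region where~\eqref{eq:local-loja} holds. If $\grad \mfc$ vanishes somewhere on $\interval[open right]{0}{\Theta}$, let $s_0$ be the infimum of such times; applying Lemma~\ref{lemma:bound-gd-flow-path-length} on $[0,t]$ for $t < s_0$ and letting $t \uparrow s_0$ gives $\int_0^{s_0}\|x'(s)\|\,\deriv s \leq \delta/2$, hence $\dist(x(s_0), \optpoint) < \delta' + \delta/2 < \delta$ and $\grad\mfc(x(s_0)) = \zeros$ by continuity, so that uniqueness of solutions forces $x \equiv x(s_0)$ on $\interval[open right]{s_0}{+\infty}$; this already gives $T_{\max} = +\infty$ and $x(t) \in \ball(\optpoint, \delta)$ for all $t \geq 0$. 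Otherwise $\grad\mfc(x(s)) \neq \zeros$ for all $s < \Theta$, and Lemma~\ref{lemma:bound-gd-flow-path-length} yields $\dist(x(t), \optpoint) \leq \delta' + \int_0^t \|x'(s)\|\,\deriv s < \delta' + \delta/2 < \delta$ for every $t < \Theta$.

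In that remaining case, if $\Theta < T_{\max}$ then $x(\Theta)$ is defined and, by continuity, $\dist(x(\Theta), \optpoint) \leq \delta' + \delta/2 < \delta$, so $x$ would stay in $\ball(\optpoint, \delta)$ on a strictly larger interval, contradicting the definition of $\Theta$; hence $\Theta = T_{\max}$, and $x$ is confined to the compact set $\overline{\ball(\optpoint, \delta' + \delta/2)} \subseteq \ball(\optpoint, \delta)$ throughout $\interval[open right]{0}{T_{\max}}$. By standard ODE theory, a maximal solution with finite $T_{\max}$ must leave every compact subset of $\manifold$, so $T_{\max} = +\infty$. In both cases $x$ is defined on $\interval[open right]{0}{+\infty}$ and $x(t) \in \ball(\optpoint, \delta) \subseteq \mathcal{U}$ for all $t \geq 0$, which is the claim.

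The real content is entirely the path-length estimate of Lemma~\ref{lemma:bound-gd-flow-path-length}; the only thing to be careful about is the bookkeeping of the maximal interval of existence together with the possibility that $\grad\mfc$ vanishes along the trajectory before it could leave $\ball(\optpoint,\delta)$, which is handled by the case split and by uniqueness of solutions (standard, since $\grad\mfc$ is continuous and locally Lipschitz in the settings of interest). A slicker alternative that avoids the case split is to observe that $h(t) = \frac{1}{(1-\plexp)\sqrt{2\plconstant}}|\mfc(x(t)) - \mfcopt|^{1-\plexp}$ is non-increasing along the flow and $\|x'(t)\| \leq -h'(t)$ at every $t$ (trivially where $\grad\mfc(x(t)) = \zeros$), so the total length up to any time is at most $h(0) < \delta/2$ directly.
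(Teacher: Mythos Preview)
Your proof is correct and follows essentially the same approach as the paper: nest two balls using continuity of $\mfc$, invoke Lemma~\ref{lemma:bound-gd-flow-path-length} to bound the path length inside the larger ball, derive a contradiction with the trajectory ever reaching the boundary, and conclude global existence via the escape lemma. Your bookkeeping of the maximal interval and of the case where $\grad\mfc$ vanishes along the trajectory is in fact slightly more careful than the paper's, and your closing remark about the monotone function $h$ is exactly the mechanism behind Lemma~\ref{lemma:bound-gd-flow-path-length}.
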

\begin{proof}
  The set $\mathcal{U}$ contains a ball $\mathcal{B}$ centered on $\optpoint$ of
  radius $\delta_1$ such that \emph{(i)} $\mfc$ satisfies~\eqref{eq:local-loja}
  with constants $\plexp$ and $\plconstant$ in $\mathcal{B}$, and \emph{(ii)}
  $\mfc(y) > \mfcopt$ for all $y \in \mathcal{B} \setminus \optimalset$.
  By continuity of $\mfc$ there exists an open ball $\mathcal{V} \subseteq
  \mathcal{B}$ of radius $\delta_2$ around $\optpoint$ such that for all $y \in
  \mathcal{V}$ we have
  \begin{align*}
    \frac{1}{(1 - \plexp)\sqrt{2\plconstant}}\vert \mfc(y) - \mfcopt \vert^{1 - \plexp} + \delta_2 < \delta_1.
  \end{align*}
  Given $x_0 \in \mathcal{V}$, let $x \colon I \to \manifold$ be the maximal
  solution to~\eqref{eq:gradient-flow}.
  Suppose that $\{t \in I : t \geq 0 \text{ and } x(t) \notin \mathcal{B}\}$ is
  non-empty and let $T$ be the infimum of this set.
  Then $x(t) \in \mathcal{B}$ for all $0 \leq t < T$.
  Suppose first that there exists $t \in \interval[open right]{0}{T}$ such that
  $\grad \mfc(x(t)) = \zeros$.
  Then $x(t') = x(t)$ for all $t' > t$, which is impossible.
  So $\grad \mfc(x(t)) \neq \zeros$ for all $t \in \interval[open right]{0}{T}$.
  It follows that the assumptions of Lemma~\ref{lemma:bound-gd-flow-path-length}
  are satisfied, and the path length is bounded as
  \begin{align*}
    \int_0^T \|x'(t)\|\deriv t \leq \frac{1}{(1 - \plexp)\sqrt{2\plconstant}}\vert \mfc(x_0) - \mfcopt \vert^{1 - \plexp}.
  \end{align*}
  This implies that
  \begin{align*}
    \dist(x(T), \optpoint) \leq \int_0^T \|x'(t)\|\deriv t + \dist(x_0, \optpoint) < \delta_1,
  \end{align*}
  and hence that $x(T) \in \mathcal{B}$.
  This is a contradiction and we deduce that $x(t) \in \mathcal{B}$ for all $t
  \in I$.
  Therefore, the total path length of $x$ is bounded, and the escape
  lemma~\cite[Lem.~A.43]{lee2018introduction} implies that $x$ is defined for
  all $t \geq 0$.
\end{proof}

\begin{corollary}\label{cor:gf-converges}
  Suppose that $\mfc$ satisfies~\eqref{eq:local-loja} around $\optpoint \in
  \optimalset$.
  There exists a neighborhood $\mathcal{V}$ of $\optpoint$ such that for all
  $x_0 \in \mathcal{V}$ the solution to~\eqref{eq:gradient-flow} is defined on
  $\interval[open right]{0}{+\infty}$ and has a limit in $\optimalset$.
\end{corollary}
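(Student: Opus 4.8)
The plan is to assemble two ingredients already prepared above: the confinement provided by Proposition~\ref{prop:lyapunov-gd-flow} and the finite total length of the flow provided by Lemma~\ref{lemma:bound-gd-flow-path-length}. First I would shrink the \loja{} neighborhood: pick $\delta > 0$ small enough that \eqref{eq:local-loja} holds on the open ball $\ball(\optpoint, \delta)$ with constants $\plexp, \plconstant$, that $\bar \ball(\optpoint, \delta)$ is compact (possible since closed balls of radius below $\inj(\optpoint)$ are compact, as in the proof of Lemma~\ref{lemma:log-well-defined}), and that $\mfc(x) > \mfcopt$ for every $x \in \ball(\optpoint, \delta) \setminus \optimalset$ --- the latter being available because $\optimalset$ is locally closed (Lemma~\ref{lemma:proj-non-empty}) and \loja{} implies the growth property of Proposition~\ref{prop:pl-implies-qg}. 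Applying Proposition~\ref{prop:lyapunov-gd-flow} with the neighborhood $\mathcal{W} = \ball(\optpoint, \delta/2)$ produces a neighborhood $\mathcal{V} \subseteq \mathcal{W}$ such that, for every $x_0 \in \mathcal{V}$, the solution $x(\cdot)$ of \eqref{eq:gradient-flow} exists on $\interval[open right]{0}{+\infty}$ and satisfies $x(t) \in \mathcal{W}$ for all $t \geq 0$. This $\mathcal{V}$ is the neighborhood claimed in the statement.

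Now fix $x_0 \in \mathcal{V}$. If $\grad \mfc(x(t_0)) = \zeros$ for some $t_0 \geq 0$ (in particular if $x_0 \in \optimalset$), then uniqueness of solutions of \eqref{eq:gradient-flow} forces $x(t) = x(t_0)$ for all $t \geq t_0$, so the flow converges to a critical point of $\mfc$ inside $\ball(\optpoint, \delta)$. Otherwise $\grad \mfc(x(t)) \neq \zeros$ for all $t \geq 0$, so for each $T > 0$ the hypotheses of Lemma~\ref{lemma:bound-gd-flow-path-length} hold on $\interval[open]{0}{T}$ and yield $\int_0^T \|x'(t)\| \deriv t \leq \frac{1}{(1 - \plexp)\sqrt{2\plconstant}} \lvert \mfc(x_0) - \mfcopt \rvert^{1 - \plexp}$; letting $T \to +\infty$ shows that the total length $L \eqbydef \int_0^{+\infty} \|x'(t)\| \deriv t$ is finite.

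From finiteness of $L$ I would deduce convergence: the map $t \mapsto \int_0^t \|x'(s)\| \deriv s$ is nondecreasing and bounded, so $\dist(x(t), x(t')) \leq \int_t^{t'} \|x'(s)\| \deriv s \to 0$ as $t, t' \to +\infty$, which makes $(x(t))_{t \geq 0}$ Cauchy; since the trajectory lies in the compact set $\bar \ball(\optpoint, \delta/2)$, it converges to some $x_\infty$ in that ball. It then remains to identify $x_\infty$ as a point of $\optimalset$. Along the flow, $\tfrac{\deriv}{\deriv t}\mfc(x(t)) = -\|\grad \mfc(x(t))\|^2 \leq 0$ and $\mfc(x(t)) \geq \mfcopt$, so $\mfc(x(t))$ decreases to $\mfc(x_\infty)$ and $\int_0^{+\infty} \|\grad \mfc(x(t))\|^2 \deriv t = \mfc(x_0) - \mfc(x_\infty) < +\infty$; hence $\grad \mfc(x(t_k)) \to \zeros$ along some sequence $t_k \to +\infty$, and $\grad \mfc(x_\infty) = \zeros$ by continuity. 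Applying \eqref{eq:local-loja} at $x_\infty \in \ball(\optpoint, \delta)$ gives $\lvert \mfc(x_\infty) - \mfcopt \rvert^{2\plexp} \leq \frac{1}{2\plconstant}\|\grad \mfc(x_\infty)\|^2 = 0$, so $\mfc(x_\infty) = \mfcopt$; combined with $\mfc(x) > \mfcopt$ for all $x \in \ball(\optpoint, \delta) \setminus \optimalset$, this forces $x_\infty \in \optimalset$ (and the same reasoning handles the finite-time case above).

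The main obstacle I anticipate is purely organizational: one has to commit to a single ball that is at once compact, contained in the \loja{} neighborhood, and on which $\mfc$ grows strictly away from $\optimalset$, and then, by feeding a slightly smaller ball to Proposition~\ref{prop:lyapunov-gd-flow}, guarantee that the entire trajectory --- hence also its limit $x_\infty$ --- remains well inside this ball, so that both the ``Cauchy implies convergent'' step and the concluding application of \eqref{eq:local-loja} at $x_\infty$ are valid on the manifold. Once the trajectory is trapped in such a compact set and has finite length, the remaining steps are routine.
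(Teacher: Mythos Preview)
Your argument is correct and in fact somewhat more direct than the paper's: both confine the trajectory inside a compact ball via Proposition~\ref{prop:lyapunov-gd-flow}, but you then invoke the finite total length from Lemma~\ref{lemma:bound-gd-flow-path-length} to make the trajectory Cauchy and conclude convergence in one stroke, whereas the paper first extracts an accumulation point by compactness, identifies it as a point of $\optimalset$, and then \emph{re-applies} Proposition~\ref{prop:lyapunov-gd-flow} at that accumulation point to upgrade accumulation to convergence. Your route avoids this second appeal to Lyapunov stability; the paper's route, on the other hand, does not need the Cauchy-completeness step and relies only on the standard fact that accumulation points of a gradient flow are critical.

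One genuine issue to fix: your justification of ``$\mfc(x) > \mfcopt$ for every $x \in \ball(\optpoint,\delta)\setminus\optimalset$'' cites Proposition~\ref{prop:pl-implies-qg}, but in this paper that proposition is \emph{proved using} Corollary~\ref{cor:gf-converges} (see the proof in Appendix~\ref{sec:loja-proofs}), so invoking it here is circular. Fortunately you do not need it: since $\optpoint$ is a local minimum, there is a neighborhood on which $\mfc \geq \mfcopt$, and on any smaller open neighborhood the identity $\optimalset = \mfc^{-1}(\mfcopt)$ holds (this is exactly the observation made in the proof of Lemma~\ref{lemma:proj-non-empty}), which is all you use. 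Drop the reference to Proposition~\ref{prop:pl-implies-qg} and keep the reference to Lemma~\ref{lemma:proj-non-empty}; the rest of your proof goes through as written.
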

\begin{proof}
  \TODOF{Here we use the fact that manifolds are locally compact.}
  Let $\mathcal{U}$ be a compact neighborhood of $\optpoint$ such that $\mfc$
  satisfies~\eqref{eq:local-loja} around all points in $\mathcal{U} \cap
  \optimalset$, and such that all critical points of $\mfc$ in $\mathcal{U}$ are
  in $\optimalset$.
  Let $\mathcal{V}$ be a neighborhood of $\optpoint$ associated to $\mathcal{U}$ as in
  Proposition~\ref{prop:lyapunov-gd-flow}.
  Let $x \colon \interval[open right]{0}{+\infty} \to \manifold$ denote a
  solution of~\eqref{eq:gradient-flow} starting from $x_0 \in \mathcal{V}$.
  Then $x(t) \in \mathcal{U}$ for all $t \geq 0$.
  The set $\mathcal{U}$ is compact so $x$ has an accumulation point $x_\infty
  \in \mathcal{U}$.
  This is a critical point for $\mfc$ so $x_\infty \in \optimalset$.
  It is also Lyapunov stable because $\mfc$ satisfies~\eqref{eq:local-loja}
  around $x_\infty$ (Proposition~\ref{prop:lyapunov-gd-flow}).
  We deduce that $\lim x(t) = x_\infty$.
  \TODOF{Check that accumulation points of gradient flow are critical.
    Section 9.3 of Hirsch Smale Devaney - Differential Equations Dynamical
    Systems and Chaos}
\end{proof}

From this we deduce that the \loja{} inequality implies the local growth of
$\mfc$ announced in Proposition~\ref{prop:pl-implies-qg}.

\begin{proof}[Proof of Proposition~\ref{prop:pl-implies-qg}]
  Let $\mathcal{U}$ be a neighborhood of $\optpoint$ where~\eqref{eq:local-loja}
  holds with constants $\plconstant$ and $\plexp$.
  Proposition~\ref{prop:lyapunov-gd-flow} and Corollary~\ref{cor:gf-converges} give a neighborhood $\mathcal{V}$
  of $\optpoint$ such that for all $x_0 \in \mathcal{V}$ the solution
  to~\eqref{eq:gradient-flow} is defined on $\interval[open right]{0}{\infty}$,
  stays in $\mathcal{U}$ at all times, and has a limit
  $x_\infty \in \optimalset$.
  Then, Lemma~\ref{lemma:bound-gd-flow-path-length} provides the first
  inequality in
  \begin{align*}
    \frac{1}{(1 - \plexp)\sqrt{2\plconstant}}\vert \mfc(x_0) - \mfcopt \vert^{1 - \plexp} \geq \int_0^\infty \|x'(t)\|\deriv t \geq \dist(x_0, x_\infty) \geq \dist(x_0, \optimalset),
  \end{align*}
  which concludes the proof.
\end{proof}

\section{Other \loja{} exponents}\label{sec:other-loja-exponents}

We prove here the statements of Remark~\ref{remark:other-loja-exponents}.
From Proposition~\ref{prop:pl-implies-qg}, there exists $c > 0$ such that
\begin{align}\label{eq:function-growth}
  \mfc(x) - \mfcopt \geq c \dist(x, \optimalset)^{\frac{1}{1 - \plexp}}
\end{align}
for all $x$ sufficiently close to $\optpoint$.

Assume first that $\mfc$ is $\smooth{1}$ and $\grad \mfc$ is $L$-Lipschitz
continuous around $\optpoint$.
Then $\mfc(x) - \mfcopt \leq \frac{L}{2}\dist(x, \optimalset)^2$ for all $x$
sufficiently close to $\optpoint$ (see for
example~\cite[Cor.~10.54]{boumal2020introduction}).
When $\plexp < \frac{1}{2}$, this inequality is incompatible
with~\eqref{eq:function-growth} if $\mfc$ is non-constant around $\optpoint$.

Now assume that $\mfc$ is $\smooth{2}$ and $\hess \mfc$ is $L$-Lipschitz
continuous around $\optpoint$.
Define $h \colon (y, v) \mapsto \inner{v}{\hess \mfc(y)[v]}$.
Lipschitz continuity of $\hess \mfc$ gives
that~\citep[Cor.~10.56]{boumal2020introduction}
\begin{align*}
  \mfc(\Exp_y(tv)) - \mfcopt - \frac{t^2}{2}h(y, v) \leq \frac{L}{6}t^3
\end{align*}
for all $y \in \optimalset$ close enough to $\optpoint$, all unitary $v \in
\normal_y\optimalset$ and $t > 0$ small enough.
Take $t \to 0$ and invoke Lemma~\ref{lemma:dist-equiv} to see that $h(y, v)$ must be positive for
this inequality to be compatible with the function
growth~\eqref{eq:function-growth} when $\plexp \in \interval[open
right]{\frac{1}{2}}{\frac{2}{3}}$.
We conclude that there is a compact neighborhood $\mathcal{V}$ of $\optpoint$
such that $h(y, v) > 0$ for all $y \in \mathcal{V} \cap \optimalset$ and $v \in
\normal_y\optimalset$ unitary.
The function $h$ is continuous and the set $\mathcal{D} = \{(y, v) : y \in
\mathcal{V} \cap \optimalset, v \in \normal_y\optimalset \text{ unitary}\}$ is
compact so $\plconstant = \inf_{(y, v) \in \mathcal{D}} h(y, v)$ is positive.
Now let $\mathcal{W}$ be a neighborhood of $\optpoint$ as in
Lemma~\ref{lemma:log-well-defined} such that for all $x \in \mathcal{W}$ the
projection $\proj_\optimalset(x)$ is included in $\mathcal{V} \cap \optimalset$.
Then for all $x \in \mathcal{W}$ we have
\begin{align*}
  \mfc(x) - \mfcopt = h(y, v) + o(\|v\|^2) \geq \plconstant \dist(x, \optimalset)^2 + o(\dist(x, \optimalset)^2),
\end{align*}
where $y \in \proj_\optimalset(x)$ and $v = \Log_y(x)$.
We conclude that \qg{} holds around $\optpoint$, which implies \pl{}, as shown
in Section~\ref{subsec:qg-implies-eb}.

Now assume that $\mfc$ is $\smooth{3}$.
The argument is similar in this case.
Let $y \in \optimalset$ and $v \in \normal_y\optimalset$ unitary and assume that
$h(y, v) = 0$.
Then a Taylor expansion gives
\begin{align*}
  \mfc(\Exp_x(tv)) - \mfcopt = o(t^3)
\end{align*}
because the third-order term vanishes.
This is incompatible with the function growth~\eqref{eq:function-growth} when
$y$ is close to $\optpoint$ and $\plexp \in
\interval{\frac{1}{2}}{\frac{2}{3}}$.
So $h(y, v)$ must be positive and we conclude with the same arguments as above.

\section{\loja{} and bounded path length}\label{sec:loja-discrete-proofs}

\begin{proof}[Proof of Lemma~\ref{lemma:bound-discrete-gd-path-length}]
  Let $\mathcal{U}$ be an open neighborhood of $\optpoint$ such that \emph{(i)}
  $\mfc$ satisfies~\eqref{eq:local-loja} with constants $\plexp \in
  \interval[open right]{0}{1}$ and $\plconstant > 0$ in $\mathcal{U}$,
  \emph{(ii)} $\mfc(x) \geq \mfcopt$ for all $x \in \mathcal{U}$ and
  \emph{(iii)} condition~\eqref{eq:sufficient-decrease-lyapunov} holds in
  $\mathcal{U}$.
  For all $x \in \mathcal{U}$ we have
  \begin{align*}
    0 \leq |\mfc(x) - \mfcopt|^{2\plexp} \leq \frac{1}{2\plconstant}\|\grad \mfc(x)\|^2, \qquad\text{and so}\qquad \|\grad \mfc(x)\| \geq \sqrt{2\plconstant}|\mfc(x) - \mfcopt|^{\plexp}.
  \end{align*}
  Let $x_L, \dots, x_K$ be consecutive iterates in $\mathcal{U}$.
  For such an iterate $x_k$, either $\grad \mfc(x_k) = \zeros$ and $\dist(x_k,
  x_{k + 1}) = 0$, or $\grad \mfc(x_k) \neq \zeros$ and $\mfc(x_k) > \mfcopt$.
  In this second case, combining the lower-bound on $\|\grad \mfc(x)\|$ above
  with the strong decrease condition in~\eqref{eq:sufficient-decrease-lyapunov},
  we find that
  \begin{align*}
    \dist(x_k, x_{k + 1}) \leq \frac{1}{\lyapsufficientdecreaseconst\sqrt{2\plconstant}}\frac{\mfc(x_k) - \mfc(x_{k + 1})}{|\mfc(x_k) - \mfcopt|^{\plexp}} \leq \frac{1}{\lyapsufficientdecreaseconst (1 - \plexp) \sqrt{2\plconstant}} \Big(\big(\mfc(x_k) - \mfcopt\big)^{1 - \plexp} - \big(\mfc(x_{k + 1}) - \mfcopt\big)^{1 - \plexp}\Big),
  \end{align*}
  where the second inequality comes from
  \begin{align*}
    \frac{\mfc(x_k) - \mfc(x_{k + 1})}{|\mfc(x_k) - \mfcopt|^{\plexp}} &= \int_{\mfc(x_{k + 1})}^{\mfc(x_k)} \frac{1}{|\mfc(x_k) - \mfcopt|^{\plexp}}\deriv t\\
                                                                       &\leq \int_{\mfc(x_{k + 1})}^{\mfc(x_k)} \frac{1}{|t - \mfcopt|^{\plexp}} \deriv t\\
                                                                       &= \frac{1}{1 - \plexp}\Big(\big(\mfc(x_k) - \mfcopt\big)^{1 - \plexp} - \big(\mfc(x_{k + 1}) - \mfcopt\big)^{1 - \plexp}\Big).
  \end{align*}
  Summing the bound on $\dist(x_k, x_{k + 1})$ over $k = L, \dots, K - 1$ gives
  the~\eqref{eq:bounded-path-length-prop} property.
\end{proof}

\section{\loja{} and linear convergence rate}\label{sec:loja-conv-rate}

\begin{proof}[Proof of Proposition~\ref{prop:linear-convergence-sufficient-decrease}]
  Let $\mathcal{U}$ be a neighborhood of $\bar x$ where \pl{} holds.
  The sufficient decrease~\eqref{eq:sufficient-decrease} and~\eqref{eq:local-pl}
  give
  \begin{align*}
    \mfc(x_{k + 1}) - \mfc(x_k) \leq -\sufficientdecreaseconst\|\grad \mfc(x_k)\|^2 \leq -2\plconstant\sufficientdecreaseconst\big(\mfc(x_k) - \mfcopt\big)
  \end{align*}
  for large enough $k$.
  Adding $\mfc(x_k) - \mfcopt$ on both sides yields
  \begin{align*}
    \mfc(x_{k + 1}) - \mfcopt &\leq (1 - 2\sufficientdecreaseconst \plconstant)\big(\mfc(x_k) - \mfcopt\big),
  \end{align*}
  showing the linear rate for function values.
  We now prove the rate for $\sequence{\|\grad \mfc(x_k)\|}$ and
  $\sequence{\dist(x_k, \optimalset)}$.
  The sufficient decrease~\eqref{eq:sufficient-decrease} gives that for large
  enough $k$ we have
  \begin{align*}
    \sufficientdecreaseconst \|\grad \mfc(x_k)\|^2 \leq \mfc(x_k) - \mfc(x_{k + 1}) \leq \mfc(x_k) - \mfcopt, \qquad\text{and so}\qquad \|\grad \mfc(x_k)\| \leq \sqrt{\frac{1}{\sufficientdecreaseconst}\big(\mfc(x_k) - \mfcopt\big)}.
  \end{align*}
  This shows that $\sequence{\|\grad \mfc(x_k)\|}$ converges linearly to zero
  with rate $\sqrt{1 - 2 \sufficientdecreaseconst \plconstant}$.
  For $\sequence{\dist(x_k, \optimalset)}$ we use the local quadratic growth
  (Proposition~\ref{prop:pl-implies-qg}).
  For all $x$ sufficiently close to $\bar x$ we have
  \begin{align*}
    \mfc(x) - \mfcopt \geq \frac{\plconstant}{2} \dist(x, \optimalset)^2, \qquad\text{and so}\qquad \dist(x_k, \optimalset) \leq \sqrt{\frac{2}{\plconstant}\big(\mfc(x_k) - \mfcopt\big)}.
  \end{align*}
  We conclude that $\dist(x_k, \optimalset)$ converges linearly to zero with
  rate $\sqrt{1 - 2 \sufficientdecreaseconst \plconstant}$.
\end{proof}

\begin{proof}[Proof of Proposition~\ref{prop:sufficient-decrease-gd}]
  When $x_k$ is sufficiently close to $\optpoint$, the
  Lipschitz-type property~\eqref{eq:lipschitz-like} implies
  \begin{align}\label{eq:constant-gd-decrease}
    \mfc(x_k) - \mfc(x_{k + 1}) = \mfc(x_k) - \mfc(\retr_x(s_k)) \geq -\Big(\inner{\grad \mfc(x_k)}{s_k} + \frac{\liplikegd}{2}\|s_k\|^2\Big).%
  \end{align}
  Plugging $s_k = -\gamma \grad \mfc(x_k)$ yields $\mfc(x_k) - \mfc(x_{k + 1})
  \geq \big(1 - \frac{\liplikegd}{2}\gamma\big)\|\grad \mfc(x_k)\|\|s_k\|$.
  The number $1 - \frac{\liplikegd}{2}\gamma$ is positive because $\gamma \in
  \interval[open]{0}{\frac{2}{\liplikegd}}$.
  So this algorithm satisfies the strong decrease
  property~\eqref{eq:sufficient-decrease-lyapunov} around $\optpoint$ with
  constant $\lyapsufficientdecreaseconst = \frac{1}{\retrdistboundconst}(1 -
  \frac{\liplikegd}{2}\gamma)$, where $\retrdistboundconst$ is as
  in~\eqref{eq:retr-distance-condition}.
  It further satisfies~\eqref{eq:bounded-path-length-prop} by
  Lemma~\ref{lemma:bound-discrete-gd-path-length}, so
  Corollary~\ref{cor:capture-to-single-point} ensures capture of the iterates.
  Equation~\eqref{eq:constant-gd-decrease} additionally gives $\mfc(x_k) -
  \mfc(x_{k + 1}) \geq \big(\gamma - \frac{\liplikegd}{2}\gamma^2\big)\|\grad
  \mfc(x_k)\|^2$.
  It follows that the iterates satisfy the sufficient
  decrease~\eqref{eq:sufficient-decrease} with $\sufficientdecreaseconst =
  \gamma - \frac{\liplikegd}{2}\gamma^2$.
  We obtain the linear convergence rate with
  Proposition~\ref{prop:linear-convergence-sufficient-decrease}.
\end{proof}

Using performance estimation, it is possible to derive sharper convergence rates
for gradient descent with constant step-sizes under the \pl{}
assumption~\cite[Thm.~3]{abbaszadehpeivasti2023conditions}.
The Lipschitz-type property~\eqref{eq:lipschitz-like} also implies the
sufficient decrease~\eqref{eq:sufficient-decrease} for gradient descent with
backtracking line-search.

\begin{proposition}\label{prop:sufficient-decrease-gd-linesearch}
  Suppose that $\mfc$ and $\retr$ satisfy~\eqref{eq:lipschitz-like} around
  $\optpoint \in \optimalset$.
  Let $\sequence{x_k}$ be a sequence of iterates generated by gradient descent
  with Armijo backtracking line-search converging to $\optpoint$.
  Then $\sequence{x_k}$ satisfies~\eqref{eq:sufficient-decrease} with
  \begin{align*}
    \sufficientdecreaseconst = \linesearchsigma \min\Big( \bar \alpha, \frac{2\beta(1 - \linesearchsigma)}{\liplikegd} \Big).
  \end{align*}
\end{proposition}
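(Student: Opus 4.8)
The plan is to replay the classical analysis of Armijo backtracking line-search (as in \cite[Lem.~4.12]{boumal2020introduction}), while keeping track of the fact that \eqref{eq:lipschitz-like} holds only in a neighborhood of $\optpoint$. First I would recall the mechanics: at iteration $k$, gradient descent tries the step-sizes $\bar\alpha, \beta\bar\alpha, \beta^2\bar\alpha, \dots$ in order and accepts the first $\armijostep_k$ of this form for which the Armijo condition
\begin{align*}
  \mfc\big(\retr_{x_k}(-\armijostep_k \grad \mfc(x_k))\big) \leq \mfc(x_k) - \linesearchsigma \armijostep_k \|\grad \mfc(x_k)\|^2
\end{align*}
holds, setting $x_{k+1} = \retr_{x_k}(-\armijostep_k \grad \mfc(x_k))$. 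Since $\optpoint \in \optimalset$ is a local minimum and $\sequence{x_k}$ converges to it, continuity of $\grad \mfc$ gives $\grad \mfc(x_k) \to 0$. Hence for all large enough $k$ the iterate $x_k$ lies in the neighborhood where \eqref{eq:lipschitz-like} is valid, and every trial step vector $-\alpha \grad \mfc(x_k)$ with $\alpha \in (0,\bar\alpha]$ has norm at most $\bar\alpha \|\grad \mfc(x_k)\|$, small enough for \eqref{eq:lipschitz-like} to apply.

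Next I would substitute $s = -\alpha \grad \mfc(x_k)$ into \eqref{eq:lipschitz-like}, obtaining for all large $k$ and all $\alpha \in (0, \bar\alpha]$
\begin{align*}
  \mfc\big(\retr_{x_k}(-\alpha \grad \mfc(x_k))\big) \leq \mfc(x_k) - \alpha\Big(1 - \frac{\liplikegd \alpha}{2}\Big)\|\grad \mfc(x_k)\|^2.
\end{align*}
Comparing with the Armijo condition, the latter is automatically satisfied whenever $1 - \frac{\liplikegd \alpha}{2} \geq \linesearchsigma$, i.e.\ whenever $\alpha \leq \frac{2(1 - \linesearchsigma)}{\liplikegd}$. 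Consequently the backtracking loop terminates with a step-size bounded below: either $\armijostep_k = \bar\alpha$ (the first trial is accepted), or the preceding trial $\armijostep_k/\beta$ was rejected, which by the previous observation forces $\armijostep_k/\beta > \frac{2(1 - \linesearchsigma)}{\liplikegd}$, hence $\armijostep_k > \frac{2\beta(1 - \linesearchsigma)}{\liplikegd}$. In both cases $\armijostep_k \geq \min\!\big(\bar\alpha, \tfrac{2\beta(1 - \linesearchsigma)}{\liplikegd}\big)$ for all large $k$.

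Finally, plugging this lower bound into the Armijo condition met by the accepted step yields
\begin{align*}
  \mfc(x_k) - \mfc(x_{k+1}) \geq \linesearchsigma \armijostep_k \|\grad \mfc(x_k)\|^2 \geq \linesearchsigma \min\!\Big(\bar\alpha, \frac{2\beta(1 - \linesearchsigma)}{\liplikegd}\Big)\|\grad \mfc(x_k)\|^2
\end{align*}
for all large $k$, which is precisely \eqref{eq:sufficient-decrease} with the stated $\sufficientdecreaseconst$. The only point requiring genuine care is the bookkeeping around the locality of \eqref{eq:lipschitz-like}: one must check that all the trial points $\retr_{x_k}(-\alpha\grad\mfc(x_k))$ probed by the line-search fall within its range of validity once $\grad\mfc(x_k)$ is small, which follows because every trial displacement is bounded by $\bar\alpha\|\grad\mfc(x_k)\|$. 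Beyond that, the argument is the textbook computation and I do not anticipate any real obstacle.
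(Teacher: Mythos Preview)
Your proposal is correct and is precisely the standard argument the paper defers to: the paper's entire proof is the citation ``See~\cite[Lem.~4.12]{boumal2020introduction}'', and you have faithfully reproduced that textbook computation, with the appropriate extra care about the locality of~\eqref{eq:lipschitz-like}. There is nothing to add.
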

\begin{proof}%
  See~\cite[Lem.~4.12]{boumal2020introduction} for the Riemannian case.
\end{proof}

See also~\citep{khanh2022inexact} for more results on line-search with \loja{}
inequalities.

\end{document}